\documentclass[11pt]{article}
\usepackage[utf8]{inputenc}

\textheight=236mm
\textwidth=163mm
\topmargin=-10mm
\oddsidemargin=-0mm
\evensidemargin=-35mm
\hoffset0.31cm
\voffset-0.7cm

\usepackage{amsmath,amsthm,amssymb}
\usepackage{graphicx}
\usepackage{wrapfig}
\usepackage{enumitem}
\usepackage{tikz-cd}
\usepackage{esvect}
\usepackage{bm}
\usepackage{tocloft}
\usepackage{lipsum} 
\usepackage{mathrsfs}
\usepackage{float}
\usepackage{graphicx}
\usepackage{commath}
\usepackage{mathtools}
\usepackage{ulem}
\usepackage[nottoc]{tocbibind}

\usepackage{caption}
\usepackage{subcaption}

\usetikzlibrary{backgrounds, matrix,fit,matrix,decorations.pathreplacing, calc, positioning}

\makeatletter
\newcommand\setItemnumber[1]{\setcounter{enum\romannumeral\@enumdepth}{\numexpr#1-1\relax}}
\makeatother

\newcommand{\R}{\mathbb{R}}
\newcommand{\C}{\mathbb{C}}

\newcommand{\e}{\varepsilon}

\newcommand{\beq}{\begin{equation}}
\newcommand{\eeq}{\end{equation}}
\newcommand{\bdef}{\begin{definition}}
\newcommand{\eedef}{\end{definition}}
\newcommand{\bpf}{\begin{proof}}
\newcommand{\epf}{\end{proof}}

\newtheorem{theorem}{Theorem}[section]
\newtheorem{lemma}[theorem]{Lemma}
\newtheorem{proposition}[theorem]{Proposition}

\newtheorem{corollary}[theorem]{Corollary}
\newtheorem{definition}[theorem]{Definition}
\newtheorem{claim}[theorem]{Claim}
\newtheorem{conjecture}[theorem]{Conjecture}

\theoremstyle{definition}

\newtheorem{remark}[theorem]{Remark}

\makeatletter
\newtheorem*{rep@theorem}{\rep@title}
\newcommand{\newreptheorem}[2]{%
\newenvironment{rep#1}[1]{%
 \def\rep@title{#2 \ref{##1}}%
 \begin{rep@theorem}}%
 {\end{rep@theorem}}}
\makeatother

\newreptheorem{theorem}{Theorem}
\newreptheorem{lemma}{Lemma}
\newreptheorem{proposition}{Proposition}

\PassOptionsToPackage{hyphens}{url}\usepackage{hyperref} 

\usepackage{breakurl}
\hypersetup{
  breaklinks=true,
  colorlinks=true,
  citecolor=blue,
  linkcolor=blue, 
  urlcolor=cyan
}

\def\?{{\bf ****???????****}}
\def\M{{\mathcal M}}


\newcommand{\defeq}{\vcentcolon=}

\long\def\frame#1#2#3#4{\hbox{\vbox{\hrule height#1pt
 \hbox{\vrule width#1pt\kern #2pt
 \vbox{\kern #2pt
 \vbox{\hsize #3\noindent #4}
\kern#2pt}
 \kern#2pt\vrule width #1pt}
 \hrule height0pt depth#1pt}}}

\def\calC{\mathcal{C}}
\def\calP{\mathcal{P}}

\def\?{{\bf ****???????****}}
\def\M{{\mathcal M}}
\def\B{{\mathcal{B}}}
\def\q{\quad}

\newcommand\blfootnote[1]{%
	\begingroup
	\renewcommand\thefootnote{}\footnote{#1}%
	\addtocounter{footnote}{-1}%
	\endgroup
}

\usepackage{hyperref}
\usepackage[backend=biber, maxbibnames=99, citestyle=numeric, bibstyle=numeric, sorting=nyt, backref = false]{biblatex}


\renewbibmacro{in:}{} 

\DeclareFieldFormat{journaltitle}{#1\isdot} 

\DeclareFieldFormat*{title}{\mkbibitalic{#1}} 

\DefineBibliographyExtras{english}{%
  }


\DeclareFieldFormat[article]{volume}{\mkbibbold{#1}} 

\usepackage{csquotes}
\addbibresource{bibliography.bib}

\usepackage{xcolor}
\usepackage{xpatch}
\colorlet{partnumbercolour}{blue}
\makeatletter
\AtBeginDocument{
\xpatchcmd{\@part}{\partname\nobreakspace\thepart}{\textcolor{partnumbercolour}{\thepart}}{}{}
}
\makeatother

\title{Two-dimensional B\l{}ocki, $L^p$-Mahler, and Bourgain conjectures}
\author{Vlassis Mastrantonis, Yanir A. Rubinstein }

\begin{document}
\let\i\undefined
\newcommand{\i}{\sqrt{-1}}
\def\LpK{K^{\circ,p}}
\def\LpS{S^{\circ,p}}
\def\Cov{\mathrm{Cov}}
\def\co{\hbox{co}\,}
\numberwithin{equation}{section}

\maketitle
\date

\begin{abstract}
We confirm, in dimension two, 
B\l{}ocki's conjectures
on sharp lower bounds for Bergman kernels of tube domains. 
To that end, we verify a broader class of $L^p$-Mahler conjectures  due to Berndtsson and the authors, where $p=1$ 
are B\l{}ocki's conjecture, and $p=\infty$ are Mahler's conjectures.
The proofs are technically challenging as the $L^p$-Mahler
volume is considerably harder to deal with analytically compared to Mahler's volume,
and furthermore duality is lost. In addition, unlike in the classical
Mahler setting,
the non-symmetric setting is considerably more involved than the symmetric one.
The proofs involve studying the effect of
Mahler's classical sliding of vertices on 
two-dimensional polytopes on the $L^p$-polar body (no longer a polytope).
Some arguments are inspired by works of Campi--Gronchi and Meyer--Reisner on volumes of classical polar bodies of shadow systems. 
In passing, we also explore how Mahler's sliding affects the isotropic constant. This leads to an elementary proof of Bourgain's strong hyperplane conjectures in dimension two, originally due to 
Bisztriczky--B\"or\"oczky, 
Campi--Colesanti--Gronchi and Meckes. Specifically, we show that, as a function of the sliding parameter, the isotropic constant raised to an appropriate power is a convex quadratic polynomial. 
\end{abstract}

\blfootnote{Research supported by NSF grants 
DMS-1906370,2204347, and a UMD Graduate School Summer Research Fellowship. Thanks to the referee for a very careful reading.}

\tableofcontents

\section{Introduction}

This article is a continuation of our study of 
interactions between convex analysis and complex analysis in
the context of Mahler conjectures in convexity \cite{MR22,BMR23}. The main
theorem is a verification of conjectures of B\l{}ocki in dimension
$n=2$---purely
complex analytic statements concerning Bergman kernels of tube domains in $\C^2$---using
methods of {\it convex} analysis.
The bridge between convex and complex is provided by 
the theory of $L^p$-polarity,
introduced in our previous work, 
where B\l{}ocki's conjectures are precisely captured
by the $L^1$-Mahler volume (while Mahler's original conjectures
correspond to $p=\infty$).
With this in mind we have strived to make the article accessible to both convex and complex analysts.
Also, despite its title, this article also continues to develop
a few aspects of the theory of $L^p$-polarity in all dimensions.

\subsection{B\l{}ocki conjectures}
A famous conjecture of Mahler from the 1930's 
\cite{Mahler39a} stipulates that the \textit{Mahler volume},  
\begin{equation*}
    \M(K)\defeq n! |K||K^{\circ}|, 
\end{equation*}
is minimized by the cube $[-1,1]^n$ among symmetric convex bodies (i.e.,
$K=-K$),
where
\begin{equation*}
    K^\circ\defeq \{y\in \R^n: \langle x,y\rangle\leq 1 \text{ for all } x\in K\}, 
\end{equation*}
is the polar of $K$.
A `non-symmetric' analog of this conjecture, also
due to Mahler \cite{Mahler39b},
states that $\M$ is minimized among all convex bodies by the simplex 
\begin{equation*}
    \Delta_{n,0}\defeq \co\big\{e_1,\ldots,e_n,-{\textstyle\sum_{i=1}^n}e_i\big\},
\end{equation*}
where $e_1, \ldots, e_n$ is the standard basis in $\R^n$,
and $\co\mskip1mu A$ denote
the convex hull of $A$.

In a beautiful paper, Nazarov established a connection between the Mahler volume and the Bergman kernels of tube domains \cite{Nazarov12}.
For a domain $\Omega\subset \C^n$, denote by 
\begin{equation*}
    \mathcal{B}_{\Omega}: \Omega\times \Omega\to \C
\end{equation*}
the \textit{Bergman kernel} of $\Omega$. 
The Bergman kernel is holomorphic in the first variable, anti-holomorphic in the second, and is uniquely characterized as the reproducing kernel of the evaluation functional. In other words, for any $w\in \Omega$ and holomorphic $f\in L^2(\Omega)$, 
\begin{equation*}
    \langle f, \B_\Omega(\cdot,w)\rangle_{L^2}\defeq \int_{\Omega} f(z) \overline{\B_\Omega(z,w)}\, d\lambda(z) = f(w).
\end{equation*}
Nazarov's idea was to exploit the explicit formula for the Bergman kernel of tube domains
\begin{equation*}
    T_K\defeq \R^n+ \i\, K, 
\end{equation*}
to obtain a lower bound on the Mahler volume of a symmetric convex body $K$,
\begin{equation}
\label{MBeq}
    \M(K)\geq \pi^n |K|^2 \mathcal{B}_{T_K}(0,0). 
\end{equation}
It was later noted by Hultgren \cite{Hultgren13}, Berndtsson \cite{Bern22} and us \cite{MR22}, that \eqref{MBeq} holds in general for convex bodies whose barycenter
\begin{equation*}
    b(K)\defeq \int_{\R^n} x\frac{d x}{|K|}
\end{equation*}
lies at the origin.
To obtain a lower bound on the Mahler volume, it is therefore enough to find a lower bound on the Bergman kernel $\mathcal{B}_{T_K}$. Nazarov specifically focuses on symmetric convex bodies, but his approach generalizes to non-symmetric
bodies with more work \cite{MR22}. He constructs a holomorphic function on $T_K$ that evaluates to one at the origin with $L^2$-norm bounded from above by $(16/\pi^2)^n |K|^2$ multiplied by terms exhibiting sub-exponential dependence on dimension. To do this he uses H\"ormander's $\overline{\partial}$-theorem \cite[Theorem 2.2.1]{Horm65}. 
B\l{}ocki revisited Nazarov's approach and reproved the same bound on the Bergman kernel of tube domains of symmetric convex bodies
\cite[(2)]{Blocki15}
\begin{equation}
\label{BTKboundEq}
    |K|^2\mathcal{B}_{T_K}(0,0)\geq  ({\pi}^2/{16})^{n}.
\end{equation}
An alternative proof of \eqref{BTKboundEq}, due to Lempert \cite[p. 464]{Blocki14b}, can be given by invoking Berndtsson's plurisubharmonicity theorem for Bergman kernels \cite[Theorem 1.1]{Bern06}. B\l{}ocki then conjectured that \eqref{BTKboundEq} is suboptimal, and in fact, the sharp lower bound should be obtained by the cube \cite[(7)]{Blocki14a}:

\begin{conjecture}
\label{BlockiConjecture}
{\rm (B\l{}ocki 2014)}    For a symmetric convex body $K\subset \R^n$, 
    \begin{equation*}
       |K|^2\mathcal{B}_{T_K}(0,0)\geq |[-1,1]^n|^2 \,\mathcal{B}_{T_{[-1,1]^n}}(0,0)= (\pi/4)^{n}. 
    \end{equation*}
\end{conjecture}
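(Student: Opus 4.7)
The plan is to adopt the $L^p$-polarity framework from \cite{MR22,BMR23}. Using the Paley--Wiener representation for Bergman kernels of tube domains,
\[
    \mathcal B_{T_K}(0,0) \;=\; (2\pi)^{-n}\int_{\R^n}\frac{d\xi}{\int_K e^{-2\langle y,\xi\rangle}\,dy},
\]
the quantity $|K|^2 \mathcal B_{T_K}(0,0)$ becomes a scalar multiple of the $L^1$-Mahler volume of $K$, and Conjecture \ref{BlockiConjecture} is precisely the $L^1$-analog of Mahler's cube conjecture; the bound \eqref{BTKboundEq} already yields it up to a multiplicative factor $(4/\pi)^n$, and the goal is to close this gap.

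First I would reduce, by continuity of the functional in the Hausdorff topology, to the case where $K$ is a centrally symmetric polytope, so that the Laplace-type integral $\xi \mapsto \int_K e^{-2\langle y,\xi\rangle}\,dy$ is an explicit sum of exponentials indexed by the vertices of $K$. Next I would apply Mahler's sliding of vertices: fix a direction $e \in \R^n$, pick a vertex $v$ of $K$ and simultaneously translate $\pm v$ parallel to $e$ by a parameter $t$ (preserving symmetry), producing a shadow system $\{K_t\}_{t\in[a,b]}$ of symmetric polytopes. The heart of the argument is to establish that the map
\[
    t \;\longmapsto\; \bigl(|K_t|^2 \mathcal B_{T_{K_t}}(0,0)\bigr)^{-1/\alpha}
\]
is convex on $[a,b]$ for a suitable exponent $\alpha>0$, so that its maximum---and therefore the minimum of $|K_t|^2 \mathcal B_{T_{K_t}}(0,0)$---is attained at an endpoint. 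Iterating the sliding to collapse vertices one at a time then reduces the minimization to a small family of extremal symmetric polytopes, which one identifies as (affine images of) the cube $[-1,1]^n$; the conjectured value $(\pi/4)^n$ follows from the one-variable identity $\int_\R \xi/\sinh(2\xi)\,d\xi = \pi^2/8$ applied coordinate-wise, using that affine $\mathrm{SL}_n(\R)$-equivariance of the functional lets one assume $K = [-1,1]^n$ up to volume.

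The principal obstacle is the shadow-system convexity. In the classical Mahler case ($p=\infty$) the polar $K_t^\circ$ is itself a polytope whose facet normals depend piecewise linearly on $t$, and the Campi--Gronchi and Meyer--Reisner arguments deduce convexity of $|K_t^\circ|^{-1/n}$ from Brunn's theorem applied to slab slices. In the $L^1$-setting, however, duality between $K_t$ and its $L^1$-polar is lost, and the relevant ``polar'' is a family of sublevel sets of a smooth Laplace transform; one must therefore show directly that the integrand $\xi \mapsto (\int_{K_t}e^{-2\langle y,\xi\rangle}\,dy)^{-1}$ is log-concave in $t$ for each $\xi$ and integrate using a Pr\'ekopa--Leindler-type inequality applied fiberwise in the direction $e$. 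A further complication is combinatorial: for $n \geq 3$ the face lattice of $K_t$ can jump as $t$ varies, so the vertex-reduction scheme must be coupled with careful bookkeeping of face degenerations, which is where the argument becomes genuinely dimension-sensitive. These difficulties suggest that a full proof in all dimensions will require substantial new input beyond the convex-analytic techniques outlined here, and that it is natural to first attack the case $n=2$, where the sliding fibers are one-dimensional and the combinatorics reduce to a manageable finite case analysis on planar polygons.
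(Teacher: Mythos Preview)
First, note that the statement is a \emph{conjecture}: the paper does not prove it in general, only for $n=2$ (Theorem~\ref{blockiConj}, via Theorem~\ref{2DimpMahlerSym}). Your outline---reduce to $\M_1$ via the Rothaus--Kor\'anyi--Hsin formula, pass to symmetric polytopes by Hausdorff continuity, then iterate Mahler sliding to collapse vertices down to a cube---is exactly the skeleton the paper uses for $n=2$, and you are right that the crux is a convexity-in-$t$ statement for the sliding family.

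However, your proposed mechanism for that convexity step does not work as written. You claim it suffices to show, for each fixed $\xi$, that $t\mapsto \big(\int_{K_t}e^{-2\langle y,\xi\rangle}\,dy\big)^{-1}$ is log-concave, and then ``integrate using a Pr\'ekopa--Leindler-type inequality.'' But pointwise log-concavity in $t$ says nothing about the $\xi$-integral: take $h(t,\xi)=(2\xi-1)t$ on $[0,1]$, linear (hence convex) in $t$ for each $\xi$, yet $\int_0^1 e^{-h}\,d\xi=\sinh(t)/t$ is log-\emph{convex}. What Pr\'ekopa actually needs is \emph{joint} convexity of $(t,\xi)\mapsto h_{1,K_t}(\xi)$, and this fails: expanding the triangle integral (see \eqref{hpDeltaEq}) produces a bilinear term $t\cdot\xi_e$ (sliding parameter times the $\xi$-component in the sliding direction), whose Hessian is indefinite. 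The paper circumvents this by proving a stronger, harmonic-mean-weighted inequality for $h_{p,K_t}$ (Claim~\ref{TriangleClaim}, Lemma~\ref{hpSymCor}) that is precisely the hypothesis of Ball's Theorem~\ref{BallIneq}; Ball then yields joint convexity of $(t,y)\mapsto\|(x,y)\|_{K_t^{\circ,p}}$ for each fixed $x$ (Lemma~\ref{symNormConvexity}), and only then does Borell--Brascamp--Lieb (Lemma~\ref{-1concaveLemma}) give convexity of $t\mapsto|K_t^{\circ,p}|^{-1}$ via the planar volume formula of Lemma~\ref{suitableVolumeLemma}. This detour through Ball's theorem and the $L^p$-polar norm---rather than a direct Pr\'ekopa argument on $e^{-h_{1,K_t}}$---is the missing idea in your sketch, and it is also the reason the argument is genuinely two-dimensional.
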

A natural `non-symmetric' analog has also been conjectured \cite[Conjecture 10]{MR22}:
\begin{conjecture}
    \label{MRConjecture}
    For a convex body $K\subset \R^n$, 
    \begin{equation*}
        |K|^2\mathcal{B}_{T_K}(0,0)\geq \inf_{x\in \R^n} \Big( |\Delta_{n,0}|^2 \mathcal{B}_{T_{\Delta_{n,0}-x}}(0,0)\Big). 
    \end{equation*}
\end{conjecture}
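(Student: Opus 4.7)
The plan is to translate Conjecture \ref{MRConjecture} into an inequality for the $L^1$-Mahler volume of convex bodies and then to resolve the latter in dimension two via a sliding argument on the vertices of polygons. Using the identification between the Bergman kernel $\B_{T_K}(0,0)$ and the $L^1$-polar functional introduced in \cite{MR22,BMR23}, the statement becomes: among convex bodies $K \subset \R^2$, the quantity $|K|\cdot |K^{\circ,1}_x|$, minimized over translation centers $x$, is bounded below by its value at the simplex $\Delta_{2,0}$. The infimum over $x \in \R^n$ on the right-hand side reflects that, unlike in the symmetric case, one must choose the center of polarity optimally.

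First I would reduce to polygons using continuity of $K \mapsto |K^{\circ,1}_x|$ in the Hausdorff topology and compactness of the set of candidate centers. Then I would induct on the number of vertices. Given a polygon $P$ with $N \geq 4$ vertices $v_1,\ldots,v_N$, select a vertex $v_i$ and slide it along the line parallel to the chord $\overline{v_{i-1}v_{i+1}}$, producing a one-parameter family $P_t$, $t \in [a,b]$, for which $|P_t|$ is constant in $t$ and for which the endpoints $P_a$, $P_b$ are $(N-1)$-gons. This is Mahler's classical sliding, giving a shadow system whose support function $h_{P_t}(\xi)$ is piecewise affine in $t$ for each direction $\xi$. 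If one can show that $t \mapsto |P_t|\cdot|P_t^{\circ,1}|$ (after optimizing the polar center) is convex in $t$, then its minimum on $[a,b]$ is attained at an endpoint, and iteration reduces the problem to triangles. Affine invariance of the $L^1$-Mahler volume (combined with the correct choice of polar center) then identifies $\Delta_{2,0}$ as the extremal triangle up to translation.

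The main obstacle, and what I expect to be the hardest step, is establishing the convexity of $t \mapsto |P_t^{\circ,1}|$ (or a suitable power of it) along the sliding. Unlike the classical Mahler case $p = \infty$, where the polar is a polytope whose volume can be computed combinatorially from the face structure, the $L^1$-polar body is a smooth sublevel set of the integral transform $y \mapsto \int_P e^{-\langle x,y\rangle}\,dx$, so no combinatorial reduction is available. My strategy would be to differentiate twice in $t$ the integral representation of $|P_t^{\circ,1}|$, exploiting the piecewise-affine dependence of $h_{P_t}$ in $\xi$, and attempt to identify $\partial_t^2 |P_t^{\circ,1}|$ as the nonnegative residue of a Cauchy--Schwarz or log-concavity inequality, in the spirit of the Campi--Gronchi and Meyer--Reisner arguments for classical polars of shadow systems. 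The non-symmetric setting adds the further difficulty that the optimal polar center itself moves with $t$, so one must argue that the partial minimization over $x$ preserves convexity---likely via an envelope argument combined with explicit control on the barycenter of $P_t^{\circ,1}$.
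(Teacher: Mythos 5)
Your high-level strategy is the right one: convert the Bergman-kernel inequality into a lower bound on the $L^1$-Mahler volume via \eqref{BMEq}, approximate by polygons, slide vertices to reduce the vertex count, and finish with the affine invariance of $\M_1$. The paper follows exactly this plan. But there are two genuine gaps in your proposed execution of the hard step, and one logical slip.

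First, the slip: you write that if $t\mapsto|P_t|\,|P_t^{\circ,1}|$ is \emph{convex} then its \emph{minimum} on $[a,b]$ is at an endpoint. Convexity gives the \emph{maximum} at an endpoint; for the induction to go in the right direction one needs the \emph{reciprocal} $t\mapsto 1/|P_t^{\circ,1}|$ to be convex (the paper's Lemma \ref{SlidingLemma}), which yields $|P_t^{\circ,1}|\geq\min\{|P_a^{\circ,1}|,|P_b^{\circ,1}|\}$.

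Second, the proposed mechanism for proving the convexity would not apply. You plan to ``differentiate twice in $t$, exploiting the piecewise-affine dependence of $h_{P_t}$ in $\xi$,'' but the $L^1$-polar is not built from the classical support function $h_{P_t}$ (which is piecewise affine): it is built from $h_{1,P_t}(\xi)=\log\int_{P_t}e^{\langle x,\xi\rangle}dx/|P_t|$, which is smooth and has no piecewise-affine structure in either $t$ or $\xi$. The paper works around this by first proving a pointwise three-parameter inequality for $h_{p,\Delta(x_2)}$ on the sliding \emph{triangle} (Claim \ref{TriangleSliding}) via H\"older and a log-convexity fact for $\sinh(\cdot)/(\cdot)$, extending it to the whole sliding polygon via the decomposition $P(x_2)=\widehat P\cup\Delta(x_2)$ (Lemma \ref{Px2Slidinghp}), and then invoking Ball's theorem (Theorem \ref{BallIneq}) to pass from the support-function inequality to convexity of the near-norm $\|\cdot\|_{P(x_2)^{\circ,p}}$, followed by the Borell--Brascamp--Lieb inequality (Lemma \ref{-1concaveLemma}) to get convexity of $1/I_\pm$. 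This is a qualitatively different line of attack than second-derivative computations.

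Third, and most critically, you do not address the specific non-symmetric obstruction. In the non-symmetric case $|P(x_2)^{\circ,p}|=I_+(x_2)+I_-(x_2)$ splits over the two half-planes, and each $1/I_\pm$ is convex but the reciprocal of a sum of two functions with convex reciprocals is in general \emph{not} convex. The paper's resolution is Proposition \ref{BalancingProp}: for each pair $x_2,x_2'$, translate the polytope by a carefully chosen $(x_0,0)$ so that the ratio $I_+/I_-$ is the \emph{same} at the two endpoints. With this balancing, the algebraic manipulation in the proof of Lemma \ref{SlidingLemma} closes, and then the minimizing property of the $L^p$-Santal\'o point removes the ad hoc translation. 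Your ``envelope argument combined with explicit control on the barycenter of $P_t^{\circ,1}$'' is too vague to be assessed, but it would at the very least need to recover this balancing mechanism (or an equivalent), since partial minimization over translations does not by itself preserve the convexity of the reciprocal of a sum. This is the step the paper identifies as the essential difference between the symmetric case (where $I_+=I_-$ automatically) and the general case.
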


Our main result is a verification of Conjectures \ref{BlockiConjecture}--\ref{MRConjecture}
in dimension two. 

\begin{theorem}
\label{blockiConj}
    For a symmetric convex body $K\subset \R^2$, $|K|^2 \mathcal{B}_{T_K}(0,0)\geq \pi^2/16$. 
\end{theorem}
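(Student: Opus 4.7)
The strategy is to translate the complex-analytic quantity $|K|^2 \B_{T_K}(0,0)$ into its convex-analytic incarnation via the $L^p$-polarity framework developed in the authors' prior work \cite{MR22,BMR23}: for $p=1$ this quantity equals, up to an explicit dimensional constant, the two-dimensional $L^1$-Mahler volume, namely $|K|$ times the volume of the $L^1$-polar body $K^{\circ,1}$. Thus the theorem reduces to showing that among centrally symmetric convex bodies in $\R^2$, the $L^1$-Mahler volume is minimized by a parallelogram. By $\mathrm{SL}_2(\R)$-invariance of both factors, any symmetric parallelogram yields the same value, namely that of $[-1,1]^2$, which one computes to be $\pi^2/16$ directly from the explicit Bergman kernel of the product of strips $T_{[-1,1]^2}$.

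To prove minimality I would first approximate an arbitrary symmetric $K$ by symmetric polygons, using continuity of $K \mapsto |K|$ and $K \mapsto |K^{\circ,1}|$ under Hausdorff convergence. Given a symmetric polygon with more than four vertices, I would then construct a Mahler-type shadow system $\{K_t\}_{t\in[a,b]}$ by sliding one pair of opposite vertices along a fixed direction while preserving central symmetry and convexity. At each endpoint of $[a,b]$ two adjacent vertices coalesce and the vertex count strictly drops. Iterating this procedure drives $K$ to a parallelogram in finitely many steps, completing an induction on the number of vertices.

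The main obstacle is to show that $t \mapsto |K_t| \cdot |K_t^{\circ,1}|$ is concave along the sliding, so that its minimum on $[a,b]$ is attained at an endpoint. For the classical Mahler volume ($p=\infty$) this is due to Campi--Gronchi and Meyer--Reisner; their arguments hinge on $K^\circ$ being a polytope whose vertex set under sliding can be tracked combinatorially. For $p=1$, by contrast, $K_t^{\circ,1}$ has a smooth boundary defined by an integral condition, so one must compute $|K_t^{\circ,1}|$ as a $t$-dependent integral, differentiate twice in $t$, and establish non-positivity through a Cauchy--Schwarz-type inequality. My plan is to slice $K_t^{\circ,1}$ along lines transverse to the sliding direction, linearize the parameter-dependence of each slice, and combine the resulting one-dimensional concavities via a weighted integral inequality in the spirit of Meyer--Reisner. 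Central symmetry pins the barycenter at the origin throughout the sliding and kills many cross-terms, which is why the symmetric case studied here is substantially cleaner than its non-symmetric analog.
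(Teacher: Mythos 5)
Your high-level strategy is exactly the paper's: identify $|K|^2\B_{T_K}(0,0)$ with $\M_1(K)/(4\pi)^2$ via the tube-domain Bergman kernel formula, reduce to symmetric polygons by Hausdorff continuity of $\M_1$, slide an antipodal vertex pair to drop the vertex count by two, and iterate down to a parallelogram.

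The one place that needs care is the precise one-variable statement you aim for along the sliding. You propose to show that $t\mapsto |K_t|\cdot|K_t^{\circ,1}|$ is \emph{concave}. The paper (Lemma~\ref{SymSlidingLemma}) instead proves that $t\mapsto 1/|K_t^{\circ,p}|$ is \emph{convex}, i.e.\ $|K_t^{\circ,p}|$ is $(-1)$-concave; since $|K_t|$ is constant under Mahler sliding this makes $1/\M_p(K_t)$ convex, which still forces the minimum of $\M_p$ onto an endpoint. Concavity of a positive $f$ implies convexity of $1/f$ but not conversely (e.g.\ $g=1+x^2$ is convex yet $1/g$ is not concave), so your target is strictly stronger and may simply be false even though the desired conclusion is true. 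This matters because the mechanism you gesture at — slicing $K_t^{\circ,1}$ and combining slicewise bounds via a Meyer--Reisner-type integral inequality — is in spirit the paper's chain: it writes $|K^{\circ,p}|=\int_\R\|(1,y)\|_{K^{\circ,p}}^{-2}\,dy$ for symmetric $K$ (Lemma~\ref{suitableVolumeLemma}), proves joint convexity of $(y,x_2)\mapsto\|(1,y)\|_{S(x_2)^{\circ,p}}$ via a pointwise $L^p$-support inequality and Ball's theorem (Claim~\ref{TriangleClaim}, Lemma~\ref{hpSymCor}, Theorem~\ref{BallIneq}), and then applies Borell--Brascamp--Lieb (Lemma~\ref{-1concaveLemma}) — and the natural output of that chain is convexity of the reciprocal, not concavity of the volume. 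Reframe your obstacle as convexity of $1/\M_1(K_t)$ and your outline is correct and matches the paper. A smaller correction: Campi--Gronchi and Meyer--Reisner do not rely on $K^\circ$ being a polytope with a combinatorially trackable vertex set — both work from integral formulas for $|K^\circ|$ in terms of support functions and invoke Borell--Brascamp--Lieb or Ball's theorem, which is precisely why the method carries over to the non-polytopal $L^p$-polar; what is lost there is duality and homogeneity, not polytopality.
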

\begin{theorem}
\label{blockinonsymThm}
    For a convex body $K\subset \R^2$, 
    $|K|^2\mathcal{B}_{T_K}(0,0)\geq 
\inf_{x\in \Delta_{2,0}} \Big( |\Delta_{2,0}|^2 \mathcal{B}_{T_{\Delta_{2,0}-x}}(0,0)\Big). 
    $
\end{theorem}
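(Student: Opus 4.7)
The plan is to reduce Theorem \ref{blockinonsymThm} to a two-dimensional non-symmetric $L^1$-Mahler inequality, and then prove that inequality by Mahler-style vertex sliding on polygons. Using the identification of $|K|^2\mathcal{B}_{T_K}(0,0)$, up to a dimensional constant, with the $L^1$-Mahler volume of $K$ measured from a chosen interior base point, as developed in \cite{MR22,BMR23}, Theorem \ref{blockinonsymThm} becomes the statement that a translate of $\Delta_{2,0}$ minimizes a base-point-optimized $L^1$-Mahler functional among convex bodies in $\R^2$. The infimum over $x\in\Delta_{2,0}$ on the right of Theorem \ref{blockinonsymThm} corresponds exactly to this optimal choice of base point.

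By lower semi-continuity of the Bergman kernel under Hausdorff approximation, I may assume $K$ is a convex polygon $P$. I would then pick two adjacent vertices of $P$ and slide one along a fixed line parametrized by $t\in[a,b]$, keeping the remaining vertices fixed. This produces a one-parameter shadow system $P_t$ in the sense of Shephard, as exploited by Campi--Gronchi and Meyer--Reisner for the classical Mahler volume. In contrast to that setting, the $L^1$-polar bodies $P_t^{\circ,1}$ are not polytopes; rather, the $L^1$-Mahler volume is an integral of an exponential density involving the support function $h_{P_t}$ against both a polar integration variable $y$ and the base point $x$. The strategy is to exploit the affine dependence of $h_{P_t}(y-x)$ on $(t,x,y)$, together with Pr\'ekopa-type log-concavity of marginals, to show that a suitable power of the base-point-optimized $L^1$-Mahler functional of $P_t$ is a convex function of $t$.

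Granted this convexity, the minimum over $t\in[a,b]$ is attained at an endpoint, where $P_t$ either degenerates or loses a vertex, so the problem reduces to polygons with strictly fewer vertices. Iterating yields a triangle, which is affinely equivalent to $\Delta_{2,0}$ up to translation; the functional is affine invariant under translation-coupled linear changes of coordinates, matching the right-hand side of Theorem \ref{blockinonsymThm}. The main obstacle will be the convexity step: the $L^p$-polar body varies non-linearly in $t$, and the translation infimum over $x$ must be handled jointly with the sliding. This forces a two-parameter log-concavity analysis rather than the one-parameter shadow-system argument of the classical Mahler case. A subsidiary difficulty is to guarantee that the base point $x$ remains inside $P_t$ along the sliding so that the functional stays finite, which may require a careful case analysis near the endpoints of $[a,b]$ or a compactness argument to extract a minimizer.
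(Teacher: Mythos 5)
Your proposal matches the paper's overall strategy: express $|K|^2\mathcal{B}_{T_K}(0,0)$ as $(4\pi)^{-2}\M_1(K)$, reduce to convex polygons by Hausdorff approximation, and then slide a vertex while translating the body, using convexity in the sliding-plus-translation parameters to drop a vertex and iterate down to a triangle. That is exactly what the paper does, with $p=1$ of Theorem \ref{2DimpMahler} specialized via \eqref{BMEq}.

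However, there is a genuine gap in the convexity step that your outline does not close, and it is precisely where the paper's main new idea lies. Because the polygon is not symmetric, the volume $|(P(x_2)-(x_0,0))^{\circ,p}|$ splits as $I_+ + I_-$, the areas of the $L^p$-polar in the right and left half-planes (Lemma \ref{suitableVolumeLemma}). Joint convexity in $(x_2, x_0)$ of each reciprocal $1/I_\pm$ does follow from a two-parameter Ball-plus-Borell--Brascamp--Lieb argument (Lemmas \ref{normTripleConvexity}, \ref{-1concaveLemma}, Corollary \ref{ConvexityOfI}); but the reciprocal of a \emph{sum} of two functions with convex reciprocals is not in general convex, so ``two-parameter log-concavity'' alone does not give convexity of $x_2\mapsto 1/|P(x_2)^{*,p}|$. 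The missing ingredient is the \emph{balancing} step, Proposition \ref{BalancingProp}: for any pair of sliding positions $x_2, x_2'$, one must choose a translation $x_0$ so that the ratio $I_+/I_-$ is the \emph{same} at $(x_2,x_0)$ and at $(x_2',-x_0)$; only with this equalized ratio $\rho$ does the algebra in \eqref{FinalPEq} collapse to give the desired convexity. Your plan frames the translation only as a way to keep the base point finite, and treats interior finiteness as a ``subsidiary difficulty,'' but in fact the degeneration of $I_\pm$ as the base point hits $\partial P$ is exactly what drives the intermediate-value argument proving the balancing point exists. Two further points your sketch glosses over: (i) because $h_{p,K}$ is not homogeneous, Pr\'ekopa-type reasoning alone does not apply to the norm of $K^{\circ,p}$; one needs Ball's integral inequality (Theorem \ref{BallIneq}) to convert the sub-additivity of the $L^p$-support functions into convexity of the near norms; and (ii) the relevant concavity exponent is $-1$, i.e.\ Borell--Brascamp--Lieb at $q=-1/2$ (Lemma \ref{-1concaveLemma}), which is strictly stronger than Pr\'ekopa's $q=0$.
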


In fact, Theorems \ref{blockiConj}-\ref{blockinonsymThm}
follow (see \eqref{BMEq}) from more general results, 
Theorems \ref{2DimpMahlerSym}--\ref{2DimpMahler},
that we next turn to describe.

\begin{remark}
Determining when equality occurs in 
Conjectures \ref{BlockiConjecture}--\ref{MRConjecture}  is likely a subtle issue, and was
not part of B\l{}ocki's original conjecture---see
Remark \ref{MinRemark} for a discussion.
\end{remark}

\subsection{\texorpdfstring{The $L^p$-Mahler conjectures}{The Lᵖ-Mahler conjectures}}
\label{lpMahlerIntro}
Motivated by Nazarov's proof of the Bourgain--Milman theorem \cite{Nazarov12},
we introduced $L^p$ versions of the support function 
\cite[Remark 36]{MR22},\cite[(1.8)]{BMR23} 
\begin{equation}
\label{hpKDef}
    h_{p,K}(y)\defeq \log\left( \int_K e^{p\langle x,y\rangle}\frac{dx}{|K|}\right)^{\frac1p}. 
\end{equation}
These functions are smooth and convex
\cite[Lemma 2.1]{BMR23}
and are related to the classical support function
\cite[Corollary 2.7]{BMR23}
\begin{equation*}
    h_{\infty, K}(y)\defeq \lim_{p\to\infty} h_{p,K}(y)= \sup_{x\in K} \langle x,y\rangle =\vcentcolon h_{K}(y).
\end{equation*}

However, the \textit{$L^p$-support functions} lack a hallmark property of the classical support function: homogeneity. Consequently, they cannot be directly associated with a geometric object. This is in contrast to $h_K$, which, owing to its homogeneity, allows for the definition of the polar body through its sublevel sets,
\begin{equation*}
    K^{\circ}\defeq \{y\in \R^n: h_K(y)\leq 1\}.
\end{equation*}
Nonetheless, as demonstrated by Ball \cite[Theorem 5]{Ball88} (and later by Klartag who removed the symmetry assumption \cite[Theorem 2.2]{Klartag06}), any convex function can be used to define a `near' norm (in the sense that it is only positively homogeneous (and actually a norm in the symmetric case)) and hence a convex body. That is,
\begin{equation}
\label{KcircpNormEq}
    \|y\|_{K^{\circ,p}}\defeq \left( \frac{1}{(n-1)!}\int_0^\infty r^{n-1} e^{-h_{p,K}(ry)} dr\right)^{-\frac1n} 
\end{equation}
is non-negative, vanishes only when $y=0$, satisfies the triangle inequality, and is positively $1$-homogeneous, i.e., $\|\lambda y\|_{K^{\circ,p}}= \lambda \|y\|_{K^{\circ,p}}$ for $\lambda>0$ (for a detailled exposition see \cite[Proposition A.1]{BMR23}). Defining the \textit{$L^p$-polar body} via
\begin{equation*}
    K^{\circ,p}\defeq \{y\in \R^n: \|y\|_{K^{\circ,p}}\leq 1\}, 
\end{equation*}
one has \cite[Theorem 1.2]{BMR23}
\begin{equation}
\label{hpKKcircpEq}
    \int_{\R^n} e^{-h_{p,K}(y)} dy= n! |K^{\circ,p}|. 
\end{equation}
Motivated by the relation between $h_K$ and the classical 
Mahler volume 
\begin{equation}
\label{Meq}
    \M(K)\defeq n! |K| |K^{\circ}|= |K| \int_{\R^n}e^{-h_{K}(y)}dy 
\end{equation}
we introduced the \textit{$L^p$-Mahler volume}
\begin{equation}
\label{MpDefEq}
    \M_p(K)\defeq |K|\int_{\R^n} e^{-h_{p,K}(y)} dy= n! |K| |K^{\circ,p}|. 
\end{equation}
Clearly, \eqref{MpDefEq} is a much more complicated
quantity to deal with compared to \eqref{Meq} as
it involves a double integral. Moreover,
duality is lost in the sense that
neither the polar nor the $L^p$-polar of $K^{\circ,p}$
equals $K$.
Yet, the $L^p$-Mahler volumes exhibit many of the signature properties of the Mahler volume: invariance under the action of $GL(n,\R)$ \cite[Lemma 4.6]{BMR23}, tensoriality \cite[Remark 2.3]{BMR23}, existence and uniqueness of a Santal\'o point \cite[Proposition 1.5]{BMR23}.  Additionally, a Santal\'o inequality holds for the $L^p$-Mahler volume
\cite[Theorem 1.6]{BMR23} \cite[Theorem 1.2]{Mastr23}, 
i.e., ellipsoids are maximizers of the $L^p$-Mahler volumes for all $p>0$. 

The motivation for the present paper and its overarching approach
is the key observation \cite{MR22,BMR23} that $\M_1$ is closely related to the Bergman kernels of
the previous section. By a formula
of Rothaus \cite[Theorem 2.6]{Rothaus60}, Kor\'anyi \cite[Theorem 2]{Koranyi62}, and Hsin \cite[(1.2)]{Hsin05} (see \cite[Lemma 32]{MR22} for a self-contained proof using the Paley--Wiener Theorem), 
\begin{equation}
\label{BTKeq}
    \mathcal{B}_{T_K}(z,w)= \frac{1}{(4\pi)^n |K|} \int_{\R^n} e^{-\i\langle \frac{z-\overline{w}}{2}, x\rangle- h_{1,K}(x)}d x.
\end{equation}
Hence, \eqref{BTKeq} on the diagonal gives, 
\begin{equation}
\label{BMEq}
\begin{aligned}
   |K|^2 \mathcal{B}_{T_K}(z,z)&= \frac{|K|}{(4\pi)^n} \int_{\R^n} e^{-\i \langle \frac{z-\overline{z}}{2}, x\rangle- h_{1,K}(x)} dx\\
   &= \frac{|K|}{(4\pi)^n} \int_{\R^n} e^{\langle \mathrm{Im}\,z, x\rangle-h_{1,K}(x)} dx= \frac{\M_1(K-\mathrm{Im}\,z)}{(4\pi)^n}, 
\end{aligned}
\end{equation}
because $h_{1,K}(x)-\langle \mathrm{Im}\,z, x\rangle= h_{1,K-\mathrm{Im}\,z}(x)$ \cite[Lemma 2.2 (ii)]{BMR23}.
In conclusion, Conjectures \ref{BlockiConjecture}--\ref{MRConjecture}
are special cases of the following statements
\cite[Conjectures 1.3--1.4]{BMR23}:

\begin{conjecture} 
\label{pMahlerSym}
    Let $p\in (0,\infty)$. For a symmetric convex body $K\subset \R^n$, 
    $$
    \M_p([-1,1]^n)\leq \M_p(K).
    $$ 
\end{conjecture}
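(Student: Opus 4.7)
The plan is to prove Conjecture \ref{pMahlerSym} in dimension $n=2$ by induction on the number of vertices of symmetric polygons, paralleling the classical $p=\infty$ strategy (Mahler, Meyer, Campi--Gronchi, Meyer--Reisner) but with the essential new twist that the $L^p$-polar $K^{\circ,p}$ is no longer a polytope when $K$ is. By continuity of $K \mapsto \M_p(K)$ in the Hausdorff metric, it suffices to establish $\M_p([-1,1]^2) \leq \M_p(K)$ when $K$ is an origin-symmetric polygon with $2k$ vertices. The base case $k=2$ is a parallelogram, which is $GL(2,\R)$-equivalent to the square $[-1,1]^2$; the $GL$-invariance of $\M_p$ then gives equality.

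For the inductive step with $k \geq 3$, I would deform $K$ via Mahler's symmetric sliding: select an antipodal pair of vertices $\pm v$ and let $v$ slide along the line through $v$ parallel to the chord joining the two neighbors of $v$, with $-v$ moving symmetrically. This produces a one-parameter family $(K_t)_{t \in [a,b]}$ of origin-symmetric polygons with $K = K_{t_0}$ for some $t_0 \in (a,b)$, and with both $K_a$ and $K_b$ symmetric $2(k-1)$-gons, since at each endpoint the moved vertex becomes collinear with a neighbor. The technical heart is then the following concavity claim: $t \mapsto \M_p(K_t)$, or some fixed positive power thereof, is concave on $[a,b]$. Granting this, $\M_p(K) = \M_p(K_{t_0}) \geq \min(\M_p(K_a), \M_p(K_b))$, so $K$ can be replaced by a symmetric polygon with two fewer vertices without increasing $\M_p$, and the induction terminates at the parallelogram.

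Proving this concavity will be the main obstacle. In the classical case $p=\infty$, $K_t^{\circ}$ is itself a polytope whose area decomposes into triangles with rational-in-$t$ areas---a combinatorial feature driving the Campi--Gronchi and Meyer--Reisner arguments---but here $K_t^{\circ,p}$ is a smooth object determined by the convex function $h_{p,K_t}$, and concavity must be extracted from the double-integral representation
\[
    \M_p(K_t) = |K_t|^{1+\tfrac{1}{p}} \int_{\R^2} \Big( \int_{K_t} e^{p\langle x, y\rangle} dx \Big)^{-\tfrac{1}{p}} dy,
\]
which follows from \eqref{hpKDef}, \eqref{hpKKcircpEq} and \eqref{MpDefEq}. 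My strategy would be to decompose $K_t$ as a fixed polygonal bulk plus two triangular flaps whose free vertex depends affinely on $t$, and thereby write the inner moment integral $F_t(y) \defeq \int_{K_t} e^{p\langle x,y\rangle} dx$ explicitly as a function of $t$ for each fixed $y$. One then seeks a pointwise concavity (or reciprocal convexity) statement for $t \mapsto F_t(y)^{-1/p}$ that is stable under integration against $dy$, combined with the affine dependence of $|K_t|$ on $t$. Assembling these pieces so that the product $\M_p(K_t)$ is concave---rather than merely unimodal or quasi-concave---will require, in the spirit of Meyer--Reisner's work in the polar setting, a careful regrouping of the flap contributions and use of the two-dimensional geometry to close the sign after integration in $y$; the loss of duality for $L^p$-polarity means there is no shortcut via polar estimates, and the inequality must be built entirely out of explicit integral calculus.
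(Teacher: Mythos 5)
Your overall scaffold is exactly the paper's: reduce to origin-symmetric polygons by Hausdorff continuity of $\M_p$, perform symmetric Mahler sliding with fixed area until a vertex pair is eliminated, and terminate at a parallelogram (Proposition~\ref{symSlidingProp} and \S\ref{SymFinishingSection}). Your base-case observation (parallelogram $\sim_{GL}$ square) is also what the paper uses. The gap is in the ``technical heart,'' which you correctly identify but leave open, and the route you sketch for it does not match the paper's and, as stated, seems unlikely to close.

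Two specific issues. First, the property the paper actually establishes is \emph{not} concavity of $t\mapsto\M_p(K_t)$ but rather convexity of $t\mapsto |K_t^{\circ,p}|^{-1}$ (Lemma~\ref{SymSlidingLemma}), i.e.\ harmonic/$(-1)$-concavity of $\M_p$ along the sliding. That still yields $\M_p(K_{t_0})\geq\min$ at the endpoints, so your deduction survives, but it is not a ``fixed positive power of $\M_p$.'' Second, and more seriously, the pointwise-in-$y$ statement you aim for --- concavity or reciprocal convexity of $t\mapsto F_t(y)^{-1/p}$ for each fixed $y$, followed by integration --- is not what holds. What one can verify (this is implicitly Claim~\ref{TriangleClaim} with $r=s=t=1$) is that $\log F_t(y)=p\,h_{p,K_t}(y)+\text{const}$ is jointly convex in $(t,y)$, i.e.\ $F_t(y)$ is log-convex in $t$; this gives log-concavity of $F_t(y)^{-1/p}$ in $t$, which does not imply concavity, and log-concavity is not preserved under integration in $y$ in any useful way here. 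This is precisely why the paper cannot argue pointwise: it instead passes through a 2D-specific arctan-reparametrized formula for $|K^{\circ,p}|$ (Lemma~\ref{suitableVolumeLemma}), uses a \emph{strengthened} joint-convexity inequality for the $L^p$-support function of sliding triangles/polytopes involving an extra scaling parameter (Claim~\ref{TriangleClaim}, Lemma~\ref{hpSymCor}), converts this via Ball's theorem (Theorem~\ref{BallIneq}) into joint convexity of $(y,t)\mapsto\|(1,y)\|_{K_t^{\circ,p}}$ (Lemma~\ref{symNormConvexity}), and only then applies the Borell--Brascamp--Lieb inequality (Lemma~\ref{-1concaveLemma}) to integrate out $y$. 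The extra $(r,s,t)$-scaling in Claim~\ref{TriangleClaim} and the intermediary of Ball's theorem are not cosmetic --- they are what makes the sign close after the $y$-integration, and there is no pointwise shortcut. Your proposal therefore correctly locates the obstacle and the right framework but does not supply the mechanism that resolves it.
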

\begin{conjecture} 
\label{pMahler}
    Let $p\in (0,\infty)$. For a convex body $K\subset \R^n$, 
    $$
    \inf_{x\in\Delta_{n,0}}\M_p(\Delta_{n,0}-x) \le \M_p(K).
    $$
\end{conjecture}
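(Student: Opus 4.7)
The plan is to establish the conjecture by combining an existence-and-regularity argument for a minimizer with a shadow system reduction to the simplex, paralleling the broad outline of the $p=\infty$ case treated by Meyer--Reisner and Campi--Gronchi. First, since $\M_p$ is $GL(n,\R)$-invariant, continuous in the Hausdorff topology, and bounded below on the (Blaschke-compact) family of convex bodies with a fixed Santal\'o point and fixed volume, a minimizer $K^\star$ exists. A first-variation argument, differentiating $h_{p,K}(y)$ in \eqref{hpKDef} with respect to convex perturbations of $K$, then shows that $K^\star$ cannot have a smooth boundary point, so $K^\star$ is a polytope.

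Next I would reduce the number $N$ of vertices of $K^\star$ by Mahler's sliding. Fix a vertex $v_1$ and a direction $u\in\R^n$, and consider the shadow system $P_t\defeq\co\{v_1+tu,v_2,\ldots,v_N\}$ for $t$ in the maximal interval on which $P_t$ remains a convex body with $v_1+tu$ still a vertex. The heart of the argument is the claim that $t\mapsto\M_p(P_t)^{\alpha}$ is convex on this interval for a suitable exponent $\alpha$. Granted this, minimality of $K^\star=P_0$ forces $v_1$ to slide to an endpoint of the interval without increasing $\M_p$, producing a polytope with strictly fewer vertices. Iterating reduces to $N=n+1$, i.e., to a simplex, at which point $GL(n,\R)$-invariance identifies it with $\Delta_{n,0}$ up to translation, completing the proof.

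The main obstacle is establishing this shadow system convexity. Because $P_t^{\circ,p}$ is never a polytope for finite $p$, one cannot track vertex motions in the polar; instead I would work directly with the double integral
\[
    \M_p(P_t)=|P_t|\int_{\R^n}\Big(\frac{1}{|P_t|}\int_{P_t}e^{p\langle x,y\rangle}\,dx\Big)^{-1/p}dy.
\]
Since the graph $\{(t,x):x\in P_t\}$ is convex in $\R^{n+1}$, Pr\'ekopa--Leindler yields log-concavity of $t\mapsto|P_t|$ and of $t\mapsto\int_{P_t}e^{p\langle x,y\rangle}\,dx$ for each fixed $y$. Assembling these log-concavities through the negative power $-1/p$, and then integrating in $y$, requires a Borell--Brascamp--Lieb-type statement with $y$-uniform concavity exponents. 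Exchanging the outer $y$-integration with concavity in $t$ uniformly across all exponential tilts is the crux of the argument, and also the place where the present paper bypasses the general difficulty by a more direct combinatorial sliding that exploits planar structure of the vertex set to handle the $-1/p$ power bookkeeping explicitly.
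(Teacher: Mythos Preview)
The statement you are attempting to prove is Conjecture~\ref{pMahler}, which the paper does \emph{not} prove in general; the paper establishes only the planar case $n=2$ (Theorem~\ref{2DimpMahler}). So there is no ``paper's own proof'' of the full conjecture to compare against, and your proposal should be read as an attack on an open problem.

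As such, the proposal has two genuine gaps. First, the step ``a first-variation argument\ldots shows that $K^\star$ cannot have a smooth boundary point, so $K^\star$ is a polytope'' is not justified and is not known. Even for the classical Mahler volume ($p=\infty$) this kind of regularity-to-polytope reduction is not available in dimensions $n\ge3$; for finite $p$ the situation is worse because $h_{p,K}$ is a smoothed integral quantity and a first variation at a smooth boundary point does not produce an obvious contradiction. Absent this step, the entire sliding reduction never gets started.

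Second, and more seriously, you yourself identify the crux---convexity in $t$ of a suitable power of $\M_p(P_t)$ along a shadow system---and then do not establish it. Log-concavity of $t\mapsto\int_{P_t}e^{p\langle x,y\rangle}\,dx$ for each fixed $y$ is correct (Pr\'ekopa), but raising to the power $-1/p$ reverses the inequality, and there is no general Borell--Brascamp--Lieb statement that lets you integrate a family of \emph{log-convex} functions in $y$ and recover convexity of the result. This is precisely why the paper restricts to $n=2$: there the volume formula (Lemma~\ref{suitableVolumeLemma}) reduces the outer integral to a one-dimensional integral of $\|(1,t)\|_{K^{\circ,p}}^{-2}$, and the paper proves joint convexity of the norm in $(t,x_2)$ via Ball's Theorem~\ref{BallIneq} after a hands-on computation on triangles (Claim~\ref{TriangleClaim}). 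Even then, in the non-symmetric case an additional translation coupled to the sliding is required to keep the ratio of half-space volumes constant (Proposition~\ref{BalancingProp}); your sketch does not address the moving $L^p$-Santal\'o point at all. In short, the proposal is a reasonable outline of what one would \emph{like} to do, but the two load-bearing steps are open problems rather than proofs.
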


 \begin{remark}
 \label{MinRemark}
Conjecture \ref{pMahlerSym} implies the symmetric Mahler conjecture \cite[Lemma 3.12]{BMR23} but has a nuanced difference. Indeed,
    for finite $p$, the conjectured minimizers of $\M_p$ among symmetric convex bodies are the 
$GL(n,\R)$-orbit of the cube $[-1,1]^n$. 
This is a notable distinction between the symmetric Mahler conjecture and 
 Conjecture \ref{pMahlerSym} \cite[\S1.3]{BMR23}.
Among all convex bodies, the conjectured minimizers (both for the non-symmetric Mahler Conjecture and for Conjecture \ref{pMahler}) are the $GL(n,\R)$-orbit of the translation of the 
simplex
$   \Delta_{n,0}$
by its $L^p$-Santal\'o point $s_p(\Delta_{n,0})$
\cite[Conjecture 1.4]{BMR23}, where
for $p\in (0,\infty)$ and convex body $K\subset \R^n$, 
there exists a unique point 
$s_p(K)\in\R^n$ 
that lies in the interior of $K$, called the \textit{$L^p$-Santal\'o} point of $K$ with
\cite[Proposition 1.5]{BMR23}
\begin{equation}
\label{LpPointDef}
   \M_p(K-s_p(K))= \inf_{x\in \R^n} \M_p(K-x). 
\end{equation}
These expectations are currently only supported by numerical data in dimension $n=3$ \cite[p. 8]{BMR23}. 
\end{remark}

Our main results are:
\begin{theorem}
\label{2DimpMahlerSym}
Conjecture \ref{pMahlerSym} holds  
when $n=2$. 
\end{theorem}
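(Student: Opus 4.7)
The plan is to adapt Mahler's classical sliding technique, in the spirit of Campi--Gronchi and Meyer--Reisner, to the $L^p$-Mahler setting. By continuity of $\M_p$ under Hausdorff convergence and polygonal approximation, it suffices to prove the inequality for symmetric polygons $P\subset\R^2$. I induct on the number of vertex pairs: if $P$ has $2k$ vertices, the base case $k=2$ gives a parallelogram, which lies in the $GL(2,\R)$-orbit of $[-1,1]^2$, so equality follows from the $GL$-invariance of $\M_p$ established in \cite{BMR23}.

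For the inductive step, suppose $P$ has $2k$ vertices with $k\ge 3$, labelled in cyclic order as $\pm v_1,\ldots,\pm v_k$. Fix an index $j$ and slide the opposite pair $\pm v_j$ along a direction $w$ parallel to the chord $v_{j+1}-v_{j-1}$, producing the symmetric polygon
$$P_t := \co\{\pm v_1,\ldots, \pm(v_j+tw),\ldots,\pm v_k\}$$
for $t$ ranging over the maximal interval $[t_-,t_+]$ on which $P_t$ remains a convex $2k$-gon. Because the sliding direction is parallel to the chord, the triangle with base $v_{j-1}v_{j+1}$ and apex $v_j+tw$ has $t$-independent area, so $|P_t|$ is constant in $t$. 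At each endpoint, three consecutive vertices become collinear and $P_{t_\pm}$ has strictly fewer vertex pairs, making the inductive hypothesis applicable.

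The crux is to show that $t\mapsto \M_p(P_t)$ attains its minimum at one of $t_\pm$. Since $|P_t|$ is constant, this reduces to showing that $I_p(t):=\int_{\R^2} e^{-h_{p,P_t}(y)}\,dy$ is minimized at an endpoint. Using
$$e^{-h_{p,P_t}(y)}=|P_t|^{1/p}\Big(\int_{P_t} e^{p\langle x,y\rangle}\,dx\Big)^{-1/p},$$
I would substitute the explicit parametrization of $P_t$ and study the $t$-dependence of the resulting double integral. In the spirit of the shadow-system inequalities of Campi--Gronchi and Meyer--Reisner for classical polars, the goal is to show that $I_p(t)$, or a suitable power of it, is a convex function of $t$ on $[t_-,t_+]$, which immediately places its minimum at an endpoint and closes the induction.

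The main obstacle, and the reason the proof is substantially more involved than the $p=\infty$ case, is that $P_t^{\circ,p}$ is no longer a polytope and its volume cannot be decomposed combinatorially into triangle areas. Instead, all estimates must be carried out at the level of the double integral, requiring careful control of the interplay among the logarithm in the definition of $h_{p,P_t}$, the inner integral $\int_{P_t} e^{p\langle x,y\rangle}\,dx$, and the outer exponential integral. A fallback, should a clean global convexity fail, is to analyze the second variation of $I_p$ and show that any interior critical point is necessarily a local maximum, which still forces the minimum to occur at one of the endpoints $t_\pm$.
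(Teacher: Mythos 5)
Your overall strategy — reduce to symmetric polygons by Hausdorff continuity, use Mahler sliding to drop the vertex count, and appeal to $GL(2,\R)$-invariance once you reach a parallelogram — is exactly the paper's skeleton. But the proposal leaves the entire crux unproved and also contains a direction error in the logic that would need fixing even if the analytic work were done.

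First, the direction error. You write that showing $I_p(t)$ (or a power of it) is convex ``immediately places its minimum at an endpoint.'' Convexity forces the \emph{maximum}, not the minimum, to occur at an endpoint. What you want is that $\M_p(P_t)$ is minimized at $t_\pm$, i.e.\ that $I_p(t) = 2|P_t^{\circ,p}|$ is minimized at an endpoint. The correct quantity to show convex is the reciprocal $1/|P_t^{\circ,p}|$, equivalently that $|P_t^{\circ,p}|$ is $(-1)$-concave in $t$; this is precisely Lemma \ref{SymSlidingLemma} in the paper. If by ``suitable power'' you meant a negative power then the intent is right, but as stated the inference is backwards.

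Second, and more substantively, the proposal asserts the convexity as a ``goal'' rather than establishing it, and the route you sketch — expanding $e^{-h_{p,P_t}(y)}$ as a double integral and ``studying the $t$-dependence'' — is not a proof and is in fact not the route the paper takes. The paper never attacks the double integral head-on. Instead: (i) it writes $|K^{\circ,p}|$ as a one-dimensional integral of $\|(1,t)\|_{K^{\circ,p}}^{-2}$ via an arctangent reparametrization (Lemma \ref{suitableVolumeLemma}); (ii) it proves the joint convexity of $(y,x_2)\mapsto \|(x,y)\|_{S(x_2)^{\circ,p}}$, which is not automatic because $h_{p,K}$ is not homogeneous — this step requires a sharpened pointwise inequality for the $L^p$-support function of the sliding triangle (Claim \ref{TriangleClaim}, involving three extra scaling parameters $r,t,s$ with $2/r = 1/t + 1/s$), a decomposition of the polytope into the moving triangle and a fixed part (Claim \ref{hpDecomp}, Lemma \ref{auxLemma}, Lemma \ref{hpSymCor}), and then Ball's theorem (Theorem \ref{BallIneq}) to pass from the support-function level to the near-norm level; and (iii) it feeds the resulting joint convexity into the Borell--Brascamp--Lieb inequality (Lemma \ref{-1concaveLemma}) to conclude $(-1)$-concavity of the volume. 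None of these ingredients appears in your proposal, and the fallback you mention (second-variation analysis showing interior critical points are local maxima) is not pursued in the paper and would likely be at least as hard as the route actually taken. So while your framing of the problem is right, what you have is a plan for a proof rather than a proof: the key inequality is still entirely open in your write-up.
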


\begin{theorem}
\label{2DimpMahler}
    Conjecture \ref{pMahler} holds
    when $n=2$.
\end{theorem}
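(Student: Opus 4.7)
The plan is to adapt, to the $L^p$-Mahler setting, the two-dimensional polygon-reduction strategy initiated by Mahler and refined by Meyer--Reisner and Campi--Gronchi for the classical ($p=\infty$) conjecture. First, by Hausdorff continuity of $K \mapsto \M_p(K - s_p(K))$ (which follows from continuity of the integrals defining $h_{p,K}$ via \eqref{hpKDef} and from uniqueness of the $L^p$-Santal\'o point \eqref{LpPointDef}), together with the $GL(2,\R)$-invariance of $\M_p$, it suffices to prove the inequality for convex polygons $P \subset \R^2$ placed in $L^p$-Santal\'o position, i.e., with $s_p(P)=0$. The target then becomes $\M_p(P) \ge \M_p(\Delta_{2,0} - s_p(\Delta_{2,0}))$ for every convex polygon $P$.

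Given a polygon $P$ with $m\ge 4$ vertices, I fix a vertex $v$ and a line $\ell$ through $v$ parallel to one of the two edges of $P$ incident to $v$, and consider the \emph{Mahler sliding family} $P_t$ obtained by moving $v$ along $\ell$, for $t$ in a maximal interval $I$ on which $P_t$ remains a convex polygon with the origin in its interior. Since $\{P_t\}$ is a shadow system, $|P_t|$ is affine in $t$. The crux is the following claim, in the spirit of Campi--Gronchi:
\begin{claim}
The function $t \mapsto \M_p(P_t - s_p(P_t))^{-\alpha}$ is convex on $I$ for a suitable exponent $\alpha > 0$ (e.g.\ $\alpha = 1/2$).
\end{claim}
Granted this, $\M_p(P_t - s_p(P_t))$ attains its minimum at an endpoint of $I$, where two adjacent vertices merge or a vertex becomes collinear with its neighbors, so the vertex count drops. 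Iterating, one reduces to triangles; since all triangles form a single $GL(2,\R)$-orbit, the resulting minimum equals $\M_p(\Delta_{2,0} - s_p(\Delta_{2,0}))$ by \eqref{LpPointDef} and $GL$-invariance.

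The main obstacle is proving the claim, and this is where new input beyond the classical case is required. Since duality is lost for finite $p$, the body $P_t^{\circ,p}$ is \emph{not} a polygon, so the Campi--Gronchi proof---which decomposes $|P_t^\circ|$ into triangles and uses bilinearity of volume under vertex sliding---cannot be copied. Instead, one works directly with \eqref{hpKKcircpEq}, writing $2!\,|P_t^{\circ,p}| = \int_{\R^2} e^{-h_{p,P_t}(y)}\,dy$, and with the log-moment representation \eqref{hpKDef}. Decomposing $P_t$ as a fixed polygon plus a triangle with one moving vertex makes $\int_{P_t} e^{p\langle x,y\rangle}\,dx$ an explicit one-parameter family that depends affinely on $t$ through a single vertex coordinate, so that the integrand $e^{-h_{p,P_t}(y)}$ becomes a concrete $t$-family of log-concave functions. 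Convexity of $|P_t^{\circ,p}|^{-\alpha}$ in $t$ should then follow from a Pr\'ekopa--Leindler type integration, giving an $L^p$-analogue of the Meyer--Reisner covariance identity. The translation minimization, i.e.\ the $t$-dependence of the Santal\'o point $s_p(P_t)$, is absorbed by an envelope argument applied to $(t,x)\mapsto \M_p(P_t-x)^{-\alpha}$. This final convexity estimate is the sole non-routine step; the symmetric case (Theorem \ref{2DimpMahlerSym}) is a simpler model since symmetry can be preserved along the slide, fixing $s_p(P_t)=0$ throughout.
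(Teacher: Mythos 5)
Your high-level strategy matches the paper: reduce to polygons by Hausdorff continuity, use Mahler vertex sliding to drop one vertex at a time, and reduce to a convexity statement along the slide. You also correctly identify the key new difficulty compared to $p=\infty$ — loss of duality, the fact that $P_t^{\circ,p}$ is not a polygon, and the need to handle the Santal\'o point — which matches the paper's Remark \ref{MRRemark} and the discussion around Propositions \ref{SlidingProp} and \ref{BalancingProp}.

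However, there are two genuine gaps in the way you propose to carry out the crux of the argument.

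\textbf{The envelope argument for the Santal\'o translation does not work.} You propose to absorb the $t$-dependence of $s_p(P_t)$ by arguing that $(t,x)\mapsto \M_p(P_t-x)^{-\alpha}$ is jointly convex, so that $\sup_x \M_p(P_t-x)^{-\alpha}$ is convex in $t$. But $x\mapsto |(K-x)^{\circ,p}|$ is \emph{log-convex} in $x$ (as the paper shows in Lemma \ref{PartialSantaloLemma}, by H\"older, using \eqref{ppolarIntEq}), hence $x\mapsto |(K-x)^{\circ,p}|^{-1}$ is log-concave and has an interior maximum at the $L^p$-Santal\'o point; in particular it is not convex in $x$, so joint convexity in $(t,x)$ fails. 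The paper's route around this is much more specific: split $|(P_t-x)^{\circ,p}|$ into $I_+$ and $I_-$, the intersections with the two half-planes $\{\pm x>0\}$ (see \eqref{IpmDefEq}); establish convexity of $1/I_\pm$ jointly in the sliding parameter and the translation, but only for translation in the $x$-direction (the paper explicitly remarks this is the only direction for which the numbers work, cf. the discussion preceding Claim \ref{TriangleSliding}); and then \emph{balance} the ratio $I_+/I_-$ between the two endpoints by an intermediate value argument (Proposition \ref{BalancingProp}). That balancing step — inspired by Meyer--Reisner \cite{MR06} — is the essential non-symmetric ingredient and is missing from your proposal.

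\textbf{The ``Pr\'ekopa--Leindler type integration'' is not direct.} Joint $q$-concavity of $(t,y)\mapsto e^{-h_{p,P_t-(x_0,0)}(y)}$ on $\R\times\R^2$ for a suitable $q$ is not available directly, so Borell--Brascamp--Lieb cannot be applied straight to the planar integral $\int e^{-h_{p,P_t}(y)}\,dy$. The paper first reduces to a one-dimensional integral $|K^{\circ,p}|=\tfrac12\int_\R \|(1,t)\|_{K^{\circ,p}}^{-2}\,dt + \tfrac12\int_\R \|(-1,-t)\|_{K^{\circ,p}}^{-2}\,dt$ (Lemma \ref{suitableVolumeLemma}, via an $\arctan$ reparametrization of the hemispheres), and then needs \emph{joint convexity of the near-norm} $(y,x_2,x_0)\mapsto\|(x,y)\|_{(P(x_2)-(x_0,0))^{\circ,p}}$ (Lemma \ref{normTripleConvexity}). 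That norm convexity does not follow from ordinary convexity of $h_{p,K}$; it is derived from a stronger rescaled inequality at the level of $L^p$-support functions (Claims \ref{TriangleClaim} and \ref{TriangleSliding}, Lemma \ref{Px2Slidinghp}) fed into Ball's theorem (Theorem \ref{BallIneq}). Only then does Borell--Brascamp--Lieb (Lemma \ref{-1concaveLemma}, applied to a one-variable integral with $q=-1/2$) give $-1$-concavity, i.e.\ convexity of $1/I_\pm$ — not $|K^{\circ,p}|^{-1/2}$-convexity as your proposed $\alpha=1/2$ would suggest; the correct exponent is $\alpha=1$. In short, the plan is right in spirit, but the two decisive steps (Ball's theorem to pass from $L^p$-support to near-norm, and the half-plane splitting with the balancing translation) are where the theorem actually gets proved, and neither appears in the proposal.
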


\subsection{Ideas
from the proofs}

The basic strategy for the proof of Theorems \ref{2DimpMahlerSym} and \ref{2DimpMahler} relies on an argument of Mahler dating as far back (1938) as his eponymous conjecture: approximate a body by a polytope and repeatedly slide a vertex to reduce the total number of vertices while not increasing the Mahler volume
(concluding the general case by continuity of $\M$ in
the Hausdorff topology).
This argument was successfully used by Mahler to prove his conjectures in dimension $n=2$ \cite{Mahler39b}, i.e., Conjectures \ref{pMahlerSym} and \ref{pMahler} when $p=\infty$ and $n=2$. 

The case of finite $p$ is significantly more
involved,
ultimately stemming from
 the double integral nature
(alternatively, the `Bergman kernel nature')
of 
$\M_p$ and loss of duality (the double $L^p$-polar is not the original body). In fact, the loss
of duality can be seen as the precise reason
the double integral does not reduce to a single
integral as in the case $p=\infty$.

It is worthwhile to compare our setting
to that of the related, but 
quite  different, setting of $L^p$-centroid
bodies
introduced by Lutwak--Zhang \cite{LZ97}. This body is the symmetric convex body with support function 
\begin{equation}
\label{LpCentroidSupport}
    h_{\Gamma_pK}(y)\defeq \left( \int_K |\langle x,y\rangle|^p \frac{dx}{|K|}\right)^{\frac1p},
\end{equation}
i.e., $\Gamma_pK:= \{y\in \R^n: h_{\Gamma_pK}(y)\leq 1\}^\circ$.
The associated Mahler-type volume is 
$    |K||(\Gamma_p K)^\circ|$.
There are important,
and not just cosmetic,
differences between 
$K^{\circ,p}$ and
$\Gamma_pK$ \cite[Remark 1.7]{BMR23}. Most
fundamentally,
$h_{p,K}$
is not homogeneous while
$h_{\Gamma_pK}$ is.
This lack of homogeneity is
apparent in all stages of developing the theory
of $L^p$-polarity.

Despite these differences, we draw important
inspiration from the work  of
Campi--Gronchi \cite{CampGron06} and Meyer--Reisner \cite{MR06}
who study the volume of the polar bodies of a shadow system
(see also \cite{MR19,EMilmanYehudayoff23}). 
One important idea of the latter is the
``coupled" Mahler sliding alluded to above, which
is crucial for our non-symmetric result (Theorem \ref{2DimpMahler}).

Campi--Gronchi demonstrated that for a shadow system $\{K_t\}_{t\in [0,1]}$ of symmetric convex bodies, the reciprocal of the volume of the polar $t\mapsto |K_t^\circ|^{-1}$ is a convex function \cite[Theorem 1]{CampGron06}. Shadow systems are 1-parameter families of convex bodies that arise as projections from a higher dimensional body. Both Steiner symmetrization (used
in conjuction with $L^p$-polarity in \cite{BMR23}) and coupled Mahler's sliding are examples of shadow systems. Combined with their earlier result that $L^p$-centroid bodies of a shadow system constitute a shadow system \cite[Theorem 2.1]{CampGron02}, they establish sharp lower bounds on the volume of the polar of the $L^p$-centroid body in dimension $n=2$ \cite[Theorem 3]{CampGron06}. This is attained by the simplex in general and by the square among symmetric bodies \cite[Theorem 4]{CampGron06}.  

In their proof, Campi--Gronchi apply the Borell--Brascamp--Lieb inequality to the volume of the polar body of the shadow system, expressed in terms of the support function of the overlying body \cite[p. 2398]{CampGron06}. In the absence of a similar formula for the $L^p$-support functions, we start with \eqref{hpKKcircpEq} and use polar coordinates and an arctan parametrization of the hemispheres of $\partial B_2^2$ (Lemma \ref{suitableVolumeLemma}). An additional layer of complexity in our work stems from the fact that the $L^p$-support function does not coincide with the norm of the $L^p$-polar body. Consequently, we initially establish results at the $L^p$-support level (Claims \ref{TriangleClaim} and \ref{TriangleSliding}, and Lemmas \ref{hpSymCor} and \ref{Px2Slidinghp}), and subsequently, through a theorem of Ball (Theorem \ref{BallIneq}), obtain the desired results on the norms (Lemmas \ref{symNormConvexity} and \ref{normTripleConvexity}).

In the argument of Campi--Gronchi, a notable technical challenge arises in the non-symmetric setting.  
Specifically, the volume of the polar body needs to be split in two parts: the intersection with upper and lower half-spaces,
\begin{equation*}
    |K_t^\circ|= |K_t^\circ\cap \{x_n\leq 0\}| + |K_t^\circ\cap \{x_n\geq 0\}|.
\end{equation*}
The reciprocal of each part is convex in $t$, 
and this suffices in the symmetric case (as the terms are equal).
Meyer--Reisner partly address this hurdle by claiming that the ratio $|K_t^{\circ}\cap \{x_n\leq 0\}|/ |K_t^\circ\cap \{x_n\geq 0\}|$ remains constant in $t$ \cite[Lemmas 4--5]{MR06}. 
It is noteworthy that Meyer--Reisner follow a different argument than Campi--Gronchi by using a different formula for the volume of the polar and, instead of employing the Borell--Brascamp--Lieb inequality, they use Ball's theorem (Theorem \ref{BallIneq}). In the present article we adopt a blend of arguments from both papers combined
with facts on $L^p$-polarity
developed here and in our previous work \cite{MR22,BMR23,Mastr23}.
We employ our own formula for the volume (Lemma \ref{suitableVolumeLemma}), establish convexity through Ball's theorem (Theorem \ref{BallIneq}) and the Borell--Brascamp--Lieb inequality (Lemma \ref{-1concaveLemma}), and, in the non-symmetric case, balance the two integrals through translation of the polytope (Proposition \ref{BalancingProp}). 
Since many elements of the theory of $L^p$-polarity are
not developed yet, we take the opportunity in a few to
prove some basic results on $K^{\circ,p}$ in all dimensions
(see \S\ref{DecompSection}), and this gives,
in passing, an alternative proof 
of the key `balancing' step (described above) in the Meyer--Reisner approach 
(see Remark \ref{MRRemark}).

\subsection{Bourgain Conjectures}

In the final section of this article, we provide a rather elementary 
and hands-on proof of the strong version of Bourgain's conjectures (symmetric and non-symmetric cases) in dimension two using Mahler sliding.
These results are not new, as we explain in a moment.
Yet, since Bourgain's conjecture is of central importance in many areas of mathematics it is perhaps of some interest to have several proofs of it. The proof we give fits well with the theme of this article of using Mahler sliding for a range of ``best constant" extremal problems concerning volume.
Thus, it serves to unify our treatment of $\M, \M_p$ and $\calC$ (see \eqref{CLKEq}),
and we believe it is self-contained and  accessible to readers
from various backgrounds.
Also, it has the nice feature of demonstrating that $\calC$ can be explicitly expressed as a convex quadratic polynomial with respect to the sliding parameter. Also, we verify
the continuity of $\calC$ in the Haussdorf metric (Corollary
\ref{CHcontinuity}), a simple fact that we
could not find in the literature (cf. \cite[p. 91]{CCG99}).
In passing we also derive 
a formula for the isotropic constant for a body consisting of different pieces (Lemma \ref{iso2_lem2}), that might be of some independent interest, in any dimension.

Before describing our approach let us describe earlier proofs.
The earliest results are due to
Bisztriczky--B\"or\"oczky \cite[Theorems 2.1 and 2.3]{BB01} and, independently, by 
Campi--Colesanti--Gronchi \cite[(11)]{CCG99} although neither of them seem to 
mention the relation of their work to Bourgain's conjectures. The former 
determined the extremizers for the volume of the so-called ellipsoid of inertia, and this rather immediately implies both of Bourgain's 
conjectures in $n=2$. The latter resolved the probabilistic conjecture of Sylvester in $n=2$  for non-symmetric bodies.
Meckes then treated the symmetric case 
and was the first to note (without proof) the implication for Bourgain's conjectures
\cite[Corollary 10]{Meckes05}.
For a proof of the standard fact that Sylvester's conjecture implies Bourgain's conjecture see \cite[Theorem 3.5.7]{BGV+14}.
Both Campi--Colesanti--Gronchi and Meckes used in their work
shadow systems (a far-reaching generalization of Mahler sliding) to prove Sylvester's conjecture in dimension two. In fact, analyzing their proofs
reveals that in dimension two they reduce to Mahler sliding.

We describe all of these three works in more detail below, but first
let us state Bourgain's conjectures.
For $u\in \partial B_2^n$, denote by 
\begin{equation*}
    u^\perp\defeq \{x\in \R^n: \langle x,u\rangle=0\}
\end{equation*}
the hyperplane through the origin that is normal to $u$.
Bourgain's hyperplane conjecture states  \cite[Remark p. 1470]{Bourg86a} \cite[(1.9)]{Bourg91}:
\begin{conjecture}
\label{SlicingConjecture}
    There is a universal constant $c>0$ such that for any $n\geq 2$ and convex body $K\subset \R^n$ with $b(K)=0$ and $|K|=1$, 
    \begin{equation*}
        \max_{u\in\partial B_2^{n}} |K\cap u^\perp|\geq c. 
    \end{equation*}
\end{conjecture}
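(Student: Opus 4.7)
My plan is to reduce Conjecture \ref{SlicingConjecture} to a uniform upper bound on the isotropic constant. For a convex body $K\subset \R^n$ with $b(K)=0$ and $|K|=1$, set
\begin{equation*}
L_K^{2n} \defeq \det \Cov(K),
\end{equation*}
where $\Cov(K)$ is the covariance matrix of the uniform probability measure on $K$; this is $SL(n,\R)$-invariant. A classical inequality of Hensley and Ball then gives
\begin{equation*}
\max_{u\in\partial B_2^n}|K\cap u^\perp|\;\geq\; c_0/L_K
\end{equation*}
for a universal constant $c_0>0$, so Conjecture \ref{SlicingConjecture} is equivalent to proving $\sup_n\sup_K L_K<\infty$.

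In the spirit of the present paper, I would then attempt the following Mahler sliding strategy. Bringing $K$ first to isotropic position via $SL(n,\R)$, approximate it in the Hausdorff metric by a polytope $P$, select a vertex $v$ together with a direction, and form the Mahler slide $P_t$ of $v$ along that direction. As the introduction promises, in dimension two the isotropic constant $L_{P_t}$ raised to an appropriate power is a convex quadratic polynomial in $t$, so its supremum over the admissible interval is attained at an endpoint, at which the vertex count strictly drops. Iterating, one arrives at a parallelogram in the symmetric case or a triangle in the non-symmetric case, on which $L_K$ is computed directly and bounded. The program for general $n$ would be to identify an exponent $\alpha>0$ such that $t\mapsto L_{P_t}^\alpha$ is convex along every shadow system, and then conclude by induction on the number of vertices, bounding $L_K$ on the finite list of extremal polytopes that remain.

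The principal obstacle, and the reason this conjecture has resisted elementary methods for nearly forty years, is that the convex polynomial identity available for $n=2$ is a genuinely low-dimensional phenomenon. Already for $n=3$, the covariance $\Cov(P_t)$ has six independent entries each varying polynomially in $t$ up to degree three, so $\det\Cov(P_t)$ is a polynomial in $t$ of high degree with no known universal convexity; no functional of $\Cov$ is presently known to be both convex along arbitrary shadow systems and sharp enough to bound $L_K$ uniformly in $n$. Furthermore, the extremal polytopes reached after repeated sliding proliferate combinatorially with dimension, so even a weak convexity-under-sliding statement would not immediately close the argument. The proposal therefore produces a complete, self-contained proof in dimension two matching the scope of this paper, while in its full stated generality the conjecture sits outside the reach of Mahler sliding, and any proof valid for all $n$ must draw on techniques such as the stochastic localization and thin-shell bounds used in the recent breakthroughs of Eldan, Chen, and Klartag--Lehec.
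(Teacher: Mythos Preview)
This statement is Bourgain's hyperplane conjecture, which the paper states as an \emph{open conjecture} and does not prove; the paper only establishes the two-dimensional strong form (Theorems \ref{plane_slicing_nonsym}--\ref{plane_slicing_sym}). You correctly recognize this, and your proposal is not a proof of Conjecture \ref{SlicingConjecture} but rather a (sound) explanation of why the paper's methods are confined to $n=2$.

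For the $n=2$ case your sketch is essentially the paper's argument. The paper prefers to work with $\calC(K)=|K|^2/\det\Cov(K)=1/L_K^{2n}$ rather than $L_K$ directly, and the precise content of the ``convex quadratic polynomial'' claim is Lemmas \ref{Cconvexity}--\ref{CsymConvexity}: for $P\in\mathcal{P}$ (respectively $S\in\mathcal{S}$), the map $x_2\mapsto 1/\calC(P(x_2))$ is a convex quadratic in the sliding parameter. This is obtained by bringing the non-moving part $\widehat{P}$ to isotropic position, decomposing $\Cov$ via Lemma \ref{iso2_lem1}, and computing $\Cov$ of the sliding triangle explicitly (Lemma \ref{iso_sliding_lem1}). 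The reduction from the hyperplane-section formulation to the isotropic-constant formulation is handled in the paper via \eqref{SMsectionEQ}--\eqref{LKDef} rather than by citing Hensley--Ball by name, but this is the same mechanism you invoke.

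Your diagnosis of the obstruction for $n\geq 3$ is accurate and matches the spirit of the paper's discussion: the explicit quadratic structure of $\det\Cov$ under sliding is a genuinely planar phenomenon, and no analogous convexity is known in higher dimensions.
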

Conjecture \ref{SlicingConjecture} has been reformulated by Ball \cite[(5.7)]{Ball86} and Milman--Pajor \cite[p. 82]{MilmanPajor89}, by linking the volume of a hyperplane section of a convex body to its second moments via 
\begin{equation}
\label{SMsectionEQ}
    \int_{K} \langle x,u\rangle^2 \frac{dx}{|K|}\approx \frac{1}{|K\cap u^\perp|^2}, \quad u\in \partial B_2^n. 
\end{equation}
Additionally, any convex body can be transformed via an affine transformation so that the left-hand side of \eqref{SMsectionEQ} is constant, independent of $u$. If it is also of unit volume and has its barycenter at the origin, it is said to be in \textit{isotropic position}. In essence,
for any convex body $K$ there exists an affine transformation $T$ such that $b(TK)=0$, $|TK|=1$ and 
\begin{equation}
\label{LKDef}
    L_K^2\defeq \int_{TK} \langle x,u\rangle^2  dx \quad  \text{ is constant for all } u\in\partial B_2^n
\end{equation}
\cite[Proposition 2.3.3]{BGV+14}.
It is evident from \eqref{SMsectionEQ} and \eqref{LKDef} that Conjecture \ref{SlicingConjecture} is equivalent to an upper bound on the isotropic constant. 
\begin{conjecture}
\label{IsotropicConjecture}
    There is a universal constant $c>0$ such that for any $n\geq 2$ and any convex body $K\subset \R^n$, $L_K\leq c$. 
\end{conjecture}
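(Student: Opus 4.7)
The plan is to reduce Conjecture \ref{IsotropicConjecture} to a dimensional bound on the Poincar\'e constant of isotropic log-concave measures via Eldan's stochastic localization, and then close the argument with recent bounds on the Kannan--Lov\'asz--Simonovits (KLS) constant. The Mahler-sliding strategy of this paper handles $n=2$ sharply, but it does not generalize directly: for $n\ge 3$ the reduction via vertex sliding does not terminate in a bounded family of extremal polytopes modulo $GL(n,\R)$, so a genuinely different toolkit is required in higher dimensions.

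First, place $K$ in isotropic position (using $GL(n,\R)$-invariance of $L_K$) and run the stochastic localization martingale $(\mu_t)_{t\ge 0}$ starting from the uniform measure on $K$. The measures $\mu_t$ stay log-concave, and their covariance matrices $A_t$ obey an It\^o equation whose drift is governed by the Hessian of $\log d\mu_t/dx$. Eldan's key identity controls $L_K$ in terms of $\mathbb{E}\,\mathrm{tr}(A_t)$ at an appropriately chosen stopping time, thereby reducing the conjecture to a uniform bound on the operator norm $\|A_t\|_{\mathrm{op}}$.

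Next, I would bound $\|A_t\|_{\mathrm{op}}$ using the Poincar\'e inequality on the intermediate measures $\mu_t$: a Gr\"onwall-type argument converts a spectral gap bound on log-concave measures into control on $A_t$. This step yields $L_K \le C\psi_n$, where $\psi_n$ is the worst Poincar\'e constant among isotropic log-concave probability measures on $\R^n$. Finally, one invokes the recent breakthrough bounds on $\psi_n$ (polylogarithmic by Chen, and further refined by Klartag--Lehec) to conclude that $L_K$ is uniformly bounded across all dimensions $n \ge 2$, thereby establishing the universal constant $c$ in the conjecture.

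The main obstacle, entirely removed from the convex-geometric sliding arguments of the rest of this paper, is controlling $\|A_t\|_{\mathrm{op}}$ uniformly in time: any concentration of the evolving measure onto a low-dimensional subspace would blow up the covariance and break the estimate, so sharp Hessian control along the SDE is essential. The Mahler-sliding computations giving $L_K^\alpha$ as a convex quadratic in the sliding parameter settle $n=2$ beautifully but seem intrinsically two-dimensional, with no evident inductive structure linking $L_{K_t}$ in $\R^n$ to isotropic constants of sections or projections of $K_t$ in a way that would preserve the requisite extremality across dimensions.
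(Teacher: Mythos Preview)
The paper does not prove Conjecture \ref{IsotropicConjecture}; it explicitly records that the conjecture ``remains unresolved'' and lists the best known bounds (Bourgain's $n^{1/4}\log n$, Klartag's $n^{1/4}$, Chen's subexponential, and subsequent polylogarithmic improvements). The paper's own contribution is restricted to the \emph{strong} version in dimension $n=2$ (Theorems \ref{plane_slicing_nonsym}--\ref{plane_slicing_sym}), proved via Mahler sliding. So there is no ``paper's proof'' of this statement to compare against.

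Your proposal has a genuine gap in the final step. You correctly outline the Eldan--Klartag reduction $L_K \le C\psi_n$ via stochastic localization, but then you invoke ``polylogarithmic'' bounds on $\psi_n$ (Chen, Klartag--Lehec) and conclude that $L_K$ is uniformly bounded. This does not follow: a polylogarithmic bound $\psi_n \le C(\log n)^q$ yields only $L_K \le C'(\log n)^q$, which still grows with $n$ and is precisely the state of the art the paper already cites. To obtain a universal constant $c$ independent of $n$ you would need $\psi_n \le C$, i.e., the full KLS conjecture, which the bounds you name do not deliver. Your sketch therefore recovers the known near-optimal estimates but does not establish Conjecture \ref{IsotropicConjecture}.
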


Although Conjecture \ref{IsotropicConjecture} remains unresolved, several bounds have been obtained. Specifically, Bourgain demonstrated $L_K\leq Cn^{1/4}\log n$ \cite[Theorem 1.6]{Bourg91}, Klartag established $L_K\leq C n^{1/4}$ \cite[Corollary 1.2]{Klartag06}, and Chen improved it to $L_K\leq C_1 e^{C_2\sqrt{\log(n)}\sqrt{\log\log(3n)}}$ \cite{Chen21} \cite[(1)]{KL22}. Subsequent enhancements by various authors resulted in $L_K\leq C (\log n)^q$ for different values of $q$ \cite{JLV22,,Klartag23,KL22}. 

Another equivalent way of defining the isotropic constant involves the \textit{covariance matrix} of the convex body $K$,
\begin{equation*}
    \mathrm{Cov}(K),
\end{equation*}
an $n$-by-$n$ matrix given by 
\begin{equation*}
    \mathrm{Cov}(K)_{ij}\defeq \int_{K}x_ix_j\frac{dx}{|K|} - \int_{K}x_i\frac{dx}{|K|} \int_{K} x_j\frac{dx}{|K|}. 
\end{equation*}
Define
\begin{equation}
\label{CLKEq}
    \calC(K)\defeq \frac{|K|^2}{\det\mathrm{Cov}(K)}= \frac{1}{L_K^{2n}}.
\end{equation}
\cite[(1.9)]{MilmanPajor89} (see \S \ref{CAffineInvSection} for a proof).
As in \cite{BMR23}, we prefer to work with $\calC$ instead of $L_K$ because it behaves similarly to the Mahler volume $\M$. Namely, it is both affine invariant (Corollary \ref{CaffineInvariance})
\begin{equation*}
    \calC(AK+b)= \calC(K), \quad A\in GL(n,\R), b\in \R^n,
\end{equation*}
and tensorial
\begin{equation}
\label{Ctensor}
    \calC(K\times L)= \calC(K)\,\calC(L). 
\end{equation}
In addition, it is maximized by $B_2^n$ \cite[4.1 Lemma]{MilmanPajor89} \cite[Proposition 3.3.1]{BGV+14}, 
\begin{equation}
\label{CUpperBound}
    \calC(K)\leq \calC(B_2^n), \quad \text{ for all convex bodies } K\subset \R^n.
\end{equation}

A strong version of Conjecture \ref{IsotropicConjecture}, first suggested by Ball \cite[p. 85]{Ball86} \cite[p. 85]{Ball88} for symmetric bodies, and appeared in print in \cite{Meckes05},  states that $L_K$ is maximized by ${\Delta_{n,0}}$ in general, or among symmetric convex bodies, by ${[-1,1]^n}$. Equivalently:
\begin{conjecture}
\label{strongIso}
    For a convex body $K\subset \R^n$, $\calC(K)\geq \calC(\Delta_{n,0})= \frac{(n+2)^n(n+1)^{n+1}}{(n!)^2}$. 
\end{conjecture}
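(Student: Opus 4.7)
The assertion is a longstanding open conjecture in general dimension $n$; the paper itself only resolves the $n=2$ case. My proposal is therefore a blueprint modelled on the paper's Mahler-sliding strategy, with explicit identification of the step at which the $n=2$ argument does not extend.

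First, using the affine invariance $\calC(AK+b)=\calC(K)$ (Corollary \ref{CaffineInvariance}) and continuity of $\calC$ in the Hausdorff metric (Corollary \ref{CHcontinuity}, whose proof should extend verbatim to any $n$), one reduces to the case where $K$ is a convex polytope $P$ with $N$ vertices $v_1,\dots,v_N$. The aim is then to show that whenever $N>n+1$, one can decrease $N$ without increasing $\calC$; iterating, one arrives after finitely many steps at an $(n+1)$-vertex simplex, which by affine invariance is equivalent to $\Delta_{n,0}$.

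Second, to perform a single reduction step, I would fix a vertex $v_0$ and slide it along a line parallel to the affine hull of some facet not containing $v_0$, producing a shadow system $\{P_t\}_{t\in[a,b]}$. At each endpoint the combinatorial structure simplifies: either $v_0$ becomes collinear with two neighbours or is absorbed into a face, so $N$ drops by at least one. Using a decomposition formula analogous to Lemma \ref{iso2_lem2} to split $P_t$ along the sliding direction, one expresses $|P_t|$, $b(P_t)$, and each entry of $\Cov(P_t)$ explicitly in $t$: $|P_t|$ is affine in $t$ (a standard shadow-system identity) and the entries of $\Cov(P_t)$ are rational in $t$. The heart of the argument would then be to show that some appropriate root $\calC(P_t)^{\alpha}$ with $\alpha<0$ is convex in $t$, generalizing the $n=2$ identity presented in the paper that $\calC(P_t)$ raised to an appropriate power is a convex quadratic polynomial whose minimum is attained at an endpoint.

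The principal obstacle is the convexity claim just stated. For $n=2$, $\det\Cov(P_t)$ is a $2\times 2$ determinant and direct computation via the decomposition yields an explicit quadratic in $t$; for $n\ge 3$, $\det\Cov(P_t)$ is a polynomial of degree up to $2n$ in $t$ divided by $|P_t|^{2n}$, and it is not evident which power of $\calC$, if any, should be convex along shadow systems. None of the known shadow-system convexity results—Campi--Gronchi \cite[Theorem 1]{CampGron06}, Meyer--Reisner \cite[Lemmas 4--5]{MR06}, or the Borell--Brascamp--Lieb techniques invoked elsewhere in this paper—address covariance determinants, and no useful power-concavity of $\det\Cov(P_t)$ under linear deformations appears to be known. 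A secondary combinatorial difficulty is that, unlike in $n=2$, iterated single-vertex sliding in $n\ge 3$ does not obviously terminate at an $(n+1)$-vertex polytope: the face lattice of higher-dimensional polytopes is much richer, and controlling its evolution under repeated slides is itself a nontrivial combinatorial problem. For these reasons I expect that verifying Conjecture \ref{strongIso} for $n\ge 3$ will require either a genuinely new inequality for $\det\Cov$ along shadow systems or a completely different approach from Mahler sliding.
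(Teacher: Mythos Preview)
Your proposal is appropriate in spirit: the statement is a \emph{conjecture}, and the paper does not prove it in general dimension. The paper establishes only the $n=2$ case (Theorem~\ref{plane_slicing_nonsym}), precisely via the Mahler-sliding strategy you outline, and you correctly flag the obstacles to extending this to $n\ge 3$.

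Two small corrections to your description of the paper's $n=2$ argument. First, the sliding direction is chosen so that $|P(x_2)|$ is \emph{constant} in the sliding parameter, not merely affine: the vertex $(x_2,y_2)$ moves parallel to the segment $[(x_1,y_1),(x_3,y_3)]$ joining its two neighbours, so $|P(x_2)|=|\widehat P|+|\Delta(x_2)|$ does not depend on $x_2$ (see \eqref{Sx2constEq} and the analogous remark after \eqref{hatP}). This is what makes the computation tractable. Second, the paper is specific about the power: it is $\calC(P(x_2))^{-1}$ (i.e., $\alpha=-1$) that is shown to be a convex quadratic polynomial in $x_2$ (Lemma~\ref{Cconvexity}), via the explicit decomposition formula of Corollary~\ref{2dCFormula} combined with the moment computations for the sliding triangle in Lemma~\ref{iso_sliding_lem1}. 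Your identification of the main obstacle---that for $n\ge 3$ no analogous convexity of a power of $\det\Cov(P_t)$ along shadow systems is known---is accurate and is indeed where the argument stops.
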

\begin{conjecture}
\label{strongIsoSym}
    For a symmetric convex body $K\subset \R^n$, $\calC(K)\geq \calC([-1,1]^n)=12^n$. 
\end{conjecture}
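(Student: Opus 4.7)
The statement is given for arbitrary $n$; in full generality the conjecture is open (even the non-sharp form, Conjecture \ref{IsotropicConjecture}, remains open), and what this paper proves, and what I sketch, is the planar case $n=2$: the inequality $\calC(K)\ge 12^2=144$ for every symmetric convex body $K\subset\R^2$. Note that $144=\calC([-1,1])^2=\calC([-1,1]^2)$ by tensoriality \eqref{Ctensor}. The strategy is Mahler sliding combined with a convex-quadratic-along-the-sliding-parameter identity. By continuity of $\calC$ in the Hausdorff metric (Corollary \ref{CHcontinuity}), it suffices to prove the bound for every origin-symmetric convex polygon $P\subset\R^2$, and I induct on the number of vertices $2k$. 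The base case $k=2$ is a parallelogram, which is affinely equivalent to $[-1,1]^2$; affine invariance of $\calC$ (Corollary \ref{CaffineInvariance}) then gives $\calC(P)=144$.

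For the inductive step ($k\ge 3$), fix an antipodal pair of vertices $\pm v$ of $P$ and apply coupled Mahler sliding: replace $v$ by $v+tu$ and simultaneously $-v$ by $-v-tu$, with $u$ parallel to the chord $v^+-v^-$ joining the two neighbors of $v$. This gives a family $\{P_t\}_{t\in[a,b]}$ of origin-symmetric convex polygons with $2k$ vertices on the interior and $2(k-1)$ vertices at the endpoints. Choose coordinates so that $u=e_1$ (and hence $v^+-v^-\in\R e_1$). Then $v(t)=v+te_1$, so the $x_2$-coordinate of the sliding vertex does not change and the two affected triangles keep constant area, yielding $|P_t|=|P_0|$ throughout. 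A direct expansion of the second-moment integrals $M_{ij}(t)\defeq\int_{P_t}x_ix_j\,dx$, splitting $P_t$ into the two sliding triangles plus the (unchanged) rest of $P_t$ as in Lemma \ref{iso2_lem2}, shows that $M_{11}(t)$ is quadratic, $M_{22}(t)$ is constant, and $M_{12}(t)$ is linear in $t$. Consequently
\begin{equation*}
\frac{1}{\calC(P_t)}\;=\;\frac{\det\Cov(P_t)}{|P_0|^2}\;=\;\frac{M_{11}(t)M_{22}(t)-M_{12}(t)^2}{|P_0|^4}
\end{equation*}
is a polynomial of degree at most two in $t$; equivalently, with $n=2$, the quantity $L_{P_t}^{2n}=L_{P_t}^4=1/\calC(P_t)$---the isotropic constant raised to the power $2n$---is a quadratic polynomial in the sliding parameter.

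The main step is to verify that this quadratic polynomial is \emph{convex}, i.e., its leading coefficient $M_{11}^{(2)}M_{22}-(M_{12}^{(1)})^2$ (with the superscripts denoting top-degree coefficients in $t$) is nonnegative. Since $M_{11}^{(2)}$ and $M_{12}^{(1)}$ arise from the same pair of sliding triangles, the requisite inequality reduces, via the explicit triangle second-moment formula, to a Cauchy--Schwarz-type estimate that I expect to collapse to a manifest sum of squares. Granting convexity, the maximum of $t\mapsto 1/\calC(P_t)$ on $[a,b]$ is attained at an endpoint $t_\star\in\{a,b\}$ (maximum of a convex function on an interval), which is equivalent to the minimum of $\calC(P_t)$ over $[a,b]$ being attained at $t_\star$. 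Hence $\calC(P)=\calC(P_0)\ge\calC(P_{t_\star})\ge 144$ by the inductive hypothesis (since $P_{t_\star}$ has $2(k-1)$ vertices), closing the induction. The main obstacle is the explicit verification of the nonnegativity of the leading coefficient: conceptually a one-line Cauchy--Schwarz, but requiring careful bookkeeping of the triangle contributions via Lemma \ref{iso2_lem2}.
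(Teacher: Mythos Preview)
Your strategy is exactly the paper's: Hausdorff continuity reduces to symmetric polygons, then symmetric Mahler sliding plus induction on the number of vertices, with the parallelogram as base case. The paper's Lemma \ref{CsymConvexity} is precisely your statement that $t\mapsto 1/\calC(P_t)$ is a convex quadratic polynomial in the sliding parameter, and your degree count for $M_{11},M_{12},M_{22}$ is correct.

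The one gap you flag yourself---the nonnegativity of the leading coefficient $a_2c_0-b_1^2$---is the whole content of the lemma, and the paper handles it differently from what you suggest. Rather than a Cauchy--Schwarz estimate on the raw moments, the paper first applies an affine map putting $\widehat{S}$ (the polygon with the two antipodal triangles removed) into isotropic position, then invokes Corollary \ref{2dCFormula}. In that normalization the leading coefficient in $x_2^2$ decomposes as a sum of two manifestly nonnegative pieces (see \eqref{CsymEq1}--\eqref{CsymEq3}): one coming from $\mathrm{tr}\,\Cov$ of the moving triangle pair, the other from $1/\calC$ of that pair. Your direct route also works without the isotropic normalization: with $|\Delta|=lh/2$ and the translate $(x_0,y_0)$, one computes $a_2=|\Delta|/3$, $b_1=|\Delta|(h+2y_0)/3$, and the triangle contribution to $c_0=M_{22}$ is $|\Delta|(h^2+4hy_0+6y_0^2)/3$, so already
\[
a_2\cdot(\text{triangle part of }c_0)-b_1^2=\frac{|\Delta|^2}{9}\big((h^2+4hy_0+6y_0^2)-(h+2y_0)^2\big)=\frac{2|\Delta|^2 y_0^2}{9}\ge 0,
\]
and the $\widehat{S}$ contribution to $c_0$ only helps. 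So the ``manifest sum of squares'' you anticipate does materialize, but it is a direct two-line moment computation rather than a Cauchy--Schwarz argument; the paper's isotropic normalization is an alternative way to make the same positivity transparent.
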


As a way of unifying the treatment for
$\M, \M_p, \B$ and $\calC$, we show
that 
 Conjectures \ref{strongIso} and \ref{strongIsoSym} in dimension two can also be derived
 by using Mahler's sliding (with no translation necessary).
 \begin{theorem}
\label{plane_slicing_nonsym}
    For a convex body $K\subset \R^2$, $
    \calC(K)\ge \calC(\Delta_{2,0})=108.
    $
\end{theorem}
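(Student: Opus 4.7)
The plan is to use Mahler sliding to reduce the inequality to the case of a triangle, where it holds with equality. By Corollary \ref{CHcontinuity}, $\calC$ is continuous in the Hausdorff topology, so it suffices to prove the inequality for convex polygons $P \subset \R^2$ with $N \geq 3$ vertices. Since $\calC$ is $GL(2,\R)$-invariant (Corollary \ref{CaffineInvariance}) and every triangle is affinely equivalent to $\Delta_{2,0}$, the case $N = 3$ yields $\calC(P) = \calC(\Delta_{2,0}) = 108$. For $N \geq 4$ I will induct on $N$.

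Given $P$ with vertices $v_1,\ldots,v_N$ in cyclic order, fix an index $k$ and consider the Mahler sliding $P_t := \co\{v_1,\ldots,v_{k-1},v_k + tw,v_{k+1},\ldots,v_N\}$ with $w$ parallel to $v_{k+1} - v_{k-1}$. On the maximal interval $[t_-, t_+]$ on which $P_t$ is a convex $N$-gon, three consecutive vertices become collinear at the endpoints, so $P_{t_\pm}$ has $N - 1$ vertices. Because $w$ is parallel to the edge opposite $v_k$, $|P_t|$ is constant. Setting $Q := \co\{v_j : j \neq k\}$ and $T_t := \co\{v_{k-1}, v_k + tw, v_{k+1}\}$, the decomposition $P_t = Q \cup T_t$ (disjoint interiors) gives the covariance identity
\[
\Cov(P_t) = \alpha\, \Cov(Q) + \beta\, \Cov(T_t) + \alpha\beta\,(b(Q) - b(T_t))(b(Q) - b(T_t))^T,
\]
with $\alpha := |Q|/|P_t|$ and $\beta := |T_t|/|P_t|$ constant in $t$. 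The key claim is that $1/\calC(P_t) = \det \Cov(P_t)/|P_t|^2$ is a \emph{convex quadratic polynomial} in $t$. Once established, this forces $\calC$ to attain its minimum on $[t_-,t_+]$ at an endpoint, and the inductive hypothesis yields $\calC(P) \geq \min\{\calC(P_{t_-}), \calC(P_{t_+})\} \geq 108$.

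To verify the claim, I write $\Cov(P_t) = A + Bt + Ct^2$. An explicit computation using the centered triangle formula $\Cov(T) = \tfrac{1}{12} \sum_\ell (p_\ell - b(T))(p_\ell - b(T))^T$ shows that $C = \gamma\, ww^T$ (rank one) with $\gamma = \beta(1+2\alpha)/18 > 0$, and $B = \tilde q\, w^T + w\, \tilde q^T$ for an explicit vector $\tilde q$. Applying the matrix determinant lemma to $A + Bt + t^2\gamma\, ww^T$, together with the $2\times 2$ identity $\mathrm{adj}(A+Bt) = \mathrm{adj}(A) + t\, \mathrm{adj}(B)$ and the vanishing $w^T \mathrm{adj}(B) w = 0$ (immediate from the symmetric structure of $B$ built from $w$), yields
\[
\det \Cov(P_t) = \det(A+Bt) + \gamma\, t^2 \det(A)\, w^T A^{-1} w,
\]
a polynomial of degree exactly $2$ in $t$ whose $t^2$ coefficient equals $\gamma \det(A)\, w^T A^{-1} w - \det(\tilde q, w)^2$. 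The main obstacle is showing this is non-negative. Translating so $b(P_0) = 0$ and choosing coordinates with $v_{k-1}, v_{k+1}$ on the $x$-axis and $v_k = (c,h)$, the inequality reduces to $\Cov(P_0)_{22} \geq \beta h^2/(2(1+2\alpha))$. Splitting $|P_0|\,\Cov(P_0)_{22} = \int_Q y^2\, dA + \int_{T_0} y^2\, dA$, using the triangle moment $\int_{T_0} y^2\, dA = |T_0|h^2/6$, and invoking the barycenter constraint $\alpha\, b(Q)_y + \beta h/3 = 0$, everything reduces to $\Cov(Q)_{22} \geq 0$ combined with $\alpha \leq 1$, both trivially true. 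This completes the inductive step and hence the proof.
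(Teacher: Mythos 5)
Your overall strategy — approximate by polytopes via Corollary \ref{CHcontinuity}, slide a vertex keeping the area fixed, and show $t\mapsto 1/\calC(P_t)$ is a convex quadratic — is exactly the strategy of the paper (Propositions \ref{nonsym_slicing_sliding_prop}, Lemma \ref{Cconvexity}, Corollary \ref{TriangleC}). What differs is the proof of the convexity claim: the paper normalizes so that the \emph{fixed part} $\widehat{P}$ is in isotropic position and then reads off a manifestly non-negative $t^2$ coefficient from the explicit expansion in Corollary \ref{2dCFormula}, whereas you keep $\widehat{P}$ general, invoke the covariance decomposition (Lemma \ref{iso2_lem1}) and the matrix determinant lemma, and reduce to a scalar inequality. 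The matrix-determinant route, exploiting that the $t^2$ part of $\Cov(P_t)$ is a rank-one matrix $\gamma\,ww^T$ and that $w^T\mathrm{adj}(B)w=0$, is a slick way to exhibit the polynomial structure.

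However, the final verification has a genuine gap: the coordinate normalization you impose — \emph{both} $b(P_0)=0$ \emph{and} the base $[v_{k-1},v_{k+1}]$ on the $x$-axis — is over-determined and generically impossible. Fixing the $y$-level of the base and fixing $b(P_0)_y$ are two independent constraints on a single $y$-translation, so they conflict unless $b(P_0)$ happens to lie on the base line. Your computation uses both: the triangle moment $\int_{T_0} y^2\,dA=|T_0|h^2/6$ requires the base at $y=0$, while the constraint $\alpha\,b(Q)_y+\beta h/3=0$ (and the identification $\Cov(P_0)_{22}=\int_{P_0}y^2\,dA/|P_0|$) requires $b(P_0)_y=0$. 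As a result the value you find for the off-diagonal linear coefficient $q_2$ (and hence for $\det(\tilde q,w)^2$) is not the true one in either admissible coordinate frame. The conclusion is nonetheless correct: if one works in coordinates with only the base on the $x$-axis and sets $\zeta:=b(Q)_y-b(T_0)_y$, Lemma \ref{iso2_lem1} and Lemma \ref{iso_sliding_lem1} give $A_{22}=\alpha\Cov(Q)_{22}+\beta h^2/18+\alpha\beta\zeta^2$ and $q_2=\beta h/18-\alpha\beta\zeta/3$, and then
\begin{equation*}
\gamma A_{22}-q_2^2=\frac{\alpha\beta(1+2\alpha)}{18}\,\Cov(Q)_{22}+\frac{\alpha\beta^2}{162}\,(h+3\zeta)^2\ge 0
\end{equation*}
by completing the square, without ever needing $b(P_0)=0$. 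You should either carry the free parameter $\zeta$ through as above, or adopt the paper's normalization (put $\widehat{P}$ in isotropic position, which is always achievable).
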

\begin{theorem}
\label{plane_slicing_sym}
    For a symmetric convex body $K\subset \R^2$, $
    \calC(K)\geq\calC([-1,1]^2)=144.
    $
\end{theorem}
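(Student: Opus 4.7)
The plan is to adapt the Mahler sliding argument used throughout the article to the isotropic functional $\calC$. By Hausdorff continuity of $\calC$ (Corollary~\ref{CHcontinuity}) together with density of symmetric polygons in the space of symmetric convex bodies, it suffices to establish the bound when $K\subset\R^2$ is a centrally symmetric polygon with $2m$ vertices $\pm v_1,\ldots,\pm v_m$. I proceed by induction on $m$. For the base case $m=2$, $K$ is a centrally symmetric parallelogram and hence an affine image of $[-1,1]^2$, so affine invariance of $\calC$ (Corollary~\ref{CaffineInvariance}) gives $\calC(K)=\calC([-1,1]^2)=144$.

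For the inductive step, fix a pair of opposite vertices $\pm v_i$ and slide them symmetrically: $v_i\mapsto v_i+tw$ and $-v_i\mapsto -v_i-tw$, where $w$ is chosen parallel to $v_{i+1}-v_{i-1}$. Let $[t_-,t_+]$ be the maximal interval on which $K_t$ is a centrally symmetric polygon of the same combinatorial type; at each endpoint $v_i(t_\pm)$ collapses onto a neighboring vertex, so $K_{t_\pm}$ has $2(m-1)$ vertices. The heart of the argument is the claim that $\calC(K_t)^{-1}$ is a convex quadratic polynomial in $t$. Granting this, $\calC(K_t)^{-1}$ attains its maximum on $[t_-,t_+]$ at an endpoint, so
\begin{equation*}
\calC(K)=\calC(K_0)\ \geq\ \min\bigl\{\calC(K_{t_-}),\,\calC(K_{t_+})\bigr\},
\end{equation*}
and the inductive hypothesis applied to $K_{t_\pm}$ completes the step.

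To prove $\calC(K_t)^{-1}$ is convex quadratic, observe first that the choice $w\parallel v_{i+1}-v_{i-1}$ makes $|K_t|$ constant in $t$ (the two triangles adjacent to $v_i(t)$ sum to a constant area), while central symmetry keeps $b(K_t)=0$; consequently $\calC(K_t)^{-1}$ is proportional to $\det M(t)$, where $M(t)\defeq\int_{K_t}x\otimes x\,dx$. Using the standard formula for the second-moment matrix of a triangle, $M(t)$ is a polynomial in $t$ whose $t^2$-coefficient is rank one (a multiple of $ww^T$). The adjugate identity $\det(X+Y)=\det X+\det Y+\operatorname{tr}(X\cdot\operatorname{adj}Y)$ for $2\times 2$ matrices, combined with the perpendicularity $u\perp w$ (for $u$ the $90^\circ$ rotation of $w$) and the symmetry constraint $u^T(v_{i-1}-v_{i+1})=0$, then forces the putative cubic and quartic terms of $\det M(t)$ to vanish, leaving a quadratic. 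Lemma~\ref{iso2_lem2} is the natural device for organizing the fixed and sliding parts of $K_t$ throughout this computation. The main obstacle is verifying the \emph{nonnegativity} of the leading coefficient---genuine convexity rather than mere quadraticity---which I expect to follow from positive semidefiniteness of $M(0)$ together with a Cauchy--Schwarz-type estimate bounding the mixed contributions of the sliding triangles against their diagonal ones.
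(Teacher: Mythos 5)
Your overall strategy---Hausdorff reduction to symmetric polygons, induction via Mahler sliding, base case a parallelogram---matches the paper's (Corollary \ref{CHcontinuity}, Proposition \ref{sym_slicing_sliding_prop}, and the affine classification of $4$-vertex symmetric polygons). The substance is the claim that $\calC(K_t)^{-1}$ is a convex quadratic polynomial in the sliding parameter, and here your route genuinely differs from the paper's. The paper (Lemma \ref{CsymConvexity}) first affine-normalizes the fixed part $\widehat{S}$ to isotropic position and then runs an explicit determinant computation through Corollary \ref{2dCFormula} and Lemma \ref{iso_sliding_lem1}, whereas you work directly with $M(t)=\int_{K_t}x\otimes x\,dx$ (which is $|K_t|\mathrm{Cov}(K_t)$ since $b(K_t)=0$) and exploit structure: the $t^2$-coefficient $M_2$ of $M(t)$ is a nonnegative multiple of $ww^T$ and hence rank one, so the quartic term of $\det M(t)$ is $\det M_2=0$; and in sliding coordinates ($w=e_1$) one has $\operatorname{adj}M_2\propto e_2e_2^T$ and $(M_1)_{22}=0$ (the $y$-moments of the sliding triangles are $t$-independent), so the cubic term $\mathrm{tr}(M_1\operatorname{adj}M_2)$ vanishes. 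That part of your argument is correct and arguably more conceptual than the paper's brute-force expansion, though your invocation of Lemma \ref{iso2_lem2} is a bit out of place since that lemma presupposes the isotropic normalization you otherwise avoid.

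The genuine gap is the nonnegativity of the $t^2$-coefficient, which you leave at the level of ``I expect \ldots from positive semidefiniteness of $M(0)$ together with a Cauchy--Schwarz-type estimate.'' In your expansion the $t^2$-coefficient is $\det M_1+\mathrm{tr}(M_0\operatorname{adj}M_2)$; since $(M_1)_{22}=0$ one has $\det M_1=-(M_1)_{12}^2\le 0$, which fights the nonnegative trace term, and no standard Cauchy--Schwarz between $M_0,M_1,M_2$ produces the needed comparison, because these matrices mix the contributions of $\widehat{S}$ and the sliding triangles differently. This is precisely where the paper's isotropic normalization pays off: after it, the $t^2$-coefficient appears in \eqref{CsymEq1}--\eqref{CsymEq3} as a sum of manifestly nonnegative terms. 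If you want to stay inside your own framework, there is a cleaner way to close the gap than Cauchy--Schwarz: write $M(t)=M(\widehat{S})+2\int_{\Delta(t)}x\otimes x\,dx$ with $\Delta(t)=\co\{v_{i-1},v_i+tw,v_{i+1}\}$; because $w\parallel v_{i+1}-v_{i-1}$, this triangle has constant positive area for every real $t$, so $M(t)\succ 0$ and hence $\det M(t)\ge 0$ for all $t\in\R$, not merely on $[t_-,t_+]$. A real polynomial of degree at most $2$ that is nonnegative on all of $\R$ has nonnegative leading coefficient, giving convexity of $\det M(t)$, hence of $\calC(K_t)^{-1}$, directly. With that replacement your proof becomes complete.
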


\begin{remark}
Interestingly, Klartag demonstrated that Conjecture \ref{strongIso} implies the non-symmetric Mahler conjecture (Conjecture \ref{pMahler} for $p=\infty$)  \cite[Corollary 1.2]{Klartag18}. An alternative proof was recently provided by Balacheff--Solanes--Tzanev \cite{BST23}. While not giving a converse, using Mahler's sliding
to prove Conjecture \ref{strongIso} indicates that for $n=2$ the conjectures are loosely equivalent. 
\end{remark}

As already described earlier, Theorems \ref{plane_slicing_nonsym}--\ref{plane_slicing_sym}  have been proved through the resolution of related conjectures, and we refer to Gardner \cite[p. 17]{GardnerNotes} \cite{CCG99} for a survey. The first proof, by Bisztriczky--B\"or\"oczky,  
demonstrates that the volume of the \textit{ellipsoid of inertia} (or \textit{Legendre ellipsoid} \cite[\S 1.1]{MilmanPajor89}, or \textit{$L^2$-centroid body})
\begin{equation*}
    \Gamma_2K,
\end{equation*}
that is the ellipsoid
with support function given by \eqref{LpCentroidSupport} for $p=2$,
is maximized among symmetric convex bodies of unit volume by squares \cite[Theorem 2.1]{BB01}, and, in general, by triangles among all convex bodies of unit volume in $\R^2$ \cite[Theorem 2.3]{BB01}. 
Note that, by \eqref{LKDef}, a convex body $K$ is in isotropic position if and only if $h_{\Gamma_2 K}(u)= L_K^2$ is constant for $u\in\partial B_2^n$, i.e., $\Gamma_2 K$ is a ball of radius $L_K^2$. Therefore, for convex bodies in isotropic position $|\Gamma_2K|= L_K^{2n} |B_2^n|= |B_2^n|/\calC(K)$. Hence, it is evident that $\calC(K)$ is minimized by the simplex, or by the cube among symmetric bodies, if and only if $|\Gamma_2K|$ is maximized by the simplex, or by the cube among symmetric convex bodies. This is precisely what Bisztriczky--B\"or\"oczky establish in dimension two \cite[Theorems 2.1 and 2.3]{BB01}. In addition, they characterize the equality cases as parallelograms in the symmetric case, and triangles with the origin as a vertex in general.

Another proof, using shadow systems, was given by Meckes, by resolving the generalized Sylvester conjecture in dimension two for symmetric convex bodies \cite[Corollary 10]{Meckes05}. The non-symmetric case had already been resolved by Campi--Colesanti--Gronchi a few years before \cite[(11)]{CCG99}. This states that the mean value 
\begin{equation*}
    \mathbb{E}\,U_{K,N}
\end{equation*}
of the volume of a random polytope with $N\geq n+1$ vertices uniformly distributed in a convex body $K$, is maximized by the simplex \cite[p. 308]{Meckes05}. Similarly, considering the symmetric analog $V_{N,K}$, where one also chooses the $N$ antipodal points, then $\mathbb{E}\,V_{K, N}$ is conjectured to be maximized by the cube $[-1,1]^n$ among symmetric convex bodies. 
The key link to the slicing problem is an equality associating the isotropic constant of a convex body $K\subset \R^n$ of volume $|K|=1$ essentially due to Kingman \cite[Theorem 1]{Kingman69}, 
\begin{equation*}
    L_K^{2n} =\frac{n!}{n+1} \mathbb{E}\,U_{K,N}= \frac{n!}{4^n} \mathbb{E}\,V_{K, N}, 
\end{equation*}
with the second equality for symmetric convex bodies due to Meckes \cite[(1)]{Meckes04} (also studied by Ball \cite[Lemma 8]{Ball88}). 

For more in-depth information on the slicing problem, we refer to \cite{BGV+14, GardnerNotes, KM21, AmbrusBoroczky14, Rademacher16, MilmanPajor89}. 

\text{ }

\noindent
\textbf{Organization.}
In \S \ref{LpPolarVolumeSection}, we derive a suitable formula for the volume of the $L^p$-polar (Lemma \ref{suitableVolumeLemma}).
Sections \ref{SymSlidingSection} and \ref{NonSymSlidingSection}, introduce the classes of polytopes most convenient for our analysis (Figures \ref{SymSlidingFig} and \ref{SlidingFig}), and the main propositions on which the proofs of Theorems \ref{2DimpMahlerSym} and \ref{2DimpMahler} are based upon.

Section \ref{MpSymSection} is dedicated to proving Theorem \ref{2DimpMahlerSym}. A technical part of the proof concerning the $L^p$-support function of triangles under sliding is deferred until \S \ref{SymTriangleSection}.
`Convexity' of the $L^p$-support function under sliding for triangles is proved in \S \ref{SymTriangleSection} (Claim \ref{TriangleClaim}). This is then generalized for symmetric polytopes in \S \ref{SymEntirePolytopeSection} (Lemma \ref{hpSymCor}), where the convexity of the norm of a symmetric convex polytope under sliding is also established
(Lemma \ref{symNormConvexity}).
Theorem \ref{2DimpMahlerSym} is proved in \S \ref{SymFinishingSection}. 
\ref{BallIneq}). 

In Section \ref{MpNonSymSection}, we prove Theorem \ref{2DimpMahler}. Due to the necessity of translating the polytope while sliding a vertex, we revisit triangles in \S \ref{TrianglesRevisitSection}, to incorporate the translating parameter (Claim \ref{TriangleSliding}). Convexity of the near norm of a polytope under sliding and translation is established in \S \ref{ConvNormSection} (Lemma \ref{normTripleConvexity}). In the absence of symmetry, the volume of the $L^p$-polar is expressed as the sum of two integrals (see \eqref{volumeLpPolar}). For our proof to work, the ratio of the two must remain constant under sliding. This is achieved in \S \ref{DecompSection} by carefully translating the body while sliding a vertex (Proposition \ref{BalancingProp}). Theorem \ref{2DimpMahler} is derived in \S \ref{nonSymFinishSection}. 

Theorems \ref{plane_slicing_nonsym} and \ref{plane_slicing_sym} are established in Section \ref{BourgainSection}. \S \ref{CAffineInvSection} is dedicated to the affine invariance of $\calC$ (Corollary \ref{CaffineInvariance}). The continuity of $\calC$ in the Hausdorff topology is established in \S \ref{CHausdorffSection}. In \S\ref{UnionSection}, we study the isotropic constant of a disjoint union (Lemma \ref{iso2_lem1}), and derive a convenient formula in dimension two (Corollary \ref{2dCFormula}). Following the pattern of the previous sections, \S \ref{Ctriangle} is dedicated to the study of the isotropic constant of triangles under sliding. Building on that, in \S \ref{isoPolytopeSlidingSection}, we investigate the isotropic constant of polytopes under sliding, whether symmetric (Lemma \ref{CsymConvexity}) or not (Lemma \ref{Cconvexity}). We conclude the proof of Theorems \ref{plane_slicing_nonsym} and \ref{plane_slicing_sym} in \S \ref{isoFinishingSection}.

In Appendix \ref{appendixA}, we revisit the Borell--Brascamp--Lieb inequality (Lemma \ref{BrascampLiebineq}) and its corollaries (Lemma \ref{-1concaveLemma} and Corollary \ref{BLcorollary}). Finally, in Appendix \ref{BallSection} we state a theorem of Ball (Theorem \ref{BallIneq}) that will be very useful throughout. 

\section{\texorpdfstring{A formula for the volume of planar $L^p$-polars}{A formula for the volume of planar Lp-polars}}
\label{LpPolarVolumeSection}

For the proof of Lemma \ref{SymSlidingLemma}, and later for its non-symmetric counterpart (Lemma \ref{SlidingLemma}), we derive a convenient formula for the volume of the $L^p$-polar.  
\begin{lemma}
\label{suitableVolumeLemma}
    Let $p>0$. For a convex body $K\subset \R^2$ with $0\in\mathrm{int}\,K$, 
    \begin{equation}
    \label{volumeLpPolar}
        |K^{\circ,p}|= \frac12\int_{\R}\frac{dt}{\|(1,t)\|^2_{K^{\circ,p}}}+ \frac12\int_{\R} \frac{dt}{\|(-1,-t)\|^2_{K^{\circ,p}}}. 
    \end{equation}
    If $K$ is symmetric, 
    \begin{equation}
    \label{volumeLpPolarSym}
        |K^{\circ,p}|= \int_{\R}\frac{dt}{\|(1,t)\|^2_{K^{\circ,p}}}. 
    \end{equation}
\end{lemma}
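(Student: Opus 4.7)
The plan is to use the fact that $\|\cdot\|_{K^{\circ,p}}$ is positively $1$-homogeneous (as recalled around \eqref{KcircpNormEq}) so that $K^{\circ,p}=\{y:\|y\|_{K^{\circ,p}}\le 1\}$ is a star body with radial function $1/\|\cdot\|_{K^{\circ,p}}$. In polar coordinates this immediately yields
\[
|K^{\circ,p}|=\int_{0}^{2\pi}\!\!\int_{0}^{1/\|(\cos\theta,\sin\theta)\|_{K^{\circ,p}}} r\,dr\,d\theta=\frac12\int_{0}^{2\pi}\frac{d\theta}{\|(\cos\theta,\sin\theta)\|^{2}_{K^{\circ,p}}},
\]
where the finiteness of all integrals below follows from $0\in\mathrm{int}\,K^{\circ,p}$, so that $\|\cdot\|_{K^{\circ,p}}$ is bounded below on $\partial B_2^2$.

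Next I would split the angular integral into the two half-circles corresponding to $\cos\theta>0$ and $\cos\theta<0$, and on each use the arctan parametrization $t=\tan\theta$. On the right half-circle $\theta\in(-\pi/2,\pi/2)$ one has $(\cos\theta,\sin\theta)=\frac{1}{\sqrt{1+t^2}}(1,t)$ and $d\theta=\frac{dt}{1+t^2}$, so by $1$-homogeneity
\[
\frac{d\theta}{\|(\cos\theta,\sin\theta)\|^{2}_{K^{\circ,p}}}=\frac{dt/(1+t^2)}{\|(1,t)\|^{2}_{K^{\circ,p}}/(1+t^2)}=\frac{dt}{\|(1,t)\|^{2}_{K^{\circ,p}}}.
\]
On the left half-circle $\theta\in(\pi/2,3\pi/2)$ an analogous computation gives $(\cos\theta,\sin\theta)=\frac{1}{\sqrt{1+t^2}}(-1,-t)$ and the same change of variables produces $dt/\|(-1,-t)\|^{2}_{K^{\circ,p}}$. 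Summing the two contributions gives \eqref{volumeLpPolar}.

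For \eqref{volumeLpPolarSym}, I would simply invoke that if $K=-K$ then $K^{\circ,p}=-K^{\circ,p}$ (a direct consequence of the definition \eqref{KcircpNormEq} of $\|\cdot\|_{K^{\circ,p}}$ in terms of $h_{p,K}$, using $h_{p,-K}(y)=h_{p,K}(-y)$), whence $\|(1,t)\|_{K^{\circ,p}}=\|(-1,-t)\|_{K^{\circ,p}}$ and the two integrals in \eqref{volumeLpPolar} coincide.

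Nothing here is subtle; the main point to get right is the bookkeeping of the arctan substitution on the two hemispheres and the use of positive (rather than full) $1$-homogeneity, which is exactly what forces the non-symmetric formula to retain two separate integrals over $(-1,-t)$ and $(1,t)$ instead of collapsing to one.
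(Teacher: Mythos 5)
Your proof is correct and takes essentially the same route as the paper's: both reduce $|K^{\circ,p}|$ to $\tfrac12\int_{\partial B_2^2}\|u\|_{K^{\circ,p}}^{-2}\,du$, split into the two hemispheres, and apply the $t=\tan\theta$ substitution together with positive $1$-homogeneity, then use symmetry of $K^{\circ,p}$ for the second formula. The only cosmetic difference is at the very first step: you use the star-body radial-function formula directly, whereas the paper starts from $|K^{\circ,p}|=\tfrac12\int_{\R^2}e^{-h_{p,K}}$ (i.e.\ \eqref{hpKKcircpEq}) and recovers $\|u\|_{K^{\circ,p}}^{-2}$ from the defining integral \eqref{KcircpNormEq} — these are interchangeable.
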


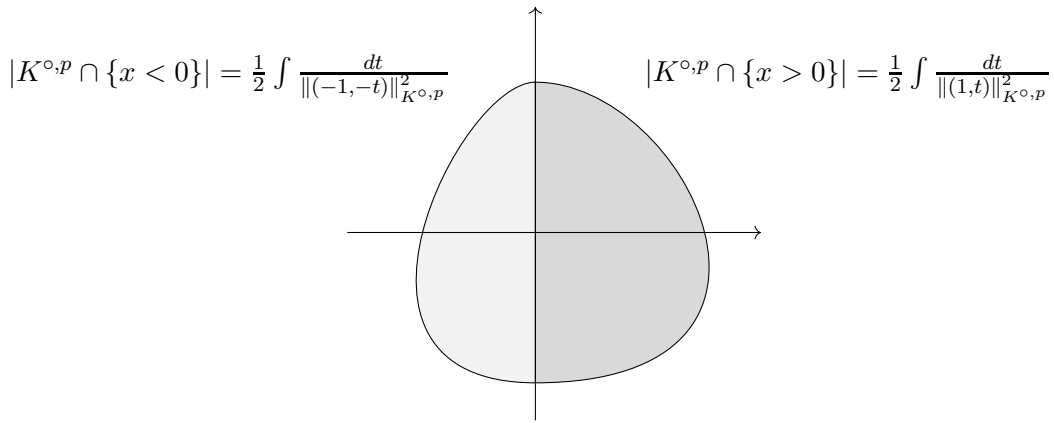
\begin{figure}[H]
    \centering
    \begin{tikzpicture}
\draw[fill=gray!30] 
    (0,2) .. controls +(0,0) and + (0,0) .. 
    (0,-2) .. controls +(4,0) and + (2,0) .. 
          cycle;
\draw[fill=gray!10] 
    (0,2) .. controls +(0,0) and + (0,4) .. 
    (0,-2) .. controls +(-3,0) and + (-1,0) .. 
          cycle;
\draw[->] (-2.5,0) -- (3,0) node[anchor=north west] {};
\draw[->] (0,-2.5) -- (0,3) node[anchor=north west] {};
\draw[] (7,2.5) node[below left]{$|K^{\circ,p}\cap\{x> 0\}|= \frac12\int\frac{dt}{\|(1,t)\|_{K^{\circ,p}}^2}$};
\draw[] (-1, 2.5) node[below left]{$|K^{\circ,p}\cap\{x< 0\}|= \frac12\int\frac{dt}{\|(-1,-t)\|_{K^{\circ,p}}^2}$};
\end{tikzpicture}
\caption{$|K^{\circ,p}|= |K^{\circ,p}\cap\{x\geq 0\}| + |K^{\circ,p}\cap \{x\leq 0\}|.$ Observe these two areas
are equal in the symmetric case.}
\end{figure}

\begin{proof}
    Starting with \eqref{MpDefEq}, in polar coordinates,
    \begin{equation*}
    \begin{aligned}
        |K^{\circ,p}|&= \frac12\int_{\R^2} e^{-h_{p,K}(x,y)} dx dy= \frac12\int_{\partial B_2^2} \left(\int_{0}^\infty e^{-h_{p,K}(ru)}r dr \right) du= \frac12\int_{\partial B_2^2}\frac{du}{\|u\|^2_{K^{\circ,p}}}, 
    \end{aligned}
    \end{equation*}
    by definition of the near norm \eqref{KcircpNormEq}. We may now split $\partial B_2^2$ into two hemispheres: 
    \begin{equation}
    \label{KcircpSplitInt}
        |K^{\circ,p}|= \frac12\int_{\partial B_2^2\cap \{x>0\}}\frac{du}{\|u\|^2_{K^{\circ,p}}}+ \frac12\int_{\partial B_2^2\cap \{x<0\}} \frac{du}{\|u\|^2_{K^{\circ,p}}}.
    \end{equation}
Such a formula holds in all dimensions (replacing 
$\|u\|^2_{K^{\circ,p}}$ with $\|u\|^n_{K^{\circ,p}}$). 
    Next, we will change variables twice, making use of $n=2$. First, parameterizing $\partial B_2^n\cap \{x>0\}$ via
    \begin{equation*}
        (-\pi/2, \pi/2) \ni \theta \mapsto (\cos\theta, \sin\theta) \in \partial B_2^n\cap \{x>0\},
    \end{equation*}
    gives
    \begin{equation*}
        \int_{\partial B_2^{n}\cap \{x>0\}} \frac{du}{\|u\|_{K^{\circ,p}}^2}= \int_{-\pi/2}^{\pi/2}\frac{d\theta}{\|(\cos\theta, \sin\theta)\|_{K^{\circ,p}}^2}.
    \end{equation*}
    Since for $\theta\in (-\pi/2, \pi/2)$, $\cos\theta>0$, and $\|\cdot\|_{K^{\circ,p}}$ is positively homogeneous
    $
        \|(\cos\theta, \sin\theta)\|_{K^{\circ,p}}
        = \cos\theta \|(1, \tan\theta)\|_{K^{\circ,p}}. 
    $
    Therefore, using the change of variables $t= \tan\theta$, $dt= \frac{d\theta}{(\cos\theta)^2}$, 
    \begin{equation}
    \label{KcircInt1}
        \int_{{\partial B_2^n}\cap \{x>0\}}\frac{du}{\|u\|^2_{K^{\circ,p}}}= \int_{-\pi/2}^{\pi/2} \frac{d\theta}{(\cos\theta)^2 \|(\cos\theta, \sin\theta)\|^2_{K^{\circ,p}}}= \int_{\R}\frac{dt}{\|(1,t)\|^2_{K^{\circ,p}}}.
    \end{equation}
    Similarly, since for $\theta\in (\pi/2, 3\pi/2)$, $\cos\theta<0$, and $\|\cdot\|_{K^{\circ,p}}$ is (only) positively homogeneous, $\|(\cos\theta,\sin\theta)\|_{K^{\circ,p}}= -\cos\theta \|(-1, -\tan\theta)\|_{K^{\circ,p}}$,
    \begin{equation}
    \label{KcircpInt2}
        \int_{\partial B_2^n\cap \{x<0\}}\frac{du}{\|u\|_{K^{\circ,p}}^2}= \int_{\pi/2}^{3\pi/2}\frac{d\theta}{(\cos\theta)^2\|(-1, -\tan\theta)\|_{K^{\circ,p}}^2}= \int_{\R} \frac{dt}{\|(-1,-t)\|_{K^{\circ,p}}^2}. 
    \end{equation}
    Combining \eqref{KcircpSplitInt}, \eqref{KcircInt1}, and \eqref{KcircpInt2} proves \eqref{volumeLpPolar}.

    If $K$ is symmetric, then so is $K^{\circ,p}$ \cite[Theorem 1.2]{BMR23}, hence $\|\cdot\|_{K^{\circ,p}}$ is a norm. Therefore, $\|(-1, -t)\|_{K^{\circ,p}}= \|(1,t)\|_{K^{\circ,p}}$ and \eqref{volumeLpPolarSym} follows from \eqref{volumeLpPolar}. 
\end{proof}

Campi--Gronchi \cite[p. 2398]{CampGron06}
used different parametrizations in
their computation of  $|\Gamma_pK|$. In dimension two both our parametrization and
theirs are equivalent via a two-step reparametrization (from the hemisphere to a disk, and from a disk to a plane).

\section{Coupled Mahler sliding}

In \S\ref{SymSlidingSection}
we describe classical Mahler sliding, with Proposition 
\ref{symSlidingProp} stating that it decreases $L^p$-Mahler volume for symmetric
polytopes.
In \S\ref{NonSymSlidingSection}
we describe Mahler sliding coupled with translation, with Proposition 
\ref{SlidingProp} the analogous result for general
polytopes.

\subsection{Symmetric Mahler sliding}
\label{SymSlidingSection}

As described in the introduction, the aim is to start with a symmetric polytope and `slide' two antipodal vertices
(see Figure \ref{SymSlidingFig})
in a manner that throughout the motion

\begin{itemize}
\item retains the symmetry of the polytope, 
\item leaves the area invariant,
\end{itemize}
and, in addition, ultimately (but perhaps not throughout) 
\begin{itemize}
\item decreases the number of vertices by two. 
\item reduces the $L^p$-Mahler volume, 
\end{itemize}

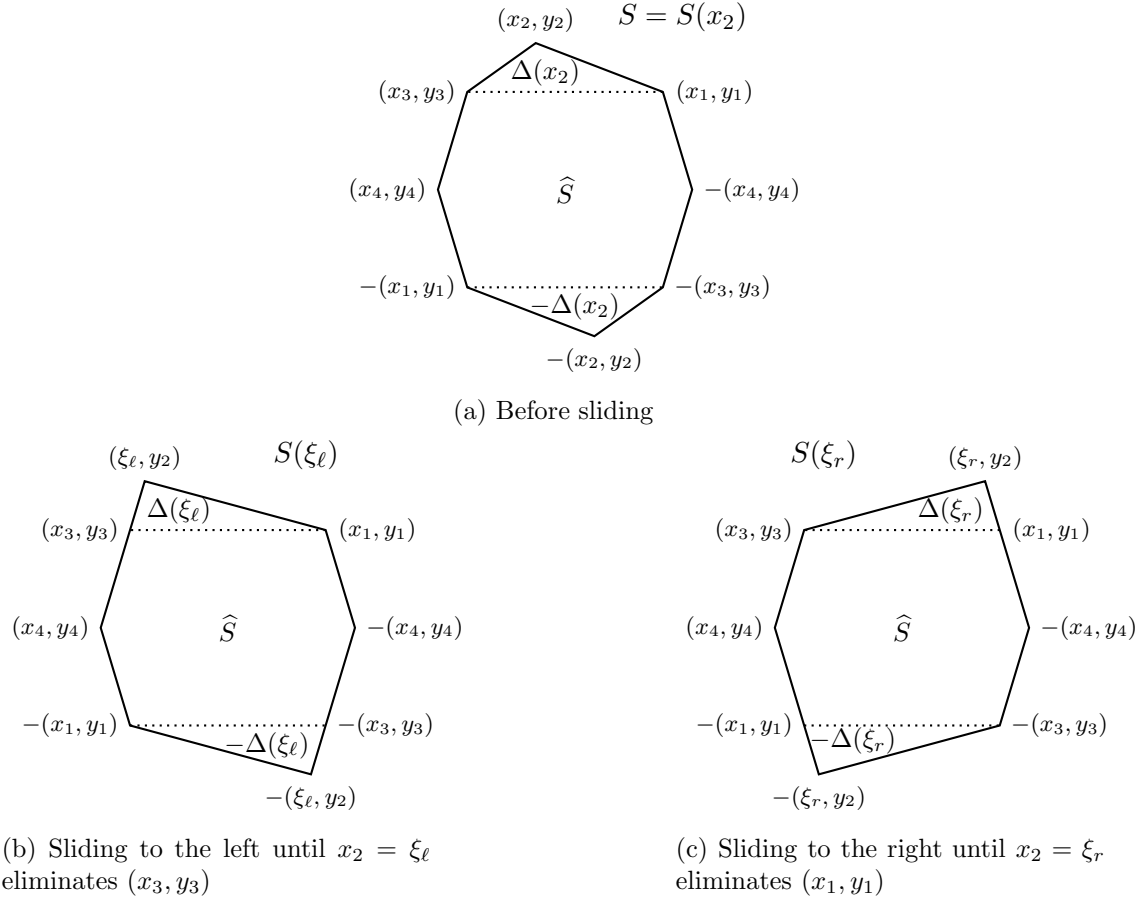
\begin{figure}[ht]
     \centering
     \begin{subfigure}[b]{0.35\textwidth}
         \centering
         \begin{tikzpicture}[scale=1.3]
\draw[thick] (-1,1) -- (-1.3, 0) -- (-1,-1) -- (.3,-1.5) -- (1,-1) -- (1.3, 0) -- (1, 1) -- (-.3,1.5) -- cycle;
\draw[thick, dotted] (1,1) -- (-1,1);
\draw[thick, dotted] (-1,-1) -- (1,-1);
\node[right=1pt] at (1, 1){\footnotesize $(x_1, y_1)$};
\node[above=.5pt] at (-.3, 1.5){\footnotesize $(x_2,y_2)$};
\node[above=.5pt] at (1.2, 1.5){$S= S(x_2)$};
\node[] at (-.2, 1.2){\small $\Delta(x_2)$};
\node[] at (0, 0){\small $\widehat{S}$};
\node[] at (.1, -1.2){\small $-\Delta(x_2)$};
\node[left=.5pt] at (-1, 1){\footnotesize $(x_3, y_3)$};
\node[left=.5pt] at (-1.3, 0){\footnotesize $(x_4, y_4)$};
\node[left=.5pt] at (-1, -1){\footnotesize $-(x_1, y_1)$};
\node[below=.5pt] at (.3, -1.5){\footnotesize $-(x_2, y_2)$};
\node[right=.5pt] at (1, -1){\footnotesize $-(x_3, y_3)$};
\node[right=.5pt] at (1.3, 0){\footnotesize $-(x_4, y_4)$};
\end{tikzpicture}
         \caption{\small Before sliding}
     \end{subfigure}
     \\
    \begin{subfigure}[b]{0.35\textwidth}
    \centering
\begin{tikzpicture}[scale=1.3]
\draw[thick] (-1,1) -- (-1.3, 0) -- (-1,-1) -- (.85, -1.5) -- (1,-1) -- (1.3, 0) -- (1, 1) -- (-.85,1.5) -- cycle;
\draw[thick, dotted] (1,1) -- (-1,1);
\draw[thick, dotted] (-1,-1) -- (1,-1);
\node[above=.5pt] at (.8, 1.5){$S(\xi_\ell)$};
\node[] at (-.5, 1.2){\small $\Delta(\xi_\ell)$};
\node[] at (0, 0){\small $\widehat{S}$};
\node[] at (.4, -1.2){\small $-\Delta(\xi_\ell)$};
\node[right=1pt] at (1, 1){\footnotesize $(x_1, y_1)$};
\node[right=.5pt] at (1, -1){\footnotesize $-(x_3, y_3)$};
\node[left=.5pt] at (-1, 1){\footnotesize $(x_3, y_3)$};
\node[above=.5pt] at (-.85, 1.5){\footnotesize $(\xi_\ell, y_2)$};
\node[left=.5pt] at (-1.3, 0){\footnotesize $(x_4, y_4)$};
\node[left=.5pt] at (-1, -1){\footnotesize $-(x_1, y_1)$};
\node[below=.5pt] at (.85, -1.5){\footnotesize $-(\xi_\ell, y_2)$};
\node[right=.5pt] at (1.3, 0){\footnotesize $-(x_4, y_4)$};
\end{tikzpicture}
    \caption{\small Sliding to the left until $x_2=\xi_\ell$ eliminates $(x_3, y_3)$}
     \end{subfigure}
     \hspace{3cm}
              \begin{subfigure}[b]{0.35\textwidth}
         \centering
\begin{tikzpicture}[scale=1.3]
\draw[thick] (-1,1) -- (-1.3, 0) -- (-1,-1) -- (-.85, -1.5) -- (1,-1) -- (1.3, 0) -- (1, 1) -- (.85,1.5) -- cycle;
\draw[thick, dotted] (1,1) -- (-1,1);
\draw[thick, dotted] (-1,-1) -- (1,-1);
\node[above=.5pt] at (-.8, 1.5){$S(\xi_r)$};
\node[] at (.5, 1.2){\small $\Delta(\xi_r)$};
\node[] at (0, 0){\small $\widehat{S}$};
\node[] at (-.5, -1.15){\small $-\Delta(\xi_r)$};
\node[above=.5pt] at (.85, 1.5){\footnotesize $(\xi_r, y_2)$};
\node[right=1pt] at (1, 1){\footnotesize $(x_1, y_1)$};
\node[left=.5pt] at (-1, -1){\footnotesize $-(x_1, y_1)$};
\node[left=.5pt] at (-1, 1){\footnotesize $(x_3, y_3)$};
\node[right=.5pt] at (1, -1){\footnotesize $-(x_3, y_3)$};
\node[left=.5pt] at (-1.3, 0){\footnotesize $(x_4, y_4)$};
\node[below=.5pt] at (-.85, -1.5){\footnotesize $-(\xi_r, y_2)$};
\node[right=.5pt] at (1.3, 0){\footnotesize $-(x_4, y_4)$};
\end{tikzpicture}
    \caption{\small Sliding to the right until $x_2=\xi_r$ eliminates
         $(x_1, y_1)$}
     \end{subfigure}
        \caption{\small Symmetric Mahler sliding eliminating two antipodal vertices simultaneously.}
        \label{SymSlidingFig}
\end{figure}

Before proceeding, let us fix some notation. 
Let 
\begin{equation*}
    S\defeq\co\{(x_1, y_1), \ldots, (x_m, y_m), -(x_1, y_1), \ldots, -(x_m, y_m)\}
\end{equation*}
denote a symmetric polytope with $2m$ vertices.
After rotating $S$, and possibly renaming the vertices, we can assume that $(x_1, y_1),\ldots, (x_m, y_m)$ are counter-clockwise oriented, $y_2>0$, and $y_1= y_3$. Sliding the $p_2$ vertex in a manner parallel to $[(x_1, y_1), (x_3, y_3)]$ amounts to changing the $x_2$ coordinate, as $[(x_1, y_1), (x_3, y_3)]$ is parallel to $x$-axis. Write $S(x_2)$ to emphasize the dependence on $x_2$. 
Decompose $S(x_2)$ into the triangles 
\begin{equation}
\label{Deltax2Def}
    \Delta(x_2) \defeq \co \{(x_1, y_1), (x_2, y_2), (x_3, y_1)\}, 
\end{equation}
and $-\Delta(x_2)$,
and the `sandwich' part
\begin{equation}
\label{hatS}
    \widehat{S}\defeq 
    \co\{
    (x_1, y_1), (x_3, y_3), \cdots, (x_m, y_m),
    -(x_1, y_1), -(x_3, y_3), \cdots, -(x_m, y_m)
    \},
\end{equation}

Sliding $(x_2, y_2)$ in a manner parallel to $[(x_1,y_1), (x_3,y_3)]$,
i.e., only changing $x_2$ while fixing $y_2$ does not change the volume as 
\begin{equation}
\label{Sx2constEq}
|S(x_2)|= |\widehat{S}|+ 2|\Delta(x_2)|
\end{equation}
is {\it constant
in $x_2$}.
To emphasize the sole dependence of the `sliding' on the value of $x_2$, write
\begin{equation*}
   x_2\mapsto S(x_2)
\end{equation*}
for the one-parameter family of symmetric polytopes.
For this family it is clear that the first three bullet points hold.
One of our main results is that also the fourth bullet point holds:

\begin{proposition}
\label{symSlidingProp}
    Let $p>0$, and $S\subset \R^2$ be a symmetric polytope with $2m>4$ vertices. There is a symmetric polytope $S'$ with $2m-2$ vertices obtained from $S$ by Mahler sliding such that
    \begin{equation*}
        \M_p(S')\leq \M_p(S).
    \end{equation*}
\end{proposition}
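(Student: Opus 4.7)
The plan is to study the one-parameter family $x_2 \mapsto S(x_2)$ on the maximal interval $[\xi_\ell, \xi_r]$ for which both $(x_1, y_1)$ and $(x_3, y_3)$ remain genuine vertices. At $x_2 = \xi_\ell$ the vertex $(x_3, y_3)$ (and its antipode) becomes collinear with its neighbors and is absorbed, and similarly $(x_1, y_1)$ disappears at $x_2 = \xi_r$; either endpoint therefore yields a symmetric polytope with $2m - 2$ vertices. Since $|S(x_2)|$ is constant by \eqref{Sx2constEq}, minimizing
$$\M_p(S(x_2)) = 2\,|S(x_2)|\cdot|S(x_2)^{\circ,p}|$$
over $[\xi_\ell,\xi_r]$ reduces to minimizing $|S(x_2)^{\circ,p}|$, and it suffices to show this minimum is attained at an endpoint.

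The stronger assertion I will prove is that $x_2 \mapsto 1/|S(x_2)^{\circ,p}|$ is \emph{convex} on $[\xi_\ell, \xi_r]$; since a convex function on an interval attains its maximum at an endpoint, this forces $|S(x_2)^{\circ,p}|$ to be minimized at an endpoint. The symmetric formula of Lemma \ref{suitableVolumeLemma} rewrites the volume in integral form
$$|S(x_2)^{\circ,p}| = \int_{\R} \frac{dt}{\|(1,t)\|^2_{S(x_2)^{\circ,p}}},$$
reducing the question to a convexity-under-integration statement.

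The key input is Lemma \ref{symNormConvexity}, which — via Ball's integral representation (Theorem \ref{BallIneq}) applied to the sliding convexity of $L^p$-support functions established in Lemma \ref{hpSymCor} — yields joint convexity of $(x_2, t) \mapsto \|(1,t)\|^2_{S(x_2)^{\circ,p}}$ on $[\xi_\ell, \xi_r] \times \R$. With this joint convexity in hand, the Borell--Brascamp--Lieb corollary \ref{-1concaveLemma}, i.e., the principle that integration of the reciprocal of a nonnegative jointly convex function produces a function whose reciprocal is itself convex, applied to the $t$-integral above, immediately gives convexity of $x_2 \mapsto 1/|S(x_2)^{\circ,p}|$ and concludes the argument.

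The main obstacle is not this convexity reduction, which is structurally clean, but rather establishing the joint convexity of the \emph{near-norm squared} in the sliding parameter. Unlike the $L^p$-support function, the near-norm $\|\cdot\|_{S(x_2)^{\circ,p}}$ is only positively $1$-homogeneous and lacks direct convex-analytic handles under Mahler sliding; bridging the gap from sliding convexity of $h_{p,S}$ to sliding convexity of $\|\cdot\|_{S^{\circ,p}}$ via Ball's theorem is the delicate analytic step, which is precisely what is absorbed into Lemma \ref{symNormConvexity}.
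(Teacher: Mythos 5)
Your proposal follows the paper's proof exactly: reduce to minimizing $|S(x_2)^{\circ,p}|$ on $[\xi_\ell,\xi_r]$, establish convexity of $x_2\mapsto|S(x_2)^{\circ,p}|^{-1}$ (the paper's Lemma \ref{SymSlidingLemma}) via the integral formula of Lemma \ref{suitableVolumeLemma}, the joint convexity supplied by Lemma \ref{symNormConvexity} (proved from Lemma \ref{hpSymCor} and Theorem \ref{BallIneq}), and the Borell--Brascamp--Lieb consequence Lemma \ref{-1concaveLemma}, then finish by evaluating the convex function at the endpoints.

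One slip in the write-up: Lemma \ref{symNormConvexity} asserts joint convexity of the \emph{near norm} $(y,x_2)\mapsto \|(1,y)\|_{S(x_2)^{\circ,p}}$, not of its square, and Lemma \ref{-1concaveLemma} states that if $f$ is jointly convex then $x_2\mapsto\bigl(\int 1/f^2\bigr)^{-1}$ is convex, i.e.\ the square appears inside the integral rather than in the hypothesis. Your paraphrase ``integration of the reciprocal of a nonnegative jointly convex function produces a function whose reciprocal is convex'' is weaker than (and different from) Lemma \ref{-1concaveLemma}, and would not suffice: convexity of $\|\cdot\|^2$ does not recover convexity of $\|\cdot\|$. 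The argument closes correctly once you feed $f(y,x_2)=\|(1,y)\|_{S(x_2)^{\circ,p}}$ (not its square) into Lemma \ref{-1concaveLemma}, which already inserts the power two so that $\int 1/f^2$ matches the formula of Lemma \ref{suitableVolumeLemma}.
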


\subsection{Coupled Mahler sliding}
\label{NonSymSlidingSection}

For a general polytope $P$, we wish to perform a similar operation that
does not change the area of the polytope and ultimately results in a
polytope with one less vertex and small $L^p$-Mahler volume.
It turns out that Mahler sliding is not sufficient in this case, but
a simultaneous (carefully chosen) translation of the whole polytope is also required.
We use the same notation as in \S\ref{SymSlidingSection}. For
$P\in\mathcal{P}$, write
\begin{equation*}
    P(x_2)
\end{equation*}
to emphasize the dependence of the sliding (without translation) on the $x_2$ variable. As before, $P(x_2)$ decomposes into a moving part, 
a triangle $\Delta(x_2)$, and a fixed part 
\begin{equation}
\label{hatP}
    \widehat{P}\defeq \co\{(x_1, y_1), (x_3, y_3), \cdots, (x_m, y_m)\},
\end{equation}
so that $|P(x_2)|= |\widehat{P}|+ |\Delta(x_2)|$ is {\it constant
in $x_2$}.

\begin{figure}[H]
\centering
\begin{subfigure}[b]{0.35\textwidth}
\centering
\begin{tikzpicture}[scale=1.3]
\draw[thick] (-1,1) -- (-1.5, 0) -- (-1,-1) -- (1,-1.2) -- (1.3, 0.3) -- (1, 1) -- (-.3,1.5) -- cycle;
\draw[thick, dotted] (1,1) -- (-1,1);
\node[above=.5pt] at (1.2, 1.5){$P= P(x_2)$};
\node[] at (-.2, 1.2){\small $\Delta(x_2)$};
\node[] at (0, .2){\small $\widehat{P}$};
\node[right=1pt] at (1, 1){\footnotesize $(x_1, y_1)$};
\node[above=.5pt] at (-.3, 1.5){\footnotesize $(x_2, y_2)$};
\node[left=.5pt] at (-1, 1){\footnotesize $(x_3, y_3)$};
\node[left=.5pt] at (-1.5, 0){\footnotesize $(x_4, y_4)$};
\node[left=.5pt] at (-1, -1){\footnotesize $(x_5, y_5)$};
\node[right=.5pt] at (1, -1.2){\footnotesize $(x_6, y_6)$};
\node[right=.5pt] at (1.3, 0.3){\footnotesize $(x_7, y_7)$};
\end{tikzpicture}
\caption{\small Before the sliding}
\end{subfigure}
\\
\begin{subfigure}[b]{0.35\textwidth}
\centering
\begin{tikzpicture}[scale=1.3]
\draw[thick] (-1,1) -- (-1.5, 0) -- (-1,-1) -- (1,-1.2) -- (1.3, 0.3) -- (1, 1) -- (-.75,1.5) -- cycle;
\draw[thick, dotted] (1,1) -- (-1,1);
\node[above=.5pt] at (.8, 1.5){$P(\xi_\ell)$};
\node[] at (-.55, 1.2){\small $\Delta(\xi_\ell)$};
\node[] at (0, .2){\small $\widehat{P}$};
\node[right=1pt] at (1, 1){\footnotesize $(x_1, y_1)$};
\node[above=.5pt] at (-.85, 1.5){\footnotesize $(\xi_\ell, y_2)$};
\node[left=.5pt] at (-1, 1){\footnotesize $(x_3, y_3)$};
\node[left=.5pt] at (-1.5, 0){\footnotesize $(x_4, y_4)$};
\node[left=.5pt] at (-1, -1){\footnotesize $(x_5, y_5)$};
\node[right=.5pt] at (1, -1.2){\footnotesize $(x_6, y_6)$};
\node[right=.5pt] at (1.3, 0.3){\footnotesize $(x_7, y_7)$};
\end{tikzpicture}
\caption{\small Sliding to the left, eliminating $(x_3, y_3)$}
\end{subfigure}
\hspace{2.5cm}
\begin{subfigure}[b]{0.35\textwidth}
\centering
\begin{tikzpicture}[scale=1.3]
\draw[thick] (-1,1) -- (-1.7, 0) -- (-1,-1) -- (1,-1.2) -- (1.3, 0.3) -- (1, 1) -- (.8,1.5) -- cycle;
\draw[thick, dotted] (1,1) -- (-1,1);
\node[above=.5pt] at (-.8, 1.5){$P(\xi_r)$};
\node[] at (.5, 1.2){\small $\Delta(\xi_r)$};
\node[] at (0, .2){\small $\widehat{P}$};
\node[right= 2pt] at (.9, 1){\footnotesize $(x_1, y_1)$};
\node[above=.5pt] at (.85, 1.5){\footnotesize $(\xi_r, y_2)$};
\node[left=.5pt] at (-1, 1){\footnotesize $(x_3, y_3)$};
\node[left=.5pt] at (-1.7, 0){\footnotesize $(x_4, y_4)$};
\node[left=.5pt] at (-1, -1){\footnotesize $(x_5, y_5)$};
\node[right=.5pt] at (1, -1.2){\footnotesize $(x_6, y_6)$};
\node[right=.5pt] at (1.3, 0.3){\footnotesize $(x_7, y_7)$};
\end{tikzpicture}
\caption{\small Sliding to the right, eliminating $(x_1, y_1)$}
\end{subfigure}
    \caption{\small Mahler's sliding eliminating a vertex}
    \label{SlidingFig}
\end{figure}
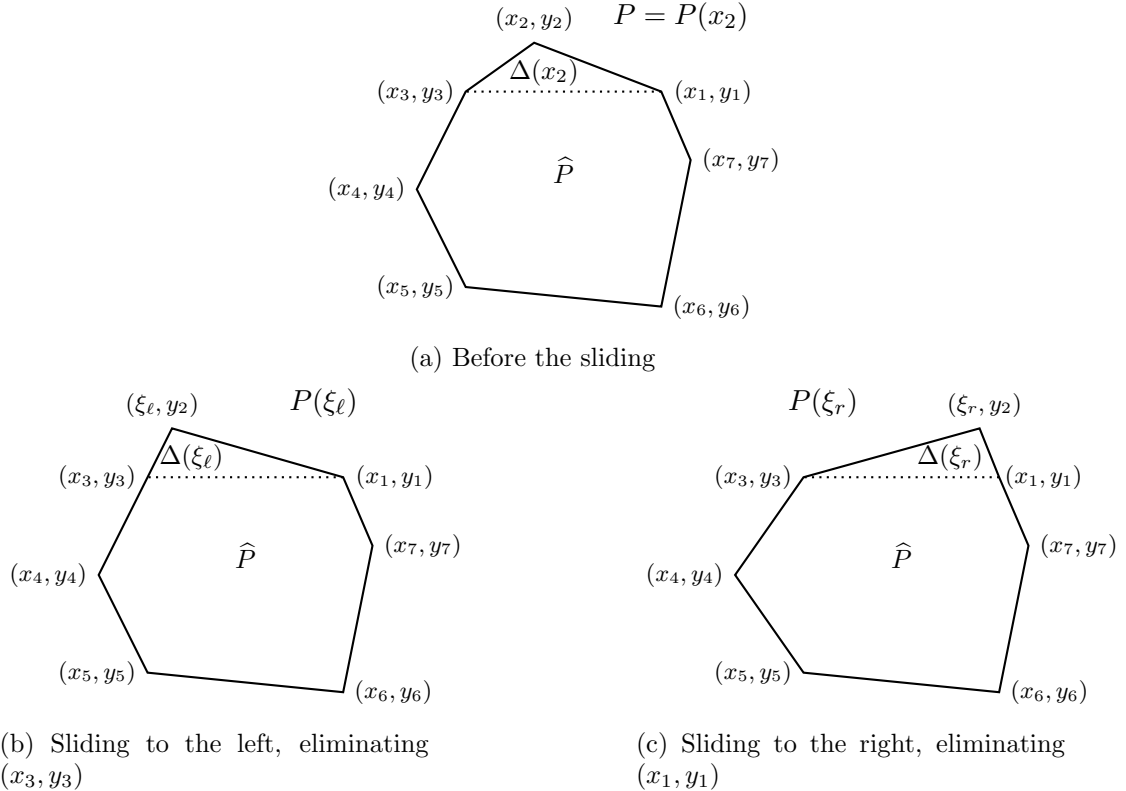

The following generalizes Proposition 
\ref{symSlidingProp}. In comparison to the symmetric case, the non-symmetric case requires a combination of translation and sliding (Figures
\ref{SlidingFig}--\ref{translatingFigure}). 

\begin{proposition}
\label{SlidingProp}
    Let $p>0$, and $P\subset \R^2$ be a polytope with $m>3$ vertices. There is a
    vector $(x_P,y_P)\in\R^2$ and a polytope $P'$ with $m-1$ vertices 
    obtained from $P$ by Mahler sliding
    such that
    \begin{equation*}
        \M_p(P'-(x_P,y_P))\leq \M_p(P).
    \end{equation*}
    In particular,
    \begin{equation*}
        \inf_{P_{m-1}}\inf_{(x,y)\in\R^2} \M_p(P_{m-1}-(x,y))\leq \inf_{P_m}\inf_{(x,y)\in\R^2}\M_p(P_{m}-(x,y)),
    \end{equation*}
    where $P_{m-1}$ and $P_m$ range over all polytopes with $m-1$ and $m$ vertices, respectively.
\end{proposition}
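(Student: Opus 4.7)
The strategy is to couple the Mahler slide of the vertex $(x_2,y_2)$ with an $x_2$-dependent translation of the whole polytope, so that the polar volume becomes a convex-reciprocal function of the sliding parameter. After a rotation (under which $\M_p$ is invariant) and a relabeling of vertices, assume $(x_1,y_1),\dots,(x_m,y_m)$ are counter-clockwise-oriented with $y_1=y_3$ and $y_2>y_1$. Sliding $(x_2,y_2)$ horizontally gives a one-parameter family $\{P(x_2)\}_{x_2\in[\xi_\ell,\xi_r]}$, whose endpoints are the extreme values of $x_2$ at which a neighbouring vertex (namely $(x_1,y_1)$ or $(x_3,y_3)$) becomes collinear with its neighbours and drops out, producing a polytope with $m-1$ vertices (cf.~Figure~\ref{SlidingFig}). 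Throughout the slide $|P(x_2)|=|\widehat P|+|\Delta(x_2)|$ is constant, since $\Delta(x_2)$ is a triangle of fixed base and fixed height. Hence it suffices to produce $\tau(x_2)\in\R^2$ for which $x_2\mapsto|(P(x_2)-\tau(x_2))^{\circ,p}|$ attains its minimum over $[\xi_\ell,\xi_r]$ at an endpoint.

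The engine of the argument---in the spirit of Campi--Gronchi and Meyer--Reisner but adapted to the $L^p$ setting---is convexity of the reciprocal polar volume. Using the planar volume formula \eqref{volumeLpPolar} of Lemma \ref{suitableVolumeLemma}, decompose
$$
\big|(P(x_2)-\tau(x_2))^{\circ,p}\big|=I_+(x_2)+I_-(x_2),
$$
with $I_\pm(x_2)\defeq \tfrac12 \int_\R \|(\pm 1,\pm t)\|^{-2}_{(P(x_2)-\tau(x_2))^{\circ,p}}\,dt$. Lemma \ref{normTripleConvexity} (to be proved in \S\ref{ConvNormSection}), combined with Ball's theorem (Theorem~\ref{BallIneq}) to transfer convexity of the $L^p$-support function $h_{p,K}$ to convexity at the near-norm level, gives that $(x_2,t)\mapsto \|(\pm 1,\pm t)\|_{(P(x_2)-\tau(x_2))^{\circ,p}}$ is jointly convex along the coupled family. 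Its square is therefore jointly convex and its reciprocal square is $-1$-concave; applying Borell--Brascamp--Lieb in the form of Lemma \ref{-1concaveLemma} to the inner $t$-integral shows each $I_\pm$ is $-1$-concave in $x_2$, i.e., $1/I_\pm(x_2)$ is convex on $[\xi_\ell,\xi_r]$.

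To sum these two convex reciprocals and obtain convexity of the reciprocal of the \emph{full} polar volume, invoke the balancing Proposition \ref{BalancingProp} (\S\ref{DecompSection}): for a specific smooth choice $\tau(x_2)=(x_P(x_2),y_P(x_2))$ the ratio $I_+(x_2)/I_-(x_2)$ is constant on $[\xi_\ell,\xi_r]$. Then $I_++I_-=(1+c)I_+$ for some constant $c>0$, so $1/(I_++I_-)$ is a positive constant multiple of $1/I_+$ and hence itself convex. As a convex function on $[\xi_\ell,\xi_r]$ it attains its maximum at an endpoint $x_2^\ast\in\{\xi_\ell,\xi_r\}$, where the corresponding polar volume is \emph{smallest}. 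Setting $P'\defeq P(x_2^\ast)$ and $(x_P,y_P)\defeq \tau(x_2^\ast)$ produces the desired $(m-1)$-vertex polytope with $\M_p(P'-(x_P,y_P))\leq \M_p(P)$, and the displayed ``in particular'' inequality on infima follows by taking infima over all $m$-vertex polytopes and over translations on both sides.

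The principal obstacle, I expect, is Proposition \ref{BalancingProp}: in the symmetric case of Proposition \ref{symSlidingProp} central symmetry gives $I_+=I_-$ for free, but here one must set up and solve an implicit-function problem for $\tau(x_2)$, proving existence, uniqueness, and smoothness on the full sliding interval $[\xi_\ell,\xi_r]$, using the decomposition theory of \S\ref{DecompSection} together with existence and uniqueness of the $L^p$-Santal\'o point (cf.~\eqref{LpPointDef}). A secondary technical hurdle is Lemma \ref{normTripleConvexity}: because the near-norm $\|\cdot\|_{K^{\circ,p}}$ is only positively homogeneous in the non-symmetric setting, convexity of $h_{p,K}$ cannot be transferred directly to convexity of the near-norm and must be routed through Ball's theorem.
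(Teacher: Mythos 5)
Your overall architecture matches the paper's: slide, couple with a translation, and exploit convexity of the reciprocal polar volume via Lemmas~\ref{normTripleConvexity}, \ref{-1concaveLemma} and the balancing Proposition~\ref{BalancingProp}, then pass from endpoint to interior by convexity. But the concrete implementation of the balancing step has a gap that the paper's proof is specifically designed to avoid.

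You posit a globally defined translation curve $\tau(x_2)=(x_P(x_2),y_P(x_2))$ along which $I_+(x_2)/I_-(x_2)$ is constant on the whole interval, and then claim $1/I_\pm(x_2)$ is convex in $x_2$. This step does not follow from Corollary~\ref{ConvexityOfI}: that result gives joint convexity of $(x_2,x_0)\mapsto 1/I_\pm(x_2,x_0)$, but the restriction of a jointly convex function to a non-affine curve $\{(x_2,\tau(x_2))\}$ need not be convex. Moreover, Proposition~\ref{BalancingProp} is not a statement about a single continuous translation making the ratio constant across the interval; it is a \emph{two-point} statement: for each fixed pair $x_2,x_2'\in[\xi_\ell,\xi_r]$ there is a single scalar $x_0=x_0(x_2,x_2')$ such that the ratio agrees at the two translates $P(x_2)-(x_0,0)$ and $P(x_2')-(-x_0,0)$. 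The paper then proves \emph{midpoint} convexity of $x_2\mapsto 1/|P(x_2)^{*,p}|$: the translations $(x_0,0)$ and $(-x_0,0)$ average to $(0,0)$, so applying joint convexity at the points $(x_2,x_0)$, $(x_2',-x_0)$ with midpoint $((x_2+x_2')/2,0)$ stays inside the convexity hypotheses of Corollary~\ref{ConvexityOfI}. The algebraic step \eqref{FinalPEq} then uses the balanced ratio to produce
\begin{equation*}
\frac{2}{|P((x_2+x_2')/2)^{\circ,p}|}\leq \frac{1}{|(P(x_2)-(x_0,0))^{\circ,p}|}+\frac{1}{|(P(x_2')-(-x_0,0))^{\circ,p}|},
\end{equation*}
and the final ingredient you omit is the minimizing property of the $L^p$-Santal\'o point: translating $P((x_2+x_2')/2)$ so that $s_p=0$ identifies the left-hand side with $2/|P((x_2+x_2')/2)^{*,p}|$, while on the right one bounds $|(P(x_2)\mp(x_0,0))^{\circ,p}|\geq |P(x_2)^{*,p}|$ by \eqref{LpPointDef}. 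This is precisely what makes $x_2\mapsto 1/|P(x_2)^{*,p}|$ convex (Lemma~\ref{SlidingLemma}), from which Proposition~\ref{SlidingProp} follows by evaluating at the endpoints. Your ``in particular'' deduction and the reduction to $\mathcal P$ via $GL(2,\R)$-invariance are correct.
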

The translation $(x_P, y_P)$ in Proposition \ref{SlidingProp} depends only on the initial polytope
$P$ and the final value of the sliding parameter ($\xi_\ell$ or
$\xi_r$, that in turn are
also determined by $P$). It is selected to maintain a constant ratio of volumes for the {\it $L^p$-polar} of the polytope in the two half-spaces at the starting and ending positions (Proposition \ref{BalancingProp}). 
One can also think
of the sliding process
as a continuous
one-parameter family of polytopes and the translation 
$(x_P(t),y_P(t))$
as a continuous curve
with $(x_P(x_2),y_P(x_2))=(0,0)$
chosen so that the aforementioned ratio of volumes is constant. 
This is unnecessary for symmetric polytopes, since then the $L^p$-polar is symmetric and the ratio
is always constant equal to 1
(see the end of the proof
of Lemma \ref{suitableVolumeLemma}).

\section{\texorpdfstring{Lower bound for $\M_p$ with symmetry}{Lower bound for Mp with symmetry }}
\label{MpSymSection}

In this section we prove Theorem \ref{2DimpMahlerSym}
using Mahler sliding. Following the discussion in \S\ref{SymSlidingSection}
it suffices to restrict attention to symmetric polytopes 
in the class
\begin{equation}
\label{SClassEq}
\mathcal{S}=
\left\{
S= \co\{(x_1, y_1), \ldots, (x_m, y_m), -(x_1,y_1),\ldots,  -(x_m, y_m) \}
\;\middle|\;
\begin{alignedat}{2}
  & m\in\mathbb{N} \\
  & (x_i, y_i) \text{ are extreme points} \\
  & \text{counter-clockwise oriented}\\
  & x_3\leq x_2\leq x_1 \\
  & 0\leq y_1= y_3 < y_2
\end{alignedat}
\right\}.
\end{equation}

Proposition \ref{symSlidingProp} is an immediate corollary of the next lemma 
whose proof, in turn, occupies most of this section
(concluded in \S \ref{SymFinishingSection}). 
\begin{lemma}
\label{SymSlidingLemma}
    Let $p\in (0,\infty]$. For $S\in\mathcal{S}$, 
     $   x_2 \mapsto {|S(x_2)^{\circ,p}|}^{-1}
    $
    is convex.
\end{lemma}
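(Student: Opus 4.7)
The plan is to apply the symmetric volume formula from Lemma~\ref{suitableVolumeLemma},
\begin{equation*}
|S(x_2)^{\circ,p}| = \int_\R \frac{dt}{\|(1,t)\|^2_{S(x_2)^{\circ,p}}},
\end{equation*}
and extract the convexity of its reciprocal from a Borell--Brascamp--Lieb argument with respect to the integration variable $t$. The argument proceeds in three conceptual stages: $L^p$-support function, then near-norm, then the full integral over $t$.

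For the support function stage, I would decompose $S(x_2)=\widehat S\cup\Delta(x_2)\cup(-\Delta(x_2))$ following \eqref{hatS}--\eqref{Deltax2Def}. Because $|S(x_2)|$ is constant by \eqref{Sx2constEq}, the $x_2$-dependence of the defining integral \eqref{hpKDef} for $h_{p,S(x_2)}(y)$ is carried entirely by the triangular exponential moments $\int_{\pm\Delta(x_2)}e^{p\langle x,y\rangle}\,dx$. The key technical computation is to show that these moments combine to give a joint convexity property for $h_{p,S(x_2)}$ under the sliding $x_2$, that is, that $(x_2,y)\mapsto h_{p,S(x_2)}(y)$ (or an appropriate transform of it) is jointly convex on the admissible range. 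This reduces first to a triangle-level assertion (Claim~\ref{TriangleClaim}) and then, upon summing the contributions from $\widehat S$, $\Delta(x_2)$, and $-\Delta(x_2)$, to the polytope-level statement (Lemma~\ref{hpSymCor}).

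For the near-norm stage, use the radial representation \eqref{KcircpNormEq} of $\|(1,t)\|_{S(x_2)^{\circ,p}}$ as an integral of $e^{-h_{p,S(x_2)}(r\cdot)}$ in the radial parameter $r$. Feeding the convexity from the previous stage into Ball's theorem (Theorem~\ref{BallIneq}) upgrades it to joint convexity of $(x_2,t)\mapsto\|(1,t)\|_{S(x_2)^{\circ,p}}$; equivalently, the integrand $F(x_2,t)\defeq\|(1,t)\|_{S(x_2)^{\circ,p}}^{-2}$ is jointly $(-1/2)$-concave in $(x_2,t)$. This is the content of Lemma~\ref{symNormConvexity}.

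For the final integration stage, apply the Borell--Brascamp--Lieb inequality (Lemma~\ref{-1concaveLemma}) to integrate out the one-dimensional variable $t$. With joint $(-1/2)$-concavity and integration dimension $m=1$, the output concavity index is $\gamma/(1+m\gamma)=(-1/2)/(1/2)=-1$. Hence $x_2\mapsto\int_\R F(x_2,t)\,dt=|S(x_2)^{\circ,p}|$ is $(-1)$-concave in $x_2$, which is exactly the assertion that $x_2\mapsto|S(x_2)^{\circ,p}|^{-1}$ is convex. The main obstacle will be the support function stage: the lack of homogeneity of $h_{p,K}$ (contrasted with $h_K=h_{\infty,K}$) and the accompanying loss of duality emphasized in the introduction prevent one from working directly at the level of $S(x_2)^{\circ,p}$; instead the triangular moments $\int_{\Delta(x_2)}e^{p\langle x,y\rangle}\,dx$ must be analyzed via an explicit parametrization of $\Delta(x_2)$, with Ball's theorem serving as the bridge back to the norm level.
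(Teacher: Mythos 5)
Your proposal is correct and matches the paper's own proof essentially step for step: Lemma~\ref{suitableVolumeLemma} to express the volume as $\int_\R \|(1,t)\|_{S(x_2)^{\circ,p}}^{-2}\,dt$, then Claim~\ref{TriangleClaim} and Lemma~\ref{hpSymCor} at the $L^p$-support level, Ball's Theorem~\ref{BallIneq} to pass to joint convexity of the near-norm (Lemma~\ref{symNormConvexity}), and finally Lemma~\ref{-1concaveLemma} to integrate out $t$ and obtain $(-1)$-concavity. The only imprecision is that plain joint convexity of $(x_2,y)\mapsto h_{p,S(x_2)}(y)$ is not by itself the input to Ball's theorem; what is actually proved (and what your parenthetical ``appropriate transform'' gestures at) is the stronger scaled inequality involving $r,s,t$ with $\tfrac{2}{r}=\tfrac1t+\tfrac1s$, which specializes to ordinary joint convexity at $r=s=t=1$.
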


\begin{proof}[Proof of Proposition \ref{symSlidingProp}]
    $\M_p$ is invariant under the action of $GL(2,\R)$ \cite[Lemma 4.6]{BMR23}, 
    and, in particular, under rotation.  We may therefore assume that $S\in\mathcal{S}$. Let $\xi_\ell, \xi_r\in \R$, be as in Figure \ref{SymSlidingFig},
    so there is a unique $\lambda\in (0,1)$ with $x_2= (1-\lambda)\xi_\ell+ \lambda\xi_r$. By Lemma \ref{SymSlidingLemma}, $x_2\mapsto |S(x_2)^{\circ,p}|^{-1}$ is convex, hence
    \begin{equation*}
        \frac{1}{|S(x_2)^{\circ,p}|}\leq \frac{1-\lambda}{|S(\xi_\ell)^{\circ, p}|}+ \frac{\lambda}{|S(\xi_r)^{\circ,p}|}\leq \max\left\{\frac{1}{|S(\xi_\ell)^{\circ,p}|}, \frac{1}{|S(\xi_r)^{\circ,p}|}\right\}, 
    \end{equation*}
    or $|S(x_2)^{\circ,p}|\geq \min\{|S(\xi_r)^{\circ,p}|, |S(\xi_r)^{\circ,p}|\}$. This concludes the proof as $S(\xi_\ell)$ and $S(\xi_r)$ are symmetric polytopes with $2m-2$ vertices.
    \end{proof}

\noindent
\begin{proof}[Proof of Lemma \ref{SymSlidingLemma}]
A well-known corollary (Lemma \ref{-1concaveLemma}) of the Borell--Brascamp--Lieb inequality
is that the one-variable function
$x_2\mapsto \bigg({\int_{\R}\frac{\dif y}{f(y,x_2)^2}}\bigg)^{-1}$
is convex if the two-variable function $(y,x_2)\mapsto f(y,x_2)$ is convex.
Thus, Lemma \ref{SymSlidingLemma} follows from
Lemma \ref{suitableVolumeLemma} 
combined with the crucial Lemma \ref{symNormConvexity} for $x=1$ below.
\end{proof}

\begin{lemma}
\label{symNormConvexity}
For $p>0$, $S\in \mathcal{S}$ and $x\in \R$,
$(y,x_2)\mapsto \|(x,y)\|_{S(x_2)^{\circ,p}}$
is convex. 
\end{lemma}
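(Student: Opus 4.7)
The plan is to deduce joint convexity of the near-norm from joint convexity of the $L^p$-support function via Ball's theorem.

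By the definition \eqref{KcircpNormEq} of the near-norm, in dimension two
$$\|v\|_{S(x_2)^{\circ,p}}^{-2}=\int_0^\infty r\,e^{-h_{p,S(x_2)}(rv)}\,dr,$$
and the stated function is the restriction of $(v,x_2)\mapsto\|v\|_{S(x_2)^{\circ,p}}$ to the affine subspace $\{v=(x,y):y\in\R\}$ with $x$ held fixed. Since restrictions of convex functions to affine subspaces are convex, it suffices to prove joint convexity of $(v,x_2)\mapsto\|v\|_{S(x_2)^{\circ,p}}$ on $\R^2\times\R$.

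First, I would invoke Lemma \ref{hpSymCor}---the $L^p$-support-function analog of the present lemma, proved earlier in the section---to conclude that $(v,x_2)\mapsto h_{p,S(x_2)}(v)$ is jointly convex on $\R^2\times\R$; equivalently, $F(v,x_2)\defeq e^{-h_{p,S(x_2)}(v)}$ is jointly log-concave on $\R^2\times\R$. Then I would apply Ball's theorem (Theorem \ref{BallIneq}) in its parametric form: for a jointly log-concave $F:\R^n\times\R^k\to\R_{\geq 0}$, the functional
$$(y,t)\mapsto\left(\int_0^\infty r^{n-1}F(ry,t)\,dr\right)^{-1/n}$$
is jointly convex on $\R^n\times\R^k$. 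Taking $n=2$, $k=1$, and the $F$ above, this yields joint convexity of $(v,x_2)\mapsto\|v\|_{S(x_2)^{\circ,p}}$, and restriction to $v=(x,y)$ gives the lemma.

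The substantive work lies in Lemma \ref{hpSymCor}. I expect that result to be obtained by decomposing $S(x_2)=\widehat S\cup\Delta(x_2)\cup(-\Delta(x_2))$ via \eqref{Deltax2Def}--\eqref{hatS}; using that $|S(x_2)|$ is constant in $x_2$ (by \eqref{Sx2constEq}) and the central symmetry of $S(x_2)$, the two antipodal triangle integrals combine as $\int_{\Delta(x_2)}\cosh(p\langle u,v\rangle)\,du$, so the joint-convexity claim for $h_{p,S(x_2)}(v)$ reduces to a corresponding statement for the single moving triangle, which is precisely Claim \ref{TriangleClaim}. Once Lemma \ref{hpSymCor} is in hand, the present lemma is a one-line consequence of Ball's theorem, and the main obstacle is really at the triangle level rather than here.
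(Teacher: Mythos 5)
Your outline is on the right track (support-function inequality $\Rightarrow$ Ball's theorem $\Rightarrow$ near-norm convexity), but there is a genuine gap in the middle step. You describe Lemma \ref{hpSymCor} as ``joint convexity of $(v,x_2)\mapsto h_{p,S(x_2)}(v)$,'' i.e.\ as joint log-concavity of $F(v,x_2)=e^{-h_{p,S(x_2)}(v)}$, and then appeal to a ``parametric form'' of Ball's theorem that turns joint log-concavity into joint convexity of the near norm. Neither half of this is what the paper does, and the second half does not follow from Theorem \ref{BallIneq}. To apply Theorem \ref{BallIneq} you must verify, for all $r,t,s>0$ with $\frac{2}{r}=\frac{1}{t}+\frac{1}{s}$, the inequality
\begin{equation*}
h_{p,S\left(\frac{x_2+x_2'}{2}\right)}\!\left(r\Big(x,\tfrac{y+y'}{2}\Big)\right)\leq \frac{s}{t+s}\,h_{p,S(x_2)}\big(t(x,y)\big)+\frac{t}{t+s}\,h_{p,S(x_2')}\big(s(x,y')\big).
\end{equation*}
If you try to get this from joint convexity of $h_{p,S(x_2)}(v)$ with weights $\lambda=\frac{t}{t+s}$, the scaled second arguments do recombine correctly, $\frac{s}{t+s}\,t(x,y)+\frac{t}{t+s}\,s(x,y')=r\big(x,\frac{y+y'}{2}\big)$, but the sliding parameter comes out as $\frac{s}{t+s}x_2+\frac{t}{t+s}x_2'$, not $\frac{x_2+x_2'}{2}$, and these agree only when $s=t$. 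This mismatch is exactly why the paper proves Claim \ref{TriangleClaim} and Lemma \ref{hpSymCor} as the strictly stronger three-parameter inequality above (the paper flags this explicitly just before Claim \ref{TriangleClaim}: setting $r=s=t=1$ recovers joint convexity, which is weaker). With the stronger Lemma \ref{hpSymCor} in hand, the paper then applies Theorem \ref{BallIneq} with $q=2$ directly to the functions $H,F,G$ restricted to the ray through $(1,\cdot)$, obtaining midpoint convexity of $(y,x_2)\mapsto\|(1,y)\|_{S(x_2)^{\circ,p}}$; there is no intermediate ``joint convexity of $(v,x_2)\mapsto\|v\|$'' step, and no separately stated parametric version of Ball's theorem.

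Your final paragraph correctly identifies the structure underlying Lemma \ref{hpSymCor} (decompose $S(x_2)=\widehat S\cup\Delta(x_2)\cup(-\Delta(x_2))$ and reduce to a single moving triangle, i.e.\ Claim \ref{TriangleClaim}); the $\cosh$ combination of the antipodal triangles is a fine observation, though the paper instead uses Corollary \ref{hpMultipleDecomp} together with Lemma \ref{auxLemma}. But the reduction you propose to carry over from the triangle---and the inequality you would need at the end---must be the full three-parameter version, not mere joint convexity, or Ball's theorem cannot be applied.
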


\noindent
The proof of Lemma \ref{symNormConvexity} occupies the rest of this section.
It is a joint convexity property for norms of $L^p$-polars (varying in a
specific 1-parameter family, coming from Mahler sliding).
We lean on an idea
used in \cite{BMR23}:
according to a classical theorem of Ball (Theorem \ref{BallIneq}), convexity-type
results for norms of the $L^p$-polars can be obtained by verifying a pointwise inequality on the $L^p$-support functions. This is first done for triangles (Claim \ref{TriangleClaim}), and then for $S(x_2)$ by decomposing it (Figure \ref{SymSlidingFig})
into its moving antipodal triangles and non-moving part (Lemma \ref{hpSymCor}).  

\subsection{\texorpdfstring{$L^p$-support function: reduction to the sliding triangles}{Lp-support function: reduction to the moving triangles}}
\label{LpsupportReductionSection}

It will be necessary to decompose the polytope into its moving and non-moving parts. Therefore, we require an understanding of how the $L^p$-support function behaves under disjoint unions. 
\begin{claim}
\label{hpDecomp}
    Let $p>0$ and $K\subset \R^n$ be a convex body. For compact bodies (possibly not connected) $L, C\subset K$ with $K= L\cup C$ and $\mathrm{int}\,L\cap \mathrm{int}\,C= \emptyset$, 
    \begin{equation*}
        e^{p h_{p,K}(y)}= \frac{|L|}{|K|} e^{p h_{p,L}(y)}+ \frac{|C|}{|K|} e^{ph_{p, C}(y)}. 
    \end{equation*}
\end{claim}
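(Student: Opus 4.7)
The plan is to simply unpack the definition of the $L^p$-support function from \eqref{hpKDef} and invoke additivity of the Lebesgue integral. By \eqref{hpKDef},
\[
e^{p h_{p,K}(y)} = \int_K e^{p\langle x,y\rangle}\,\frac{dx}{|K|}.
\]
Since $L, C \subset K$ are compact with $\mathrm{int}\,L\cap \mathrm{int}\,C = \emptyset$ and $L\cup C = K$, the intersection $L \cap C$ lies in the (shared) topological boundary, hence has Lebesgue measure zero, so we may split the integral as
\[
\int_K e^{p\langle x,y\rangle} dx \;=\; \int_L e^{p\langle x,y\rangle} dx \;+\; \int_C e^{p\langle x,y\rangle} dx.
\]

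The second step is to renormalize each piece so that it matches the definition of $h_{p,L}$ and $h_{p,C}$ respectively. Multiplying and dividing the $L$-integral by $|L|$, and likewise for $C$ by $|C|$, yields
\[
\int_L e^{p\langle x,y\rangle} dx = |L|\, e^{p h_{p,L}(y)}, \qquad \int_C e^{p\langle x,y\rangle} dx = |C|\, e^{p h_{p,C}(y)}.
\]
Dividing the displayed identity through by $|K|$ then gives the claim.

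There is no real obstacle here; the only point requiring a word of justification is that the overlap $L \cap C$ contributes nothing to the integral, which follows from the disjoint-interior hypothesis together with $L, C$ being compact (so $L \cap C \subset \partial L \cap \partial C$, a set of measure zero for convex/compact bodies). Note that the claim does not require $L$ or $C$ to be convex or even connected, and it holds in every dimension, which is exactly what will be needed later when $S(x_2)$ is decomposed into its moving triangles $\pm\Delta(x_2)$ and the fixed sandwich $\widehat S$ from \eqref{Deltax2Def}--\eqref{hatS}.
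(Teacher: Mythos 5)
Your argument is essentially identical to the paper's: unpack the definition of $h_{p,K}$ from \eqref{hpKDef}, split the integral over $K=L\cup C$, and renormalize each piece. The paper simply writes this chain of equalities without remarking on the measure-zero overlap, which your extra sentence addresses. One small inaccuracy in that remark: under the hypotheses one gets $L\cap C\subset \partial L\cup\partial C$, not $\partial L\cap\partial C$ (a point of $L\cap C$ lying in $\mathrm{int}\,L$ must be in $\partial C$, and vice versa, but it need not lie on both boundaries). Since a union of two Lebesgue-null sets is still null, this does not affect the conclusion, and your proof is correct.
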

\begin{proof}
By definition, 
\begin{equation*}
    \begin{aligned}
        e^{ph_{p,K}(y)}&= \int_{K}e^{p\langle x,y\rangle}\frac{dx}{|K|} \\
        &= \int_L e^{p\langle x,y\rangle}\frac{dx}{|K|} + \int_C e^{p\langle x,y\rangle}\frac{dx}{|K|} \\
        &= \frac{|L|}{|K|} \int_{L} e^{p\langle x,y\rangle}\frac{dx}{|L|} + \frac{|C|}{|K|}\int_C e^{p\langle x,y\rangle} \frac{dx}{|C|} \\
        &= \frac{|L|}{|K|} e^{p h_{p, L}(y)}+ \frac{|C|}{|L|} e^{p h_{p, C}(y)}. 
    \end{aligned}
\end{equation*}
\end{proof}

Repeated applications of Claim \ref{hpDecomp} give the following:
\begin{corollary}
\label{hpMultipleDecomp}
        Let $p>0$ and $K\subset \R^n$ be a convex body. For 
$m\in\mathbb{N}$ and compact bodies (possibly not connected) $L_1, \ldots, L_m\subset K$ with $K= L_1\cup L_2\cup\ldots\cup L_m$ and $\mathrm{int}\,L_i\cap \mathrm{int}\,L_j= \emptyset$ for all $i \neq j$, 
    \begin{equation*}
        e^{p h_{p,K}(y)}= \frac{|L_1|}{|K|} e^{p h_{p,L_1}(y)}+ \frac{|L_2|}{|K|} e^{ph_{p, L_2}(y)}+ \cdots + \frac{|L_m|}{|K|} e^{p h_{p, L_m}(y)}. 
    \end{equation*}
\end{corollary}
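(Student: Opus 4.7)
The plan is to argue by induction on $m$, bootstrapping Claim \ref{hpDecomp}. The base case $m=2$ is precisely Claim \ref{hpDecomp}, and the degenerate $m=1$ case is trivial since both sides reduce to $e^{p h_{p,K}(y)}$. For the inductive step, given a decomposition $K = L_1 \cup L_2 \cup \cdots \cup L_m$ with pairwise disjoint interiors, group the last $m-1$ pieces into $C \defeq L_2 \cup \cdots \cup L_m$. Then $C$ is a (possibly disconnected) compact subset of $K$ satisfying $K = L_1 \cup C$ and $\mathrm{int}\,L_1 \cap \mathrm{int}\,C = \emptyset$, so Claim \ref{hpDecomp} applies and yields
\[
e^{p h_{p,K}(y)} \;=\; \frac{|L_1|}{|K|}\, e^{p h_{p,L_1}(y)} \;+\; \frac{|C|}{|K|}\, e^{p h_{p,C}(y)}.
\]
Applying the inductive hypothesis to $C$ (expanded as $L_2 \cup \cdots \cup L_m$) rewrites $e^{p h_{p,C}(y)}$ as $\sum_{i=2}^m \frac{|L_i|}{|C|} e^{p h_{p,L_i}(y)}$, and substituting this in — together with the volume identity $|C| = \sum_{i=2}^m |L_i|$, valid because the interiors of the $L_i$ are pairwise disjoint — collapses the constants $\frac{|C|}{|K|} \cdot \frac{|L_i|}{|C|}$ to $\frac{|L_i|}{|K|}$ and gives the claimed formula.

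The only subtle point is that the statement as phrased asks $K$ to be a convex body, whereas the inductive hypothesis is applied with $C$ (generally non-convex) playing the role of the ambient body. However, inspection of the proof of Claim \ref{hpDecomp} shows convexity of the ambient body is never used: the argument relies only on positive Lebesgue measure and additivity of the integral against the definition \eqref{hpKDef}. Hence Claim \ref{hpDecomp}, and the induction built on it, extends verbatim to any compact body of positive volume, and there is no genuine obstacle.

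Alternatively, one can short-circuit the induction by going directly to the definition:
\[
e^{p h_{p,K}(y)} \;=\; \int_K e^{p\langle x,y\rangle} \frac{dx}{|K|} \;=\; \sum_{i=1}^m \int_{L_i} e^{p\langle x,y\rangle}\, \frac{dx}{|K|} \;=\; \sum_{i=1}^m \frac{|L_i|}{|K|}\, e^{p h_{p,L_i}(y)},
\]
where the middle equality uses $|L_i \cap L_j| = 0$ for $i \neq j$ (a consequence of the disjoint-interior hypothesis) to invoke additivity of Lebesgue integration over the partition. Either path makes the corollary a routine unwinding of Claim \ref{hpDecomp}, so I do not anticipate a hard step.
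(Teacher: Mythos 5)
Your proof is correct and matches the paper's intent exactly: the paper gives no formal argument, only the remark that the corollary follows by ``repeated applications of Claim \ref{hpDecomp},'' which is precisely the induction you carry out. Your observation that the inductive hypothesis is invoked with a non-convex ambient set $C$, and that this is harmless because the proof of Claim \ref{hpDecomp} only uses additivity of the integral and never convexity of $K$, is a genuine (if minor) point the paper glosses over; the direct computation you give at the end sidesteps it entirely and is arguably the cleanest way to state the proof.
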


\subsection{\texorpdfstring{Joint `convexity' of the $L^p$-support for sliding triangles}{Joint convexity of the Lp-support for sliding triangles}}
\label{SymTriangleSection}

The following `convexity' property is precisely what is needed for the application of Ball's theorem (Theorem \ref{BallIneq}). 
 It is not the same as the convexity of
 \begin{equation}
 \label{x2yhp}
 (x_2,y)\mapsto h_{p,\Delta(x_2)}(x,y)
 \end{equation} 
 but rather a {\it stronger} property (put $r=s=t=1$ below to get convexity of \eqref{x2yhp}) that
 leads, via Ball's theorem, to (joint in the sliding parameter) convexity of the associated norm, as stated in Lemma
\ref{symNormConvexity}. 
\begin{claim}
\label{TriangleClaim}
    Let $p>0$ and let 
    $\Delta(x_2)$ be the triangle as in \eqref{Deltax2Def}.
    For $r,t, s>0$ with $\frac{2}{r}= \frac{1}{t}+ \frac{1}{s}$, 
\begin{equation}
\label{Claim4.5Eq}
\begin{aligned}
    &h_{p,\Delta((x_2+x_2')/2)}\left(r\Big(x, \frac{y + y'}{2}\Big)\right)\leq \frac{s}{ t+ s} h_{p, \Delta(x_2)}\big(t(x,y)\big)+ \frac{t}{ t+ s} h_{p, \Delta(x_2')}\big(s(x,y')\big),
\end{aligned}
\end{equation}
for all $x_2, x_2', x, y,y'\in \R$. 
\end{claim}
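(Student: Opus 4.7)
The plan is to rewrite each side in terms of the integral definition $h_{p,K}(v)=\tfrac{1}{p}\log\int_K e^{p\langle u,v\rangle}\tfrac{du}{|K|}$, to parametrize all three triangles $\Delta(x_2),\Delta(x_2'),\Delta((x_2+x_2')/2)$ by a common simplex via barycentric coordinates, and then apply H\"older's inequality. Since each of the three triangles has base $[(x_1,y_1),(x_3,y_1)]$ and apex at height $y_2$, they share the common area $A=\tfrac{1}{2}(x_1-x_3)(y_2-y_1)$, and the associated change of variables has constant Jacobian $2A$ independent of the sliding parameter.

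The key observation is the following joint affine structure. Under the parametrization
\[
(\alpha,\beta)\in\Omega:=\{(\alpha,\beta):\alpha,\beta\ge 0,\ \alpha+\beta\le 1\}\longmapsto \alpha(x_1,y_1)+\beta(x_*,y_2)+(1-\alpha-\beta)(x_3,y_1),
\]
the inner product with a test vector $(x,y)$ equals
\[
\phi_{x_*}(\alpha,\beta;x,y):=\bigl(\alpha x_1+\beta x_*+(1-\alpha-\beta)x_3\bigr)x+\bigl(y_1+\beta(y_2-y_1)\bigr)y,
\]
which is jointly affine in $(x_*,y)$ for fixed $(\alpha,\beta,x)$. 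In particular,
\[
\phi_{(x_2+x_2')/2}\bigl(\alpha,\beta;x,\tfrac{y+y'}{2}\bigr)=\tfrac{1}{2}\phi_{x_2}(\alpha,\beta;x,y)+\tfrac{1}{2}\phi_{x_2'}(\alpha,\beta;x,y').
\]
This is precisely where the geometry of Mahler sliding (pure horizontal motion of the apex, preserving base, height, and area) enters the argument.

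Setting $a:=s/(t+s)$ and $b:=t/(t+s)$, the hypothesis $\tfrac{2}{r}=\tfrac{1}{t}+\tfrac{1}{s}$ gives $a+b=1$ and $at=bs=r/2$, so
\[
pr\,\phi_{(x_2+x_2')/2}\bigl(\alpha,\beta;x,\tfrac{y+y'}{2}\bigr)=a\cdot pt\,\phi_{x_2}(\alpha,\beta;x,y)+b\cdot ps\,\phi_{x_2'}(\alpha,\beta;x,y').
\]
Exponentiating, integrating over $\Omega$, and applying H\"older's inequality with the conjugate exponents $1/a$ and $1/b$ yields
\[
\int_{\Omega}e^{pr\phi_{(x_2+x_2')/2}}\,d\alpha\,d\beta\le \Bigl(\int_{\Omega}e^{pt\phi_{x_2}}\,d\alpha\,d\beta\Bigr)^{a}\Bigl(\int_{\Omega}e^{ps\phi_{x_2'}}\,d\alpha\,d\beta\Bigr)^{b}.
\]
Taking $\tfrac{1}{p}\log$ of both sides gives \eqref{Claim4.5Eq}; the common normalization constants arising from the Jacobian $2A$ and the volume $A$ combine trivially via $a+b=1$. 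I expect no substantive obstacle here: the claim is structured exactly to feed into H\"older once the barycentric parametrization is chosen, and the only arithmetic input is the identity $at=bs=r/2$, which is precisely the content of the constraint $\tfrac{2}{r}=\tfrac{1}{t}+\tfrac{1}{s}$.
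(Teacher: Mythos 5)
Your proof is correct, and it takes a genuinely different route from the paper's. The paper computes $e^{p h_{p,\Delta(x_2)}(x,y)}$ by iterated Cartesian integration, integrating out the horizontal variable first; this produces a one-dimensional integral whose integrand contains the factor $J(x,v)=\frac{\sinh\!\big(px\frac{x_1-x_3}{2}(1-\frac{v-y_1}{h})\big)}{px}$, and the proof then cites a separate lemma (\cite[Claim 5.19]{BMR23}) that $J$ is log-convex in $x$ before applying H\"older. You instead keep the two-dimensional integral, pull everything back to a fixed reference simplex $\Omega$ by barycentric coordinates, and observe that the pulled-back exponent $\phi_{x_*}(\alpha,\beta;x,y)$ is \emph{jointly affine} in the sliding parameter $x_*$ and the dual variable $y$, while the Jacobian $2A=(x_1-x_3)(y_2-y_1)$ is constant in $x_*$. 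Under the constraint $\frac{2}{r}=\frac{1}{t}+\frac{1}{s}$ (equivalently $at=bs=r/2$ with $a=\frac{s}{t+s}$, $b=\frac{t}{t+s}$) this makes the pointwise relation an exact multiplicative identity, $e^{pr\phi_{(x_2+x_2')/2}}=(e^{pt\phi_{x_2}})^a(e^{ps\phi_{x_2'}})^b$, rather than an inequality, so a single application of H\"older over $\Omega$ finishes the proof; the common factor of $2$ from the Jacobian-to-volume ratio cancels since $2=2^a 2^b$. Your route buys a cleaner and more conceptual argument: it dispenses entirely with the auxiliary log-convexity lemma for $J$, and it isolates exactly where the geometry of Mahler sliding enters (area invariance $\Rightarrow$ $x_*$-independent Jacobian; apex moving parallel to the base $\Rightarrow$ affine dependence on $x_*$).
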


\begin{proof}
We start by deriving a useful formula for $h_{p,\Delta(x_2)}$.
Referrring to Figure \ref{SymSlidingFig} (a), the sides of 
$\Delta(x_2)$ are 
\begin{equation*}
\begin{aligned}
&\hbox{(right side)}\q    &\xi_r(y)= x_1+\frac{x_2-x_1}{y_2-y_1} (y-y_1), \q y\in[y_1,y_2],\cr
&\hbox{(left side)}\q    &\xi_\ell(y)= x_3+ \frac{x_2-x_3}{y_2-y_1}(y-y_1),
\q y\in[y_1,y_2]. 
\end{aligned}
\end{equation*}
Denote by
$$
h:=y_2-y_1
$$
the height of the triangle. 
From \eqref{hpKDef} observe
that $|\Delta(x_2)|$ cancels from both sides
of \eqref{Claim4.5Eq}, so we can assume for the proof
that $|\Delta(x_2)|=1$.
Then,
\begin{equation*}
    \begin{aligned}
        &e^{ph_{p, \Delta(x_2)}(x,y)} \\
        &= \int_{\Delta(x_2)} e^{pxu+ pyv}
        {\dif u\dif v} \\
        &= \int_{v= y_1}^{y_2} e^{pyv} 
        \int_{u=\xi_\ell(v)}^{\xi_r(v)}
        e^{pxu}\dif u{\dif v} \\
        &= \int_{v=y_1}^{y_2} e^{pyv} \frac{e^{px(x_1+ \frac{x_2-x_1}{h}(v-y_1))}- e^{px(x_3+\frac{x_2-x_3}{h}(v-y_1))}}{px} {\dif v} \\
        &= \int_{v=y_1}^{y_2} e^{pyv} e^{px x_2\frac{v-y_1}{h}} \frac{e^{pxx_1 (1-\frac{v-y_1}{h})} - e^{pxx_3(1- \frac{v-y_1}{h})}}{px}{\dif v} \\
        &= \int_{v=y_1}^{y_2} e^{pyv} e^{px x_2\frac{v-y_1}{h}} e^{px\frac{x_1+x_3}{2}(1-\frac{v-y_1}{h})}\frac{e^{px\frac{x_1-x_3}{2} (1-\frac{v-y_1}{h})} - e^{-px\frac{x_1-x_3}{2}(1- \frac{v-y_1}{h})}}{px}{\dif v} \\
        &=  \int_{v=y_1}^{y_2} e^{pyv} e^{px x_2\frac{v-y_1}{h}} e^{px\frac{x_1+x_3}{2}(1-\frac{v-y_1}{h})} \frac{\sinh\left(px\frac{x_1-x_3}{2}(1-\frac{v-y_1}{h})\right)}{px}{\dif v}. 
    \end{aligned}
\end{equation*}
Note that \cite[Claim 5.19]{BMR23}
\begin{equation}    
\label{JlogcxEq}
J(x,v)\defeq \frac{\sinh\left(px\frac{x_1-x_3}{2}(1-\frac{v-y_1}{h})\right)}{px}
\quad \hbox{is log-convex in $x$},
\end{equation}
and that
\begin{equation}
\label{hpDeltaEq}
h_{p,\Delta(x_2)}(x,y)= \frac1p\log\int_{v=y_1}^{y_2} e^{pyv} e^{px x_2\frac{v-y_1}{h}} e^{px\frac{x_1+x_3}{2}(1-\frac{v-y_1}{h})}  J(x,v)
{\dif v}.
\end{equation}

We can now prove \eqref{Claim4.5Eq}.
Let $x_2, x_2', y, y' \in \R$. Note that 
$p,x_1,x_3,y_1,y_2,h$ are all fixed parameters and 
\begin{equation}
\label{rstEq}
    r= \frac{2ts}{t+ s}= \frac{s}{t+ s}t+ \frac{t}{t+ s} s. 
\end{equation}
By \eqref{hpDeltaEq}, \eqref{rstEq}, and \eqref{JlogcxEq}, 
{\allowdisplaybreaks
\begin{align*}
    &h_{p,\Delta((x_2+x_2')/2)}\left(r\Big(x, \frac{y+y'}{2}\Big)\right)\\
    &=\frac1p\log\int_{v=y_1}^{y_2} e^{pv r\frac{y+y'}{2}} e^{prx \frac{x_2+x_2'}{2}\frac{v-y_1}{h}} e^{prx\frac{x_1+x_3}{2}(1-\frac{v-y_1}{h})} J(rx,v)
    {\dif v}
    \\
    &=\frac1p\log\int_{v=y_1}^{y_2} e^{pv (\frac{s}{t+ s}ty+ \frac{t}{t+ s} sy')} e^{px (\frac{s}{t+ s} tx_2+\frac{t}{t+ s} s x_2')\frac{v-y_1}{h}}  \\
    &\hspace{2.4cm} e^{p(\frac{s}{t + s}t+ \frac{t}{t+ s} s)x\frac{x_1+x_3}{2}(1-\frac{v-y_1}{h})} J\left(\frac{s}{t+ s}t x + \frac{t}{t+ s} sx, v\right)
    {\dif v}
    \\
    &\leq \frac1p\log\int_{v=y_1}^{y_2} e^{pv (\frac{s}{t+ s}ty+ \frac{t}{t+ s} sy')} e^{px (\frac{s}{t+ s} t x_2+\frac{t}{t+ s} s x_2')\frac{v-y_1}{h}}  \\
    &\hspace{2.4cm} e^{p(\frac{s}{t+ s}t+ \frac{t}{t+ s} s)x\frac{x_1+x_3}{2}(1-\frac{v-y_1}{h})} J(tx, v)^{\frac{s}{t + s}} J(sx,v)^{\frac{t}{t+s}}
    {\dif v}
    \\
    &= \frac1p\log\int_{v=y_1}^{y_2} \left( e^{pvty} e^{pxt x_2\frac{v-y_1}{h}} e^{ptx\frac{x_1+x_3}{2}(1-\frac{v-y_1}{h})} J(tx, v)\right)^{\frac{s}{t+ s}} \\
    &\hspace{2.35cm}\left( e^{pvsy'} e^{pxs x_2'\frac{v-y_1}{h}} e^{psx\frac{x_1+x_3}{2}(1-\frac{v-y_1}{h})} J(sx,v)\right)^{\frac{t}{t+s}} 
    {\dif v}.
\end{align*}
}
that by H\"older's inequality satisfies,  
{\allowdisplaybreaks
\begin{align*}
 &\leq \frac1p \log\left[\left(\int_{v=y_1}^{y_2} e^{pvty} e^{pxt x_2 \frac{v-y_1}{h}} e^{ptx \frac{x_1+x_3}{2}\big( 1-\frac{v-y_1}{h}\big)} J(tx,v) 
 {dv}
 \right)^{\frac{s}{t+ s}}\right.\\
 &\left.
\hspace{1.6cm}
 \left( \int_{v= y_1}^{y_2} e^{pvsy'} e^{pxs x_2' \frac{v-y_1}{h}} e^{psx\frac{x_1+x_3}{2}\big( 1-\frac{v-y_1}{h}\big)} J(sx, v) 
 {dv}
 \right)^{\frac{t}{t + s}}
 \right]\\
 &= \frac{s}{t+ s} \frac1p \log \left(\int_{v=y_1}^{y_2} e^{pvty} e^{pxt x_2 \frac{v-y_1}{h}} e^{ptx \frac{x_1+x_3}{2}\big( 1-\frac{v-y_1}{h}\big)} J(tx,v) 
 {dv}
 \right) \\
 &+ \frac{t}{t+s} \frac1p \log \left( \int_{v= y_1}^{y_2} e^{pvsy'} e^{pxs x_2' \frac{v-y_1}{h}} e^{psx\frac{x_1+x_3}{2}\big( 1-\frac{v-y_1}{h}\big)} J(sx, v)
 {dv}
 \right) \\
 &= \frac{s}{t+ s} h_{p, \Delta(x_2)}(t(x,y))+ \frac{t}{t + s} h_{p,\Delta(x_2')}(s(x, y')),
\end{align*}
}
i.e., \eqref{Claim4.5Eq} holds.
\end{proof}

\subsection{\texorpdfstring{Joint `convexity' of the $L^p$-support for 
sliding polytopes}{Joint convexity of the Lp-support for 
sliding polytopes}}
\label{SymEntirePolytopeSection}

A consequence of Claim \ref{TriangleClaim} is the
same joint `convexity' for the $L^p$-support function of
the entire 1-parameter family of sliding polytopes $S(x_2)$.

\begin{lemma}
\label{hpSymCor}
    Let $p>0$ and $S(x_2)\in\mathcal{S}$. 
    For $r,t, s>0$ with $\frac{2}{r}= \frac{1}{t}+ \frac{1}{s}$, 
\begin{equation*}
\begin{aligned}
    &h_{p,S\big(\frac{x_2+x_2'}{2}\big)}\left(r\Big(x, \frac{y+y'}{2}\Big)\right)\leq \frac{s}{t+ s} h_{p, S(x_2)}(t(x,y))+ \frac{t}{t+ s} h_{p, S(x_2')}(s(x,y')),
\end{aligned}
\end{equation*}
\end{lemma}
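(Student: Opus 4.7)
The plan is to decompose the sliding polytope into its moving triangular pieces and fixed sandwich part, apply Claim \ref{TriangleClaim} separately on each moving triangle (with $-\Delta(x_2)$ handled via the symmetry $h_{p,-K}(w)=h_{p,K}(-w)$), use mere convexity of $h_{p,\widehat{S}}$ on the fixed part, and then glue the pointwise bounds together using H\"older's inequality for finite convex combinations.

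First I would set up the decomposition. By construction $S(x_2)=\Delta(x_2)\cup\bigl(-\Delta(x_2)\bigr)\cup\widehat S$ with pairwise disjoint interiors, where $\widehat S$ is defined in \eqref{hatS}. Since the base $[(x_1,y_1),(x_3,y_3)]$ is horizontal and $y_2$ is fixed, $|\Delta(x_2)|$ is independent of $x_2$, and by \eqref{Sx2constEq} so is $|S(x_2)|$. Writing $\alpha\defeq|\Delta(x_2)|/|S(x_2)|$ and $\beta\defeq|\widehat S|/|S(x_2)|$, so that $2\alpha+\beta=1$, Corollary \ref{hpMultipleDecomp} gives, for every $w\in\R^2$,
\begin{equation*}
e^{p\,h_{p,S(x_2)}(w)}=\alpha\,e^{p\,h_{p,\Delta(x_2)}(w)}+\alpha\,e^{p\,h_{p,-\Delta(x_2)}(w)}+\beta\,e^{p\,h_{p,\widehat S}(w)}.
\end{equation*}

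Next I would establish the pointwise inequality on each of the three summands. With $r=2ts/(t+s)$, write $\bar x_2\defeq(x_2+x_2')/2$ and $\bar y\defeq(y+y')/2$. Claim \ref{TriangleClaim} applied directly gives
\begin{equation*}
e^{p\,h_{p,\Delta(\bar x_2)}(r(x,\bar y))}\le\Bigl(e^{p\,h_{p,\Delta(x_2)}(t(x,y))}\Bigr)^{\!s/(t+s)}\Bigl(e^{p\,h_{p,\Delta(x_2')}(s(x,y'))}\Bigr)^{\!t/(t+s)}.
\end{equation*}
For the antipodal triangle, the identity $e^{p\,h_{p,-K}(w)}=\int_K e^{-p\langle x,w\rangle}dx/|K|=e^{p\,h_{p,K}(-w)}$ lets me apply Claim \ref{TriangleClaim} at the negated arguments $(-x,-y,-y')$ to obtain the analogous bound with $\Delta$ replaced by $-\Delta$ throughout. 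For the fixed piece $\widehat S$, the identity $tsx/(t+s)\cdot 2=rx$ and $ts(y+y')/(t+s)=r\bar y$ show that $r(x,\bar y)=\frac{s}{t+s}\,t(x,y)+\frac{t}{t+s}\,s(x,y')$, so convexity of $h_{p,\widehat S}$ \cite[Lemma 2.1]{BMR23} yields
\begin{equation*}
e^{p\,h_{p,\widehat S}(r(x,\bar y))}\le\Bigl(e^{p\,h_{p,\widehat S}(t(x,y))}\Bigr)^{\!s/(t+s)}\Bigl(e^{p\,h_{p,\widehat S}(s(x,y'))}\Bigr)^{\!t/(t+s)}.
\end{equation*}

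Finally, plug the three bounds into the decomposition and apply H\"older's inequality for the probability weights $(\alpha,\alpha,\beta)$ with exponents $\alpha_*=s/(t+s)$ and $1-\alpha_*=t/(t+s)$: for positive $b_i,c_i$ and weights $w_i\ge 0$ with $\sum w_i=1$,
\begin{equation*}
\sum_i w_i b_i^{\alpha_*}c_i^{1-\alpha_*}\le\Bigl(\sum_i w_i b_i\Bigr)^{\!\alpha_*}\Bigl(\sum_i w_i c_i\Bigr)^{\!1-\alpha_*}.
\end{equation*}
Taking $b_i=e^{p\,h_{p,X_i(x_2)}(t(x,y))}$ and $c_i=e^{p\,h_{p,X_i(x_2')}(s(x,y'))}$ over $X_1=\Delta$, $X_2=-\Delta$, $X_3=\widehat S$, the right-hand side becomes exactly $e^{p\alpha_* h_{p,S(x_2)}(t(x,y))}\cdot e^{p(1-\alpha_*)h_{p,S(x_2')}(s(x,y'))}$ after reassembling via Corollary \ref{hpMultipleDecomp}. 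Taking $\tfrac1p\log$ of both sides delivers the claimed inequality.

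The only substantive step is Claim \ref{TriangleClaim} itself (already proved); once that is granted, the remaining work is the bookkeeping above, whose single subtlety is verifying that the same weights $\alpha,\alpha,\beta$ apply at $x_2$, $x_2'$, and $\bar x_2$---which is precisely what the $x_2$-independence of $|\Delta(x_2)|$ and $|S(x_2)|$ ensures, and this is what makes H\"older's inequality applicable to the decomposition in the form stated.
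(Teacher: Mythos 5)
Your proposal is correct and follows essentially the same route as the paper: decompose $S(x_2)$ into $\widehat S\cup\Delta(x_2)\cup(-\Delta(x_2))$, use Corollary \ref{hpMultipleDecomp}, apply Claim \ref{TriangleClaim} to the moving triangles and convexity to $\widehat S$, then glue with a three-term H\"older/AM-GM step (the paper's Lemma \ref{auxLemma} is precisely your weighted H\"older inequality). One small improvement in your write-up over the paper is that you explicitly justify why Claim \ref{TriangleClaim} also covers the antipodal triangle $-\Delta(x_2)$, via the reflection identity $h_{p,-K}(w)=h_{p,K}(-w)$; the paper applies the claim to $-\Delta(x_2)$ without comment, implicitly treating it as an instance of the same configuration.
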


\begin{proof}
Let $\Delta(x_2)$ and $\hat S$ be as in \eqref{Deltax2Def}--\eqref{hatS},
with 
$S(x_2)= \widehat{S}\cup \pm\Delta(x_2), 
$
and each having constant volume (recall \eqref{Sx2constEq}).
Write $|S|=|S(x_2)|,\, |\Delta|=|\Delta(x_2)|$ to emphasize the independence from $x_2$. By Corollary \ref{hpMultipleDecomp}, 
\begin{equation}
\label{Sx2DecompEq}
    \begin{aligned}
        &e^{p h_{p,S(x_2)}(x,y)}
        = \frac{|\widehat{S}|}{|S|} e^{ph_{p,\widehat{S}}(x,y)}+ \frac{|\Delta|}{|S|} e^{ph_{p,\Delta(x_2)}(x,y)}+ \frac{|\Delta|}{|S|} e^{p h_{p, -\Delta(x_2)}(x,y)}.
    \end{aligned}
\end{equation}
By \eqref{rstEq},
\begin{equation*}
\begin{aligned}
    r\Big(x, \frac{y+ y'}2\Big) &= \frac{2ts}{t+ s}\frac{(x,y)+ (x, y')}{2} 
    = \frac{s}{t + s} (tx, ty) + \frac{t}{t + s} (sx, sy).
\end{aligned}
\end{equation*}
Thus, by convexity of $h_{p, \widehat{S}}$, 
\begin{equation}
\label{ShatConvexity}
    h_{p,\widehat{S}}\left(r\Big(x, \frac{y+y'}{2}\Big)\right)\leq \frac{s}{t+ s} h_{p,\widehat{S}}(t(x,y))+ \frac{t}{t+ s} h_{p,\widehat{S}}(s(x,y')). 
\end{equation}
Now we can conclude from Claim \ref{TriangleClaim} and the auxiliary Lemma \ref{auxLemma} below. Indeed, by \eqref{Claim4.5Eq}, \eqref{Sx2DecompEq} (used several times), \eqref{ShatConvexity}, and \eqref{3termIneq},
{\allowdisplaybreaks
\begin{align*}
        &e^{p h_{p, S((x_2 + x_2')/2)}(r(x, \frac{y+y'}{2}))}\\
        &=  \frac{|\widehat{S}|}{|S|} e^{p h_{p, \widehat{S}}(r(x, \frac{y+y'}{2})} 
        + \frac{|\Delta|}{|S|} e^{ph_{p,\Delta(x_2/2+ x_2'/2)}(r(x, \frac{y+y'}{2}))} 
        + \frac{|\Delta|}{|S|} e^{ph_{p,-\Delta(x_2/2+ x_2'/2)}(r(x, \frac{y+y'}{2}))} \\
        &\leq \frac{|\widehat{S}|}{|S|} \left( e^{p h_{p,\widehat{S}}(t(x,y))}\right)^{\frac{s}{t+s}} \left( e^{p h_{p, \widehat{S}}(s(x,y'))} \right)^{\frac{t}{t+s}} \\
        & + \frac{|\Delta|}{|S|} \left( e^{p h_{p,\Delta(x_2)}(t(x,y))}\right)^{\frac{s}{t+s}} \left( e^{ph_{p,\Delta(x_2')(s(x,y'))}}\right)^{\frac{t}{t+s}} \\
        & + \frac{|\Delta|}{|S|} \left( e^{p h_{p,-\Delta(x_2)}(t(x,y))}\right)^{\frac{s}{t+s}} \left( e^{ph_{p,-\Delta(x_2')(s(x,y'))}}\right)^{\frac{t}{t+s}} \\
        &=\left(\frac{|\widehat{S}|}{|S|} e^{p h_{p,\widehat{S}}(t(x,y))}\right)^{\frac{s}{t+s}} \left(\frac{|\widehat{S}|}{|S|} e^{p h_{p, \widehat{S}}(s(x,y'))} \right)^{\frac{t}{t+s}} \\
        & + \left(\frac{|\Delta|}{|S|} e^{p h_{p,\Delta(x_2)}(t(x,y))}\right)^{\frac{s}{t+s}} \left(\frac{|\Delta|}{|S|} e^{ph_{p,\Delta(x_2')(s(x,y'))}}\right)^{\frac{t}{t+s}} \\
        & + \left(\frac{|\Delta|}{|S|} e^{p h_{p,-\Delta(x_2)}(t(x,y))}\right)^{\frac{s}{t+s}} \left(\frac{|\Delta|}{|S|} e^{ph_{p,-\Delta(x_2')(s(x,y'))}}\right)^{\frac{t}{t+s}} \\
        &\leq \left( \frac{|\widehat{S}|}{|S|} e^{p h_{p, \widehat{S}}(t(x,y))}+ \frac{|\Delta|}{|S|} e^{ph_{p, \Delta(x_2)}(t(x,y))}+ \frac{|\Delta|}{|S|} e^{p h_{p, -\Delta(x_2)}(t(x,y))} \right)^{\frac{s}{t+s}} \\
        & \hspace{.45cm} \left( \frac{|\widehat{S}|}{|S|} e^{p h_{p, \widehat{S}}(s(x,y'))}+ \frac{|\Delta|}{|S|} e^{ph_{p, \Delta(x_2')}(s(x,y'))}+ \frac{|\Delta|}{|S|} e^{p h_{p, -\Delta(x_2')}(s(x,y'))} \right)^{\frac{t}{t+s}} \\
        &= e^{\frac{s}{t+s} p h_{p, S(x_2)}(t(x,y))} e^{\frac{t}{t+s} ph_{p, S(x_2')}(s(x,y'))}, 
\end{align*}
}
hence the claim.
\end{proof}

\begin{lemma}
\label{auxLemma}
    For $a,b,c,d,f,g>0$ and $\lambda\in(0,1)$, 
    \begin{equation}
    \label{2termIneq}
        a^{1-\lambda} b^\lambda+ c^{1-\lambda}d^\lambda\leq (a+c)^{1-\lambda} (b+d)^\lambda, 
    \end{equation}
    and
    \begin{equation}
    \label{3termIneq}
        a^{1-\lambda}b^\lambda+ c^{1-\lambda}d^\lambda+ f^{1-\lambda}g^\lambda\leq (a+c+f)^{1-\lambda}(b+d+g)^{\lambda}.
    \end{equation}
\end{lemma}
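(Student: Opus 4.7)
Both inequalities are instances of the discrete H\"older inequality with conjugate exponents $p = 1/(1-\lambda)$ and $q = 1/\lambda$, so that $1/p + 1/q = 1$. The plan is therefore to recognize the left-hand sides as an inner product $\sum_i x_i y_i$ with $x_i = (\cdot)^{1-\lambda}$ and $y_i = (\cdot)^\lambda$, apply H\"older, and observe that the resulting norms collapse to the right-hand sides.

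For \eqref{2termIneq}, I would set $x_1 = a^{1-\lambda}$, $x_2 = c^{1-\lambda}$, $y_1 = b^{\lambda}$, $y_2 = d^{\lambda}$, and apply H\"older's inequality
\[
x_1 y_1 + x_2 y_2 \le (x_1^p + x_2^p)^{1/p}(y_1^q + y_2^q)^{1/q}
\]
with $p = 1/(1-\lambda)$, $q = 1/\lambda$. Since $x_i^p = a$ or $c$ and $y_i^q = b$ or $d$, the right-hand side is exactly $(a+c)^{1-\lambda}(b+d)^\lambda$, as required. For \eqref{3termIneq}, the cleanest approach is to repeat the same H\"older argument on three pairs $(a,b), (c,d), (f,g)$; alternatively, one can apply \eqref{2termIneq} once to combine $a^{1-\lambda}b^\lambda + c^{1-\lambda}d^\lambda \le (a+c)^{1-\lambda}(b+d)^\lambda$, then apply \eqref{2termIneq} a second time with the pair $((a+c),(b+d))$ against $(f,g)$ to obtain $(a+c)^{1-\lambda}(b+d)^\lambda + f^{1-\lambda}g^\lambda \le (a+c+f)^{1-\lambda}(b+d+g)^\lambda$, and chain the two estimates.

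An equally short alternative route, avoiding any appeal to H\"older by name, is to normalize and use weighted AM--GM. Dividing \eqref{2termIneq} through by $(a+c)^{1-\lambda}(b+d)^\lambda$, it suffices to show that for $\alpha = a/(a+c)$, $\beta = b/(b+d)$ (and correspondingly for $c,d$),
\[
\alpha^{1-\lambda}\beta^\lambda + (1-\alpha)^{1-\lambda}(1-\beta)^\lambda \le 1,
\]
which follows immediately from the weighted AM--GM inequality $u^{1-\lambda}v^\lambda \le (1-\lambda)u + \lambda v$ applied termwise and summed. The three-term version reduces analogously after normalizing by $(a+c+f)^{1-\lambda}(b+d+g)^\lambda$.

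There is no real obstacle here; the only minor pitfall is making sure the exponents in H\"older are the right pair, namely $(1/(1-\lambda), 1/\lambda)$, so that the $p$-th and $q$-th powers strip off the $1-\lambda$ and $\lambda$ exponents cleanly. I would present the two-term case via H\"older in one line and then derive the three-term case by iteration, keeping the proof essentially two or three lines long.
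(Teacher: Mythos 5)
Your proposal is correct, and your ``equally short alternative route'' (normalize by $(a+c)^{1-\lambda}(b+d)^\lambda$ and sum termwise via weighted AM--GM) is precisely the paper's proof of \eqref{2termIneq}; the paper likewise derives \eqref{3termIneq} by iterating \eqref{2termIneq}, exactly as you suggest. Your primary route is a direct invocation of discrete H\"older's inequality with conjugate exponents $1/(1-\lambda)$ and $1/\lambda$, which is a cleaner one-line presentation if the reader is comfortable citing H\"older in its two- or three-term form, whereas the paper's AM--GM argument is slightly more self-contained since it effectively re-derives the needed case of H\"older from scratch. Both are fully rigorous, and there is no gap; the only thing to check (which you flagged) is that $p=1/(1-\lambda)$ and $q=1/\lambda$ are indeed conjugate, so that the $p$-th and $q$-th powers strip the exponents as desired.
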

\begin{proof}
    By the weighted Arithmetic Mean--Geometric Mean applied twice, 
    \begin{equation*}
        \begin{aligned}
            & \left( \frac{a}{a+c}\right)^{1-\lambda}\left( \frac{b}{b+d}\right)^{\lambda}+ \left( \frac{c}{a+c}\right)^{1-\lambda}\left( \frac{d}{b+d}\right)^\lambda \\
            &\leq (1-\lambda)\frac{a}{a+c}+ \lambda\frac{b}{b+d}+ (1-\lambda)\frac{c}{a+c}+ \lambda\frac{d}{b+d} \\
            &= (1-\lambda) + \lambda =1, 
        \end{aligned}
    \end{equation*}
    hence \eqref{2termIneq}. 

    For \eqref{3termIneq} it is enough to apply \eqref{2termIneq} twice, 
    \begin{equation*}
        a^{1-\lambda}b^\lambda + c^{1-\lambda} d^\lambda+ f^{1-\lambda} g^\lambda \leq (a+c)^{1-\lambda}(b+d)^\lambda + f^{1-\lambda} g^\lambda\leq (a+c+f)^{1-\lambda}(b+d+g)^\lambda. 
    \end{equation*}
\end{proof}

\begin{proof}[Proof of Lemma \ref{symNormConvexity}]
    By Remark \ref{convexityRemark} and continuity
    properties of convex functions \cite[Theorem 10.1]{Rock70} it is enough to prove midpoint convexity. 
    Fix $y, y'\in \R$. Let,
    \begin{equation*}
    \begin{aligned}
        &H(r)\defeq \exp(-h_{p, S((x_2+x_2')/2)}(r(1, 
        {y/2 + y'/2}))), \quad r>0,\\
        &F(t)\defeq \exp(-h_{p,S(x_2)}(t(1,y))), \quad t>0,\\
        &G(s)\defeq \exp(-h_{p,S(x_2')}(s(1,y'))), \quad s>0. 
    \end{aligned}
    \end{equation*}
    For $r,t,s>0$ with $\frac{2}{r}= \frac{1}{t}+ \frac{1}{s}$, by Lemma \ref{hpSymCor}, 
    \begin{equation*}
        H(r)\geq F(t)^{\frac{s}{t+s}} G(s)^{\frac{t}{t+s}}. 
    \end{equation*}
    Thus, by Theorem \ref{BallIneq} (with $q=2$), 
    \begin{equation*}
        \begin{aligned}
           \left(\int_0^\infty r H(r)dr\right)^{-\frac12}\leq \frac12\left( \int_0^\infty t F(t)dt\right)^{-\frac12} + \frac12\left( \int_0^\infty s G(s)ds\right)^{-\frac12}. 
        \end{aligned}
    \end{equation*}
    Since by \eqref{KcircpNormEq} (with $n=2$), 
    \begin{equation*}
    \begin{aligned}
        &\left(\int_0^\infty rH(r)dr\right)^{-\frac12}= \|(1, y/2+ y'/2))\|_{S((x_2+x_2')/2)^{\circ,p}}, \\
        & \left( \int_0^\infty tF(t)dt\right)^{-\frac12}= \|(1, y)\|_{S(x_2)^{\circ,p}}, \\
        & \left( \int_0^\infty sG(s)ds\right)^{-\frac12}= \|(1, y')\|_{S(x_2')^{\circ,p}},
    \end{aligned}
    \end{equation*}
    the proof of Lemma \ref{symNormConvexity} is complete.
\end{proof}

\subsection{Proof of Theorem \ref{2DimpMahlerSym}}
\label{SymFinishSection}
\label{SymFinishingSection}
    By continuity of the $L^p$-Mahler volume under the Hausdorff topology \cite[Lemma 5.11]{BMR23}, it suffices to work with symmetric polytopes, and as explained in the proof of Proposition \ref{symSlidingProp} it further suffices
    to restrict to polytopes in $\mathcal{S}$ \eqref{SClassEq}. 
    Let $S\in\mathcal{S}$ be a symmetric polytope with $2m\geq 6$ vertices. Write $S_{2m} = S$. By repeated application of Proposition \ref{symSlidingProp}, there exist symmetric polytopes $S_{2m-2}, \ldots, S_{6}, S_4$ with $2m-2, \ldots, 6, 4$ vertices respectively such that 
    \begin{equation*}
        \M_p(S_{2m})\geq \M_p(S_{2m-2})\geq \cdots\geq \M_p(S_6) \geq \M_p(S_4). 
    \end{equation*}
    Since $S_4$ is a symmetric polytope with four vertices, it lies in the $GL(2,\R)$ orbit of $[-1,1]^2$ (to see that,
    note $S_4=\co\{p_1,p_2,-p_1,-p_2\}$ and there is a unique
    $A\in GL(2,\R)$ that sends $p_1$ to $(1,0)$ and $p_2$
    to $(0,1)$), hence $\M_p(S_4)= \M_p([-1,1]^2)$ \cite[Lemma 4.6]{BMR23}. 
    This concludes the proof of Theorem \ref{2DimpMahlerSym}.

\section{\texorpdfstring{Lower bound for $\M_p$}{Lower bound for Mp }}
\label{MpNonSymSection}

In analogy with the symmetric case of \S\ref{MpSymSection}, 
fix a convenient class of (not necessarily symmetric) polytopes as in Figure \ref{SlidingFig}:
\begin{equation}
\label{PclassEq}
\mathcal{P}=
\left\{
P= \co\{(x_1, y_1), \ldots, (x_m, y_m) \}
\;\middle|\;
\begin{alignedat}{2}
  & m\in\mathbb{N}, m>3 \\
  & 0\in\mathrm{int}\,\widehat{P} \text{ }  \eqref{hatP} \\
  & (x_i, y_i) \text{ are extreme points of $P$,} \\
  & \text{counter-clockwise oriented}\\
  & x_3\leq x_2\leq x_1 \\
  & 0\leq y_1= y_3 < y_2
\end{alignedat}
\right\}.
\end{equation}
Unlike in  \S\ref{MpSymSection}, for the proof of Theorem \ref{2DimpMahler} only one vertex will be sliding. On the other hand, a technical issue absent in the symmetric case arises: the necessity to also translate in the direction of the sliding.
Indeed, for symmetric polytopes, the $L^p$-Santal\'o point
\eqref{LpPointDef} remains at the origin throughout the sliding. That is because its $L^p$-support function is an even function and hence its barycenter lies at the origin, which implies the $L^p$-Santal\'o point must be the origin \cite[Proposition 1.5]{BMR23}.  However, for non-symmetric polytopes, this may not be the case
and it is natural to work with 
\begin{equation}
\label{starpnotationEq}
    K^{*,p}\defeq (K-s_p(K))^{\circ,p}.
\end{equation}

The main result of this section is a generalization of
Lemma \ref{SymSlidingLemma}.

\begin{lemma}
\label{SlidingLemma}
Let $p\in (0,\infty]$. For $P(x_2)\in\mathcal{P}$ (recall (\ref{PclassEq})--(\ref{starpnotationEq})),
 $       x_2\mapsto {|P(x_2)^{*,p}|}^{-1}, x_2\in[\xi_\ell, \xi_r], \text{ }
  $
 is convex.
\end{lemma}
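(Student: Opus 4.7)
The strategy mirrors the symmetric case (Lemma \ref{SymSlidingLemma}) but must confront two difficulties that were absent there: the volume formula \eqref{volumeLpPolar} now splits into two generally unequal half-plane integrals, and, in the absence of central symmetry, the centering of the $L^p$-polar shifts as we slide. Both are handled simultaneously by the coupled Mahler sliding of \S\ref{NonSymSlidingSection}: the sliding in $x_2$ is coupled with a continuous translation curve $x_2 \mapsto (x_P(x_2), y_P(x_2))$, furnished by Proposition \ref{BalancingProp}, selected so that the ratio of the two half-plane integrals stays constant in $x_2 \in [\xi_\ell, \xi_r]$.

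\textbf{Step 1 (splitting).} Apply Lemma \ref{suitableVolumeLemma} to the translated polytope, so that
$$
|P(x_2)^{*,p}| \;=\; \tfrac12 A(x_2) \,+\, \tfrac12 B(x_2),
$$
where $A(x_2)=\int_\R \|(1,t)\|_{P(x_2)^{*,p}}^{-2}\,dt$ and $B(x_2)=\int_\R \|(-1,-t)\|_{P(x_2)^{*,p}}^{-2}\,dt$.

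\textbf{Step 2 (joint `convexity' of the near-norm).} Establish the non-symmetric analog of Lemma \ref{symNormConvexity}, namely Lemma \ref{normTripleConvexity}: the maps $(y,x_2)\mapsto \|(\pm1,\pm y)\|_{P(x_2)^{*,p}}$ are jointly convex. Following the symmetric template, this is obtained by (i) proving a joint `convexity' inequality for the $L^p$-support function of the sliding-and-translating triangle (Claim \ref{TriangleSliding}, which upgrades Claim \ref{TriangleClaim} to include the translation parameter; the log-convexity trick around \eqref{JlogcxEq} still applies after suitable reparametrization), (ii) assembling the triangle estimate into a statement for the full polytope by the disjoint-union decomposition of Corollary \ref{hpMultipleDecomp} (and the elementary inequality of Lemma \ref{auxLemma}), and (iii) passing from the $L^p$-support function to the near-norm via Ball's theorem (Theorem \ref{BallIneq}), exactly as in \S\ref{SymEntirePolytopeSection}.

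\textbf{Step 3 (Borell--Brascamp--Lieb and balancing).} By the $-1$-concavity corollary of Borell--Brascamp--Lieb (Lemma \ref{-1concaveLemma}), joint convexity from Step 2 yields that $A(x_2)^{-1}$ and $B(x_2)^{-1}$ are each convex in $x_2$. By the balancing of Proposition \ref{BalancingProp}, the translation curve is chosen so that $A(x_2)=c\,B(x_2)$ for some constant $c>0$ independent of $x_2$; hence $A+B=(1+c^{-1})A$ and $(A+B)^{-1}$ is a positive multiple of the convex function $A^{-1}$. This gives convexity of $|P(x_2)^{*,p}|^{-1}$, as desired.

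\textbf{Principal obstacle.} The main obstruction is Step 3's balancing ingredient: convexity of $A^{-1}$ and $B^{-1}$ individually does \emph{not} imply convexity of $(A+B)^{-1}$, so the ratio $A/B$ genuinely has to be forced to be constant along the curve. In the symmetric case this ratio is automatically $1$ (Lemma \ref{suitableVolumeLemma}) and no translation is needed; here one must produce and track a continuous translation curve that exactly balances the two half-plane integrals throughout the sliding and coincides with the $L^p$-Santal\'o translation at the values of $x_2$ where the statement is applied. Establishing the existence and continuity of such a curve (Proposition \ref{BalancingProp}) is the technical heart of the non-symmetric argument.
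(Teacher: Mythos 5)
Your Steps 1 and 2 correctly identify the architecture of the proof (split the volume into two half-plane integrals, establish joint convexity of the near-norm via the triangle inequality, Ball's theorem, and Corollary \ref{ConvexityOfI}). However, Step 3 misstates the content of Proposition \ref{BalancingProp} and on that basis proposes an argument that does not close.

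Proposition \ref{BalancingProp} is a \emph{pairwise} statement: for each chosen pair $x_2, x_2' \in [\xi_\ell,\xi_r]$ it produces a single scalar $x_0 = x_0(x_2,x_2')$ so that the ratio $I_+/I_-$ of $(P(x_2)-(x_0,0))^{\circ,p}$ equals the ratio for $(P(x_2')-(-x_0,0))^{\circ,p}$. It does \emph{not} supply a continuous translation curve $x_2 \mapsto x_0(x_2)$ along which the ratio stays constant for all intermediate $x_2$, and in particular it says nothing about the ratio at the midpoint $(x_2+x_2')/2$. Your claim that $A(x_2) = c\,B(x_2)$ for a fixed $c$ uniformly in $x_2$ has no basis in the proposition, and your reduction ``$(A+B)^{-1}$ is a positive multiple of the convex $A^{-1}$'' collapses. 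Even granting such a curve, a second gap remains: Corollary \ref{ConvexityOfI} gives \emph{joint} convexity of $(x_2,x_0)\mapsto I_\pm(x_2,x_0)^{-1}$, but composing with a non-linear curve $x_0(x_2)$ would not preserve convexity, so the single-variable convexity of $A^{-1}$ along the curve is not available either.

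The paper's actual argument is a direct midpoint computation that evades both issues. For a fixed pair $x_2, x_2'$, choose the balancing $x_0$ from Proposition \ref{BalancingProp}. The three points $(x_2,x_0)$, $(x_2',-x_0)$, and their arithmetic midpoint $\big((x_2+x_2')/2,\,0\big)$ are the ones fed into joint convexity of $I_\pm^{-1}$; the crucial feature is that the translations at the two endpoints are $\pm x_0$, so the midpoint has zero translation. Joint convexity gives harmonic-mean lower bounds for $I_\pm\big((x_2+x_2')/2,0\big)$ separately, and \emph{then} the common ratio $\rho$ at the two endpoints is used algebraically to combine the two bounds into a bound on the total volume (equation \eqref{FinalPEq}). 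The ratio at the midpoint is never used. Finally, the passage to $|P(\cdot)^{*,p}|$ — which you also elide — requires two uses of the $L^p$-Santal\'o minimality \eqref{LpPointDef}: after a global translation one may assume $s_p(P((x_2+x_2')/2))=0$, so the $\circ,p$ and $*,p$ volumes agree at the midpoint; and at the endpoints $|P(x_2)^{*,p}| \leq |(P(x_2)-(x_0,0))^{\circ,p}|$ because the Santal\'o point minimizes over all translations. Your proof plan should be repaired to use this pairwise/midpoint structure rather than a global translation curve.
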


In the absence of symmetry, the volume of $|P(x_2)^{\circ,p}|$ splits into two integrals $I_+$ and $I_-$. By Lemma \ref{-1concaveLemma}, the reciprocals of each of $I_+$ and $I_-$ are convex under sliding. However, the reciprocal of their sum, that is the volume of $P(x_2)^{\circ,p}$, may not be. To tackle this difficulty, motivated by the work of Meyer--Reisner \cite[Lemma 5]{MR06}, we simultaneously translate the polytope in the direction of the sliding. This is to ensure that under sliding the ratio $I_+/ I_-$ remains constant, which would ultimately be enough to obtain Lemma \ref{SlidingLemma}. We start by studying the $L^p$-support functions of triangles under sliding and translation (Claim \ref{TriangleSliding}) obtaining an inequality that generalizes Claim \ref{TriangleClaim} (that only dealt
with sliding). Consequently, we obtain an inequality for the $L^p$-support of $P(x_2)$ (Lemma \ref{Px2Slidinghp})
that generalizes Claim \ref{TriangleClaim}. This allows for the application of Ball's theorem (Theorem \ref{BallIneq}) proving the joint convexity of the norm of $P(x_2)^{\circ,p}$ under sliding and translation. Consequently, the reciprocals of $I_+$ and $I_-$ are convex under sliding and translation (Corollary \ref{ConvexityOfI}). We then show that their ratio $I_+/ I_-$ can be kept constant under sliding via appropriate translation (Proposition \ref{BalancingProp}). This turns out to be just enough to obtain Lemma \ref{SlidingProp}. 

\subsection{Triangles revisited}
\label{TrianglesRevisitSection}
Next we state a simple generalization of Claim \ref{TriangleClaim}
that shows the same type of `strong joint convexity' but now 
with respect to 3 variables. 
Apart from $y\in\R$ (the parameter for the boundary of the body) and
the sliding parameter $x_2\in\R$, we introduce a translation parameter $x_0\in\R$. We only consider translation by $(x_0,0)$,
i.e., in the $x$-direction.
This is because we can only translate in the same direction as the sliding to obtain the desired inequality, as it is the only motion for which the numbers workout. 

\begin{claim}
\label{TriangleSliding}
Let $p>0$ and a triangle $\Delta(x_2)\defeq \co\{(x_1, y_1), (x_2, y_2), (x_3, y_3)\}$. Let $r,t,s>0$ with $\frac{2}{r}= \frac{1}{t}+ \frac{1}{s}$, 
\begin{equation*}
\begin{aligned}
    &h_{p,\Delta((x_2+x_2')/2)-(x_0,0)/2-(x_0',0)/2}\left(r\Big(x, \frac{y+y'}{2}\Big)\right)\\
    &\leq \frac{s}{t+s} h_{p, \Delta(x_2)-(x_0,0)}(t(x,y))+ \frac{t}{t+s} h_{p, \Delta(x_2')-(x_0',0)}(s(x,y')),
\end{aligned}
\end{equation*}
for all $x_0, x_0', 
x_2, x_2', x, y, y'\in \R$.
\end{claim}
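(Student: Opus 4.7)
The plan is to reduce Claim \ref{TriangleSliding} directly to Claim \ref{TriangleClaim} by peeling off the translation parameters using the elementary identity
\[
h_{p, K-v}(u) = h_{p, K}(u) - \langle v, u\rangle,
\]
cited as \cite[Lemma 2.2 (ii)]{BMR23}. Applying this with $v=((x_0+x_0')/2,0)$ on the left and $v=(x_0,0),\, (x_0',0)$ on the two terms on the right converts the claimed inequality into
\[
h_{p,\Delta((x_2+x_2')/2)}\!\Big(r\big(x,\tfrac{y+y'}{2}\big)\Big) - \tfrac{x_0+x_0'}{2}\, r x \;\le\; \tfrac{s}{t+s} h_{p,\Delta(x_2)}(t(x,y)) + \tfrac{t}{t+s} h_{p,\Delta(x_2')}(s(x,y')) - \tfrac{s t x_0 + t s x_0'}{t+s}\, x.
\]

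The key algebraic step is to observe that the linear-in-$x$ remainders on both sides match: using $r=\frac{2ts}{t+s}$, the right-hand linear term equals $-\frac{ts(x_0+x_0')}{t+s}x = -\frac{x_0+x_0'}{2}\,r\,x$, which is exactly the left-hand linear term. So both translation contributions cancel identically, and what remains is precisely the inequality of Claim \ref{TriangleClaim} applied to the (un-translated) triangles $\Delta(x_2), \Delta(x_2'), \Delta((x_2+x_2')/2)$.

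Therefore the proof is a two-line reduction: invoke the translation formula, observe the cancellation forced by the harmonic-mean relation $\tfrac{2}{r}=\tfrac1t+\tfrac1s$, and quote Claim \ref{TriangleClaim}. There is no genuine obstacle here because the translation direction is aligned with the sliding direction $x$ (as the claim deliberately restricts translations to $(x_0,0)$); the footnote-style comment before the statement (``we can only translate in the same direction as the sliding to obtain the desired inequality, as it is the only motion for which the numbers work out'') is precisely the statement that the linear terms match only in that case. If one tried a translation with a $y$-component, an extra term $-\tfrac{y_0+y_0'}{2}\cdot r\cdot\tfrac{y+y'}{2}$ on the left would not match the corresponding right-hand expression, and the reduction would fail.
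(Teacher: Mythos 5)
Your proposal is correct and is essentially identical to the paper's proof: both peel off the translations via $h_{p,K-v}(u)=h_{p,K}(u)-\langle v,u\rangle$, observe that the linear contributions cancel exactly because $r=\tfrac{2ts}{t+s}$ (the paper records this cancellation as equation \eqref{TranslationPart}), and then invoke Claim \ref{TriangleClaim}. One minor imprecision in your closing remark: a translation with a common $y$-component $(x_0,y_0)$, $(x_0',y_0)$ (same $y_0$) would actually still cancel by the same algebra — the obstruction only arises when the $y$-components differ across the two terms, since then $\frac{(y_0+y_0')(y+y')}{2}\neq y_0y+y_0'y'$ in general.
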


\begin{proof}
Using \eqref{rstEq},
    \begin{equation}
    \label{TranslationPart}
        \begin{aligned}
            &\left\langle r\Big(x, \frac{y+y'}{2}\Big), \frac{(x_0,0)+  (x_0',0)}{2}\right\rangle 
            &= \frac{s}{t+s} \langle t(x,y), (x_0,0)\rangle + \frac{t}{t+s} \langle s(x,y'), (x_0',0)\rangle. 
        \end{aligned}
    \end{equation}
    Recall the identity \cite[Lemma 2.2 (ii)]{BMR23}
\begin{equation}
\label{hpTranslate}
    h_{p,K-x}(y) = h_{p,K}(y)- \langle x,y\rangle, \quad x,y\in \R^n, 
\end{equation}
for the $L^p$-support of the translated body.
By \eqref{TranslationPart}, Claim \ref{TriangleClaim}, and \eqref{hpTranslate} (twice), 
    \begin{equation*}
        \begin{aligned}
            &h_{p,\Delta((x_2+x_2')/2)-(x_0, y_0)/2- (x_0', y_0)/2}\left(r\Big(x, \frac{y+y'}{2}\Big)\right) \\
            &= h_{p,\Delta((x_2+x_2')/2)}\left(r\Big(x, \frac{y+y'}{2}\Big)\right) - \left\langle r\Big(x, \frac{y+y'}{2}\Big), \frac{(x_0, y_0)+ (x_0', y_0)}{2}\right\rangle \\
            &\leq \frac{s}{t+s} \left( h_{p, \Delta(x_2)}(t(x,y)) -\langle t(x,y), (x_0, y_0)\rangle\right) \\
            &+ \frac{t}{t+s} \left( h_{p,\Delta(x_2')}(s(x,y'))- \langle s(x,y'), (x_0', y_0)\rangle\right) \\
            &= \frac{s}{t+s} h_{p,\Delta(x_2)-(x_0, y_0)}(t(x, y))+ \frac{t}{t+s} h_{p,\Delta(x_2')- (x_0',y_0)}(s(x,y')). 
        \end{aligned}
    \end{equation*}
\end{proof}

\subsection{3-parameter joint `convexity' of the near norm for sliding
and translating polytopes}
\label{ConvNormSection}
The next result is the simple generalization of Lemma \ref{hpSymCor} (concerning sliding alone) 
to the non-symmetric
setting.
Namely, it extends the type of convexity obtained
in Claim \ref{TriangleSliding} for the sliding and translating 
1-parameter family of triangles $\Delta(x_2)-(x_0,0)$ to the sliding and translating
1-parameter family of polytopes $P(x_2)-(x_0,0)$. 
Once again the proof reduces to that of Lemma \ref{hpSymCor}
by \eqref{hpTranslate}. We give the full details
for completeness.

\begin{lemma}
\label{Px2Slidinghp}
    Let $p>0$ and $P(x_2)\in\mathcal{P}$. For $x_0, x_0', 
    x, y, y'\in \R, x_2, x_2'\in [\xi_\ell, \xi_r]$, and $r,t,s>0$ with $\frac2r = \frac{1}{t}+ \frac{1}{s}$, 
    \begin{equation*}
        \begin{aligned}
            &h_{p, P((x_2+x_2')/2)-((x_0+x_0')/2,0)}\left(r\Big(x, \frac{y+y'}{2}\Big)\right) \\
            &\leq \frac{s}{t+s} h_{p, P(x_2)- (x_0, 0)}(t(x,y))+ \frac{t}{t+s} h_{p, P(x_2')- (x_0', 0)}(s(x,y')).
        \end{aligned}
    \end{equation*}
\end{lemma}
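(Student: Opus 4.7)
The plan is to mirror the proof of Lemma \ref{hpSymCor} with two modifications: first, the decomposition $P(x_2) = \widehat{P} \cup \Delta(x_2)$ has only two pieces (no antipodal triangle), so the two-term inequality \eqref{2termIneq} of Lemma \ref{auxLemma} replaces the three-term version; second, the translation parameter $x_0$ is absorbed using the identity \eqref{hpTranslate}, exactly as in the passage from Claim \ref{TriangleClaim} to Claim \ref{TriangleSliding}.

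First, I would reduce to the untranslated statement. By \eqref{hpTranslate},
\[
h_{p, P(x_2)-(x_0,0)}(t(x,y)) = h_{p, P(x_2)}(t(x,y)) - t\,\langle (x,y),(x_0,0)\rangle,
\]
and similarly for the other two terms. Using the identity $r(x,(y+y')/2) = \frac{s}{t+s}(tx,ty) + \frac{t}{t+s}(sx,sy')$, which follows from $2/r = 1/t+1/s$ as in \eqref{rstEq}, the translation contributions combine linearly exactly as in \eqref{TranslationPart}. Hence it suffices to prove the inequality in the $x_0=x_0'=0$ case, i.e., the direct analog of Lemma \ref{hpSymCor} for $P(x_2)\in\mathcal{P}$.

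Second, for that untranslated case I would apply Corollary \ref{hpMultipleDecomp} with the decomposition $P(x_2)=\widehat{P}\cup\Delta(x_2)$. Crucially, $|\widehat P|$, $|\Delta|=|\Delta(x_2)|$, and $|P|=|P(x_2)|$ are all independent of $x_2$ (this is the invariance under Mahler sliding noted after \eqref{hatP}). So
\[
e^{p\,h_{p,P(x_2)}(v)} = \frac{|\widehat P|}{|P|}\,e^{p\,h_{p,\widehat P}(v)} + \frac{|\Delta|}{|P|}\,e^{p\,h_{p,\Delta(x_2)}(v)},
\]
and analogously at $x_2'$ and at $(x_2+x_2')/2$.

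Third, I would estimate each of the two pieces at the midpoint argument $r(x,(y+y')/2)$. For the fixed piece $\widehat P$, convexity of the $L^p$-support function gives
\[
h_{p,\widehat P}\bigl(r(x,(y+y')/2)\bigr) \le \tfrac{s}{t+s}\,h_{p,\widehat P}(t(x,y)) + \tfrac{t}{t+s}\,h_{p,\widehat P}(s(x,y')),
\]
which upon exponentiating by $p$ yields a multiplicative bound. For the moving triangle, Claim \ref{TriangleClaim} delivers exactly the analogous multiplicative bound involving $\Delta(x_2)$ and $\Delta(x_2')$.

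Finally, I would combine these two multiplicative bounds using \eqref{2termIneq} of Lemma \ref{auxLemma} with $\lambda=\tfrac{t}{t+s}$, obtaining
\[
e^{p\,h_{p,P((x_2+x_2')/2)}(r(x,(y+y')/2))} \le \bigl(e^{p\,h_{p,P(x_2)}(t(x,y))}\bigr)^{\frac{s}{t+s}} \bigl(e^{p\,h_{p,P(x_2')}(s(x,y'))}\bigr)^{\frac{t}{t+s}},
\]
and taking logarithms gives the required inequality in the untranslated case. Reinstating the translation terms via the splitting already computed completes the proof. The argument is essentially a matter of bookkeeping: the only non-trivial inputs are Claim \ref{TriangleClaim} (already done), convexity of $h_{p,\widehat P}$, and the two-term AM--GM inequality \eqref{2termIneq}. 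I do not anticipate a genuine obstacle; the principal thing to get right is the linear splitting of the translation term, which works only because we translate purely in the sliding direction (as the author notes in the preamble to Claim \ref{TriangleSliding}).
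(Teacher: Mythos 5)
Your proposal is correct and matches the paper's proof essentially step for step: both reduce the translated statement to the $x_0=x_0'=0$ case via \eqref{hpTranslate} and the linear splitting \eqref{TranslationPart}, and both establish that core case by decomposing $P(x_2)=\widehat{P}\cup\Delta(x_2)$ via Claim \ref{hpDecomp}, bounding the fixed piece by convexity of $h_{p,\widehat P}$, bounding the moving triangle by Claim \ref{TriangleClaim}, and recombining with the two-term AM--GM inequality \eqref{2termIneq} with $\lambda=\tfrac{t}{t+s}$. The only difference is cosmetic ordering (you strip the translation first, the paper reinstates it last), which has no mathematical content.
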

\begin{proof}
    Let $\Delta(x_2)$ and $\widehat{P}$ as in \eqref{Deltax2Def} and \eqref{hatP},
    so  that $P(x_2)= \Delta(x_2)\cup \widehat{P}$,
    $|\Delta(x_2)|=|\Delta|$, and $|P|= |\Delta|+ |\widehat{P}|$. 
    By Claim \ref{hpDecomp}, 
    \begin{equation}
    \label{PPhatDeltaEq}
        e^{h_{p, P(x_2)}(x,y)}= \frac{|\widehat{P}|}{|P|} e^{h_{p,\widehat{P}}(x,y)} + \frac{|\Delta|}{|P|} e^{h_{p, \Delta(x_2)}(x,y)}.
    \end{equation}
By \eqref{rstEq}
and convexity of the $L^p$-support function \cite[Lemma 2.1]{BMR23},
    \begin{equation}
    \label{PhatConvexity}
        h_{p,\widehat{P}}\left(r\Big(x, \frac{y+y'}{2}\Big)\right)\leq \frac{s}{t+s} h_{p,\widehat{P}}(t(x,y))+ \frac{t}{t+s} h_{p,\widehat{P}}(s(x,y')). 
    \end{equation}
By \eqref{PPhatDeltaEq}, \eqref{PhatConvexity}, \eqref{2termIneq}, and Claim \ref{TriangleSliding}, 
    {\allowdisplaybreaks
    \begin{align*}
            &e^{h_{p, P((x_2+x_2')/2)}\left(r\Big(x,\frac{y+y'}{2}\Big)\right)} \\
            &= \frac{|\widehat{P}|}{|P|} e^{h_{p,\widehat{P}}(r(x,\frac{y+y'}{2})} + \frac{|\Delta|}{|P|} e^{h_{p,\Delta((x_2+x_2')/2)}(r(x,\frac{y+y'}{2})} \\
            &\leq \frac{|\widehat{P}|}{|P|}\left(e^{h_{p,\widehat{P}}(t(x,y))}\right)^{\frac{s}{t+s}} \left( e^{h_{p,\widehat{P}}(s(x,y'))}\right)^{\frac{t}{t+s}}
            +  \frac{|\Delta|}{|P|}\left( e^{h_{p,\Delta(x_2)}(t(x,y))}\right)^{\frac{s}{t+ s}} 
            \left( e^{p,\Delta(x_2')(s(x,y'))}\right)^{\frac{t}{t+s}} \\
            &= \left( \frac{|\widehat{P}|}{|P|} e^{h_{p,\widehat{P}}(t(x,y))}\right)^{\frac{s}{t+s}} \left( \frac{|\widehat{P}|}{|P|} e^{h_{p,\widehat{P}}(s(x,y'))}\right)^{\frac{t}{t+s}}
            + \left( \frac{|\Delta|}{|P|} e^{h_{p,\Delta(x_2)}(t(x,y))}\right)^{\frac{s}{t+ s}} 
            \left(\frac{|\Delta|}{|P|} e^{p,\Delta(x_2')(s(x,y'))}\right)^{\frac{t}{t+s}} \\
            &\leq \left(\frac{|\widehat{P}|}{|P|} e^{h_{p,\widehat{P}}(t(x,y))}+ \frac{|\Delta|}{|P|} e^{h_{p,\Delta(x_2)} (t(x,y))} \right)^{\frac{s}{t+s}} 
            \left(\frac{|\widehat{P}|}{|P|} e^{h_{p,\widehat{P}}(s(x,y'))}+ \frac{|\Delta|}{|P|} e^{h_{p,\Delta(x_2')}(s(x,y'))} \right)^{\frac{t}{t+s}} \\
            &= \left( e^{h_{p,P(x_2)}(t(x,y))}\right)^{\frac{s}{t+s}} \left( e^{h_{p,P(x_2')}(s(x,y'))}\right)^{\frac{t}{t+s}},
    \end{align*}
    } 
    proving the claim when $x_0= x_0'=0$. In general, the claim follows from 
    this inequality combined with \eqref{TranslationPart} and \eqref{hpTranslate}.     
\end{proof}

\begin{lemma}
\label{normTripleConvexity}
    $(y, x_2, x_0)\mapsto \|(x,y)\|_{P(x_2)-(x_0, 0)}$ is convex.
\end{lemma}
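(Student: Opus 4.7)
I'll read the statement in line with the paper's convention: the object is the near-norm $\|(x,y)\|_{(P(x_2)-(x_0,0))^{\circ,p}}$ of the $L^p$-polar body defined by~\eqref{KcircpNormEq}, making this the direct non-symmetric analogue of Lemma~\ref{symNormConvexity}. (A literal Minkowski-functional reading cannot be what is meant, since even for the toy shadow system $K_t=[-1,t]$ on $\R$, the resulting function $(y,t)\mapsto \|y\|_{K_t}=y/t$ on $\{y,t>0\}$ has indefinite Hessian and so is not jointly convex.)

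The plan is to mirror the proof of Lemma~\ref{symNormConvexity} essentially verbatim, with the $3$-parameter joint inequality of Lemma~\ref{Px2Slidinghp} in the role of the $2$-parameter inequality of Lemma~\ref{hpSymCor}. By standard continuity of convex functions \cite[Theorem~10.1]{Rock70} (cf.\ Remark~\ref{convexityRemark}) it suffices to establish midpoint convexity in $(y,x_2,x_0)$. Fix $x\in\R$ and, given triples $(y,x_2,x_0),(y',x_2',x_0')\in\R^3$, set
\begin{equation*}
H(r)\defeq e^{-h_{p,P(\frac{x_2+x_2'}{2})-(\frac{x_0+x_0'}{2},0)}(r(x,\frac{y+y'}{2}))},\q F(t)\defeq e^{-h_{p,P(x_2)-(x_0,0)}(t(x,y))},\q G(s)\defeq e^{-h_{p,P(x_2')-(x_0',0)}(s(x,y'))}
\end{equation*}
for $r,t,s>0$. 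Exponentiating Lemma~\ref{Px2Slidinghp} gives the pointwise inequality $H(r)\ge F(t)^{s/(t+s)}\,G(s)^{t/(t+s)}$ for every $r,t,s>0$ with $2/r=1/t+1/s$.

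Ball's theorem (Theorem~\ref{BallIneq}) with $q=2$ then upgrades this pointwise inequality to
\begin{equation*}
\Bigl(\int_0^\infty r\,H(r)\,dr\Bigr)^{-1/2}\le \tfrac{1}{2}\Bigl(\int_0^\infty t\,F(t)\,dt\Bigr)^{-1/2}+\tfrac{1}{2}\Bigl(\int_0^\infty s\,G(s)\,ds\Bigr)^{-1/2},
\end{equation*}
and by~\eqref{KcircpNormEq} with $n=2$ each of the three integrals equals the corresponding near-norm (evaluated at $(x,\cdot)$) raised to the power $-2$. This is precisely the midpoint convexity inequality for $(y,x_2,x_0)\mapsto\|(x,y)\|_{(P(x_2)-(x_0,0))^{\circ,p}}$.

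I expect no genuinely new obstacle beyond the symmetric case. The delicate exponent relation $2/r=1/t+1/s$ was baked into Claim~\ref{TriangleSliding} (and inherited by Lemma~\ref{Px2Slidinghp}) specifically so that this Ball-theorem step goes through; the extra translation parameter $x_0$ is absorbed at the $L^p$-support level via the translation identity~\eqref{hpTranslate}, which has already been folded into Lemma~\ref{Px2Slidinghp}. What remains here is therefore a routine, formal adaptation of the symmetric proof.
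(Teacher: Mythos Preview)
Your proof is correct and essentially identical to the paper's own argument: reduce to midpoint convexity, define $H,F,G$ via the $L^p$-support functions of the translated sliding polytopes, invoke Lemma~\ref{Px2Slidinghp} for the pointwise inequality, apply Ball's Theorem~\ref{BallIneq} with $q=2$, and identify the resulting integrals as near-norms via~\eqref{KcircpNormEq}.
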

\begin{proof}
By Remark \ref{convexityRemark} and \cite[Theorem 10.1]{Rock70}, it is enough to prove midpoint convexity. Let 
    \begin{equation*}
    \begin{aligned}
        &H(r)\defeq \exp(-h_{p, P((x_2+x_2')/2)- (x_0,0)/2-(x_0', 0)/2}(r(x, 
        {y/2+y'/2}))), \quad r>0,\\
        &F(t)\defeq \exp(-h_{p,P(x_2)-(x_0, 0)}(t(x,y))), \quad t>0,\\
        &G(s)\defeq \exp(-h_{p,P(x_2')-(x_0', 0)}(s(x,y'))), \quad s>0. 
    \end{aligned}
    \end{equation*}
    For $r,t,s>0$ with $\frac2r= \frac{1}{t}+ \frac{1}{s}$,  by Lemma \ref{Px2Slidinghp}, 
  $      H(r)\geq F(t)^{\frac{s}{t+s}} G(s)^{\frac{t}{t+s}}. $
    Therefore, by Theorem \ref{BallIneq} for $q=2$, 
    \begin{equation*}
        \left( \int_0^\infty r H(r)dr \right)^{-\frac12}\leq \frac12\left( \int_0^\infty t F(t)dt\right)^{-\frac12} + \frac12 \left( \int_0^\infty sG(s)ds\right)^{-\frac12}. 
    \end{equation*}
    Finally, by \eqref{KcircpNormEq}, 
    \begin{equation*}
        \begin{aligned}
            & \left( \int_0^\infty rH(r)dr\right)^{-\frac12}= \| (1,y)/2+ (1,y')/2\|_{(P((x_2+x_2')/2)- (x_0,0)/2- (x_0', 0)/2)^{\circ,p}}, \\
            & \left( \int_0^\infty tF(t)dt\right)^{-\frac12} = \|(1,y)\|_{(P(x_2)-(x_0, 0))^{\circ,p}}, \\
            & \left( \int_0^\infty sG(s)ds\right)^{-\frac12} = \|(1,y')\|_{(P(x_2')-(x_0',0))^{\circ,p}}, 
        \end{aligned}
    \end{equation*}
    hence the claim. 
\end{proof}

\subsection{\texorpdfstring{Balancing the left and right areas of the $L^p$-polar}{Balancing the left and right areas of the Lp-polar}}
\label{DecompSection}
Let
\begin{equation}
\label{IpmDefEq}
    I_\pm(x_2, x_0)\defeq \frac12 \int_{\R} \frac{dy}{\|(\pm1,\pm y)\|^2_{(P(x_2)-(x_0,0))^{\circ,p}}}= |(P(x_2)-(x_0,0))^{\circ,p}\cap \{\pm x>0\}|, 
\end{equation}
such that by Lemma \ref{suitableVolumeLemma},
\begin{equation*}
    |(P(x_2)-(x_0,0))^{\circ,p}|= I_+(x_2,x_0) + I_-(x_2,x_0).
\end{equation*}

\begin{corollary}
\label{ConvexityOfI}
    Let $P\in \mathcal{P}$. The functions 
    \begin{equation*}
        (x_2, x_0)\mapsto \frac{1}{I_\pm(x_2,x_0)}
    \end{equation*}
are convex
for $x_2\in [\xi_\ell, \xi_r]$ (recall Figure \ref{SlidingFig})
 and $(x_0,0)\in\mathrm{int}\,P$.
\end{corollary}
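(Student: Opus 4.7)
The plan is to derive Corollary \ref{ConvexityOfI} directly from Lemma \ref{normTripleConvexity} combined with Lemma \ref{-1concaveLemma}, in direct analogy with the deduction of Lemma \ref{SymSlidingLemma} from Lemma \ref{symNormConvexity} in the symmetric case. The only genuinely new feature is that the convexity parameter is now two-dimensional, $(x_2, x_0) \in \R^2$, rather than one-dimensional.

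First I would take $x=1$ in Lemma \ref{normTripleConvexity} to obtain that
\[
(y, x_2, x_0) \mapsto f_+(y, x_2, x_0) \defeq \|(1, y)\|_{(P(x_2) - (x_0, 0))^{\circ,p}}
\]
is jointly convex on its domain. For the $I_-$ integral, I would take $x=-1$ and precompose with the affine reflection $y \mapsto -y$ in the first slot (which preserves joint convexity) to see that
\[
(y, x_2, x_0) \mapsto f_-(y, x_2, x_0) \defeq \|(-1, -y)\|_{(P(x_2) - (x_0, 0))^{\circ,p}}
\]
is also jointly convex. The sign flip is required precisely because $\|\cdot\|_{K^{\circ,p}}$ is only positively $1$-homogeneous in the non-symmetric setting, which is why formula \eqref{volumeLpPolar} for $|K^{\circ,p}|$ splits into two distinct hemispheric integrals in the first place.

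Next I would invoke Lemma \ref{-1concaveLemma} applied to a parameter $z \in \R^k$ rather than $z \in \R$. The one-variable form recalled in the proof of Lemma \ref{SymSlidingLemma} extends verbatim: joint convexity of $(y,z) \mapsto f(y,z)$ restricts to convexity on every affine line in $z$-space, the one-variable statement then yields convexity of $z \mapsto \bigl(\int_{\R} f(y,z)^{-2}\,dy\bigr)^{-1}$ along every such line, and convexity along every line characterizes convexity on an open convex set. Taking $z = (x_2, x_0)$ and $f = f_\pm$, and recognizing from \eqref{IpmDefEq} that
\[
\int_{\R} \frac{dy}{f_\pm(y, x_2, x_0)^2} = 2 I_\pm(x_2, x_0),
\]
I would obtain the convexity of $(x_2, x_0) \mapsto 1/I_\pm(x_2, x_0)$ on the domain specified in the corollary.

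All of the substantive content is therefore already contained in Lemma \ref{normTripleConvexity}, which in turn is powered by Ball's theorem and the triangle computation of Claim \ref{TriangleSliding}. The main thing to be careful about is the sign convention in the $I_-$ case: positive $1$-homogeneity of $\|\cdot\|_{(\cdot)^{\circ,p}}$ forces the natural integrand to involve $\|(-1, -y)\|$ rather than $\|(-1, y)\|$, which is exactly the norm appearing in formula \eqref{volumeLpPolar} for the $\{x<0\}$ hemisphere. Beyond this harmless reflection and the slicing argument that upgrades Lemma \ref{-1concaveLemma} to two parameters, the proof is pure bookkeeping.
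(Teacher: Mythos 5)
Your proof is correct and follows essentially the same route as the paper's: apply Lemma \ref{normTripleConvexity} with $x=\pm1$, flip the sign of $y$ in the $I_-$ case, and feed the resulting jointly convex norm functions into Lemma \ref{-1concaveLemma}. The only superfluous step is your slicing argument to upgrade Lemma \ref{-1concaveLemma} to a two-dimensional parameter — the paper already states that lemma for $f:\R\times\R^m\to(0,\infty)$ with arbitrary $m\in\mathbb{N}$, so it applies directly with $z=(x_2,x_0)\in\R^2$ and no restriction to lines is needed.
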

\begin{proof}
By Lemma \ref{normTripleConvexity} for $x=1$, $(y,x_2, x_0)\mapsto \|(1,y)\|_{P(x_2)-(x_0,0)}$ is convex, and hence by Lemma \ref{-1concaveLemma}, 
\begin{equation*}
    (x_2, x_0)\mapsto \bigg[{\frac12\int_\R \frac{dy}{\|(1,y)\|^2_{P(x_2)-(x_0,0)}}}\bigg]^{-1}= \frac{1}{|I_+(x_2,x_0)|}
\end{equation*}
is convex. Similarly, by Lemma \ref{normTripleConvexity} for $x=-1$, $(y,x_2, x_0)\mapsto \|(-1,y)\|_{P(x_2)-(x_0,0)}$ is convex, thus $(y,x_2, x_0)\mapsto \|(-1,-y)\|_{P(x_2)-(x_0,0)}$ is also convex (in general, if $f(x,y)$ is convex then $f(x,-y)$ is also convex). As a result, by Lemma \ref{-1concaveLemma}, 
\begin{equation*}
    (x_2, x_0)\mapsto \bigg[\frac12{\int_{\R}\frac{dy}{\|(-1, -y)\|^2_{P(x_2)-(x_0,0)}}}
    \bigg]^{-1}= \frac{1}{I_-(x_2,x_0)}
\end{equation*}
is convex. 
\end{proof}

To facilitate the algebraic steps required to derive Lemma \ref{SlidingLemma} from Corollary \ref{ConvexityOfI}, it is essential to keep the ratio $I_+/ I_-$ constant throughout the sliding coupled with translation. 

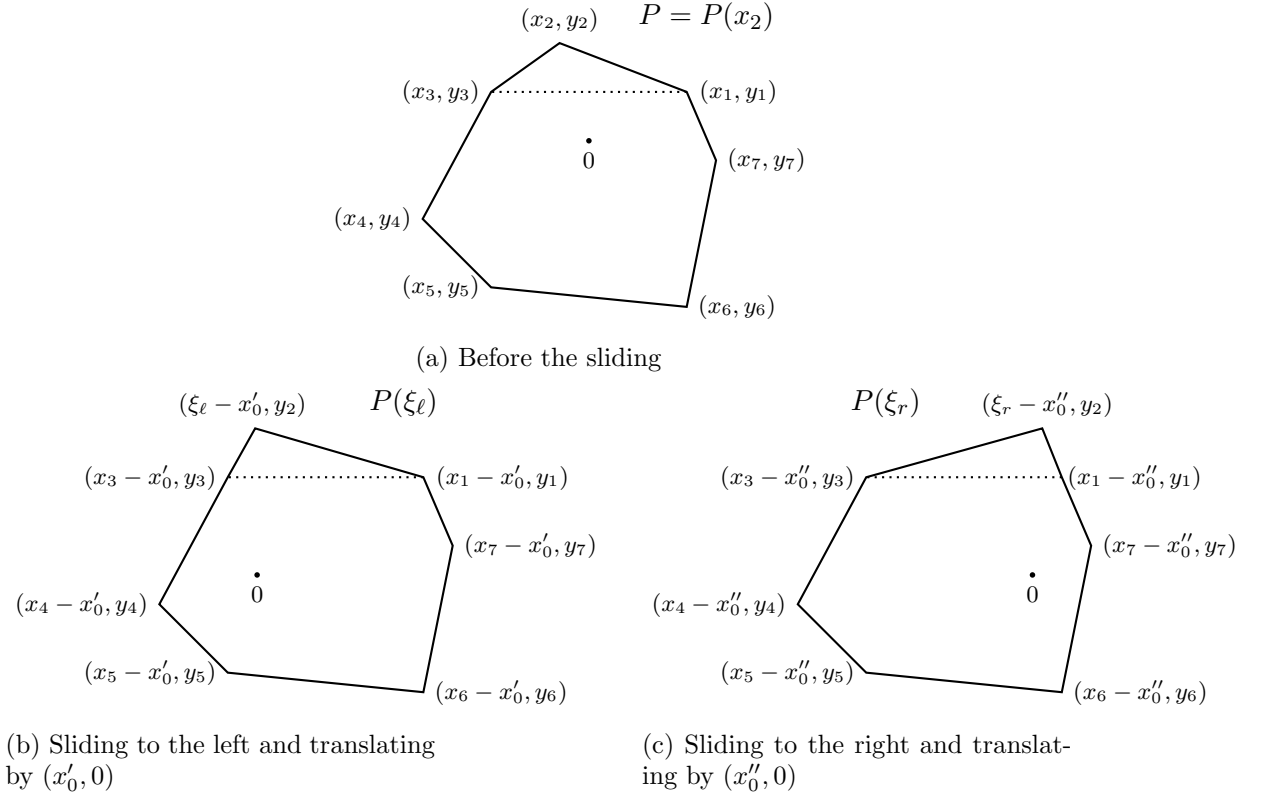
\begin{figure}[H]
\centering
\begin{subfigure}[b]{0.35\textwidth}
\centering
\begin{tikzpicture}[scale=1.3]
\draw[thick] (-1,1) -- (-1.7, -0.3) -- (-1,-1) -- (1,-1.2) -- (1.3, 0.3) -- (1, 1) -- (-.3,1.5) -- cycle;
\draw[thick, dotted] (1,1) -- (-1,1);
\node[above=.5pt] at (1.2, 1.5){$P= P(x_2)$};
\node[right=1pt] at (1, 1){\footnotesize $(x_1, y_1)$};
\node[above=.5pt] at (-.3, 1.5){\footnotesize $(x_2, y_2)$};
\node[left=.5pt] at (-1, 1){\footnotesize $(x_3, y_3)$};
\node[left=.5pt] at (-1.7, -0.3){\footnotesize $(x_4, y_4)$};
\node[left=.5pt] at (-1, -1){\footnotesize $(x_5, y_5)$};
\node[right=.5pt] at (1, -1.2){\footnotesize $(x_6, y_6)$};
\node[right=.5pt] at (1.3, 0.3){\footnotesize $(x_7, y_7)$};
\fill (0, 0.5)  circle[radius=0.8pt];
\node[below=.5pt] at (0, 0.5){\footnotesize $0$};
\end{tikzpicture}
\caption{\small Before the sliding}
\end{subfigure}
\\
\begin{subfigure}[b]{0.35\textwidth}
\centering
\begin{tikzpicture}[scale=1.3]
\draw[thick] (-1,1) -- (-1.7, -0.3) -- (-1,-1) -- (1,-1.2) -- (1.3, 0.3) -- (1, 1) -- (-.72,1.5) -- cycle;
\draw[thick, dotted] (1,1) -- (-1,1);
\node[above=.5pt] at (.8, 1.5){$P(\xi_\ell)$};
\node[right=1pt] at (1, 1){\footnotesize $(x_1-x_0', y_1)$};
\node[above=.5pt] at (-.85, 1.5){\footnotesize $(\xi_\ell-x_0', y_2)$};
\node[left=.5pt] at (-1, 1){\footnotesize $(x_3-x_0', y_3)$};
\node[left=.5pt] at (-1.7, -0.3){\footnotesize $(x_4-x_0', y_4)$};
\node[left=.5pt] at (-1, -1){\footnotesize $(x_5-x_0', y_5)$};
\node[right=.5pt] at (1, -1.2){\footnotesize $(x_6-x_0', y_6)$};
\node[right=.5pt] at (1.3, 0.3){\footnotesize $(x_7-x_0', y_7)$};
\fill (-.7, 0)  circle[radius=0.8pt];
\node[below=.5pt] at (-.7, 0){\footnotesize $0$};
\end{tikzpicture}
\caption{\small Sliding to the left and translating by $(x_0', 0)$}
\end{subfigure}
\hspace{2.5cm}
\begin{subfigure}[b]{0.35\textwidth}
\centering
\begin{tikzpicture}[scale=1.3]
\draw[thick] (-1,1) -- (-1.7, -0.3) -- (-1,-1) -- (1,-1.2) -- (1.3, 0.3) -- (1, 1) -- (.8,1.5) -- cycle;
\draw[thick, dotted] (1,1) -- (-1,1);
\node[above=.5pt] at (-.8, 1.5){$P(\xi_r)$};
\node[right= 2pt] at (.9, 1){\footnotesize $(x_1-x_0'', y_1)$};
\node[above=.5pt] at (.85, 1.5){\footnotesize $(\xi_r-x_0'', y_2)$};
\node[left=.5pt] at (-1, 1){\footnotesize $(x_3-x_0'', y_3)$};
\node[left=.5pt] at (-1.7, -0.3){\footnotesize $(x_4-x_0'', y_4)$};
\node[left=.5pt] at (-1, -1){\footnotesize $(x_5-x_0'', y_5)$};
\node[right=.5pt] at (1, -1.2){\footnotesize $(x_6-x_0'', y_6)$};
\node[right=.5pt] at (1.3, 0.3){\footnotesize $(x_7-x_0'', y_7)$};
\fill (.7, 0)  circle[radius=0.8pt];
\node[below=.5pt] at (.7, 0){\footnotesize $0$};
\end{tikzpicture}
\caption{\small Sliding to the right and translating by $(x_0'',0)$}
\end{subfigure}
    \caption{\small Sliding with translation}
\label{translatingFigure}
\end{figure}

\begin{proposition}
\label{BalancingProp}
    Let $P\in\mathcal{P}$ with $0\in\mathrm{int}\,P$. For any $x_2, x_2'\in [\xi_\ell, \xi_r]$
    (recall Figure \ref{SlidingFig}), there exists $x_0=x_0(x_2, x_2')\in\R$ such that $(x_0,0)\in\mathrm{int}\,P(x_2)$ and $(-x_0,0)\in\mathrm{int}\,P(x_2')$ so that
    \begin{equation}
    \label{ratioProp5.6Eq}
        \frac{|(P(x_2)-(x_0,0))^{\circ,p}\cap\{x>0\}|}
        {|(P(x_2)-(x_0,0))^{\circ,p}\cap\{x<0\}|}
        =\frac{|(P(x_2')-(-x_0,0))^{\circ,p}\cap\{x>0\}|}
        {|(P(x_2')-(-x_0,0))^{\circ,p}\cap\{x<0\}|}.
    \end{equation}
\end{proposition}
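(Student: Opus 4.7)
The plan is to apply the intermediate value theorem to the ratio function
$$
R(t, x_0) \defeq \frac{I_+(t, x_0)}{I_-(t, x_0)},
$$
where $I_\pm$ are as in \eqref{IpmDefEq}; the identity \eqref{ratioProp5.6Eq} amounts to $R(x_2, x_0) = R(x_2', -x_0)$, so it suffices to find a zero of $\Phi(x_0) \defeq \log R(x_2, x_0) - \log R(x_2', -x_0)$. Since $0 \in \mathrm{int}\,\widehat{P} \subset \mathrm{int}\,P(t)$ for every $t\in[\xi_\ell,\xi_r]$, the intersection of $\partial P(t)$ with the horizontal axis consists of two points $(a_-(t), 0)$ and $(a_+(t), 0)$ with $a_-(t) < 0 < a_+(t)$. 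The constraints $(x_0, 0) \in \mathrm{int}\,P(x_2)$ and $(-x_0, 0) \in \mathrm{int}\,P(x_2')$ restrict $x_0$ to the open interval
$$
\Omega \defeq \bigl(\max\{a_-(x_2),\,-a_+(x_2')\},\,\min\{a_+(x_2),\,-a_-(x_2')\}\bigr),
$$
which is nonempty since $0 \in \Omega$. Continuity of $\Phi$ on $\Omega$ follows from dominated convergence applied to the integrals defining $I_\pm$, together with the translation identity \eqref{hpTranslate} in the form $h_{p, P(t) - (x_0, 0)}(\pm 1, y) = h_{p, P(t)}(\pm 1, y) \mp x_0$.

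The crucial step is the analysis of $\Phi$ at the boundary of $\Omega$. As $x_0 \uparrow a_+(x_2)$, the origin approaches the right edge of $K \defeq P(x_2) - (x_0, 0)$, so its $L^p$-polar becomes unbounded in the $(1, 0)$-direction. Using the pointwise bound $h_{p, K}(u) \leq h_K(u)$ together with the classical translation $h_K(1, y) = h_{P(x_2)}(1, y) - x_0$ and a local Lipschitz estimate $h_{P(x_2)}(1, y) \leq a_+(x_2) + M|y|$ (where $M$ bounds the second coordinate of $P(x_2)$), one obtains from \eqref{KcircpNormEq} the lower bound
$$
\|(1, y)\|_{K^{\circ, p}}^{-2} \geq \int_0^\infty r\,e^{-r((a_+(x_2) - x_0) + M|y|)}\,dr = \bigl((a_+(x_2) - x_0) + M|y|\bigr)^{-2}.
$$
Integrating this over $|y| < \delta$ yields $I_+(x_2, x_0) \to +\infty$. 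Meanwhile, $I_-(x_2, x_0)$ remains bounded: the barycenter $b(K)$ stays in $\{x < 0\}$ uniformly, so Jensen's inequality $h_{p, K}(u) \geq \langle b(K), u\rangle$ provides integrable decay of $e^{-h_{p, K}(-r, -ry)}$ on compact ranges of $y$, which a further dominated-convergence argument extends to the whole integral. Hence $\log R(x_2, x_0) \to +\infty$ in this case. The three remaining boundary limits, $\log R(x_2, x_0) \to -\infty$ as $x_0 \downarrow a_-(x_2)$, $\log R(x_2', -x_0) \to +\infty$ as $x_0 \downarrow -a_+(x_2')$, and $\log R(x_2', -x_0) \to -\infty$ as $x_0 \uparrow -a_-(x_2')$, are obtained by the symmetric versions of the same argument.

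Whichever of the two candidates defines each endpoint of $\Omega$, the four limits combine to give $\Phi(x_0) \to +\infty$ at the right endpoint and $\Phi(x_0) \to -\infty$ at the left endpoint. By continuity and the intermediate value theorem, there exists $x_0 \in \Omega$ with $\Phi(x_0) = 0$, which is precisely \eqref{ratioProp5.6Eq}. The main obstacle is the quantitative boundary analysis in the preceding paragraph: the two half-space volumes of the $L^p$-polar must be treated asymmetrically, and one must verify that in each degenerate limit only the relevant half-space volume blows up. The paired lower and upper bounds on $h_{p, K}$---one via the classical support function (forcing $I_+$ to diverge) and the other via the barycenter (keeping $I_-$ bounded)---are what ensure the correct sign of divergence for $\Phi$.
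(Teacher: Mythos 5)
Your overall strategy coincides with the paper's: pass to the ratio difference as a function of the translation parameter $x_0$, restrict to the interval on which both translation constraints are satisfied, and apply the intermediate value theorem after establishing divergence to $\pm\infty$ at the endpoints. The formulation via $\Omega=\bigl(\max\{a_-(x_2),-a_+(x_2')\},\min\{a_+(x_2),-a_-(x_2')\}\bigr)$ is a tidier packaging of the paper's four-case analysis of $(\alpha,\beta)\cap(-\beta',-\alpha')$, and your argument that $I_+(x_2,x_0)\to\infty$ as $x_0\uparrow a_+(x_2)$ (via $h_{p,K}\le h_K$, the linear bound $h_{P(x_2)}(1,y)\le a_+(x_2)+M|y|$, and the explicit integral) is correct.

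The gap is in the boundedness of $I_-$. Jensen's inequality gives $h_{p,K}(-r,-ry)\geq r\bigl(-b(K)_1-y\,b(K)_2\bigr)$, which yields exponential decay of $e^{-h_{p,K}(-r,-ry)}$ in $r$ only when $-b(K)_1-y\,b(K)_2>0$, i.e., for $|y|$ below a threshold determined by $b(K)$. Once $|y|$ exceeds that threshold the Jensen bound is vacuous, and the appeal to ``a further dominated-convergence argument'' does not produce a dominating function uniform in $x_0$; dominated convergence gives continuity of $I_-$ in $x_0$ on compact subsets of the interior but not a bound as $x_0$ approaches the edge. The paper closes this gap with Lemma \ref{PartialSantaloLemma} (log-convexity of $x\mapsto|(K-x)^{\circ,p}\cap\{x_1>0\}|$ in the translation vector $x$), interpolating between the extremal translation, where Claim \ref{finiteClaim1} gives finiteness, and the origin, where Claim \ref{MpInfinite} gives finiteness. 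An even shorter fix, available directly from \eqref{hpTranslate}, is a monotonicity observation: for every fixed $y$,
\begin{equation*}
\|(-1,-y)\|_{(P(x_2)-(x_0,0))^{\circ,p}}^{-2}=\int_0^\infty r\,e^{-h_{p,P(x_2)}(-r,-ry)}\,e^{-rx_0}\,dr
\end{equation*}
is decreasing in $x_0$, so $I_-(x_2,x_0)\le I_-(x_2,0)<\infty$ for every $x_0\ge0$; the analogous identity with $e^{+rx_0}$ shows $I_+(x_2,x_0)$ is increasing in $x_0$, which also gives the $+\infty$ divergence and, incidentally, the strict monotonicity of $\Phi$. You should replace the Jensen paragraph with one of these two arguments; as written the claim that $I_-$ remains bounded is not justified.
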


The key to proving Proposition \ref{BalancingProp} lies in the following intuition: if a convex body mostly lies on one side of a hyperplane, then its classical polar mostly lies on the other side (Figure \ref{fig:PPpolarHyperplane}). The hope
is that something roughly similar happens for the $L^p$-polar with $p$ finite.
Namely, that if we take $x_0$ very close to the left-hand side boundary of $P$ the difference between the two ratios in Proposition \ref{BalancingProp} is negative, and if we take $x_0$ to the right-hand side boundary, the difference between the two ratios is positive. If that is the case, by the intermediate value theorem, there must be a point where the two are equal. 

\begin{figure}[H]
\centering
\begin{tikzpicture}[scale=1.1]
\draw[->] (-1,0) -- (3,0) node[anchor=north west] {x};
\draw[->] (0,-1.5) -- (0,2) node[anchor=north west] {y};
\draw[thick]  (-.5, 0) -- (0,-1) -- (1.5,-1) -- (2.5, 0) -- (2, 1) -- (0,1.5) -- cycle;
\node[above right=1pt] at (1.7, 1.2){$P$};
\end{tikzpicture}
\quad\quad
\begin{tikzpicture}[scale=1.3]
\draw[->] (-2.5,0) -- (1,0) node[anchor=north west] {x};
\draw[->] (0,-1.3) -- (0,1.5) node[anchor=north west] {y};
\draw[thick]  (-2, 1/3) -- (-2,-1) -- (0, -1) -- (.4, -.4) -- (.4, .2) -- (1/6, 2/3) -- cycle;
\node[above left=1pt] at (-.8, .8){$P^\circ$};
\end{tikzpicture}
    \caption{When $P$ mostly lies in $\{x\geq 0\}$ then $P^{\circ}$ mostly lies in $\{x\leq 0\}$.}
    \label{fig:PPpolarHyperplane}
\end{figure}
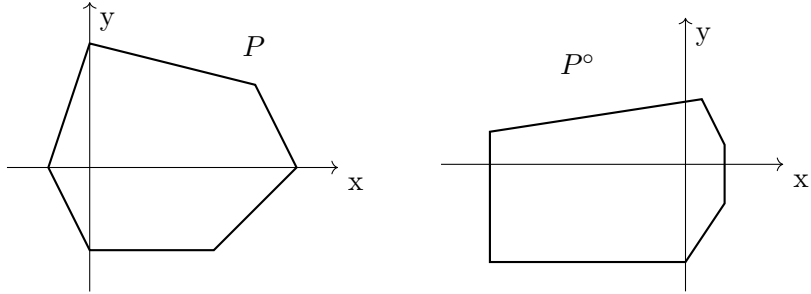

Several basic facts in this direction are the following:
\begin{claim}{\rm\cite[Claim 4.8]{BMR23}}
\label{MpInfinite}
    Let $p\in (0,\infty]$. For a convex body $K\subset \R^n$, $|K^{\circ,p}|<\infty$ if and only if $0\in\mathrm{int}\,K$. 
\end{claim}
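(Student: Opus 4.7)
The plan is to use identity \eqref{hpKKcircpEq} to recast the question as integrability of $e^{-h_{p,K}}$ on $\R^n$, and then to attack each implication via opposite control of $h_{p,K}$ at infinity.

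For the direction $0\in\mathrm{int}\,K \Rightarrow |K^{\circ,p}|<\infty$, I pick $r>0$ with $rB_2^n\subseteq K$ and restrict the integral defining $e^{p h_{p,K}(y)}$ in \eqref{hpKDef} to $rB_2^n$, obtaining
$$
h_{p,K}(y)\geq h_{p,rB_2^n}(y)+\frac{1}{p}\log\frac{|rB_2^n|}{|K|}.
$$
Rotational invariance of $B_2^n$ reduces $h_{p,rB_2^n}(y)$ to a function of $|y|$ alone; a further restriction of the defining integral to the cap $\{x\in rB_2^n:\langle x,y/|y|\rangle \geq r/2\}$ then yields the linear lower bound $h_{p,rB_2^n}(y)\geq r|y|/2+\frac{1}{p}\log c_n$ for a dimensional constant $c_n>0$. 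Combining gives $h_{p,K}(y)\geq r|y|/2-C$, so $\int_{\R^n}e^{-h_{p,K}(y)}\,dy\leq e^C\int_{\R^n} e^{-r|y|/2}\,dy<\infty$.

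For the direction $0\notin\mathrm{int}\,K \Rightarrow |K^{\circ,p}|=\infty$, I apply Hahn--Banach separation to obtain $u\in\partial B_2^n$ with $\langle x,u\rangle\geq 0$ for every $x\in K$. Fixing an orthonormal basis $v_1,\ldots,v_{n-1}$ of $u^\perp$ and $R>0$ with $K\subseteq RB_2^n$, for $y=-tu+\sum_{j=1}^{n-1}s_j v_j$ with $t>0$ the sign condition on $K$ gives
$$
\langle x,y\rangle=-t\langle x,u\rangle+\sum_{j=1}^{n-1}s_j\langle x,v_j\rangle\leq R\sum_{j=1}^{n-1}|s_j|
$$
for every $x\in K$, hence $h_{p,K}(y)\leq R\sum_{j=1}^{n-1}|s_j|$. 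Thus $e^{-h_{p,K}(y)}\geq e^{-R(n-1)}$ throughout the cylindrical region $\{t\in(0,\infty),\,|s_j|\leq 1\}$, making the $t$-integral infinite and $|K^{\circ,p}|=\infty$.

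The main obstacle is the direction-uniform lower bound on $h_{p,rB_2^n}$ in the forward direction; the $O(n)$-invariance of $B_2^n$ collapses this to a one-variable cap estimate, after which tail integrability is immediate. The reverse direction is considerably easier once separation is invoked, since any supporting hyperplane at $0$ forces $h_{p,K}$ to be bounded on an entire half-space of directions, which is already enough for divergence.
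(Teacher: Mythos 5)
Your proof is correct. The paper itself does not give a proof of this claim---it is cited from \cite[Claim~4.8]{BMR23}---so there is no in-paper argument to compare against. Evaluated on its own, your proposal is a complete, self-contained argument. In the forward direction, the monotonicity of $e^{ph_{p,\cdot}}$ under the inclusion $rB_2^n\subseteq K$ gives $h_{p,K}(y)\geq h_{p,rB_2^n}(y)+\tfrac1p\log\tfrac{|rB_2^n|}{|K|}$, and your cap estimate for the ball correctly yields the scale- and direction-uniform linear lower bound $h_{p,K}(y)\geq r|y|/2 - C$, from which integrability of $e^{-h_{p,K}}$ (and hence finiteness of $|K^{\circ,p}|$ via \eqref{hpKKcircpEq}) follows. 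In the reverse direction, the separating hyperplane at the origin forces the first summand of $\langle x,y\rangle$ to be nonpositive on the whole half-slab $\{-tu+\sum s_jv_j: t>0,\,|s_j|\le1\}$, giving $h_{p,K}\leq R(n-1)$ there and hence a divergent integral; this works whether $0\in\partial K$ or $0\notin K$, and the argument carries over to $p=\infty$ verbatim with $h_K$ in place of $h_{p,K}$. A common alternative, when the theory of $L^p$-support functions has already been developed, is to deduce the claim from a two-sided comparison between $h_{p,K}$ and the classical support function $h_K$ together with the $p=\infty$ case; your route is more elementary in that it establishes each direction directly from the definition of $h_{p,K}$ without invoking such a comparison.
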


\begin{claim}
\label{finiteClaim1}
{\rm \cite[Claims 2.18, 2.21]{Mastr23}}
    Let $p\in (0,\infty]$. For a convex body $K\subset \R^n$ with $0\in\mathrm{int}\,K$ and $t\in \R$ such that $(K-(t,0,\ldots,0))\subset \{x\in\R^n\,:\, x_1\ge0\}$
    (respectively, $(K-(t,0,\ldots,0))\subset \{x\in\R^n\,:\, x_1\le0\}$), 
    \begin{equation*}
       |(K-(t,0,\ldots,0))^{\circ,p}\cap 
        \{x\in\R^n\,:\,  x_1\ge0\}|
        <\infty, 
    \end{equation*}
    (respectively, $|(K-(t,0,\ldots,0))^{\circ,p}\cap 
        \{x\in\R^n\,:\, x_1\le0\}
        <\infty$).
\end{claim}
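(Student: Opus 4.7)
The plan is to reduce finiteness of the half-volume to a uniform lower bound on the near-norm $\|\cdot\|_{(K')^{\circ,p}}$ over the closed hemisphere $\partial B_2^n\cap\{u_1\ge 0\}$, and then establish that bound via a ball sitting inside $K':=K-(t,0,\ldots,0)$ centred on the positive $x_1$-axis. Since $0\in\mathrm{int}\,K$, some $B(0,\epsilon)\subset K$, and translating gives $B(y_0,\epsilon)\subset K'$ with $y_0:=-te_1$. The hypothesis $K'\subset\{x_1\ge 0\}$ forces $-t\ge\epsilon>0$, so $\langle y_0,u\rangle=(-t)u_1\ge 0$ for every $u$ in the hemisphere.

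Extending the $n=2$ polar-coordinates computation of Lemma \ref{suitableVolumeLemma} to arbitrary dimension,
\begin{equation*}
|(K')^{\circ,p}\cap\{x_1\ge 0\}|=\frac{1}{n}\int_{\partial B_2^n\cap\{u_1\ge 0\}}\frac{du}{\|u\|_{(K')^{\circ,p}}^n}.
\end{equation*}
Since the spherical measure of the hemisphere is finite, by \eqref{KcircpNormEq} it suffices to produce a uniform upper bound on $\int_0^\infty r^{n-1}e^{-h_{p,K'}(ru)}\,dr$ as $u$ ranges over the hemisphere.

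The core estimate is the uniform lower bound $h_{p,K'}(ru)\ge \tfrac{r\epsilon}{2}+C_p$, where $C_p$ depends only on $p$, $n$, $\epsilon$, $|K'|$. Restricting the defining integral of $h_{p,K'}$ to $B(y_0,\epsilon)\subset K'$ and substituting $z=x-y_0$,
\begin{equation*}
e^{ph_{p,K'}(ru)}\ge \frac{e^{pr\langle y_0,u\rangle}}{|K'|}\int_{B(0,\epsilon)}e^{pr\langle z,u\rangle}\,dz,
\end{equation*}
the prefactor is at least $1$ by the observation above, and the ball integral is bounded below by the contribution of the cap $\{z\in B(0,\epsilon):\langle z,u\rangle\ge\epsilon/2\}$, whose volume is a dimensional constant $c_n\epsilon^n$ independent of the unit vector $u$ by rotational symmetry. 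Taking logarithms yields the displayed estimate; inserting it into the radial integral gives a uniform finite upper bound. For $p=\infty$ the same ball inclusion gives directly $h_{\infty,K'}(ru)\ge r\langle y_0,u\rangle+r\epsilon\ge r\epsilon$, so the conclusion holds. The symmetric case $K'\subset\{x_1\le 0\}$ follows by applying the above to $-K'$ together with the identity $(-K')^{\circ,p}=-(K')^{\circ,p}$, which in turn follows from $h_{p,-K'}(y)=h_{p,K'}(-y)$.

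The main obstacle is the equator $u_1=0$: the naive bound obtained by translating the centre alone, namely $r\langle y_0,u\rangle$, vanishes there, so one genuinely needs the radial width of the ball (the extra $r\epsilon/2$ term) rather than only its centre. Rotational invariance of the cap volume is what makes this bound truly uniform across the closed hemisphere, including the equator.
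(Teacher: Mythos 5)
Your proof is correct. The paper does not supply its own proof of this claim; it cites \cite[Claims 2.18, 2.21]{Mastr23}, so I am evaluating your argument on its own merits. The structure you follow is natural and complete: you reduce the finiteness of $|(K')^{\circ,p}\cap\{x_1\ge 0\}|$ to a uniform lower bound on $\|u\|_{(K')^{\circ,p}}$ over the closed hemisphere via the $n$-dimensional polar-coordinate formula (which, incidentally, is already \eqref{ppolarIntersectionEq1} in \S\ref{DecompSection} with $u=e_1$, so you could cite that rather than "extending Lemma \ref{suitableVolumeLemma}"). The key estimate is to inscribe $B(-te_1,\epsilon)\subset K'$, use $-t\ge\epsilon>0$ and $u_1\ge0$ to discard the centre's contribution, and then use the rotationally invariant cap $\{z\in B(0,\epsilon):\langle z,u\rangle\ge\epsilon/2\}$ to extract the uniform linear growth $h_{p,K'}(ru)\ge r\epsilon/2+C_p$. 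As you correctly flag, this cap term is what saves the estimate at the equator $u_1=0$, where the centre contribution alone degenerates; the $p=\infty$ case follows directly from $h_{K'}(u)\ge\langle y_0,u\rangle+\epsilon\ge\epsilon$; and the $\{x_1\le0\}$ variant reduces to the first case via $(-K')^{\circ,p}=-(K')^{\circ,p}$ (a consequence of $h_{p,-K'}(y)=h_{p,K'}(-y)$). Nothing is missing.
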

The next fact implies that 
none of the terms appearing in 
\eqref{ratioProp5.6Eq} vanish.

\begin{claim}
\label{NonzeroLpPolarVolClaim}
Let $p\in (0,\infty]$.    For a convex body $K\subset \R^n$ and $u\in\partial B_2^n$, $|K^{\circ,p}\cap \{x: \langle x,u\rangle\geq 0\}|>0$. 
\end{claim}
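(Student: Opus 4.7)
The plan is to show that $K^{\circ,p}$ contains a Euclidean ball of positive radius around the origin; the intersection of this ball with the closed half-space $\{x:\langle x,u\rangle\ge 0\}$ is then a half-ball of positive volume lying inside $K^{\circ,p}\cap\{x:\langle x,u\rangle\ge 0\}$, which settles the claim.

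First, I would extract a universal upper bound on $h_{p,K}$ along rays. Since $K$ is compact, $R\defeq\sup_{x\in K}|x|<\infty$ (and $R>0$ because $K$ has nonempty interior). Cauchy--Schwarz gives $\langle x,y\rangle\le R|y|$ for all $x\in K$, hence $h_K(y)\le R|y|$. Moreover, Jensen's inequality applied to the probability measure $dx/|K|$ on $K$ yields $h_{p,K}\le h_K$ for $p\in(0,\infty)$, and at $p=\infty$ this bound is an equality by definition. Combining these, $h_{p,K}(ry)\le rR|y|$ for all $r\ge 0$, $y\in\R^n$, and $p\in(0,\infty]$.

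Next, I would substitute this bound into the defining integral \eqref{KcircpNormEq}: for finite $p$, the elementary identity $\int_0^\infty r^{n-1}e^{-ar}\,dr=(n-1)!/a^n$ (applied with $a=R|y|$ when $y\ne 0$) gives
\begin{equation*}
\|y\|_{K^{\circ,p}}^{-n} = \frac{1}{(n-1)!}\int_0^\infty r^{n-1}e^{-h_{p,K}(ry)}\,dr \ge \frac{1}{(R|y|)^n},
\end{equation*}
hence $\|y\|_{K^{\circ,p}}\le R|y|$. At $p=\infty$ the same estimate is immediate from $\|y\|_{K^\circ}=\max(h_K(y),0)\le R|y|$. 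Either way, $B_{1/R}(0)\subset K^{\circ,p}$, so the half-ball $B_{1/R}(0)\cap\{x:\langle x,u\rangle\ge 0\}$ of volume $\tfrac12|B_2^n|R^{-n}>0$ lies inside $K^{\circ,p}\cap\{x:\langle x,u\rangle\ge 0\}$, giving the conclusion.

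I do not anticipate any substantial obstacle: the whole argument reduces to the elementary observation that every $L^p$-polar contains the Euclidean ball of radius $1/R$, where $R$ is the circumradius of $K$ about $0$. The only minor wrinkle is that finite $p$ and $p=\infty$ are handled by slightly different routes (integral estimate versus direct estimate on the support function), but both paths give the same bound, and notably the argument is valid for \emph{any} convex body $K$ regardless of whether $0$ lies in its interior (so that $K^{\circ,p}$ may be unbounded, but still contains $B_{1/R}(0)$).
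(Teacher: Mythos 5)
Your proof is correct and follows the same essential route as the paper's: inscribe a small Euclidean ball centered at the origin inside $K^{\circ,p}$, so that its intersection with the half-space is a half-ball of positive volume. The only difference is cosmetic: the paper gets the ball inclusion by citing $K^\circ\subset K^{\circ,p}$ from their earlier work, whereas you re-derive it inline via $h_{p,K}\le h_K$ and the integral formula \eqref{KcircpNormEq} (note that this bound is just $\int_K e^{p\langle x,y\rangle}\,dx/|K|\le\sup_K e^{p\langle x,y\rangle}$, i.e.\ monotonicity of the integral, rather than Jensen's inequality, but the conclusion is the same).
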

\begin{proof}
    The origin lies in the interior of $K^\circ$. By definition, a body is bounded, hence there is $M>0$ such that $|x|\leq M$ for all $x\in K$. Therefore, $\langle x,y\rangle\leq M|y|\leq 1$, for all $|y|\leq 1/M$, i.e., $M^{-1} B_2^n\subset K^\circ$.
    But $K^\circ\subset K^{\circ,p}$ \cite[Lemma 3.1]{BMR23}, so the latter contains a neighborhood of the origin and the claim follows.
\end{proof}

\begin{figure}[ht]
    \centering
    \begin{tikzpicture}[scale=1.1]
\draw[->] (-1,0) -- (3,0) node[anchor=north west] { };
\draw[->] (0,-1.5) -- (0,2) node[anchor=north west] { };
\draw[thick]  (-.5, .5) -- (0,-1) -- (1.5,-1) -- (2, 1) -- (0,1.5) -- cycle;
\draw[dotted, thick]  (-.5, .5) -- (-.5, 0);
\draw[dotted, thick]  (2, 1) -- (2, 0);
\node[above right=1pt] at (1.7, 1.2){$K$};
\node[below=1pt] at (-.5, 0){$\alpha$};
\node[below=1pt] at (2, 0){$\beta$};
\end{tikzpicture}
    \caption{$\alpha$ and $\beta$}
\end{figure}
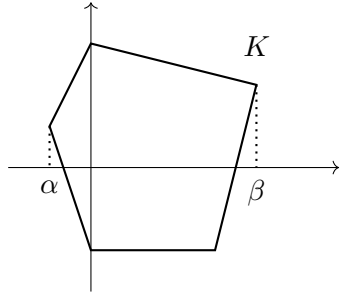

\begin{lemma}
\label{IinfiniteLemma}
Let $p\in (0,\infty]$.
Let $K\subset \R^2$ be a convex body with $0\in\mathrm{int}\,K$. For 
$
    \alpha\defeq \min\{x\in \R: (x,y)\in K\}
$ 
and 
$
\beta \defeq \max\{x\in \R:  (x,y)\in K\},
$
    \begin{equation*}
    \begin{aligned}
       |(K-(\alpha,0))^{\circ,p}\cap \{x> 0\}|,\quad
       |(K-(\beta,0))^{\circ,p}\cap\{x<0\}|<\infty,
    \cr
     |(K-(\alpha,0))^{\circ,p}\cap\{x<0\}|
     =
     |(K-(\beta,0))^{\circ,p}\cap\{x>0\}|= \infty,
     \end{aligned}
    \end{equation*}
\end{lemma}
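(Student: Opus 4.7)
The finite statements are immediate from Claim \ref{finiteClaim1}: by the definitions of $\alpha$ and $\beta$, the translates $K-(\alpha,0)$ and $K-(\beta,0)$ lie in the closed half-planes $\{x\geq 0\}$ and $\{x\leq 0\}$ respectively, so Claim \ref{finiteClaim1} (with $n=2$, and noting that $\{x=0\}$ has measure zero) yields both finiteness assertions with no further work.

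For the two infinite statements, by reflection through the $y$-axis it suffices to treat $|(K-(\alpha,0))^{\circ,p}\cap\{x<0\}|=\infty$. My plan is to show that the entire ray $\{(-R,0):R\geq 0\}$ lies in $(K-(\alpha,0))^{\circ,p}$, then exploit convexity of the $L^p$-polar to produce arbitrarily large triangles inside it. To establish ray containment, I would first verify that $\|(-1,0)\|_{(K-(\alpha,0))^{\circ,p}}=0$. Since $K-(\alpha,0)\subset\{x\geq 0\}$, for every $r>0$ the inner product $\langle x,r(-1,0)\rangle=-rx_1$ is non-positive on $K-(\alpha,0)$, so
\[
h_{p,K-(\alpha,0)}(r(-1,0))=\tfrac{1}{p}\log\int_{K-(\alpha,0)}e^{-prx_1}\tfrac{dx}{|K|}\leq 0,
\]
and substituting this into \eqref{KcircpNormEq} gives $\int_0^\infty r\,e^{-h_{p,K-(\alpha,0)}(r(-1,0))}\,dr\geq \int_0^\infty r\,dr=\infty$, i.e.\ vanishing near norm. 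Positive $1$-homogeneity of $\|\cdot\|_{(K-(\alpha,0))^{\circ,p}}$ then places the entire negative $x$-axis inside the polar.

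To convert this into an area lower bound, I would use that $K-(\alpha,0)$ is a bounded convex body containing the origin, hence its classical polar $(K-(\alpha,0))^\circ$ contains some Euclidean ball $B(0,\e)$; by \cite[Lemma 3.1]{BMR23}, $B(0,\e)\subset(K-(\alpha,0))^{\circ,p}$ as well. Convexity of the $L^p$-polar (the triangle inequality for $\|\cdot\|_{(K-(\alpha,0))^{\circ,p}}$) then ensures that $\co(B(0,\e)\cup\{(-R,0)\})\subset(K-(\alpha,0))^{\circ,p}$ for every $R>0$; the portion of this convex hull in $\{x<0\}$ contains a triangle of area of order $R\e$, and letting $R\to\infty$ produces the desired infinite area. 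The $\beta$-statement follows by the entirely symmetric argument (reflect through the $y$-axis, replacing $-e_1$ by $e_1$). The only mildly subtle point is the sign check producing $h_{p,K-(\alpha,0)}(r(-1,0))\leq 0$; everything else is a routine consequence of the convexity and positive homogeneity of the near-norm machinery established in \cite{BMR23}.
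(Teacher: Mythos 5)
Your treatment of the finiteness statements matches the paper exactly (Claim \ref{finiteClaim1} applied to the translated bodies). For the infiniteness, though, you take a genuinely different route. The paper's argument is two lines: since $K-(\alpha,0)\subset\{x\geq 0\}$, the origin is not in $\mathrm{int}\,(K-(\alpha,0))$, so Claim \ref{MpInfinite} gives $|(K-(\alpha,0))^{\circ,p}|=\infty$; combined with $|(K-(\alpha,0))^{\circ,p}\cap\{x\geq 0\}|<\infty$ from Claim \ref{finiteClaim1}, the other half must be infinite. You instead build arbitrarily large convex subsets of $(K-(\alpha,0))^{\circ,p}\cap\{x<0\}$ directly: you show the negative $x$-axis lies in the $L^p$-polar because the near norm of $(-1,0)$ vanishes (via $h_{p,K-(\alpha,0)}(r(-1,0))\leq 0$), note $B(0,\e)\subset(K-(\alpha,0))^\circ\subset(K-(\alpha,0))^{\circ,p}$, and use convexity to produce cones of unbounded area in $\{x<0\}$. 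Both are valid. The paper's approach is shorter, leaning on the dichotomy of Claim \ref{MpInfinite}; yours is more self-contained and explains geometrically \emph{why} the volume diverges (the $L^p$-polar genuinely contains a ray), which is instructive. One caveat to keep in mind: your argument relies on the near norm vanishing on a nonzero vector, which only happens because $0\notin\mathrm{int}\,(K-(\alpha,0))$; the summary after \eqref{KcircpNormEq} ("vanishes only when $y=0$") presupposes $0\in\mathrm{int}\,K$, so it is worth flagging that the definiteness property is exactly what fails here. Also, for $p=\infty$ your integral formula for $h_{p,K}$ should be replaced by $h_{K-(\alpha,0)}(r(-1,0))=r\sup_{(u,v)\in K-(\alpha,0)}(-u)=0$, which gives the same conclusion.
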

\begin{proof}
    Since $K-(\alpha,0)\subset \{x\geq 0\}$,
    Claim \ref{MpInfinite} implies $|(K-(\alpha,0))^{\circ,p}|=\infty$, while Claim \ref{finiteClaim1} implies
     $   |(K-(\alpha,0))^{\circ,p}\cap \{x\geq 0\}|<\infty,
    $
    so necessarily $|(K-(\alpha,0))^{\circ,p}\cap\{x<0\}|=\infty$. 
The remaining claim follows in the same way. 
\end{proof}

\begin{lemma}
\label{PartialSantaloLemma}
Let $p\in (0,\infty]$.
    Let $K\subset \R^n$ be a convex body. For $u\in\partial B_2^n$, 
    \begin{equation*}
        x\mapsto |(K-x)^{\circ,p}\cap\{y: \langle y,u\rangle>0\}|
    \end{equation*}
    is log-convex in $x\in\R^n$. 
\end{lemma}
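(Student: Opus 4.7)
The plan is to reduce the lemma to a Laplace-transform-type identity and then invoke Hölder's inequality. Specifically, I would first show that for any convex body $K\subset\R^n$ with $p\in(0,\infty]$,
\[
|(K-x)^{\circ,p}\cap \{y:\langle y,u\rangle>0\}|=\frac{1}{n!}\int_{\{\langle y,u\rangle>0\}} e^{\langle x,y\rangle - h_{p,K}(y)}\,dy,
\]
and then observe that the right-hand side is log-convex in $x$ by Hölder.

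For the integral representation, I would generalize the polar-coordinate computation in the proof of Lemma \ref{suitableVolumeLemma} to a half-space in arbitrary dimension. Since $\|\cdot\|_{K^{\circ,p}}$ is positively $1$-homogeneous, $K^{\circ,p}$ is star-shaped with respect to the origin, so $K^{\circ,p}\cap\{\langle y,u\rangle>0\}$ is swept out by the rays through $\partial B_2^n\cap\{\langle v,u\rangle>0\}$. Using \eqref{KcircpNormEq} and Fubini--Tonelli with nonnegative integrands,
\[
|K^{\circ,p}\cap\{\langle y,u\rangle>0\}|=\frac{1}{n}\!\int_{\partial B_2^n\cap\{\langle v,u\rangle>0\}}\!\!\|v\|_{K^{\circ,p}}^{-n}\,dv = \frac{1}{n!}\!\int_{\partial B_2^n\cap\{\langle v,u\rangle>0\}}\!\!\int_0^\infty r^{n-1}e^{-h_{p,K}(rv)}\,dr\,dv = \frac{1}{n!}\!\int_{\{\langle y,u\rangle>0\}}\!\!e^{-h_{p,K}(y)}\,dy,
\]
where the last step is the reverse polar-to-Cartesian change of variables. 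Applying this to $K-x$ in place of $K$ and using the translation identity $h_{p,K-x}(y)=h_{p,K}(y)-\langle x,y\rangle$ from \eqref{hpTranslate} yields the claimed representation. For $p=\infty$ the same argument works with $h_{\infty,K}=h_K$.

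With the integral representation in hand, log-convexity is immediate. Fix $x_1,x_2\in\R^n$ and $\lambda\in(0,1)$, and write $x_\lambda=\lambda x_1+(1-\lambda)x_2$. Then
\[
e^{\langle x_\lambda,y\rangle - h_{p,K}(y)} = \bigl(e^{\langle x_1,y\rangle - h_{p,K}(y)}\bigr)^{\lambda}\bigl(e^{\langle x_2,y\rangle - h_{p,K}(y)}\bigr)^{1-\lambda},
\]
so Hölder's inequality on $\{\langle y,u\rangle>0\}$ with exponents $1/\lambda$ and $1/(1-\lambda)$ gives
\[
|(K-x_\lambda)^{\circ,p}\cap\{\langle y,u\rangle>0\}|\leq |(K-x_1)^{\circ,p}\cap\{\langle y,u\rangle>0\}|^{\lambda}\,|(K-x_2)^{\circ,p}\cap\{\langle y,u\rangle>0\}|^{1-\lambda},
\]
which is exactly log-convexity in $x$ (with the usual conventions if any term is $+\infty$).

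There is essentially no obstacle: the only point requiring some care is the possibility of infinite integrals (coming from $x\notin\mathrm{int}\,K$, as in Lemma \ref{IinfiniteLemma}), but since all integrands are nonnegative, Fubini--Tonelli and Hölder apply unrestrictedly, and log-convexity holds under the convention $0\cdot\infty=0$. In effect, the lemma is a half-space refinement of the classical fact that translations of the full $L^p$-polar volume are log-convex, which underlies the existence and uniqueness of the $L^p$-Santaló point in \eqref{LpPointDef}.
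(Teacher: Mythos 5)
Your proposal is correct and takes essentially the same route as the paper: the paper establishes the integral representation $|K^{\circ,p}\cap \{\langle y,u\rangle\geq 0\}|= \frac{1}{n!}\int_{\{\langle y,u\rangle\geq 0\}} e^{-h_{p,K}(y)}\,dy$ by the same polar-coordinate computation, applies the translation identity $h_{p,K-x}=h_{p,K}-\langle x,\cdot\rangle$, and then concludes log-convexity via H\"older exactly as you do.
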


First, we need a convenient formula for $|K^{\circ,p}\cap \{y: \langle y,u\rangle\geq 0\}|$.
\begin{claim}
Let $p\in (0,\infty]$.
    For a convex body $K\subset \R^n$ and $u\in \partial B_2^n$, 
    \begin{equation}
    \label{ppolarIntEq}
        |K^{\circ,p}\cap \{y: \langle y,u\rangle\geq 0\}|= \frac{1}{n!} \int_{\{y: \langle y,u\rangle\geq 0\}} e^{-h_{p,K}(y)}dy.
    \end{equation}
\end{claim}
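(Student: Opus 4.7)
The plan is to mimic the argument used to derive \eqref{hpKKcircpEq}, but restrict the angular integration to the appropriate hemisphere. The key point is that the hyperplane $\{y : \langle y,u\rangle = 0\}$ is invariant under positive scaling, so polar coordinates respect the splitting of $\R^n$ into the two half-spaces determined by $u$.

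First, I would write the volume on the left-hand side in polar coordinates $y = rv$, $r > 0$, $v \in \partial B_2^n$. Using that $\{y : \langle y,u\rangle \ge 0\}\setminus\{0\}$ is exactly the set of $rv$ with $r > 0$ and $v$ in the hemisphere $\partial B_2^n \cap \{v: \langle v,u\rangle \ge 0\}$, and that a point $rv$ lies in $K^{\circ,p}$ iff $r \le 1/\|v\|_{K^{\circ,p}}$,
\begin{equation*}
|K^{\circ,p}\cap \{y: \langle y,u\rangle\geq 0\}|
= \int_{\partial B_2^n \cap \{\langle v,u\rangle \ge 0\}} \int_0^{1/\|v\|_{K^{\circ,p}}} r^{n-1}\,dr\,dv
= \frac{1}{n}\int_{\partial B_2^n \cap \{\langle v,u\rangle \ge 0\}} \frac{dv}{\|v\|_{K^{\circ,p}}^{n}}.
\end{equation*}

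Next, I would substitute the defining expression \eqref{KcircpNormEq} for the near-norm, namely
\begin{equation*}
\frac{1}{\|v\|_{K^{\circ,p}}^{n}} = \frac{1}{(n-1)!}\int_0^\infty r^{n-1} e^{-h_{p,K}(rv)}\,dr,
\end{equation*}
and apply Fubini's theorem (justified by positivity of the integrand) to obtain
\begin{equation*}
|K^{\circ,p}\cap \{y: \langle y,u\rangle\geq 0\}|
= \frac{1}{n!}\int_{\partial B_2^n \cap \{\langle v,u\rangle \ge 0\}} \int_0^\infty r^{n-1} e^{-h_{p,K}(rv)}\,dr\,dv.
\end{equation*}
Finally, converting back from polar to Cartesian coordinates on the half-space $\{y : \langle y,u\rangle \ge 0\}$ yields precisely the right-hand side of \eqref{ppolarIntEq}.

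There is no real obstacle here: the argument is essentially the same as the polar-coordinate computation already performed in the proof of Lemma \ref{suitableVolumeLemma} (up to \eqref{KcircpSplitInt}), and indeed the present claim is just the $n$-dimensional, half-space refinement of \eqref{hpKKcircpEq}. The only thing to be slightly careful about is that when $0 \in \mathrm{int}\,K$ the inner radial integral is finite for every $v$ in the hemisphere, while when $0 \notin \mathrm{int}\,K$ both sides may be infinite, in which case the identity still holds in $[0,\infty]$ by monotone convergence applied to truncations $K^{\circ,p}\cap rB_2^n$.
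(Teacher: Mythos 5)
Your proof is correct and follows essentially the same route as the paper: both compute the left-hand side in polar coordinates to reduce to an integral of $\|v\|_{K^{\circ,p}}^{-n}$ over the hemisphere, then use the definition \eqref{KcircpNormEq} of the near-norm to identify this with the right-hand side (the paper computes both sides down to the hemisphere integral and equates them, while you substitute and convert back, but these are the same steps reorganized). Your closing remark on the case $0\notin\mathrm{int}\,K$ is a reasonable extra observation not spelled out in the paper.
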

\begin{proof}
    In polar coordinates $y= rv$, where $r= |y|$, 
 \begin{equation*}
 \begin{aligned}
    K^{\circ,p}\cap\{y: \langle y,u\rangle\geq 0\} &= \{y\in \R^n: \|y\|_{K^{\circ,p}}\leq 1 \text{ and } \langle y,u\rangle\geq 0\} 
    \cr
    &= \{(r,v)\in (0,\infty)\cap \partial B_2^n: r\|v\|_{K^{\circ,p}}\leq 1 \text{ and } \langle v,u\rangle\geq 0\}, 
 \end{aligned}
 \end{equation*}
thus
\begin{equation}
\label{ppolarIntersectionEq1}
    \begin{aligned}
        |K^{\circ,p}\cap\{y: \langle y,u\rangle\geq 0\}|&= \int_{K^{\circ,p}\cap \{y: \langle y,u\rangle\geq 0\}}dy = \int_{\{v\in \partial B_2^n: \langle v,u\rangle\geq 0\}} \int_0^{1/\|v\|_{K^{\circ,p}}} r^{n-1} dr dv\\
        &= \frac1n \int_{\{v\in\partial B_2^n: \langle v,u\rangle\geq 0\}} \frac{dv}{\|u\|_{K^{\circ,p}}^n}. 
    \end{aligned}
\end{equation}
In addition,  
\begin{equation}
\label{ppolarIntersectionEq2}
\begin{aligned}
    \int_{\{y: \langle y,u\rangle\geq 0\}} e^{-h_{p,K}(y)}dy&= \int_{\{v\in\partial B_2^n: \langle v,u\rangle\geq 0\}} \int_0^\infty e^{-h_{p,K}(rv)} r^{n-1}drdv\\
    &= (n-1)! \int_{\{v\in\partial B_2^n: \langle u,v\rangle\geq 0\}} \frac{dv}{\|v\|_{K^{\circ,p}}^n}. 
\end{aligned}
\end{equation}
Combining \eqref{ppolarIntersectionEq1} and \eqref{ppolarIntersectionEq2} proves the claim. 
\end{proof}

\begin{proof}[Proof of Lemma \ref{PartialSantaloLemma}]
    Since $h_{p,K-x}(y)= h_{p,K}(y)-\langle x,y\rangle$ \cite[Lemma 2.2 (ii)]{BMR23}, by \eqref{ppolarIntEq}, 
    \begin{equation}
    \label{KxpPolarEq}
        |(K-x)^{\circ,p}\cap \{y: \langle y,u\rangle\geq 0\}| = \frac{1}{n!} \int_{\{y: \langle y,u\rangle\geq 0\}} e^{-h_{p,K}(y)} e^{\langle x,y\rangle} dy.
    \end{equation}
    By H\"older's inequality, 
    \begin{equation*}
        \begin{aligned}
            &|(K-(1-\lambda)x_0-\lambda x_1)^{\circ,p}\cap \{y: \langle y,u\rangle\geq 0\}| \\
            &= \frac{1}{n!}\int_{\{y: \langle y,u\rangle\geq 0\}} e^{-h_{p,K}(y)} e^{\langle  (1-\lambda)x_0+ \lambda x_1, y\rangle} dy \\
            &\leq \left( \frac{1}{n!}\int_{\{y: \langle y,u\rangle\geq 0\}} e^{-h_{p,K}(y)} e^{\langle  x_0, y\rangle} dy\right)^{1-\lambda} \left( \frac{1}{n!}\int_{\{y: \langle y,u\rangle\geq 0\}} e^{-h_{p,K}(y)} e^{\langle  x_1, y\rangle} dy\right)^\lambda \\
            &= |(K-x_0)^{\circ,p}\cap \{y: \langle y,u\rangle\geq 0\}|^{1-\lambda}  |(K- x_1)^{\circ,p}\cap \{y: \langle y,u\rangle\geq 0\}|^\lambda.
        \end{aligned}
    \end{equation*}
\end{proof}

\begin{remark}
    We may use
    dominated convergence \cite[\S 2.24]{Folland99} on \eqref{KxpPolarEq} to differentiate twice under the integral sign as in \cite[pp. 34--35]{BMR23} to get
    \begin{equation*}
        \frac{\partial}{\partial x_i} |(K-x)^{\circ,p}\cap \{y: \langle y,u\rangle\geq 0\}|= \frac{1}{n!}\int_{\{y: \langle y,u\rangle \geq 0\}} y_i e^{-h_{p,K}(y)} e^{\langle x,y\rangle}dy, 
    \end{equation*}
    and 
    \begin{equation*}
        \frac{\partial^2}{\partial x_i\partial x_j} |(K-x)^{\circ,p}\cap \{y: \langle y,u\rangle\geq 0\}|= \frac{1}{n!}\int_{\{y: \langle y,u\rangle \geq 0\}} y_i y_j e^{-h_{p,K}(y)} e^{\langle x,y\rangle}dy.
    \end{equation*}
    This is positive definite, yielding convexity (weaker than log-convexity). This then indicates the existence of a `Santal\'o point' for $K^{\circ,p}\cap\{y: \langle y,u\rangle\geq 0\}$.
\end{remark}

\begin{proof}[Proof of Proposition \ref{BalancingProp}]
   Fix $x_2$ and $x_2'$. Let 
    \begin{equation*}
        \begin{aligned}
            & \alpha \defeq \min\{x\in \R: (x,y)\in P(x_2)\}, \quad
             &\beta\defeq \max\{x\in \R: (x,y)\in P(x_2)\}, \\
            & \alpha' \defeq \min\{x\in \R: (x,y)\in P(x_2')\}, \quad
             &\beta' \defeq \max\{x\in \R: (x,y)\in P(x_2')\}. 
        \end{aligned}
    \end{equation*}
Note that $P\in\mathcal{P}$ \eqref{PclassEq} implies that the origin is an interior point of 
$\widehat P$, hence also of $P(x_2)$ and $P(x_2')$.
Thus $\alpha,\alpha'<0$ and $\beta,\beta'>0$. 
In particular, 
    \begin{equation}\label{alphabetasetEq}
(\alpha,\beta)\cap (-\beta',-\alpha')\not=\emptyset.
    \end{equation}
    There are four cases to consider:
    \begin{equation}\label{alphabetaset2Eq}
(\alpha,\beta)\cap (-\beta',-\alpha')=
\begin{cases}
(\alpha,\beta)\cr
(\alpha,-\alpha')\cr
(-\beta',-\alpha')\cr
(-\beta',\beta)\cr
\end{cases}
.
    \end{equation}
    Consider, 
    \begin{equation*}
        F(x_0)\defeq 
        G_1(x_0)-G_2(x_0),
    \end{equation*}
    where
    \begin{equation*}
    G_1(x_0):=\frac{|(P(x_2)-(x_0,0))^{\circ,p}\cap\{x>0\}|}
        {|(P(x_2)-(x_0,0))^{\circ,p}\cap\{x<0\}|},
     \qquad
        G_2(x_0):=
        \frac{|(P(x_2')-(-x_0,0))^{\circ,p}\cap\{x>0\}|}
        {|(P(x_2')-(-x_0,0))^{\circ,p}\cap\{x<0\}|}.
    \end{equation*}
We want to find a zero $x_0$ of $F$ satisfying $(x_0,0)\in\hbox{\rm int}\, P(x_2)$
and $(-x_0,0)\in\hbox{\rm int}\, P(x_2')$. In other words, we seek a solution of (recall \eqref{alphabetasetEq})
$$
F(x_0)=0, \hbox{\ \ with \ \  $x_0\in (\alpha,\beta)\cap (-\beta',-\alpha')$}.
$$
    Number the cases in \eqref{alphabetaset2Eq}
    (i)--(iv).

    \smallskip
    \noindent
    {\it Case (i):} 
    By Lemma \ref{IinfiniteLemma}, $G_1(\alpha)=0$ and $G_1(\beta)=\infty$, 
    i.e.,
    \begin{equation}
    \label{FalphaEq}
        F(\alpha)\leq 0, \qquad F(\beta)+G_2(\beta)=\infty.
    \end{equation}
    We claim that $G_2(\beta)<\infty$.
    By Lemma \ref{IinfiniteLemma}, 
    $|(P(x_2')- (\alpha',0))^{\circ,p}\cap\{x>0\}|<\infty$. 
    In addition, since $0\in\mathrm{int}\,P(x_2')$, by Claim \ref{MpInfinite}, $|P(x_2')^{\circ,p}\cap\{x>0\}|<\infty$.  
    In this case, $\alpha'\leq-\beta<0$ thus by Lemma \ref{PartialSantaloLemma} and $-\beta= \frac{-\beta}{\alpha'} \alpha' + (1-\frac{\beta}{\alpha'})0$, 
    \begin{equation*}
        \begin{aligned}
            |(P(x_2')-(-\beta,0))^{\circ,p}\cap\{x>0\}|\leq |(P(x_2')- (\alpha',0))^{\circ,p}\cap \{x>0\}|^{\frac{-\beta}{\alpha'}} |P(x_2')^{\circ,p}\cap\{x>0\}|^{1-\frac{\beta}{\alpha'}}
        \end{aligned}
    \end{equation*}
    which is finite, hence $G_2(\beta)<\infty$.
    In conclusion,
    \begin{equation}
    \label{FbetaEq}
        F(\beta)= 
        \infty
    \end{equation}
    \smallskip
    \noindent
    {\it Case (ii):} 
As in case (i), $F(\alpha)\leq 0$. 
By Lemma \ref{IinfiniteLemma}, 
$|(P(x_2')-(\alpha',0))^{\circ,p}\cap\{x<0\}|=\infty$
and 
$|(P(x_2')-(\alpha',0))^{\circ,p}\cap\{x>0\}|<\infty$.
Thus, $G_2(-\alpha')=0$, and 
    \begin{equation}
    \label{FalphaprimeEq}
        F(-\alpha')= G_1(-\alpha')\ge 0.
    \end{equation}
        
    \smallskip
    \noindent
    {\it Case (iii):} 
    Once again \eqref{FalphaprimeEq} holds. We claim that
    $G_1(-\beta')<\infty$. 
    By Lemma \ref{IinfiniteLemma}, $|(P(x_2)-(\alpha,0))^{\circ,p}\cap\{x>0\}|<\infty$. In addition, since $0\in\mathrm{int}\,P(x_2)$, by Claim \ref{MpInfinite}, $|P(x_2)^{\circ,p}\cap \{x>0\}|<\infty$. Since, in this case, $\alpha\leq -\beta'$, using Lemma \ref{PartialSantaloLemma} and $-\beta'= \frac{-\beta'}{\alpha}\alpha+ (1-\frac{-\beta'}{\alpha})0$, 
    \begin{equation*}
        |(P(x_2)-(-\beta',0))^{\circ,p}\cap\{x>0\}|\leq |(P(x_2)-(\alpha,0))^{\circ,p}\cap\{x>0\}|^{\frac{-\beta'}{\alpha}} |P(x_2)^{\circ,p}\cap\{x>0\}|^{1-\frac{\beta'}{\alpha}}
    \end{equation*}
    is finite, hence $G_1(-\beta')<\infty$.
    By Lemma \ref{IinfiniteLemma}, $G_2(-\beta')= \infty$, hence
    \begin{equation}
    \label{FminusbetaprimeEq}
        F(-\beta')=-\infty.
    \end{equation}

    \smallskip
    \noindent
    {\it Case (iv):} 
    Both \eqref{FminusbetaprimeEq} and \eqref{FbetaEq}
        hold.
    
    Proposition \ref{BalancingProp} now follows from the intermediate value theorem in each case. 
\end{proof}

\begin{remark}
\label{MRRemark}
There are other ways to prove Proposition \ref{BalancingProp}, without using Lemma \ref{PartialSantaloLemma}. One way is to
position the body in such a way that 
\begin{equation}
\label{alphaequalalphaprimeEq}
\hbox{$\alpha=\alpha'$ and $\beta=\beta'$.} 
\end{equation}
To do that one would need to argue that after
applying an affine transformation there is a vertex $p=(x_p,y_p)\in P\in\calP$ that
upon sliding does not achieve the extremal values $\alpha$
and $\beta$. That would mean these are attained at vertices
$p'$ and $p''$ other than the sliding one (the extremal values
of a linear function on a polytope are always attained on vertices) hence \eqref{alphaequalalphaprimeEq} must hold. 
For instance, for the case of 
four vertices one needs to affine transform to a cube (recall proof of Theorem \ref{2DimpMahlerSym} earlier) to avoid the situation of Figure
\ref{inverted-triangles-figure}. 
\begin{figure}[ht]
    \centering
\begin{tikzpicture}[scale=1]
\draw[->] (-2,0) -- (2,0) node[anchor=north west] { };
\draw[->] (0,-2) -- (0,2) node[anchor=north west] { };
\draw[thick]  (0,-1.5) -- (-1,1) -- (-1.25,1.5) -- (1,1) -- cycle;
\node[above right=1pt] at (1.5, 1.2){$P(x_2')$};
\end{tikzpicture}
\begin{tikzpicture}[scale=1]
\draw[->] (-2,0) -- (2,0) node[anchor=north west] { };
\draw[->] (0,-2) -- (0,2) node[anchor=north west] { };
\draw[thick]  (0,-1.5) -- (-1,1) -- (-.5,1.5) -- (1,1) -- cycle;
\node[above right=1pt] at (1.5, 1.2){$P(x_2)$};
\end{tikzpicture}
\begin{tikzpicture}[scale=1]
\draw[->] (-2,0) -- (2,0) node[anchor=north west] { };
\draw[->] (0,-2) -- (0,2) node[anchor=north west] { };
\draw[thick]  (0,-1.5) -- (-1,1) -- (1.25,1.5) -- (1,1) -- cycle;
\node[above right=1pt] at (1.5, 1.2){$P(x_2'')$};
\end{tikzpicture}
    \caption{$\alpha'<\alpha$}
    \label{inverted-triangles-figure}
\end{figure}
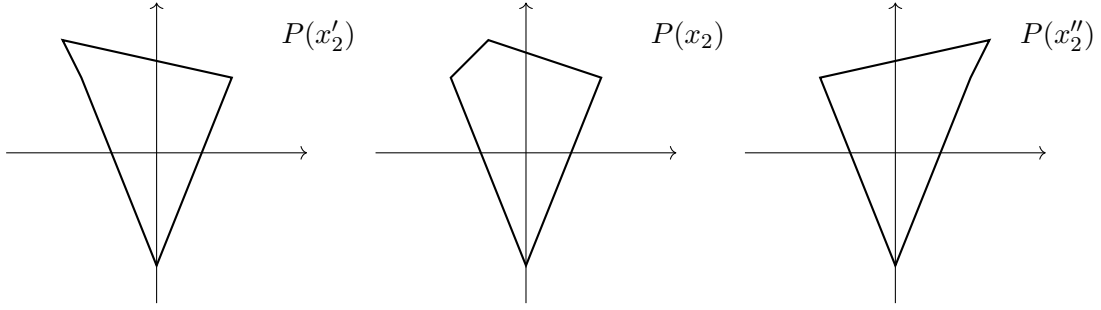
Lemma \ref{PartialSantaloLemma} seemed like a good alternative as it relies on a Santal\'o-type argument not specific to planar bodies.
It also could be used to prove \cite[Lemma 5]{MR06}.  
\end{remark}

\subsection{Proof of Theorem \ref{2DimpMahler}}
\label{nonSymFinishSection}
\begin{proof}[Proof of Lemma \ref{SlidingLemma}]
Let $x_2, x_2'\in[\xi_\ell,\xi_r]$ and recall
the notation \eqref{IpmDefEq}. By Proposition \ref{BalancingProp}, there exists $x_0\in \R$ with $(x_0, 0), (-x_0,0)\in\mathrm{int}\,\widehat{P}$, 
and
\begin{equation}
\label{rhoDef}
    \frac{I_+(x_2,x_0)}{I_-(x_2,x_0)}= 
    \frac{I_+(x_2',-x_0)}{I_-(x_2',-x_0)}=:\rho.
\end{equation}
Thus, 
\begin{equation}
\label{rhoconseqEq}
    |P((x_2)-(x_0, 0))^{\circ,p}|=
    (1+\rho){I_-(x_2,x_0)},\quad
    |P((x_2')-(-x_0, 0))^{\circ,p}|=
    (1+\rho){I_-(x_2',-x_0)}.
    \end{equation}
By Corollary \ref{ConvexityOfI}, 
\begin{equation}
\label{I+Conv}
    \frac{2}{I_\pm((x_2+x_2')/2,0)}\leq 
    \frac{1}{I_\pm(x_2,x_0)}+ \frac{1}{I_\pm(x_2',-x_0)}, 
\end{equation}
By \eqref{IpmDefEq}, \eqref{rhoDef}, \eqref{rhoconseqEq}, and \eqref{I+Conv}, 
\begin{equation}
\label{FinalPEq}
    \begin{aligned}
        |P((x_2+x_2')/2)^{\circ,p}|
        &= I_+((x_2+x_2')/2,0)+ I_-((x_2+x_2')/2,0)
        \\
        &\geq \frac{2 I_+(x_2,x_0) I_+(x_2',-x_0)}{I_+(x_2,x_0)+ I_+(x_2',-x_0)} 
        + \frac{2 I_-(x_2,x_0) I_-(x_2',-x_0)}{I_-(x_2,x_0)+ I_-(x_2',-x_0)}
        \\
        &= (1+\rho)\frac{2 I_-(x_2,x_0) I_-(x_2',-x_0)}{I_-(x_2,x_0)+ I_-(x_2',-x_0)}  \\
        &= (1+\rho)\frac{2 \frac{1}{1+\rho}|P((x_2)-(x_0, 0))^{\circ,p}| \frac{1}{1+\rho}|(P(x_2')-(-x_0, 0))^{\circ,p}|}{\frac{1}{1+\rho} |(P(x_2)-(x_0, 0))^{\circ,p}|+ \frac{1}{1+\rho}|(P(x_2')-(-x_0, 0))^{\circ,p}|}  \\
        &= \frac{2|P((x_2)-(x_0, 0))^{\circ,p}| |(P(x_2')-(-x_0, 0))^{\circ,p}|}{|(P(x_2)-(x_0, 0))^{\circ,p}|+ |(P(x_2')-(-x_0, 0))^{\circ,p}|}. 
    \end{aligned}
\end{equation}
Translating $P((x_2+x_2')/2)$ by $s_p(P((x_2+x_2')/2))$ (always an interior
point by \cite[Proposition 1.5]{BMR23}) we may assume that $s_p(P((x_2+x_2')/2))=0$. 
Thus, 
$P((x_2+x_2')/2)^{*,p}= P((x_2+x_2')/2)^{\circ,p}$ (recall the notation
\eqref{starpnotationEq}), hence \eqref{FinalPEq} gives
\begin{equation*}
\begin{aligned}
    \frac{2}{|P((x_2+x_2')/2)^{*,p}|}&= 
    \frac{2}{|P((x_2+x_2')/2)^{\circ,p}|}\\
    &\leq \frac{1}{|(P(x_2)-(x_0,0))^{\circ,p}|}+ \frac{1}{|(P(x_2)-(-x_0,0))^{\circ,p}|}\\
    &\leq \frac{1}{|P(x_2)^{*,p}|}+\frac{1}{|P(x_2')^{*,p}|},
\end{aligned}
\end{equation*}
by the minimizing property of the $L^p$-Santal\'o point
\cite[Proposition 1.5]{BMR23}.
\end{proof}

\begin{proof}[Proof of Proposition \ref{SlidingProp}]
        $\M_p$ is invariant under the action of $GL(n,\R)$ \cite[Lemma 4.6]{BMR23}, and hence, in particular, invariant under rotation.  We may therefore assume that $P(x_2)\in\mathcal{P}$ (recall \eqref{PclassEq}). Let 
        $\xi_\ell, \xi_r\in \R$, be as in Figure \ref{SlidingFig}, i.e., such that 
    $(x_3, y_3)\in [(\xi_\ell, y_2), (x_4, y_4)]$ and $(x_1, y_1)\in [(\xi_r, y_2), (y_m, y_m)]$. 
    Let also $\lambda\in (0,1)$ such that $x_2= (1-\lambda)\xi_\ell+ \lambda \xi_r$. By Lemma \ref{SlidingLemma}, 
    \begin{equation*}
        \frac{1}{|P(x_2)^{*,p}|}\leq \frac{1-\lambda}{|P(\xi_\ell)^{*, p}|}+ \frac{\lambda}{|P(\xi_r)^{*,p}|}\leq \max\left\{\frac{1}{|P(\xi_\ell)^{*,p}|}, \frac{1}{|P(\xi_r)^{*,p}|}\right\}, 
    \end{equation*}
    or $|P(x_2)^{*,p}|\geq \min\{|P(\xi_\ell)^{*,p}|, |P(\xi_r)^{*,p}|\}$. Finally, $|P(x_2)|= |P(\xi_\ell)|= |P(\xi_r)|= |P|$ hence
    \begin{equation*}
    \begin{aligned}
        \inf_{x\in \R^2}\M_p(P(x_2)-x)&= 2|P||P(x_2)^{*,p}|\geq \min\{2|P||P(\xi_\ell)^{*,p}|, 2|P||P(\xi_r)^{*,p}|\}\\
        &= \min\left\{\inf_{x\in \R^2}\M_p(P(\xi_\ell)-x), \inf_{x\in\R^2}\M_p(P(\xi_r)-x)\right\}
    \end{aligned}
    \end{equation*}
    Proposition \ref{SlidingProp} follows since $P(\xi_\ell)$ and $P(\xi_r)$ are polytopes with $m-1$ vertices.
\end{proof}
\begin{proof}[Proof of Theorem \ref{2DimpMahler}]
 By continuity of the $L^p$-Mahler volume under the Hausdorff topology \cite[Lemma 5.11]{BMR23}, it suffices to work with polytopes. 
Let $P$ be a polytope with $m\geq 4$ vertices. Write $P_m = P$. By repeated applications of Proposition \ref{SlidingProp}, there exist polytopes $P_{m-1}, \ldots, P_4, P_3$ with $m-1, \ldots, 4, 3$ vertices respectively, such that
\begin{equation*}
    \inf_{x\in\R^2}\M_p(P_m-x)\geq \inf_{x\in\R^2}\M_p(P_{m-1}-x)\geq \cdots\geq \inf_{x\in \R^2} \M_p(P_3 - x).
\end{equation*}
Since $P_3$ is a triangle, some translation of $P_3$ is in the $GL(2,\R)$-orbit
of $\Delta_{2,0}$, so Theorem \ref{2DimpMahler} follows.  
\end{proof}

\section{The isotropic constant and Mahler sliding}
\label{BourgainSection}

This section presents the proofs of Theorems \ref{plane_slicing_nonsym} and \ref{plane_slicing_sym}.  
In contrast to the $\M_p$ that are only invariant under the action of $GL(n,\R)$, 
$\calC$ (and the isotropic constant) is an affine invariant, i.e., is in addition unchanged by translations (Lemma \ref{CaffineInvariance}). This fact simplifies many of the calculations. In particular, there are no significant differences between the symmetric and non-symmetric cases (as for $\M$).

\begin{proposition}
\label{nonsym_slicing_sliding_prop}
    For any polytope $P\subset \R^2$ with $m\geq 4$ vertices, there exists a polytope $P'$ with $m-1$ vertices satisfying $\calC(P')<\calC(P)$. 
\end{proposition}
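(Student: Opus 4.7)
My plan is to perform classical Mahler sliding on a vertex of $P$, as in the proof of Proposition~\ref{SlidingProp} for $\M_p$, with the significant simplification that $\calC$ is \emph{affine} invariant (Corollary~\ref{CaffineInvariance}), not merely $GL(2,\R)$-invariant; consequently no simultaneous translation of the body is required, and the delicate balancing argument of \S\ref{DecompSection} is avoided entirely. Using affine invariance, normalize $P$ to lie in the class $\mathcal{P}$ of \eqref{PclassEq}, so that the moving vertex $(x_2,y_2)$ has $y_2>y_1=y_3$ with its two neighbors $(x_1,y_1),(x_3,y_3)$ on the horizontal line $y=y_1$. Slide $(x_2,y_2)$ horizontally through the one-parameter family $P(x_2)$, $x_2\in[\xi_\ell,\xi_r]$, with $|P(x_2)|$ constant, and decompose $P(x_2)=\hat P\cup\Delta(x_2)$ as in \eqref{Deltax2Def}--\eqref{hatP}, with $\hat P$ fixed and $\Delta(x_2)$ the moving triangle.

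The key is to exploit the polynomial dependence of $\calC$ on $x_2$. By classical centroid and second-moment formulas for a triangle (cf.\ \S\ref{Ctriangle}), the integrals $\int_{\Delta(x_2)}x\,dA$ and $\int_{\Delta(x_2)}xy\,dA$ are linear in $x_2$, $\int_{\Delta(x_2)}x^2\,dA$ is quadratic, and all remaining first/second moments of $\Delta(x_2)$ together with all moments of $\hat P$ are constant in $x_2$. Combining via the disjoint-union formula for $\calC$ in dimension two (Corollary~\ref{2dCFormula}) gives
$$
x_2\mapsto \calC(P(x_2))^{-1}=\frac{\det\Cov(P(x_2))}{|P|^2}
$$
a polynomial of degree at most $2$ in $x_2$. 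The heart of the argument---Lemma~\ref{Cconvexity} of \S\ref{isoPolytopeSlidingSection}---is that this quadratic is \emph{strictly convex}. I would verify this by recognizing that sliding acts as a linear shear on the moving triangle $\Delta(x_2)$, under which the oriented area of any triangle with all three vertices inside $\Delta(x_2)$ is invariant, so in the integral-geometric expression for $\det\Cov$ the $x_2^2$-contribution comes only from the cross-terms mixing $\hat P$ and $\Delta(x_2)$. These cross-terms assemble into a non-negative combination of integrals of perfect squares, strictly positive provided $\hat P$ is two-dimensional, which is forced by $m\ge4$ together with $P$ having positive area.

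Given strict convexity, the conclusion is immediate: for $x_2\in(\xi_\ell,\xi_r)$,
$$
\calC(P(x_2))^{-1}<\max\bigl\{\calC(P(\xi_\ell))^{-1},\,\calC(P(\xi_r))^{-1}\bigr\},
$$
so taking $P'\in\{P(\xi_\ell),P(\xi_r)\}$ to be the endpoint realizing this maximum yields $\calC(P')<\calC(P)$; and at each endpoint the vertex $(x_2,y_2)$ becomes collinear with one of the adjacent edges of $\hat P$, so $P'$ has exactly $m-1$ extreme points. The main obstacle is the \emph{strict} positivity of the leading coefficient---weak convexity would only give $\calC(P')\le\calC(P)$, which is insufficient. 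Ruling out the zero-leading-coefficient case (which corresponds precisely to the degenerate situation $P=\Delta$, a triangle for which $\calC=108$ is already constant under sliding) is exactly where the hypothesis $m\ge4$ enters crucially.
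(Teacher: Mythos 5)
Your overall strategy coincides with the paper's: affine-normalize into $\mathcal{P}$, slide the vertex through the family $P(x_2)$, and exploit the strict convexity of the quadratic $x_2\mapsto 1/\calC(P(x_2))$ from Lemma~\ref{Cconvexity}, then pick the endpoint $\xi_\ell$ or $\xi_r$ that maximizes $1/\calC$. Where you diverge is in how Lemma~\ref{Cconvexity} is to be established. The paper proceeds by direct computation: it evaluates $\mathrm{Cov}(\Delta_{l,h}(x_2))$ in closed form (Lemma~\ref{iso_sliding_lem1}), substitutes into the disjoint-union formula (Corollary~\ref{2dCFormula}), and reads off the $x_2^2$-coefficient
\begin{equation*}
\calC(\widehat{P})^{-1/2}\tfrac{lh}{36} + \calC(\widehat{P})^{-1/2}\tfrac{lh}{18|P|}+ \tfrac{l^2h^2 y_0^2}{72|P|},
\end{equation*}
manifestly positive since $l,h>0$. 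Your alternative --- that sliding is a shear on $\Delta(x_2)$, and in a Sylvester/Kingman-type multiple-integral expression for $\det\mathrm{Cov}$ the all-in-$\Delta$ and all-in-$\widehat P$ tuples are $x_2$-independent so that the $x_2^2$-coefficient is a sum of squared signed areas over mixed tuples --- is a nice conceptual observation, and the key positivity indeed comes down to $\widehat P$ being full-dimensional. But as written it is a sketch: you neither state the requisite integral formula for $\det\mathrm{Cov}$ in terms of (squared) signed areas, nor verify that the resulting non-negative integrand is not almost everywhere zero. Both are fixable (e.g., the $x_2$-linear coefficient of the signed area of a tuple with one vertex in $\Delta$ and two in $\widehat P$ reduces, via $\det(e_1,b)+\det(a,e_1)=\det(e_1,b-a)$, to a height difference inside $\widehat P$, nonzero on positive measure precisely when $\widehat P$ is two-dimensional), but the paper's explicit coefficient is complete whereas your verification is not. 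One small inaccuracy: the zero-leading-coefficient case does not literally ``correspond to $P=\Delta$''; when $m=3$ the fixed part $\widehat P$ degenerates to a segment and the decomposition formula becomes vacuous, so the hypothesis $m\ge 4$ is what guarantees $\widehat P$ is a genuine planar body, which is exactly what makes the $x_2^2$-coefficient strictly positive.
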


\begin{proposition}
\label{sym_slicing_sliding_prop}
For any symmetric polytope $S\subset \R^2$ with $2m\geq 6$ vertices, there exists a symmetric polytope $S'$ with $2m-2$ vertices satisfying $\calC(S')<\calC(S)$. 
\end{proposition}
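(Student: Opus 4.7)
My plan is to adapt the Mahler sliding template used throughout the paper. By the affine invariance of $\calC$ (Corollary \ref{CaffineInvariance}), I may assume after a suitable rotation that $S \in \mathcal{S}$ (recall \eqref{SClassEq}). The sliding family $S(x_2)$ of symmetric polytopes, obtained by varying the $x$-coordinate of the vertex $(x_2, y_2)$ and its antipode while fixing $y_2$, preserves the area: $|S(x_2)| = |S|$ by \eqref{Sx2constEq}. At the extreme values $x_2 = \xi_\ell$ or $x_2 = \xi_r$, a pair of antipodal vertices merges, producing a symmetric polytope with $2m - 2$ vertices. It therefore suffices to show that $\calC(S(x_2))$ is strictly minimized on $[\xi_\ell, \xi_r]$ at an endpoint.

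To this end I would prove that $L_{S(x_2)}^4 = 1/\calC(S(x_2)) = \det\mathrm{Cov}(S(x_2))/|S|^2$ is a \emph{strictly convex quadratic polynomial} in $x_2$. Using the disjoint decomposition $S(x_2) = \widehat{S} \cup \Delta(x_2) \cup (-\Delta(x_2))$ from \eqref{Deltax2Def}--\eqref{hatS}, together with the standard formulas for second moments of a triangle, one checks that as functions of $x_2$,
\begin{equation*}
\int_{S(x_2)} x^2\,dA \text{ is quadratic}, \quad \int_{S(x_2)} xy\,dA \text{ is linear}, \quad \int_{S(x_2)} y^2\,dA \text{ is constant},
\end{equation*}
because only the $x$-coordinate of the sliding vertex of each triangle depends on $x_2$, while $y_3 = y_1$ is fixed. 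The symmetry of $S(x_2)$ keeps its barycenter at the origin throughout the sliding, so $\det\mathrm{Cov}(S(x_2)) = |S|^{-2}\bigl[(\int x^2)(\int y^2) - (\int xy)^2\bigr]$ is itself a quadratic polynomial in $x_2$.

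A direct calculation shows that the leading coefficient of this quadratic is
\begin{equation*}
\frac{|\Delta|}{3|S|^2}\Bigl(\int_{\widehat{S}} y^2\,dA + \tfrac{2|\Delta|}{3}\,y_1^2\Bigr),
\end{equation*}
after using $2\int_{\Delta(x_2)} y^2\,dA = \tfrac{|\Delta|}{3}\bigl((y_1+y_2)^2 + 2y_1^2\bigr)$. This is strictly positive under the hypothesis $2m \geq 6$: either $y_1 > 0$, or else $y_1 = 0$, which forces $m \geq 4$ (a convex symmetric hexagon cannot have four collinear extreme points on the $x$-axis), in which case $\widehat{S}$ is genuinely two-dimensional and $\int_{\widehat{S}} y^2 > 0$. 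Hence $x_2 \mapsto 1/\calC(S(x_2))$ is strictly convex.

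Strict convexity forces the maximum of $1/\calC(S(x_2))$ on $[\xi_\ell, \xi_r]$ to be attained at one of the endpoints and to exceed strictly every interior value. Setting $S' \in \{S(\xi_\ell), S(\xi_r)\}$ to be the endpoint at which this maximum is achieved then yields a symmetric polytope with $2m-2$ vertices and $\calC(S') < \calC(S)$, as required. The principal technical obstacle is carrying out the second-moment bookkeeping cleanly and verifying strict positivity of the leading coefficient in all edge cases of $\mathcal{S}$; once those are in place, the desired strict inequality follows formally from the convex quadratic structure.
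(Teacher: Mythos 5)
Your proof is correct and follows the same overall template as the paper: normalize $S$ into the sliding class $\mathcal{S}$ via affine invariance, show that $x_2\mapsto 1/\calC(S(x_2))$ is a strictly convex quadratic polynomial, and then read off the conclusion at the endpoints $\xi_\ell,\xi_r$ where a vertex pair is eliminated.

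The computational route in step two is genuinely different from the paper's. The paper proves Lemma \ref{CsymConvexity} by first applying an affine map placing $\widehat{S}$ in isotropic position, then invoking the disjoint-union formula for $\calC$ (Corollary \ref{2dCFormula}) together with the explicit covariance of a sliding triangle (Lemma \ref{iso_sliding_lem1}). You instead work directly with the raw second moments $\int x^2$, $\int xy$, $\int y^2$ of $S(x_2)$, exploiting the fact that the barycenter stays at the origin by symmetry so that $\det\mathrm{Cov}(S(x_2)) = |S|^{-2}[(\int x^2)(\int y^2) - (\int xy)^2]$. This avoids the isotropic normalization entirely and yields the leading coefficient in a transparent form, $\frac{|\Delta|}{3}\bigl(\int_{\widehat{S}}y^2\,dA+\tfrac{2|\Delta|}{3}y_1^2\bigr)$ up to powers of $|S|$, which you then argue is strictly positive. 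This is a nice gain: the paper states Lemma \ref{CsymConvexity} only as ``convex quadratic'' and uses strict inequality in the proof of Proposition \ref{sym_slicing_sliding_prop} without isolating where the strictness comes from, whereas you make it explicit. Two small remarks: (1) There is a bookkeeping slip of a factor of $|S|^2$ — what you wrote is the leading coefficient of $\det\mathrm{Cov}(S(x_2))$ rather than of $1/\calC(S(x_2))=\det\mathrm{Cov}(S(x_2))/|S|^2$ — but since $|S|$ is a positive constant this is immaterial for the sign. (2) Your case split on $y_1=0$ is not needed: in the class $\mathcal{S}$ with $2m\ge 6$ distinct vertices, $y_1=0$ is impossible for any $m\geq 3$, since $(\pm x_1,0),(\pm x_3,0)$ would be four collinear extreme points (or would force $(x_1,y_1)=-(x_3,y_3)$, reducing the vertex count). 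Your fallback observation that $\widehat{S}$ is two-dimensional and $\int_{\widehat{S}}y^2>0$ is still correct and makes the positivity argument robust, so the conclusion stands.
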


These follow directly, as before, from
the next two lemmas.

\begin{lemma}
\label{Cconvexity}
    For $P\in\mathcal{P}$, 
     $   x_2\mapsto {1}/{\calC(P(x_2))}
    $ 
    is a convex quadratic polynomial. 
\end{lemma}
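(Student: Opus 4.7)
The plan is to reduce the lemma to a single polynomial identity in $x_2$. Since $\calC(K) = |K|^2/\det\Cov(K)$ and $|P(x_2)| = |\widehat{P}| + |\Delta(x_2)|$ with $|\Delta(x_2)| = \tfrac12(y_2-y_1)(x_1-x_3)$ independent of $x_2$ (sliding keeps the base and height of $\Delta(x_2)$ fixed), it suffices to prove that $x_2 \mapsto \det\Cov(P(x_2))$ is a convex polynomial of degree at most two.

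First I would compute the six moments $|\cdot|$, $\int x$, $\int y$, $\int x^2$, $\int xy$, $\int y^2$ of $P(x_2)$ by writing $P(x_2) = \widehat{P} \cup \Delta(x_2)$. The moments of $\widehat{P}$ are constants in $x_2$, while for the triangle $\Delta(x_2)$ with vertices $(x_1, y_1), (x_2, y_2), (x_3, y_1)$ the standard closed-form formulas (e.g.\ $\int_T x^2\,dA = \tfrac{|T|}{6}(x_1^2 + x_2^2 + x_3^2 + x_1 x_2 + x_2 x_3 + x_1 x_3)$, and analogues for $\int_T xy$ and $\int_T x$) yield: $\int x^2$ quadratic in $x_2$ with leading coefficient $|\Delta|/6$; $\int x$ and $\int xy$ affine in $x_2$; and $\int y$, $\int y^2$ constant. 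Substituting into $\Cov_{ij} = \tfrac{1}{|K|}\int x_i x_j - \tfrac{1}{|K|^2}\int x_i \cdot \int x_j$ shows that $\Cov_{xx}$ is quadratic, $\Cov_{xy}$ is affine and $\Cov_{yy}$ is constant, so $\det\Cov(P(x_2)) = \Cov_{xx}\Cov_{yy} - \Cov_{xy}^2$ is a polynomial of degree at most two.

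For convexity I would compute the (constant) second derivative
\begin{equation*}
(\det\Cov)'' \;=\; (\Cov_{xx})''\,\Cov_{yy} \;-\; 2\bigl((\Cov_{xy})'\bigr)^2,
\end{equation*}
and simplify using the parallel axis theorem for the decomposition $P(x_2) = \widehat{P}\cup\Delta(x_2)$, together with the triangle identities $\bar y_\Delta = (2y_1+y_2)/3$ and $\sigma_y^{2,\Delta} = (y_2-y_1)^2/18$. Setting $D = |\Delta|$, $\widehat V = |\widehat P|$, $V = D + \widehat V$, $h = y_2 - y_1$ and $d = \bar y_\Delta - \bar y_{\widehat{P}}$, my expectation is that after grouping cross-terms one obtains (up to positive factors) an identity of the shape
\begin{equation*}
V^2\,(\det\Cov)'' \;=\; 2\widehat V\,\bigl[\,D\,(h/3 - d)^2 \;+\; (3V-2D)\,\sigma_y^{2,\widehat{P}}\,\bigr].
\end{equation*}
Both summands on the right are manifestly non-negative (since $D \le V$ forces $3V - 2D \ge V > 0$), so $(\det\Cov)''\ge0$. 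Moreover the second summand is strictly positive because $\widehat{P}$ contains three non-collinear vertices and hence $\sigma_y^{2,\widehat{P}} > 0$, so the quadratic is actually strictly convex.

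The main obstacle I anticipate is this last algebraic simplification. Expanding $(\Cov_{xx})''\Cov_{yy} - 2((\Cov_{xy})')^2$ produces several terms involving $V, D, h$, the $y$-centroids, and the $y$-moments of $\widehat{P}$; the trick will be to use the weighted-average relation $V\bar y = D\bar y_\Delta + \widehat V\bar y_{\widehat{P}}$ (and its consequence $\bar y_\Delta - \bar y = \widehat V d/V$) to collapse these into the sum-of-squares form above. Once this identity is in hand, both the quadratic nature and the convexity claim follow immediately.
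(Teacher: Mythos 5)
Your proposal is correct and arrives at the right conclusion by a route that parallels the paper's but skips a normalization step. Both you and the paper decompose $P(x_2) = \widehat{P}\cup\Delta(x_2)$ and exploit the parallel--axis relation (the paper's Lemma \ref{iso2_lem1}) together with the explicit triangle moments. The paper, however, first applies an affine transformation and a rotation so that $\widehat{P}$ is in isotropic position; this makes $\Cov(\widehat{P})$ a multiple of $I_2$ and $b(\widehat{P})=0$, which substantially simplifies the algebra and allows it to invoke the ready-made formula of Corollary \ref{2dCFormula} and simply read off the coefficient of $x_2^2$ as a sum of three manifestly nonnegative terms. You instead keep $\widehat{P}$ general and compute the constant second derivative of $\det\Cov(P(x_2))$ directly via $(\det\Cov)'' = (\Cov_{xx})''\Cov_{yy} - 2((\Cov_{xy})')^2$ (the term $-2\Cov_{xy}\Cov_{xy}''$ vanishes since $\Cov_{xy}$ is affine). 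Your approach is thus a bit more computational but also a bit more elementary, since it avoids the isotropic-position argument altogether. Your target identity is correct in shape; carrying the algebra through, with $d = \bar y_\Delta - \bar y_{\widehat P}$, $D=|\Delta|$, $\widehat V = |\widehat P|$, $V=D+\widehat V$, $h = y_2-y_1$, one gets
\begin{equation*}
V^2\,(\det\Cov(P(x_2)))'' \;=\; \frac{D\widehat V}{9V}\Bigl[\,D\,(h/3 - d)^2 + (3V-2D)\,\Cov(\widehat P)_{yy}\,\Bigr],
\end{equation*}
using that $3V-2D = D + 3\widehat V = V + 2\widehat V$; so the prefactor should be $\tfrac{D\widehat V}{9V}$ rather than $2\widehat V$, but as you hedged this is only ``up to positive factors'' and does not affect the conclusion. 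Both terms in the bracket are nonnegative, and the second is strictly positive because $\widehat P$ is full-dimensional, giving the claimed strictly convex quadratic polynomial.
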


\begin{lemma}
\label{CsymConvexity}
    For $S\in\mathcal{S}$, 
     $   x_2 \mapsto 1/{\calC(S(x_2))}
    $ 
    is a convex quadratic polynomial.
\end{lemma}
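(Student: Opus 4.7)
\textbf{Proof plan for Lemma \ref{CsymConvexity}.} The plan is to compute $1/\calC(S(x_2))$ \emph{explicitly} as a polynomial in $x_2$ using the triangle moment formula, and then verify convexity by inspecting the leading coefficient. By affine invariance (Corollary \ref{CaffineInvariance}) and after a rotation as in the proof of Proposition \ref{symSlidingProp}, we may assume $S(x_2)\in\mathcal{S}$, so $S(x_2)=\widehat S\cup\Delta(x_2)\cup(-\Delta(x_2))$ with $\Delta(x_2)=\co\{(x_1,y_1),(x_2,y_2),(x_3,y_1)\}$. Symmetry forces $b(S(x_2))=0$, and $|S(x_2)|=|\widehat S|+2|\Delta|$ is constant in $x_2$ by \eqref{Sx2constEq}. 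Hence
\begin{equation*}
\frac{1}{\calC(S(x_2))}=\frac{\det\Cov(S(x_2))}{|S|^2}=\frac{\alpha(x_2)\,\beta(x_2)-\gamma(x_2)^2}{|S|^4},
\end{equation*}
where $\alpha=\int_{S(x_2)}u^2\,du\,dv$, $\beta=\int_{S(x_2)}v^2\,du\,dv$, $\gamma=\int_{S(x_2)}uv\,du\,dv$, writing $(u,v)$ for Euclidean coordinates on $\R^2$.

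Next I would evaluate each moment using the standard identity
\begin{equation*}
\int_\Delta u_i u_j\,du\,dv=\frac{|\Delta|}{12}\Big[\sum_k (w_k)_i(w_k)_j+\Big(\sum_k (w_k)_i\Big)\Big(\sum_k (w_k)_j\Big)\Big]
\end{equation*}
for a triangle $\Delta$ with vertices $w_1,w_2,w_3$. Since $\int_{-\Delta(x_2)}u^iv^j=\int_{\Delta(x_2)}u^iv^j$ when $i+j=2$, the $\pm\Delta$-contributions simply double. Combining with the $x_2$-independent contributions of $\widehat S$, a direct expansion gives: $\beta(x_2)\equiv\beta_0$ is constant; $\gamma(x_2)=\gamma_0+\gamma_1 x_2$ is linear with slope $\gamma_1=\tfrac{|\Delta|(y_1+y_2)}{3}$; and $\alpha(x_2)=\alpha_0+\alpha_1 x_2+\alpha_2 x_2^2$ is quadratic with leading coefficient $\alpha_2=\tfrac{|\Delta|}{3}$. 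Hence $|S|^4/\calC(S(x_2))=\alpha\beta-\gamma^2$ is a polynomial in $x_2$ of degree at most $2$, with leading coefficient
\begin{equation*}
\alpha_2\beta_0-\gamma_1^2=\frac{|\Delta|}{9}\Big[3\beta_0-|\Delta|(y_1+y_2)^2\Big].
\end{equation*}

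To conclude convexity, it remains to check this coefficient is non-negative. Dropping the positive $\widehat S$-contribution from $\beta_0$, we have $\beta_0\ge 2\int_{\Delta(x_2)}v^2\,du\,dv$, and the triangle formula evaluates
\begin{equation*}
2\int_{\Delta(x_2)}v^2\,du\,dv=\frac{|\Delta|}{3}\big[3y_1^2+y_2^2+2y_1 y_2\big]=\frac{|\Delta|}{3}\big[(y_1+y_2)^2+2y_1^2\big]\ge\frac{|\Delta|(y_1+y_2)^2}{3},
\end{equation*}
so $3\beta_0\ge|\Delta|(y_1+y_2)^2$, proving convexity (strictly so once $\widehat S$ is non-degenerate, which is automatic for $S\in\mathcal S$ with $2m\ge 6$).

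The main obstacle is precisely this non-negativity check: although it ultimately reduces to the one-line identity $3y_1^2+y_2^2+2y_1y_2=(y_1+y_2)^2+2y_1^2$, there is no reason a priori for $1/\calC(S(x_2))$ to be convex in $x_2$ merely from the fact that $\alpha$ is convex, since the $\gamma^2$ term enters with the opposite sign. The delicate balance between $\alpha_2\beta_0$ and $\gamma_1^2$ is what makes the argument work; the surrounding bookkeeping with the triangle-moment formula is elementary.
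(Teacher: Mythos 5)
Your proposal is correct, and it takes a genuinely different and more elementary route than the paper's. The paper first normalizes $\widehat{S}$ to isotropic position, then invokes the disjoint-union covariance machinery (Lemma \ref{iso2_lem1}), the determinant expansion of Corollary \ref{2dCFormula}, and the explicit $\mathrm{Cov}(\Delta_{l,h}(x_2))$ formula of Lemma \ref{iso_sliding_lem1}, using the crucial simplification $b(\Delta(x_2)\cup(-\Delta(x_2)))=0$ to kill the cross term; the leading $x_2^2$-coefficient is then positive essentially because it appears as a sum of visibly nonnegative pieces. You instead exploit $b(S(x_2))=0$ directly, write $|S|^4/\calC(S(x_2))=\alpha\beta-\gamma^2$ in raw second moments, apply the classical vertex-form triangle moment formula, and reduce everything to one explicit discriminant inequality $\alpha_2\beta_0\geq\gamma_1^2$, which follows from the one-line identity $3y_1^2+y_2^2+2y_1y_2=(y_1+y_2)^2+2y_1^2\ge(y_1+y_2)^2$ together with $\int_{\widehat{S}}v^2\ge0$. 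Your approach bypasses the isotropic normalization of $\widehat{S}$ and the block-decomposition lemmas entirely and gives the leading coefficient in closed form; the paper's approach is more modular because the same lemmas are reused verbatim in the non-symmetric case (Lemma \ref{Cconvexity}), where $b(C)\neq0$ and your direct cancellation would be messier. One small remark: your argument implicitly needs $\widehat{S}$ non-degenerate to conclude strict positivity of the leading coefficient, but this holds automatically for $m\geq3$, which is the only regime where the sliding lemma is applied (and, as the paper's isotropic normalization also requires non-degenerate $\widehat{S}$, the two proofs make the same implicit assumption).
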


The remainder of this section unfolds as follows: we start by quickly verifying the affine invariance of $\calC$ (Corollary \ref{CaffineInvariance}). To establish Lemmas \ref{Cconvexity} and \ref{CsymConvexity}, it becomes necessary to partition the polytope into its moving and non-moving components. Consequently, we study the covariance matrix of a disjoint union (Lemma \ref{iso2_lem1}), and, by extension, of $\calC$ (Lemma \ref{iso2_lem2} and Corollary \ref{2dCFormula}). The moving part is a triangle or the union of two antipodal triangles. Hence, in \S \ref{Ctriangle} we compute the covariance matrix (Lemma \ref{iso_sliding_lem1}) and $\calC$ of triangles (Corollary \ref{TriangleC}). Finally, we combine the aforementioned results to prove Lemmas \ref{Cconvexity} and \ref{CsymConvexity}, Propositions \ref{nonsym_slicing_sliding_prop} and \ref{sym_slicing_sliding_prop}, and Theorems \ref{plane_slicing_nonsym} and \ref{plane_slicing_sym}.

\subsection{\texorpdfstring{Affine invariance of $\calC$}{Affine invariance of C}}
The isotropic constant is an affine invariant (e.g. \cite[p. 77]{BGV+14}), hence so is $\calC$ (for proof, see \cite[Lemma 6.1, Corollary 6.2]{BMR23}). 
\label{CAffineInvSection}
    \begin{lemma}
    \label{cov_lemAT}  For a convex body $K\subset \R^n$ and $T(x)= Ax+b, A\in GL(n,\R), b\in \R^n$ an affine transformation, 
    \begin{equation}
    \label{TCovEq}
    \mathrm{Cov}(TK)= A\mathrm{Cov}(K)A^T. 
    \end{equation}
\end{lemma}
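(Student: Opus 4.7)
The plan is to verify the identity entry-by-entry via a direct change of variables. Starting from the definition
\[
\mathrm{Cov}(TK)_{ij}= \int_{TK} y_i y_j \frac{dy}{|TK|} - \int_{TK} y_i \frac{dy}{|TK|} \int_{TK} y_j \frac{dy}{|TK|},
\]
I would substitute $y=Ax+b$ in both the quadratic and the linear integrals. Since $dy = |\det A|\, dx$ and $|TK|=|\det A|\,|K|$, the Jacobian cancels against the normalization in every integral. This reduces both moments to integrals over $K$ against the affine coordinates $(Ax+b)_i = \sum_k A_{ik}x_k + b_i$.

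Next I would expand. For the first moment this gives $b(TK)_i = \sum_k A_{ik} b(K)_k + b_i$, i.e., $b(TK) = Ab(K)+b$. For the second moment, expanding $(Ax+b)_i(Ax+b)_j$ produces four terms: a purely quadratic piece $\sum_{k,l} A_{ik}A_{jl} \int_K x_k x_l \frac{dx}{|K|}$, two cross terms of the form $b_i\sum_l A_{jl}b(K)_l$ (and its $i\leftrightarrow j$ partner), and $b_i b_j$. The product $b(TK)_i b(TK)_j$ produces exactly those same cross and constant terms plus $\sum_{k,l} A_{ik}A_{jl} b(K)_k b(K)_l$. Subtracting one from the other cancels the $b$-dependent pieces and yields
\[
\mathrm{Cov}(TK)_{ij} = \sum_{k,l} A_{ik}\Big[\int_K x_k x_l \tfrac{dx}{|K|} - b(K)_k b(K)_l\Big] A_{jl} = \sum_{k,l} A_{ik}\,\mathrm{Cov}(K)_{kl}\,(A^T)_{lj},
\]
which is precisely $(A\,\mathrm{Cov}(K)\,A^T)_{ij}$.

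There is essentially no obstacle here beyond careful index bookkeeping; the key conceptual point is that the translation $b$ drops out because covariance is translation-invariant, so only the linear part $A$ contributes to the final transformation law. This will then immediately yield the affine invariance of $\calC$ via $\det\mathrm{Cov}(TK) = (\det A)^2 \det\mathrm{Cov}(K)$ and $|TK|^2 = (\det A)^2 |K|^2$, so that the ratio $|K|^2/\det\mathrm{Cov}(K)$ defining $\calC$ in \eqref{CLKEq} is unchanged.
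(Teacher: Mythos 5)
Your proof is correct and is the standard direct change-of-variables computation; the paper itself does not reproduce a proof of this lemma but defers to \cite[Lemma 6.1]{BMR23}, and your argument is the natural one that reference carries out (Jacobian cancellation against the normalization, expansion of $(Ax+b)_i(Ax+b)_j$, cancellation of the $b$-dependent cross terms against $b(TK)_ib(TK)_j$). The index bookkeeping you outline is exactly right and does yield $\mathrm{Cov}(TK)=A\,\mathrm{Cov}(K)\,A^T$, and your final remark on how this gives affine invariance of $\calC$ matches Corollary \ref{CaffineInvariance}.
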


\begin{corollary}
\label{CaffineInvariance}
      For a convex body $K\subset \R^n$ and $T(x)= Ax+b, A\in GL(n,\R), b\in \R^n$ an affine transformation, 
      \begin{equation*}
          \calC(TK)= \calC(K). 
      \end{equation*}
\end{corollary}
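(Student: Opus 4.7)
The plan is to combine Lemma \ref{cov_lemAT} with the standard change-of-variables formula for volumes. Recall that $\calC(K) = |K|^2/\det \mathrm{Cov}(K)$, so I need to track how both the numerator and denominator transform under the affine map $T(x) = Ax+b$.

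First I would compute the volume transformation: by the standard change of variables, $|TK| = |\det A| \cdot |K|$, so $|TK|^2 = (\det A)^2 |K|^2$. Next, I would apply Lemma \ref{cov_lemAT} to obtain $\mathrm{Cov}(TK) = A\,\mathrm{Cov}(K)\,A^T$, and take determinants to get
\begin{equation*}
\det \mathrm{Cov}(TK) = \det(A\,\mathrm{Cov}(K)\,A^T) = (\det A)^2 \det \mathrm{Cov}(K).
\end{equation*}
Dividing, the factor $(\det A)^2$ cancels, yielding $\calC(TK) = \calC(K)$.

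There is essentially no obstacle here once Lemma \ref{cov_lemAT} is in hand; the affine invariance is just a two-line computation. The only minor point worth noting is that the translation vector $b$ does not appear anywhere in the final formula, which is consistent with the fact that $\mathrm{Cov}(K)$ is already translation invariant (translations merely shift the barycenter, and $\mathrm{Cov}$ is defined by centered second moments). Thus the affine invariance really reduces to linear invariance, which is what Lemma \ref{cov_lemAT} supplies.
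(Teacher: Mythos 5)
Your proof is correct and is the standard derivation: combine Lemma \ref{cov_lemAT} with $|TK| = |\det A|\,|K|$ so that the $(\det A)^2$ factors cancel. The paper does not spell out this computation itself (it delegates to \cite[Lemma 6.1, Corollary 6.2]{BMR23}), but the argument there is the same as yours, and your remark about translation invariance being automatic from the centered definition of $\mathrm{Cov}$ is a correct and useful observation.
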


The following summarizes the relations between $\calC(K), L_K,$ and 
$\mathrm{Cov}(K)$ (in particular proving \eqref{CLKEq}, i.e.,
the existence of isotropic position):

\begin{corollary}
\label{isotropicpositioncor}
Let $K\subset \R^n$ be a convex body. 

\smallskip
\noindent
(i) There exists a unique
affine transformation $T$ such that $TK$ is in isotropic position, i.e.,
\begin{equation}
\label{isotropicpositionEq}
\hbox{$|TK|=1, \, b(TK)=0$, and $\mathrm{Cov}(TK)$ is a multiple
of the identity matrix, denoted $L_K^2 I_n$}.
\end{equation}

\smallskip
\noindent
(ii) 
One has,
\begin{equation}
\label{CKCTKEq}
    \frac{|K|^2}{\det\mathrm{Cov}(K)}
    =:\calC(K)
    =
    \calC(TK):= \frac{1}{\det\mathrm{Cov}(TK)}= \frac{1}{\det(L_K^2 I_n)}= \frac{1}{L_K^{2n}}.
\end{equation}
\end{corollary}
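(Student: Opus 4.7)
The plan for part (i) is to construct $T$ in three stages that address the three conditions of \eqref{isotropicpositionEq} sequentially. First, I would translate by $x\mapsto x-b(K)$ to move the barycenter to the origin, which leaves the covariance matrix unchanged. Second, since $K$ has nonempty interior, $\mathrm{Cov}(K)$ is symmetric positive definite and hence admits a well-defined inverse square root; applying this linear map turns $\mathrm{Cov}$ into the identity via Lemma \ref{cov_lemAT}. Third, I would rescale by a scalar $\lambda>0$ chosen so that the volume equals $1$, which simultaneously makes the covariance the scalar matrix $\lambda^2 I_n$; the volume normalization rigidly pins down
\[
\lambda^2 \;=\; \bigl(\det\mathrm{Cov}(K)/|K|^2\bigr)^{1/n} \;=\; 1/\calC(K)^{1/n},
\]
which is precisely what will be called $L_K^2$. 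Concretely, $T(x)\defeq \lambda\,\mathrm{Cov}(K)^{-1/2}(x-b(K))$ with this $\lambda$ verifies all three requirements in \eqref{isotropicpositionEq}.

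For the uniqueness clause, I would argue as follows. Any affine $T'$ satisfying \eqref{isotropicpositionEq} must send $b(K)$ to $0$, which fixes its translational part once its linear part $A$ is fixed. The condition $A\,\mathrm{Cov}(K)\,A^T = L_K^2 I_n$ together with $|\det A|\,|K|=1$ then determines $A$ up to left multiplication by an orthogonal matrix. This $O(n)$ ambiguity does not affect the properties recorded in \eqref{isotropicpositionEq}, and so ``unique'' in the statement must be read in this natural sense; in any case the ambiguity is irrelevant for all downstream uses in \S\ref{BourgainSection}.

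Part (ii) is then a one-line consequence. By Corollary \ref{CaffineInvariance} (affine invariance of $\calC$), $\calC(K)=\calC(TK)$. Since $|TK|=1$ and $\mathrm{Cov}(TK)=L_K^2 I_n$, the definition \eqref{CLKEq} immediately produces
\[
\calC(TK) \;=\; \frac{1}{\det\mathrm{Cov}(TK)} \;=\; \frac{1}{\det(L_K^2 I_n)} \;=\; \frac{1}{L_K^{2n}},
\]
which is \eqref{CKCTKEq}. There is no substantive obstacle: the whole argument is elementary linear algebra layered on top of the transformation law \eqref{TCovEq} and the already-established affine invariance of $\calC$. The only even mildly delicate point is the $O(n)$ ambiguity in the uniqueness clause of (i), which deserves a sentence in the write-up.
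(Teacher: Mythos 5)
Your proof is correct and follows essentially the same route as the paper's: diagonalize $\mathrm{Cov}(K)$ via the spectral theorem and Lemma~\ref{cov_lemAT}, rescale to unit volume, and translate to center the barycenter; part (ii) is then immediate from Corollary~\ref{CaffineInvariance}. The paper performs the translation last rather than first, but this is an inconsequential reordering.

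One substantive point where you go beyond the paper: you correctly observe that the isotropic position map $T$ is only determined up to left composition with an element of $O(n)$ (if $T$ works, so does $UT$ for any orthogonal $U$, since $U(L_K^2 I_n)U^T = L_K^2 I_n$, $|\det U|=1$, and $Ub(TK)=0$). The paper states ``unique'' without qualification and its proof does not address uniqueness at all --- it only constructs one such $T$. Your reading --- that ``unique'' should be understood modulo $O(n)$, and that the ambiguity is immaterial because none of the quantities in \eqref{isotropicpositionEq} or downstream (volume, barycenter, covariance, $\calC$) distinguish between the choices --- is the correct interpretation. If anything, your write-up is the more precise of the two; the paper would benefit from your remark.

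One minor notational quibble: your displayed formula expresses $\lambda^2 = (\det\mathrm{Cov}(K)/|K|^2)^{1/n}$, which is correct, but the identification $\lambda^2 = L_K^2$ is by definition (this is how $L_K$ is being introduced), so it is worth phrasing it as ``and we \emph{define} $L_K^2 \defeq \lambda^2$'' rather than as something being ``verified.'' This matches the logical structure of the Corollary, where $L_K$ is defined via \eqref{isotropicpositionEq} and then shown in (ii) to satisfy $\calC(K)=L_K^{-2n}$.
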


\bpf
(i)
The covariance matrix is positive definite: For $y\in \R^n$, 
\begin{equation*}
    y^T \mathrm{Cov}(K) y = \sum_{i,j=1}^n y_i \int_K x_ix_j\frac{dx}{|K|} y_j = \int_{K}\langle x,y\rangle^2 \frac{dx}{|K|}\geq 0, 
\end{equation*}
with equality if and only if $y=0$. 
Therefore, it has a square root by the spectral theorem. 
Take $A$ to be the inverse of that and apply 
Lemma \ref{cov_lemAT}, possibly taking a multiple to ensure
the resulting volume is $1$. The covariance matrix of the resulting body is then 
a multiple of the identity. Finally, translate the resulting body so its barycenter
is at the origin.

(ii)
Equation \eqref{CKCTKEq} is by
Corollary \ref{CaffineInvariance}.
\epf

\subsection{\texorpdfstring{Continuity of $\mathcal{C}$ in the Hausdorff metric}{Continuity of C in the Hausdorff metric}}
\label{CHausdorffSection}
Theorems \ref{plane_slicing_nonsym} and \ref{plane_slicing_sym} are proven by 
working with polytopes. Therefore, it is necessary to demonstrate the continuity of $\calC$ under approximations. For two sets $A,B\subset \R^n$, their \textit{Hausdorff distance} is given by 
\begin{equation*}
    d_{H}(A,B)\defeq \inf\{r>0: A\subset B+ rB_2^n \text{ and } B\subset A+ rB_2^n\}. 
\end{equation*}
\begin{claim}
\label{SymDifferenceLimit}
    For a sequence $\{K_m\}_{m\geq 1}$ converging in the Hausdorff metric to a convex body $K$, 
    \begin{equation*}
        \lim_{m\to\infty} |K_m\setminus K|= \lim_{m\to\infty} |K\setminus K_m|=0.
    \end{equation*}
\end{claim}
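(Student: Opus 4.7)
The plan is to deduce both limits from the single statement $|K_m \triangle K| \to 0$, which I would establish via pointwise almost-everywhere convergence of indicator functions together with the dominated convergence theorem. Since $d_H(K_m, K) \to 0$, the sequence $\{K_m\}$ is eventually contained in a fixed large ball $B$, so $\mathbf{1}_{K_m \triangle K} \leq \mathbf{1}_{B \cup K}$ for all sufficiently large $m$, providing the required integrable dominating function. As $K$ is convex with nonempty interior, $\partial K$ has Lebesgue measure zero, so it suffices to verify $\mathbf{1}_{K_m}(x) \to \mathbf{1}_K(x)$ for $x \in \mathrm{int}\, K$ and for $x \in \R^n \setminus \overline{K}$.

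For $x \notin \overline{K}$, one has $d(x, K) > 0$, and once $d_H(K_m, K) < d(x, K)/2$ the inclusion $K_m \subset K + (d(x, K)/2) B_2^n$ rules out $x \in K_m$. The nontrivial direction is to show that $x \in \mathrm{int}\, K$ implies $x \in K_m$ for all large $m$; here I would use that in the intended application the $K_m$ are polytopes, hence convex bodies. Fix $\delta > 0$ with $x + \delta B_2^n \subset K$, and take $m$ large enough that $d_H(K_m, K) < \delta/2$. If $x \notin K_m$, separate $x$ from the compact convex set $K_m$ by a hyperplane with outer unit normal $u$, so that $\langle y - x, u \rangle \leq 0$ for every $y \in K_m$. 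The point $x + \delta u$ lies in $K \subset K_m + (\delta/2) B_2^n$, hence equals $y + (\delta/2) b$ for some $y \in K_m$ and $b \in B_2^n$, yielding $\langle y - x, u \rangle = \delta - (\delta/2)\langle b, u \rangle \geq \delta/2 > 0$, a contradiction.

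The main obstacle is this second step: Hausdorff convergence alone does not imply measure-theoretic convergence for arbitrary sequences---for example a sequence of increasingly fine finite $\varepsilon$-nets of $K$ converges to $K$ in the Hausdorff metric yet each has zero measure---so the convexity of the approximants is essential and enters precisely through the separating hyperplane argument above. Everything else is routine: the dominated convergence step is immediate once indicator convergence is in place, and the two halves $|K_m \setminus K|$ and $|K \setminus K_m|$ fall out of the $L^1$ bound on $\mathbf{1}_{K_m} - \mathbf{1}_K$.
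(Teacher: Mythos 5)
Your proposal is correct and follows essentially the same route as the paper: almost-everywhere pointwise convergence of indicators plus dominated convergence, with the negligibility of $\partial K$ reducing the problem to $x\in\mathrm{int}\,K$ and $x\notin\overline{K}$. The only difference is cosmetic: where you run a separating-hyperplane argument to show $x\in\mathrm{int}\,K\Rightarrow x\in K_m$ eventually, the paper invokes the cancellation principle for Minkowski sums (from $x+\e B_2^n\subset K_m+\e B_2^n$ deduce $x\in K_m$), which your hyperplane argument is in effect proving from scratch in this special case. Your remark that convexity of the approximants is essential is well taken---the paper's statement omits the hypothesis, but its proof (like yours, via the cancellation property) silently uses it, and all its applications are to convex $K_m$.
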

\begin{proof}
    First, 
    \begin{equation*}
        \lim_{m\to\infty}\bm{1}_{K_m\setminus K}(x)= 0, \quad \text{ for all } x\in (\R^n\setminus K)\cup \mathrm{int}\,K. 
    \end{equation*}
    Indeed, let $x\in \R^n\setminus K$. There is $\e>0$ such that $x+ \e B_2^n\subset \R^n\setminus K$. Therefore, for large enough $m$, $x\notin K_m$, hence $\bm{1}_{K_m\setminus K}(x)=0$. Similarly, for $x\in \mathrm{int}\,K$ and $\e>0$ such that $x+ \e B_2^n\subset K$, let $m_0\in\mathbb{N}$ such that $K\subset K_m+\e B_2^n$. Therefore, $x+ \e B_2^n\subset K_m+ \e B_2^n$ for all $m\geq m_0$, and by the cancellation principle for the Minkowski sum, $x\in K_m$ for all $m\geq m_0$.

    Second, 
    \begin{equation*}
        \lim_{m\to\infty}\bm{1}_{K\setminus K_m}(x)= 0, \quad \text{ for all } x\in (\R^n\setminus K)\cup \mathrm{int}\,K,
    \end{equation*}
    which is proved similarly. 

    The claim follows from dominated convergence \cite[\S 2.24]{Folland99}. 

\end{proof}
We start with the continuity of the first moments. 
\begin{lemma}
\label{firsMomentsLemma}
    For a sequence of convex bodies $\{K_m\}_{m\geq 1}$ converging in the Hausdorff metric to a convex body $K$, 
    \begin{equation*}
        \lim_{m\to\infty} \int_{K_m} x_i \frac{dx}{|K_m|}= \int_{K} x_i \frac{dx}{|K|}. 
    \end{equation*}
\end{lemma}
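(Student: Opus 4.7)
The plan is to reduce the claim to a symmetric-difference estimate and exploit the uniform boundedness that Hausdorff convergence provides. First I would observe that if $K_m\to K$ in the Hausdorff metric, then for some $R>0$ (e.g.\ $R=\sup_m d_H(K_m,\{0\})+1$) we have $K_m,K\subset R B_2^n$ for all sufficiently large $m$, so in particular $|x_i|\le R$ on $K_m\cup K$.

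Next I would write
\begin{equation*}
\int_{K_m}x_i\,dx-\int_K x_i\,dx = \int_{K_m\setminus K} x_i\,dx - \int_{K\setminus K_m} x_i\,dx,
\end{equation*}
which by the preceding bound gives
\begin{equation*}
\left|\int_{K_m}x_i\,dx-\int_K x_i\,dx\right| \le R\bigl(|K_m\setminus K| + |K\setminus K_m|\bigr).
\end{equation*}
Claim \ref{SymDifferenceLimit} forces the right-hand side to tend to $0$. Taking $i$ to be any coordinate and also taking the $0$-th moment (integrating $1$) by the same argument yields $|K_m|\to|K|$. Since $0\in\mathrm{int}\,K$ is not assumed but $|K|>0$ because $K$ is a convex body, the denominators are bounded away from zero for large $m$, and the ratio
\begin{equation*}
\frac{\int_{K_m} x_i\,dx}{|K_m|}\longrightarrow \frac{\int_K x_i\,dx}{|K|}
\end{equation*}
follows immediately.

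There is no real obstacle here: the only thing to check carefully is that Hausdorff convergence gives uniform containment in a bounded ball (standard, since $d_H(K_m,K)\to 0$ and $K$ is bounded) and that $|K_m|$ is eventually bounded away from $0$, which follows either from $|K_m|\to|K|>0$ or from the inclusion $K\subset K_m+\e_m B_2^n$ combined with the reverse inclusion. Everything else is bounded convergence applied to Claim \ref{SymDifferenceLimit}.
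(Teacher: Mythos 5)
Your proof is correct and follows essentially the same route as the paper: uniform boundedness from Hausdorff convergence, the symmetric-difference bound $\left|\int_{K_m}x_i\,dx-\int_K x_i\,dx\right|\le R(|K_m\setminus K|+|K\setminus K_m|)$, and Claim \ref{SymDifferenceLimit}. You are actually a bit more careful than the paper's written proof in explicitly noting that $|K_m|\to|K|>0$ (so the normalization causes no trouble), a step the paper leaves implicit.
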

\begin{proof}
Since $K$ is compact and $K_m\to K$ in the Hausdorff sense, there is $R>0$ with $K_m\subset RB_2^n$ for all $m\in \mathbb{N}$. Hence, 
        \begin{equation*}
        \begin{aligned}
            \left| \int_{K_m}x_i dx - \int_K x_i dx \right| &\leq \int_{(K_m\setminus K)\cup (K\setminus K_m)} |x_i|dx \leq R\big( |K_m\setminus K|+ |K\setminus K_m|\big), 
        \end{aligned}
    \end{equation*}
    which converges to zero by Claim \ref{SymDifferenceLimit}. 
\end{proof}

\begin{lemma}
\label{secondMomentsLemma}
    For a sequence of convex bodies $\{K_m\}_{m\geq 1}$ converging in the Hausdorff metric to a convex body $K$, 
    \begin{equation*}
        \lim_{m\to\infty} \int_{K_m} x_i x_j \frac{dx}{|K_m|}= \int_{K} x_i x_j \frac{dx}{|K|}. 
    \end{equation*}
\end{lemma}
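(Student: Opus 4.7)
The plan is to mimic the proof of Lemma \ref{firsMomentsLemma} almost verbatim, replacing the linear integrand $x_i$ by the quadratic integrand $x_ix_j$. Since $K_m \to K$ in the Hausdorff metric and $K$ is bounded, there exists $R>0$ such that $K_m \subset R B_2^n$ for all $m$ and also $K \subset RB_2^n$. On $RB_2^n$ we have the uniform bound $|x_i x_j|\le R^2$.

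First I would control the numerators. By the triangle inequality and the previous bound,
\begin{equation*}
\left|\int_{K_m} x_i x_j \, dx - \int_K x_i x_j\, dx\right| \le \int_{(K_m\setminus K)\cup (K\setminus K_m)} |x_i x_j|\, dx \le R^2\bigl(|K_m\setminus K| + |K\setminus K_m|\bigr),
\end{equation*}
and the right-hand side tends to $0$ by Claim \ref{SymDifferenceLimit}. Hence $\int_{K_m} x_i x_j\, dx \to \int_K x_i x_j\, dx$.

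Next I need convergence of the normalizing volumes, $|K_m|\to|K|$. This is immediate from Claim \ref{SymDifferenceLimit} via the identity $|K_m|-|K| = |K_m\setminus K|-|K\setminus K_m|$. Since $K$ is a convex body, $|K|>0$, so dividing the numerator convergence by the denominator convergence yields the stated limit. No real obstacle arises; the only point worth checking is that the integrand is uniformly bounded on a common ball containing all $K_m$, which is precisely what Hausdorff convergence affords.
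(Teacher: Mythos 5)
Your proof is correct and follows essentially the same route as the paper: bound the integrand by $R^2$ on a common ball, estimate the difference of the unnormalized integrals by $R^2(|K_m\setminus K|+|K\setminus K_m|)$, and invoke Claim \ref{SymDifferenceLimit}. The only (minor) difference is that you explicitly record the convergence $|K_m|\to|K|$ and the positivity of $|K|$ needed to pass from the numerators to the normalized integrals, a step the paper leaves implicit.
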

\begin{proof}
    Since $K$ is compact and $K_m\to K$ in the Hausdorff sense, there is $R>0$ with $K_m\subset RB_2^n$ for all $m\in \mathbb{N}$. Hence, 
        \begin{equation*}
        \begin{aligned}
            \left| \int_{K_m}x_ix_j dx - \int_K x_ix_j dx \right| &\leq \int_{(K_m\setminus K)\cup (K\setminus K_m)} |x_i x_j|dx \leq R^2\big( |K_m\setminus K|+ |K\setminus K_m|\big), 
        \end{aligned}
    \end{equation*}
    which converges to zero by Claim \ref{SymDifferenceLimit}. 
\end{proof}

By Claim \ref{SymDifferenceLimit} and
    Lemmas \ref{firsMomentsLemma}--\ref{secondMomentsLemma}:

\begin{corollary}
\label{CHcontinuity}
       For a sequence of convex bodies $\{K_m\}_{m\geq 1}$ converging in the Hausdorff metric to a convex body $K$, 
       \begin{equation*}
           \lim_{m\to\infty} \mathcal{C}(K_m)= \mathcal{C}(K). 
       \end{equation*}
\end{corollary}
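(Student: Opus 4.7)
The plan is to assemble the ingredients already in place: Claim \ref{SymDifferenceLimit} (vanishing of the symmetric difference), Lemma \ref{firsMomentsLemma} (continuity of first moments), and Lemma \ref{secondMomentsLemma} (continuity of second moments). Recall from \eqref{CLKEq} that
\[
\calC(K)=\frac{|K|^2}{\det\mathrm{Cov}(K)},
\]
so it suffices to show (a) $|K_m|\to|K|$, (b) $\mathrm{Cov}(K_m)\to\mathrm{Cov}(K)$ entrywise, and (c) $\det\mathrm{Cov}(K)\neq 0$, so that continuity of the determinant and the reciprocal can be invoked.

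For (a), note that $\bigl||K_m|-|K|\bigr|\leq |K_m\setminus K|+|K\setminus K_m|$, and both terms tend to $0$ by Claim \ref{SymDifferenceLimit}. For (b), the definition
\[
\mathrm{Cov}(K)_{ij}=\int_K x_ix_j\frac{dx}{|K|}-\int_K x_i\frac{dx}{|K|}\int_K x_j\frac{dx}{|K|}
\]
expresses each entry as a polynomial in the first and second moments, so Lemmas \ref{firsMomentsLemma} and \ref{secondMomentsLemma} give $\mathrm{Cov}(K_m)_{ij}\to\mathrm{Cov}(K)_{ij}$ for every $i,j$. Since the determinant is a polynomial in the matrix entries, $\det\mathrm{Cov}(K_m)\to\det\mathrm{Cov}(K)$.

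For (c), the limit $K$ is a convex body, hence has nonempty interior; the positive-definiteness argument in the proof of Corollary \ref{isotropicpositioncor}(i) then gives $\det\mathrm{Cov}(K)>0$. Combining (a)--(c) and using continuity of division at a nonzero denominator yields $\calC(K_m)\to\calC(K)$, which is the desired conclusion. No significant obstacle is expected; the statement is essentially a bookkeeping corollary of the moment-continuity lemmas already proved.
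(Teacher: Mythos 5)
Your proof is correct and follows the same route the paper indicates (which is stated only as "By Claim \ref{SymDifferenceLimit} and Lemmas \ref{firsMomentsLemma}--\ref{secondMomentsLemma}"): combine the symmetric-difference estimate with moment continuity, note the covariance entries and hence the determinant converge, and use non-degeneracy of $\mathrm{Cov}(K)$ to justify taking reciprocals.
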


\subsection{The covariance matrix of a disjoint union}
\label{UnionSection}
\begin{lemma}
\label{iso2_lem1}
    Let $K\subset \R^n$ be a convex body. For compact bodies (possibly not connected) $L,C\subset K$ with $K= L\cup C$  and $\mathrm{int}\,L\cap\mathrm{int}\,C= \emptyset$, 
    \begin{equation*}
        \mathrm{Cov}(K)= \frac{|L|}{|K|} \mathrm{Cov}(L)+ \frac{|C|}{|K|}\mathrm{Cov}(C)+ \frac{|L||C|}{|K|^2} (b(L)-b(C)) (b(L)-b(C))^T. 
    \end{equation*}
\end{lemma}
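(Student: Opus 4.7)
The plan is to prove this by direct computation from the definition, using the decomposition of integrals over $K = L \cup C$ combined with the standard identity that relates the covariance matrix to the raw second moment and the barycenter.

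First, I would rewrite the covariance matrix in terms of the raw (non-centered) second-moment matrix
\begin{equation*}
  M_2(E)_{ij}\defeq \int_E x_i x_j\, \frac{dx}{|E|},
\end{equation*}
for a body $E$, via the identity $\mathrm{Cov}(E) = M_2(E) - b(E)b(E)^T$. Since $\mathrm{int}\,L\cap \mathrm{int}\,C=\emptyset$ and $K=L\cup C$, additivity of the integral immediately gives
\begin{equation*}
  M_2(K) = \frac{|L|}{|K|}\, M_2(L) + \frac{|C|}{|K|}\, M_2(C),
  \qquad
  b(K) = \frac{|L|}{|K|}\, b(L) + \frac{|C|}{|K|}\, b(C).
\end{equation*}

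Next, writing $\alpha\defeq |L|/|K|$ and $\beta\defeq |C|/|K|$ (so $\alpha+\beta=1$), I would substitute these two identities into $\mathrm{Cov}(K) = M_2(K) - b(K)b(K)^T$. Replacing $M_2(L)=\mathrm{Cov}(L)+b(L)b(L)^T$ and similarly for $C$, the expression becomes
\begin{equation*}
  \mathrm{Cov}(K) = \alpha\,\mathrm{Cov}(L) + \beta\,\mathrm{Cov}(C) + \alpha\, b(L)b(L)^T + \beta\, b(C)b(C)^T - b(K)b(K)^T.
\end{equation*}
All that remains is to recognize that the last three terms simplify to $\alpha\beta\,(b(L)-b(C))(b(L)-b(C))^T$.

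The concluding algebraic step uses $\alpha+\beta=1$: expanding $b(K)b(K)^T$ and using $\alpha(1-\alpha)=\alpha\beta$ and $\beta(1-\beta)=\alpha\beta$ yields
\begin{equation*}
  \alpha\, b(L)b(L)^T + \beta\, b(C)b(C)^T - b(K)b(K)^T = \alpha\beta\bigl(b(L)b(L)^T - b(L)b(C)^T - b(C)b(L)^T + b(C)b(C)^T\bigr),
\end{equation*}
which factors as $\alpha\beta\,(b(L)-b(C))(b(L)-b(C))^T$. Reinstating $\alpha\beta = |L||C|/|K|^2$ gives the claimed formula. There is no real obstacle here; the only thing to be careful about is tracking the cross terms $b(L)b(C)^T$ and $b(C)b(L)^T$ as the outer product is not symmetric in $L$ and $C$ individually, though of course $(b(L)-b(C))(b(L)-b(C))^T$ is symmetric.
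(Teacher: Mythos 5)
Your proof is correct and follows essentially the same route as the paper: both decompose $I(K)=M_2(K)$ and $b(K)$ by additivity of the integral over $L\cup C$, and both reduce to the same algebraic identity on $b(K)b(K)^T$ using $\alpha+\beta=1$. The only cosmetic difference is that the paper manipulates $b(K)b(K)^T$ on its own before substituting into $\mathrm{Cov}(K)=I(K)-b(K)b(K)^T$, whereas you substitute $M_2=\mathrm{Cov}+bb^T$ for $L$ and $C$ first and then simplify the residual three terms; the computations are equivalent.
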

\begin{proof}
Since $K= L\cup C$ and $\mathrm{int}\,L\cap \mathrm{int}\,C=\emptyset$, $|L|+ |C|= |K|$ and $\int_K= \int_L+ \int_C$. Thus,
    \begin{equation}
    \label{xixjDecomp}
        \int_{K}x_jx_j\frac{dx}{|K|}= \int_{L}x_ix_j\frac{dx}{|K|} + \int_{C}x_i x_j \frac{dx}{|K|}= \frac{|L|}{|K|}\int_{L}x_i x_j\frac{dx}{|L|}+ \frac{|C|}{|K|}\int_C x_{i}x_j\frac{dx}{|C|}
    \end{equation}
    and, similarly, 
    \begin{equation}
    \label{xiDecomp}
        \int_K x_i\frac{dx}{|K|} =\int_L x_i\frac{dx}{|K|} + \int_{C}x_i\frac{dx}{|K|}= \frac{|L|}{|K|} \int_L x_i\frac{dx}{|L|}+ \frac{|C|}{|K|}\int_{C}x_i\frac{dx}{|C|}. 
    \end{equation}
    In other words, if
    \begin{equation*}
        I(K)\defeq \left( \int_{K}x_ix_j\frac{dx}{|K|}\right)_{i,j=1}^n
    \end{equation*}
    is the matrix of the second moments of $K$, \eqref{xixjDecomp} reads
    \begin{equation}
    \label{IDecomp}
        I(K)=\frac{|L|}{|K|} I(L) +\frac{|C|}{|K|}I(C), 
    \end{equation}
    and \eqref{xiDecomp},
    \begin{equation}
    \label{bDecomp}
        b(K) = \frac{|L|}{|K|}b(L)+ \frac{|C|}{|K|}b(C).
    \end{equation}

    By \eqref{bDecomp}, 
    \begin{equation}
    \begin{aligned}
    \label{bbEq}
        &b(K)b(K)^T 
        \\
        &= \left(\frac{|L|}{|K|}b(L)+\frac{|C|}{|K|}b(C)\right) \left( \frac{|L|}{|K|}b(L)^T+ \frac{|C|}{|K|}b(C)^T\right) \\
        &= \frac{|L|^2}{|K|^2}b(L)b(L)^T+ \frac{|C|^2}{|K|^2} b(C)b(C)^T+ \frac{|L||C|}{|K|^2}\left( b(L)b(C)^T+ b(C)b(L)^T\right) \\
        &= \frac{|L|}{|K|} \left(1-\frac{|C|}{|K|} \right)b(L)b(L)^T+ \frac{|C|}{|K|}\left(1-\frac{|L|}{|K|}\right) b(C)b(C)^T+ \frac{|L||C|}{|K|^2}\left( b(L)b(C)^T+ b(C)b(L)^T\right) \\
        &= \frac{|L|}{|K|}b(L)b(L)^T+ \frac{|C|}{|K|}b(C) b(C)^T+ \frac{|L||C|}{|K|^2}\left( b(L)b(C)^T+ b(C)b(L)^T- b(L)b(L)^T - {b(C)}{b(C)^T}\right) \\
        &= \frac{|L|}{|K|} b(L) b(L)^T+ \frac{|C|}{|K|}b(C)b(C)^T - \frac{|L||C|}{|K|^2} (b(L)-b(C))(b(L)^T- b(C)^T). 
    \end{aligned}
    \end{equation}
Finally, by \eqref{IDecomp} and \eqref{bbEq},
\begin{equation*}
    \begin{aligned}
        &\mathrm{Cov}(K)
        \\
        &= I(K) - b(K)b(K)^T
        \\
        &= \frac{|L|}{|K|}\left(I(L)-b(L)b(L)^T \right)+\frac{|C|}{|K|}\left( I(C)-b(C)b(C)^T\right)+ \frac{|L||C|}{|K|^2}(b(L)-b(C))(b(L)^T-b(C)^T) \\
        &= \frac{|L|}{|K|}\mathrm{Cov}(L)+ \frac{|C|}{|K|}\mathrm{Cov}(C)+\frac{|L||C|}{|K|^2} (b(L)-b(C)) (b(L)^T- b(C)^T),
    \end{aligned}
\end{equation*}
as claimed.
\end{proof}

\begin{lemma}\label{iso2_lem2}
    For a convex body $K\subset \R^n$ with $K= L\cup C, \mathrm{int}\,L\cap\mathrm{int}\,C=\emptyset$, and $L$ isotropic, 
    \begin{equation*}
        \frac{1}{\mathcal{C}(K)}= \frac{\det(\calC(L)^{-\frac1n} I_n + |C|\mathrm{Cov}(C)+ \frac{|C|}{|K|} b(C)b(C)^T)}{|K|^{n+2}}
    \end{equation*}
\end{lemma}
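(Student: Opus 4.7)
The plan is to apply Lemma \ref{iso2_lem1} directly and simplify using the isotropic hypothesis on $L$. By \eqref{isotropicpositionEq}, $L$ being isotropic means $|L|=1$, $b(L)=0$, and $\mathrm{Cov}(L)=L_L^2 I_n$. Combining this with the identity $\calC(L)=L_L^{-2n}$ from \eqref{CKCTKEq} yields $L_L^2=\calC(L)^{-1/n}$, hence
\begin{equation*}
\mathrm{Cov}(L)=\calC(L)^{-1/n}I_n.
\end{equation*}

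Substituting $|L|=1$ and $b(L)=0$ into Lemma \ref{iso2_lem1} gives
\begin{equation*}
\mathrm{Cov}(K)=\frac{1}{|K|}\mathrm{Cov}(L)+\frac{|C|}{|K|}\mathrm{Cov}(C)+\frac{|C|}{|K|^2}b(C)b(C)^T.
\end{equation*}
Multiplying both sides by $|K|$ and using the identification of $\mathrm{Cov}(L)$ above produces
\begin{equation*}
|K|\,\mathrm{Cov}(K)=\calC(L)^{-1/n}I_n+|C|\,\mathrm{Cov}(C)+\frac{|C|}{|K|}b(C)b(C)^T.
\end{equation*}

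Taking determinants of both sides (using $\det(|K|\,\mathrm{Cov}(K))=|K|^n\det\mathrm{Cov}(K)$) and then dividing by $|K|^{n+2}$, the left-hand side becomes $\det\mathrm{Cov}(K)/|K|^2$, which by definition \eqref{CLKEq} equals $1/\calC(K)$. This yields exactly the claimed formula. There is no real obstacle: the entire content lies in Lemma \ref{iso2_lem1}, and the step here is purely algebraic bookkeeping exploiting the three isotropic normalizations on $L$ to reduce the first summand to a scalar multiple of the identity.
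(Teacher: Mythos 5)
Your proof is correct and follows essentially the same route as the paper's: both apply Lemma \ref{iso2_lem1} with the isotropic normalizations $|L|=1$, $b(L)=0$, $\mathrm{Cov}(L)=\calC(L)^{-1/n}I_n$ and then compute the determinant, pulling out the scalar $|K|$.
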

\begin{proof}
    By assumption on $L$ and Corollary \ref{isotropicpositioncor}, $b(L)=0$, $|L|=1$, 
    and $\mathrm{Cov}(L)= \calC(L)^{-\frac1n} I_n$.  By Lemma \ref{iso2_lem1}, 
    \begin{equation*}
        \mathrm{Cov}(K)= \frac{\calC(L)^{-\frac1n}}{|K|} I_n+ \frac{|C|}{|K|}\mathrm{Cov}(C)+ \frac{|C|}{|K|^2} b(C)b(C)^T. 
    \end{equation*}
    Therefore,
    \begin{equation*}
    \begin{aligned}
        \frac{1}{\mathcal{C}(K)}&= \frac{\det\mathrm{Cov}(K)}{|K|^2}
        \\
        &= \frac{1}{|K|^2}\det\left( \frac{1}{|K|}\calC(L)^{-\frac1n} I_n+ \frac{|C|}{|K|}\mathrm{Cov}(C)+ \frac{|C|}{|K|^2} b(C) b(C)^T\right)\\
        &= \frac{\det(\calC(L)^{-\frac1n} I_n + |C|\mathrm{Cov}(C)+ \frac{|C|}{|K|} b(C)b(C)^T)}{|K|^{n+2}}. 
    \end{aligned}
    \end{equation*}
    \end{proof}

For a two-dimensional convex body $K$ 
denote the coordinates of the barycenter
\begin{equation*}
    b(K)\defeq (b(K)_1, b(K)_2), 
\end{equation*}
and set
\begin{equation*}
    b^N(K)\defeq (b(K)_2, -b(K)_1), 
\end{equation*}
which is normal to the barycenter, i.e.,
$\langle b(K), b^N(K)\rangle =0$.

    \begin{corollary}
    \label{2dCFormula}
    For a convex body $K\subset \R^2$ with $K= L\cup C, \mathrm{int}\,L\cap\mathrm{int}\,C=\emptyset$, and $L$ isotropic, 
    \begin{equation*}
    \begin{aligned}
        \frac{|K|^4}{\mathcal{C}(K)}&= \calC(L)^{-1} + \calC(L)^{-\frac12}\Big( |C|\mathrm{tr}\,\mathrm{Cov}(C)+ \frac{|C|}{|K|} \mathrm{tr}\,b(C)b(C)^T\Big)
        \\
        &+ \frac{|C|^4}{\mathcal{C}(C)} + \frac{|C|^2}{|K|} b^N(C)^T\mathrm{Cov}(C) b^N(C).     
    \end{aligned}
    \end{equation*} 
    \end{corollary}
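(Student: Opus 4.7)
The plan is to specialize Lemma \ref{iso2_lem2} to $n=2$ and then expand the determinant of the $2\times 2$ matrix
\[
M \defeq \calC(L)^{-\frac12} I_2 + |C|\mathrm{Cov}(C)+ \frac{|C|}{|K|} b(C)b(C)^T
\]
explicitly. Lemma \ref{iso2_lem2} gives $\frac{|K|^4}{\calC(K)}=\det M$, so the task reduces to a purely linear-algebraic computation.

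First, I would use the elementary $2\times 2$ identity $\det(aI_2+N)=a^2+a\,\mathrm{tr}(N)+\det(N)$ with $a=\calC(L)^{-1/2}$ and $N=|C|\mathrm{Cov}(C)+\tfrac{|C|}{|K|}b(C)b(C)^T$. This immediately produces the first two summands of the claimed formula, namely $\calC(L)^{-1}$ and $\calC(L)^{-1/2}\bigl(|C|\mathrm{tr}\,\mathrm{Cov}(C)+\tfrac{|C|}{|K|}\mathrm{tr}\,b(C)b(C)^T\bigr)$, since trace is linear.

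Next I would handle $\det(N)$ via the matrix determinant lemma $\det(A+uv^T)=\det(A)(1+v^TA^{-1}u)$, applied with $A=|C|\mathrm{Cov}(C)$, $u=b(C)$, $v=\tfrac{|C|}{|K|}b(C)$. This yields
\[
\det(N)=|C|^2\det\mathrm{Cov}(C)+\frac{|C|^2}{|K|}\det\mathrm{Cov}(C)\,b(C)^T\mathrm{Cov}(C)^{-1}b(C).
\]
The first term equals $\frac{|C|^4}{\calC(C)}$ by definition \eqref{CLKEq}. For the second term, I would invoke the $2\times 2$ ``cofactor identity'' $(x^N)^TAx^N=\det(A)\,x^TA^{-1}x$, valid for any symmetric $2\times 2$ matrix $A$ and vector $x$ with $x^N=(x_2,-x_1)$; this is a one-line check by writing both sides out. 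Applying it to $A=\mathrm{Cov}(C)$ and $x=b(C)$ converts the remaining term into $\frac{|C|^2}{|K|}b^N(C)^T\mathrm{Cov}(C)b^N(C)$, completing the formula.

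There is no real obstacle here; the argument is purely computational once Lemma \ref{iso2_lem2} is in hand. The only mildly non-routine step is recognizing the $2\times 2$ cofactor identity that rewrites $\det\mathrm{Cov}(C)\cdot b(C)^T\mathrm{Cov}(C)^{-1}b(C)$ as a quadratic form in the \emph{normal} vector $b^N(C)$ with respect to $\mathrm{Cov}(C)$ itself (rather than its inverse). This rewriting is what makes the resulting expression manifestly polynomial in the data, which will be crucial for the subsequent application in Lemmas \ref{Cconvexity}--\ref{CsymConvexity}, where $C$ is a sliding (family of) triangle(s) whose volume, barycenter, and covariance depend polynomially on $x_2$.
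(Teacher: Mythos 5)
Your proof is correct and reaches exactly the claimed formula, but it organizes the computation quite differently from the paper. The paper's proof is a direct brute-force expansion of the $2\times2$ determinant: it writes out every entry of $\calC(L)^{-1/2}I_2 + |C|\mathrm{Cov}(C) + \tfrac{|C|}{|K|}b(C)b(C)^T$, multiplies everything out, and then regroups roughly a dozen terms to recognize $\det\mathrm{Cov}(C)$, $\mathrm{tr}\,\mathrm{Cov}(C)$, $\mathrm{tr}\,b(C)b(C)^T$, and $b^N(C)^T\mathrm{Cov}(C)b^N(C)$ in the mess. Your approach instead layers three clean identities: $\det(aI_2+N)=a^2+a\,\mathrm{tr}N+\det N$, the rank-one update formula $\det(A+uv^T)=\det A\,(1+v^TA^{-1}u)$, and the $2\times2$ cofactor identity $(x^N)^TAx^N=\det(A)\,x^TA^{-1}x$ for symmetric $A$. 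This is tidier and makes it conceptually obvious where each of the four summands comes from (the quadratic, linear, and constant-in-$\calC(L)^{-1/2}$ pieces, with the last split by the rank-one structure). One small point worth noting: your statement of the determinant lemma and the cofactor identity both invoke $\mathrm{Cov}(C)^{-1}$, so as written they require $\mathrm{Cov}(C)$ to be invertible. For a full-dimensional compact body $C$ this is automatic (the covariance matrix is positive definite), so there is no gap for the intended use; but if you want the derivation to be literally unconditional you could replace $A^{-1}$ by the adjugate $\mathrm{adj}(A)$ throughout, using $\det(A+uv^T)=\det A+v^T\mathrm{adj}(A)u$ and $x^T\mathrm{adj}(A)x=(x^N)^TAx^N$, neither of which needs invertibility. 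Either way, your route is a genuine and arguably cleaner alternative to the paper's computation.
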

    \begin{proof}
    In dimension $n=2$, 
    $$
    b(C) b(C)^T= \begin{bmatrix} b(C)^2_1 & b(C)_1 b(C)_2 \\ b(C)_1 b(C)_2 & b(C)_2^2\end{bmatrix},
    $$
    thus 
    {\allowdisplaybreaks
    \begin{align*}
        &\det\left( \calC(L)^{-\frac12} I_2+ |C|\mathrm{Cov}(C)+ |C|/|K| b(C) b(C)^T\right)=
        \\
        &\det\begin{bmatrix}
            \calC(L)^{-\frac12}+ |C|\mathrm{Cov}(C)_{11} +\frac{|C|}{|K|} b(C)_1^2 & |C|\mathrm{Cov}(C)_{12}+ \frac{|C|}{|K|} b(C)_1 b(C)_2 \\
            |C|\mathrm{Cov}(C)_{21} +\frac{|C|}{|K|} b(C)_1 b(C)_2 &  
            \calC(L)^{-\frac12}+ |C|\mathrm{Cov}(C)_{22} + \frac{|C|}{|K|} b(C)_2^2
        \end{bmatrix}
        \\
        &= \calC(L)^{-1}+ \calC(L)^{-\frac12} |C| \mathrm{Cov}(C)_{22}+ \calC(L)^{-\frac12} \frac{|C|}{|K|}b(C)_2^2
        \\
        &\quad+ \calC(L)^{-\frac12} |C|\mathrm{Cov}(C)_{11}+ |C|^2 \mathrm{Cov}(C)_{11}\mathrm{Cov}(C)_{22}+ \frac{|C|^2}{|K|} \mathrm{Cov}(C)_{11} b(C)_2^2
        \\ 
        &\quad+\calC(L)^{-\frac12}\frac{|C|}{|K|}  b(C)_1^2+ \frac{|C|^2}{|K|}\mathrm{Cov}(C)_{22} b(C)_1^2+ \frac{|C|^2}{|K|^2} b(C)_1^2 b(C)_2^2
        \\
        &\quad-|C|^2 \mathrm{Cov}(C)_{12}\mathrm{Cov}(C)_{21}- \frac{|C|^2}{|K|} \mathrm{Cov}(C)_{21} b(C)_1 b(C)_2 
        \\
        &\quad- \frac{|C|^2}{|K|} \mathrm{Cov}(C)_{12} b(C)_1 b(C)_2- \frac{|C|^2}{|K|^2} b(C)_1^2 b(C)_2^2 
        \\
        &= \calC(L)^{-1}        
        + |C|^2 \left(\mathrm{Cov}(C)_{11}\mathrm{Cov}(C)_{22}-\mathrm{Cov}(C)_{12}\mathrm{Cov}(C)_{21} \right)
        \\
        &\quad+ \calC(L)^{-\frac12}|C| \left(\mathrm{Cov}(C)_{22}+ \mathrm{Cov}(C)_{11}\right)
        + \calC(L)^{-\frac12}\frac{|C|}{|K|} \left(b(C)_2^2+ b(C)_1^2 \right)
        \\
        &\quad+ \frac{|C|^2}{|K|}\left(\mathrm{Cov}(C)_{11} b(C)_2^2+ \mathrm{Cov}(C)_{22} b(C)_1^2 - \mathrm{Cov}(C)_{21} b(C)_1 b(C)_2- \mathrm{Cov}(C)_{12} b(C)_1 b(C)_2\right)
        \\
        &= \calC(L)^{-1}+ |C|^2 \det\mathrm{Cov}(C)+ \calC(L)^{-\frac12} |C|\mathrm{tr}\,\mathrm{Cov}(C)+ \calC(L)^{-\frac12}\frac{|C|}{|K|} (b(C)_1^2+ b(C)_2^2)
        \\
        &\quad+ \frac{|C|^2}{|K|} \left( \mathrm{Cov}(C)_{11} b(C)_2^2+ \mathrm{Cov}(C)_{22} b(C)_1^2 - \mathrm{Cov}(C)_{21} b(C)_1 b(C)_2- \mathrm{Cov}(C)_{12} b(C)_1 b(C)_2\right) 
        \\
        &= \mathcal{C}(L)^{-1}+ \frac{|C|^4}{\mathcal{C}(C)}+ \calC(L)^{-\frac12} |C|\mathrm{tr}\,\mathrm{Cov}(C)+ \calC(L)^{-\frac12} \frac{|C|}{|K|} (b(C)_1^2+ b(C)_2^2)\\
        &\quad+ \frac{|C|^2}{|K|} b^N(C)^T\mathrm{Cov}(C) b^N(C). 
    \end{align*}
    }
\end{proof}

\subsection{Isotropic constant of a sliding triangle}
\label{Ctriangle}
As in the previous sections, we aim to decompose the polytope of $m$ vertices $P\in \mathcal{P}$ into $\widehat{P}$ and $\Delta(x_2)$ as in \eqref{Deltax2Def} and \eqref{hatP}. The only moving vertex lies in the triangle, so we start by studying that. The upside in the study of $\mathrm{Cov}$ and $\calC$ is they are also invariant under translations (Lemma \ref{cov_lemAT} and Corollary \ref{CaffineInvariance}). Therefore, to further simplify calculations, let 
\begin{equation}
\label{Deltalh_def}
    \Delta_{l,h}(x_2)\defeq \co\{(-l/2, 0), (l/2,0), (x_2,h)\}, \quad l>0, h\neq 0, x_2\in \R,
\end{equation}
the triangle with basis on the $x$-axis centered at the origin and basis $l$, height $h$, and sliding parameter $x_2$. For triangles, the barycenter is the average over the vertices, hence
\begin{equation}
\label{DeltalhBar}
    b(\Delta_{l,h}(x_2))= \left( \frac{x_2}{3}, \frac{h}{3}\right).
\end{equation}

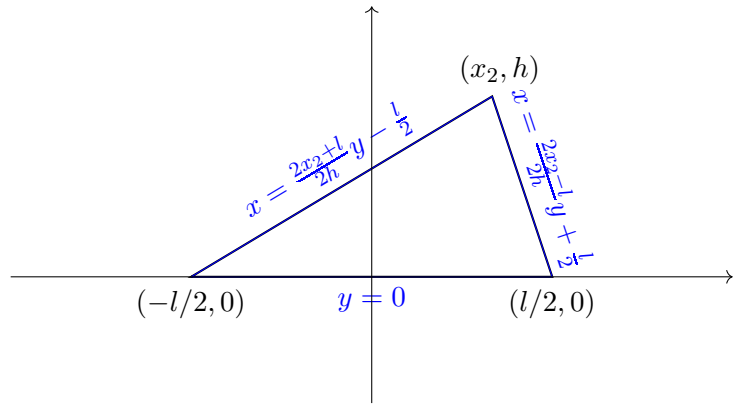
\begin{figure}[H]
    \centering
    \begin{tikzpicture}[scale=2.4]
    \draw[thick] (-1,0) -- (1,0) -- (.667,1) -- cycle;
    \draw[-, blue] (1,0)--(.667,1) node[pos=.5,sloped,above] {$x= \frac{2x_2-l}{2h}y+\frac{l}{2}$};
    \draw[-, blue] (.667,1)--(-1,0) node[pos=.5,sloped,above] {$x= \frac{2x_2+l}{2h}y-\frac{l}{2}$};
    \draw[-, blue] (-1,0)--(1,0) node[pos=.5,sloped,below] {$y=0$};
\draw[->] (-2,0) -- (2,0) node[anchor=north west] {};
\draw[->] (0,-.7) -- (0,1.5) node[anchor=north west] {};
\coordinate (O) at (0,0);
\coordinate (p1) at (1,1);
\coordinate (p2) at (-.5,1.5);
\coordinate (p3) at (-1,1);
\node[above left=1pt] at (1, 1){$(x_2,h)$};
\node[below=1pt] at (1,0){$(l/2,0)$};
\node[below=1pt] at (-1,0){$(-l/2,0)$};
\end{tikzpicture}
        \caption{\small $\Delta_{l,h}(x_2)$.}
        \label{fig:Delta_lh}
\end{figure}

\begin{lemma}
\label{iso_sliding_lem1}
    For $\Delta_{l,h}(x_2)$ as in \eqref{Deltalh_def},
    \begin{equation*}
        \mathrm{Cov}(\Delta_{l,h}(x_2))= \frac{1}{18}\begin{bmatrix}
        x_2^2+ \frac{3l^2}{4} & x_2 h\\
        x_2 h & h^2
        \end{bmatrix}. 
    \end{equation*}
\end{lemma}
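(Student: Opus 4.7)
The plan is to exploit the transformation law $\mathrm{Cov}(TK) = A\,\mathrm{Cov}(K)\,A^T$ for an affine map $T(x) = Ax + b$ (Lemma \ref{cov_lemAT}), which reduces the entire computation to that of a single fixed reference triangle. Take $\Delta_0 \defeq \co\{(0,0),(1,0),(0,1)\}$ and the affine map $T(u,v) \defeq A(u,v)^T + b$ with
$$
A = \begin{pmatrix} l & x_2 + l/2 \\ 0 & h \end{pmatrix}, \qquad b = \big({-l/2},\,0\big).
$$
Checking that $T$ sends $(0,0),(1,0),(0,1)$ to $(-l/2,0),(l/2,0),(x_2,h)$ respectively shows $T(\Delta_0) = \Delta_{l,h}(x_2)$, so by Lemma \ref{cov_lemAT} the problem reduces to computing $\mathrm{Cov}(\Delta_0)$ and conjugating.

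Next I would compute $\mathrm{Cov}(\Delta_0)$ directly. Using $|\Delta_0| = 1/2$ and $b(\Delta_0) = (1/3,1/3)$, iterated integration over $v \in [0,1]$, $u \in [0,1-v]$ gives the second moments $\int_{\Delta_0} u^2 = \int_{\Delta_0} v^2 = 1/12$ and $\int_{\Delta_0} uv = 1/24$, so
$$
\mathrm{Cov}(\Delta_0) \;=\; \begin{pmatrix} 1/6 & 1/12 \\ 1/12 & 1/6 \end{pmatrix} \;-\; \begin{pmatrix} 1/9 & 1/9 \\ 1/9 & 1/9 \end{pmatrix} \;=\; \frac{1}{36}\begin{pmatrix} 2 & -1 \\ -1 & 2 \end{pmatrix}.
$$

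Finally, I would carry out the $2\times 2$ matrix multiplication $A\,\mathrm{Cov}(\Delta_0)\,A^T$ and simplify, which produces the claimed matrix. The only delicate point is algebraic bookkeeping: the $(1,1)$ entry requires combining $(3l/2 - x_2)l + 2x_2(x_2 + l/2) = 3l^2/2 + 2x_2^2$, where the cross terms $\pm x_2 l$ cancel, and the $(1,2)$ and $(2,1)$ entries similarly need the $l/2$-terms to drop out, leaving $2x_2 h$. No step is a real obstacle; the affine reduction effectively trivializes what would otherwise be a somewhat tedious three-piece direct integration over $\Delta_{l,h}(x_2)$.
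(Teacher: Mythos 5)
Your proposal is correct and takes a genuinely different, cleaner route than the paper. The paper computes $\mathrm{Cov}(\Delta_{l,h}(x_2))$ by brute-force iterated integration over the slanted triangle: it parametrizes the region by horizontal slices between the two slanted edges, and explicitly evaluates $\int x^2$, $\int y^2$, $\int xy$ over $\Delta_{l,h}(x_2)$, then subtracts the barycenter terms. This works but is about two pages of bookkeeping. You instead invoke the transformation law (Lemma \ref{cov_lemAT}), which the paper already states and uses elsewhere in \S\ref{CAffineInvSection}, to transfer the computation to the standard simplex $\Delta_0=\co\{(0,0),(1,0),(0,1)\}$. The verification that your $T$ sends the three vertices of $\Delta_0$ to $(-l/2,0),(l/2,0),(x_2,h)$ is immediate, the moments of $\Delta_0$ are simple one-line integrals, and the final conjugation
\begin{equation*}
A\,\mathrm{Cov}(\Delta_0)\,A^T \;=\; \frac{1}{36}\begin{pmatrix}3l/2-x_2 & 2x_2\\ -h & 2h\end{pmatrix}\begin{pmatrix}l & 0\\ x_2+l/2 & h\end{pmatrix} \;=\; \frac{1}{18}\begin{pmatrix}x_2^2+\tfrac{3l^2}{4} & x_2 h\\ x_2 h & h^2\end{pmatrix}
\end{equation*}
checks out, including the cancellations you flag (the $\pm x_2 l$ terms in the $(1,1)$ entry and the $\pm hl$ terms in the off-diagonal). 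The affine reduction buys a structurally transparent proof with fewer opportunities for error; what is lost relative to the paper's direct calculation is only the explicit triangle parametrization, which the paper perhaps wished to display for pedagogical self-containment. Both arguments are complete and correct.
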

\begin{proof}
Note $|\Delta_{l,h}(x_2)|= lh/2$. Moreover, $\Delta_{l,h}(x_2)$ is bounded by the lines
\begin{equation*}
        y=0, \quad 
        x= \frac{2x_2+l}{2h}y-\frac{l}{2}, \quad 
        \text{ and }
        x= \frac{2x_2-l}{2h}y+ \frac{l}{2}. 
\end{equation*}
Therefore, to compute the second moments
{\allowdisplaybreaks
\begin{align*}
        &\int_{\Delta_{l,h}(x_2)}x^2 \frac{\dif x\dif y}{|\Delta_{l,h}(x_2)|}\\
        &= \frac{2}{lh}\int_{y=0}^h \int_{x=\frac{2x_2+l}{2h}y-\frac{l}{2}}^{\frac{2x_2-l}{2h}y+ \frac{l}{2}} x^2\dif x\dif y \\
        &= \frac{2}{3lh} \int_{y=0}^h \left( \frac{2x_2-l}{2h}y+\frac{l}{2}\right)^3- \left( \frac{2x_2+l}{2h}y-\frac{l}{2}\right)^3 \dif y\\
        &= \frac{2}{3lh} \int_{0}^h\left[\left(\frac{2x_2-l}{2h}\right)^3 y^3+ 3\left(\frac{2x_2-l}{2h}\right)^2 \frac{l}{2} y^2+ 3\left( \frac{2x_2-l}{2h}\right)\frac{l^2}{4}y+ \frac{l^3}{8} \right.\\
        &\hspace{1.65cm}\left. -\left(\frac{2x_2+l}{2h} \right)^3 y^3+ 3\left( \frac{2x_2+l}{2h}\right)^2 \frac{l}{2}y^2 - 3\left(\frac{2x_2+l}{2h} \right)\frac{l^2}{4} y+ \frac{l^3}{8}\right]\dif y\\
        &= \frac{2}{3lh}\int_{0}^h -\frac{24x_2^2l+2l^3}{8h^3} y^3+ 3\frac{8x_2^2+2l^2}{4h^2} \frac{l}{2} y^2- 3\frac{2l}{2h}\frac{l^2}{4}y+ \frac{l^3}{4}\dif y\\
        &= \frac{2}{3lh}\left( -\frac{12 x_2^2 l+ l^3}{4h^3}\frac{h^4}{4}+ 3\frac{4 x_2^2+ l^2}{4h^2}l \frac{h^3}{3}- 3\frac{l^3}{4h}\frac{h^2}{2}+ \frac{l^3}{4}h\right)\\
        &= \frac{2}{3lh} \left( -\frac{(12 x_2^2l+ l^3)h}{16}+ \frac{(4x_2^2+l^2)lh}{4}-\frac{3l^3 h}{8}+\frac{l^3h}{4}\right)
        \\
        &= \frac{2}{3lh}\left( -\frac{3}{4}x_2^2 l h - \frac{l^3h}{16}+ hl x_2^2+ \frac{hl^3}{4}- \frac{3l^3 h}{8}+ \frac{l^3h}{4}\right) \\
        &= \frac{2}{3lh} \frac{lh(-12x_2^2-l^2+16x_2^2+ 4l^2-6l^2+4l^2)}{16}
        = \frac{4x_2^2+ l^2}{24}.
\end{align*}
}
In addition, 
{\allowdisplaybreaks
\begin{align*}
    \int_{\Delta_{l,h}(x_2)}y^2 \frac{\dif x\dif y}{|\Delta_{l,h}(x_2)|}&= \frac{2}{lh}\int_{y=0}^h \int_{x= \frac{2 x_2+l}{2h}y-\frac{l}{2}}^{\frac{2x_2-l}{2h}y+\frac{l}{2}} y^2 \dif x\dif y \\
    &= \frac{2}{lh} \int_{y=0}^h y^2 \left( \frac{2x_2-l}{2h}y+ \frac{l}{2}-\frac{2x_2+l}{2h}y+\frac{l}{2}\right) \dif y\\
    &= \frac{2}{lh}\int_{y=0}^h y^2 \left(-\frac{l}{h}y+ l\right)\dif y\\
    &= \frac{2}{lh} \left( -\frac{l}{h}\frac{h^4}{4}+l\frac{h^3}{3}\right)\\
    &= \frac{2}{lh} \frac{lh^3}{12}= \frac{h^2}{6}.
\end{align*}
}
Finally, 
{\allowdisplaybreaks
\begin{align*}
    &\int_{\Delta_{l,h}(x_2)} xy\frac{\dif x \dif y}{|\Delta_{l,h}(x_2)|}\\
    &= \frac{2}{lh} \int_{y=0}^h \int_{x= \frac{2x_2+l}{2h}y-\frac{l}{2}}^{\frac{2x_2-l}{2h}y+\frac{l}{2}} xy\dif x\dif y\\
    &= \frac{1}{lh}\int_{y=0}^h y\left( \left(\frac{2x_2-l}{2h}y+\frac{l}{2}\right)^2-\left(\frac{2x_2+l}{2h}y-\frac{l}{2}\right)^2\right)\dif y\\
    &= \frac{1}{lh} \int_{y=0}^h y\left(\frac{2x_2-l}{2h}y+\frac{l}{2}+ \frac{2x_2+l}{2h}y-\frac{l}{2} \right)\left(\frac{2x_2-l}{2h}y+\frac{l}{2} -\frac{2x_2 +l}{2h}y+\frac{l}{2}\right) \dif y\\
    &= \frac{1}{lh}\int_{y=0}^h y\left( \frac{2x_2}{h}y\right) \left( -\frac{l}{h}y+l\right)\dif y\\
    &= \frac{1}{lh}\int_0^h -\frac{2lx_2}{h^2}y^3+ \frac{2lx_2}{h}y^2 \dif y \\
    &= \frac{1}{lh}\left( -\frac{2lx_2}{h^2} \frac{h^4}{4}+ \frac{2lx_2}{h} \frac{h^3}{3}\right) \\
    &= \frac{1}{lh} \frac{lx_2 h^2}{6}= \frac{x_2h}{6}.
\end{align*}
}

Putting everything together, 
\begin{equation*}
    \begin{aligned}
        \mathrm{Cov}(\Delta_{l,h}(x_2))_{11}&\defeq \int_{\Delta_{l,h}(x_2)} x^2 \frac{\dif x\dif y}{|\Delta_{l,h}|}- \left(\int_{\Delta_{l,h}(x_2)} x\frac{\dif x\dif y}{|\Delta_{l,h}|}\right)^2
        = \frac{4x_2^2+ l^2}{24}- \frac{x_2^2}{9}
        = \frac{x_2^2}{18}+ \frac{l^2}{24}, 
    \end{aligned}
\end{equation*}
and
\begin{equation*}
\begin{aligned}
    \mathrm{Cov}(\Delta_{l,h}(x_2))_{22}&\defeq \int_{\Delta_{l,h}(x_2)} y^2\frac{\dif x\dif y}{|\Delta_{l,h}|}- \left( \int_{\Delta_{l,h}(x_2)}y \frac{\dif x\dif y}{|\Delta_{l,h}|}\right)^2
    = \frac{h^2}{6}- \frac{h^2}{9}= \frac{h^2}{18},
\end{aligned}
\end{equation*}
and 
\begin{equation*}
\begin{aligned}
    \mathrm{Cov}(\Delta_{l,h}(x_2))_{12}= \mathrm{Cov}(\Delta_{l,h}(x_2))_{21}&\defeq \int_{\Delta_{l,h}}x y\frac{\dif x\dif y}{|\Delta_{l,h}|}- \int_{\Delta_{l,h}(x_2)} x\frac{\dif x\dif y}{|\Delta_{l,h}|}\int_{\Delta_{l,h}(x_2)}y\frac{\dif x\dif y}{|\Delta_{l,h}|}
    \\
    &= \frac{x_2h}{6}- \frac{x_2}{3}\frac{h}{3}= \frac{x_2 h}{18}.
\end{aligned}
\end{equation*}
\end{proof}

\begin{corollary}
\label{TriangleC}
    For a triangle $\Delta\subset \R^2$, $\mathcal{C}(\Delta)= 108$. 
\end{corollary}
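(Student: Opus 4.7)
The plan is to invoke affine invariance of $\calC$ (Corollary \ref{CaffineInvariance}) to reduce to a convenient single triangle and then apply the explicit covariance computation of Lemma \ref{iso_sliding_lem1}. Any two triangles in $\R^2$ are related by an affine transformation, since an affine transformation is uniquely determined by its action on three affinely independent points. Hence $\calC(\Delta)=\calC(\Delta_{l,h}(x_2))$ for any choice of $l>0$, $h\neq 0$, $x_2\in\R$, with $\Delta_{l,h}(x_2)$ as in \eqref{Deltalh_def}.

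Next I would compute the two ingredients of $\calC$ for $\Delta_{l,h}(x_2)$. The volume is immediate: $|\Delta_{l,h}(x_2)|=lh/2$, so $|\Delta_{l,h}(x_2)|^2 = l^2h^2/4$. By Lemma \ref{iso_sliding_lem1},
\[
\det\mathrm{Cov}(\Delta_{l,h}(x_2))
=\frac{1}{18^2}\det\begin{bmatrix} x_2^2+\frac{3l^2}{4} & x_2 h \\ x_2 h & h^2 \end{bmatrix}
=\frac{1}{324}\!\left(\Big(x_2^2+\tfrac{3l^2}{4}\Big)h^2 - x_2^2 h^2\right)
=\frac{l^2h^2}{432}.
\]
Note in particular that the sliding parameter $x_2$ drops out of the determinant, as it must, given the affine invariance.

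Finally, by \eqref{CLKEq},
\[
\calC(\Delta)=\calC(\Delta_{l,h}(x_2))=\frac{|\Delta_{l,h}(x_2)|^2}{\det\mathrm{Cov}(\Delta_{l,h}(x_2))}=\frac{l^2h^2/4}{l^2h^2/432}=\frac{432}{4}=108.
\]
There is no genuine obstacle here: the only potentially delicate point is confirming that Lemma \ref{iso_sliding_lem1} already encodes a sufficiently general triangle (up to affine equivalence), which it does since the family $\{\Delta_{l,h}(x_2)\}$ covers every affine class of triangles.
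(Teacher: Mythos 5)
Your proof is correct and takes essentially the same route as the paper: reduce via affine invariance of $\calC$ to the normalized triangle $\Delta_{l,h}(x_2)$, then plug in the covariance matrix from Lemma \ref{iso_sliding_lem1} and the area $lh/2$. The only cosmetic difference is that the paper reduces by ``translation and rotation'' to a specific $\Delta_{l,h}(x_2)$ matching the given triangle, whereas you invoke the fact that all triangles are affinely equivalent to take any $\Delta_{l,h}(x_2)$; both are valid and lead to the identical cancellation of $l,h,x_2$.
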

\begin{proof}
    By Corollary \ref{CaffineInvariance}, $\calC$ is invariant under affine transformations, so after translation and rotation 
\begin{equation*}
    \calC(\Delta)= \calC(\Delta_{l,h}(x_2)),
\end{equation*}
for appropriate $l,h>0$ and $x_2\in \R$. By Lemma \ref{iso_sliding_lem1}, 
\begin{equation*}
    \det\mathrm{Cov}(\Delta_{l,h}(x_2))= \frac{x_2^2 h^2 + \frac{3}{4}l^2h^2- x_2^2 h^2}{18^2} = \frac{l^2h^2}{432}. 
\end{equation*}
Since $|\Delta_{l,h}(x_2)|= lh/2$, 
\begin{equation*}
    \calC(\Delta_{l,h}(x_2))= \frac{|\Delta_{l,h}(x_2)|^2}{\det\mathrm{Cov}(\Delta_{l,h}(x_2))}= \frac{\frac{l^2 h^2}{4}}{\frac{l^2h^2}{432}} = 108. 
\end{equation*}
\end{proof}

\subsection{Isotropic constant of a sliding polytope}
\label{isoPolytopeSlidingSection}
For $P(x_2)\in\mathcal{P}$ and $\Delta(x_2), \widehat{P}$ as in \eqref{Deltax2Def} and \eqref{hatP} so that $P(x_2)=\widehat{P}\cup \Delta(x_2)$, let $T$ be an affine transformation so that $T\widehat{P}$ is in isotropic position. There is unique rotation $A\in O(2)$ such that $A(T(P(x_2))\in \mathcal{P}$, meaning that $A(T(x_1,y_1))$ and $A(T(x_3, y_3))$ have the same positive $y$-coordinate and $A(T(x_2,y_2))$ lies above them. Since $A(T\widehat{P})$ is still in isotropic position, it suffices to work with 
\begin{equation*}
    \widetilde{\mathcal{P}}\defeq \{P(x_2)\in\mathcal{P}: \widehat{P} \text{ in isotropic position}\}.
\end{equation*}

    \begin{figure}[H]
        \centering
         \begin{tikzpicture}[scale=1]
\draw[thick] (0, -1.4) -- (-.91, -.91) -- (-1.4,0) -- (0,1.4) -- (.91, .91) -- (1.4, 0) -- (.84,-1.26) -- cycle;
\draw[thick, dotted] (1.4,0) -- (0,-1.4);
\draw[->] (-2.3,0) -- (2.3,0) node[anchor=north west] {};
\draw[->] (0,-2) -- (0,2.5) node[anchor=north west] {};
\coordinate (O) at (0,0);
\coordinate (p1) at (1,1);
\coordinate (p2) at (-.5,1.5);
\coordinate (p3) at (-1,1);
\node[above right=1pt] at (1.4, 0){\footnotesize$(x_1, y_1)$};
\node[below right=.5pt] at (.84, -1.26){\footnotesize $(x_2, y_2)$};
\node[below=.5pt] at (0, -1.4){\footnotesize $(x_3, y_3)$};
\node[below left=.5pt] at (-.91, -.91){\footnotesize$(x_4, y_4)$};
\node[above left=.5pt] at (-1.4, 0){\footnotesize$(x_5, y_5)$};
\node[above=.5pt] at (0, 1.4){\footnotesize$(x_6, y_6)$};
\node[above right=.5pt] at (.91, .91){\footnotesize$(x_7, y_7)$};
\node[below left=.5pt] at (0, 0){\footnotesize$0$};
\end{tikzpicture}
\hspace{2cm}
\begin{tikzpicture}[scale=1]
\draw[thick] (-1,1) -- (-1.3, 0) -- (-1,-1) -- (1,-1) -- (1.3, 0) -- (1, 1) -- (-.3,1.5) -- cycle;
\draw[thick, dotted] (1,1) -- (-1,1);
\draw[->] (-2.3,0) -- (2.3,0) node[anchor=north west] {};
\draw[->] (0,-2) -- (0,2.5) node[anchor=north west] {};
\coordinate (O) at (0,0);
\coordinate (p1) at (1,1);
\coordinate (p2) at (-.5,1.5);
\coordinate (p3) at (-1,1);
\node[above right=1pt] at (1, 1){\footnotesize $(x_1, y_1)$};
\node[above=.5pt] at (-.3, 1.5){\footnotesize $(x_2, y_2)$};
\node[above left=.5pt] at (-1, 1){\footnotesize $(x_3, y_3)$};
\node[above left=.5pt] at (-1.3, 0){\footnotesize $(x_4, y_4)$};
\node[below left=.5pt] at (-1, -1){\footnotesize $(x_5, y_5)$};
\node[below right=.5pt] at (1, -1){\footnotesize $(x_6, y_6)$};
\node[above right=.5pt] at (1.3, 0){\footnotesize $(x_7, y_7)$};
\node[below left=.5pt] at (0, 0){\footnotesize $0$};
\end{tikzpicture}
\caption{\small $\widehat{P}$ remains isotropic after roation.}
\label{fig:IsoRotation}
\end{figure}
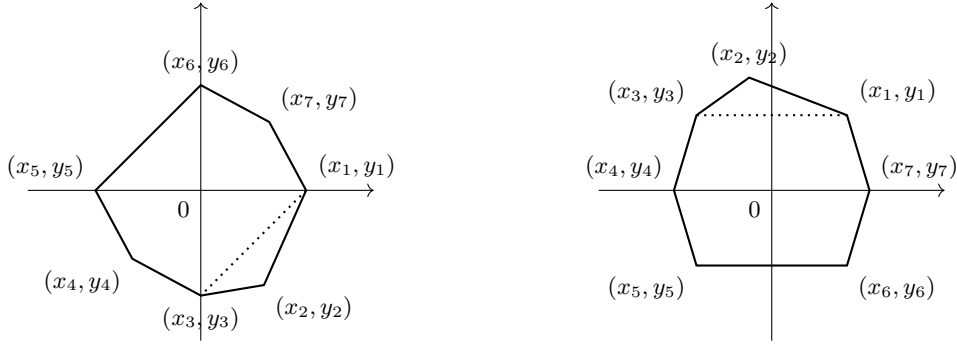

By Corollary \ref{2dCFormula}, for $P\in\widetilde{\mathcal{P}}$,
\begin{equation}
\label{iso_sliding_eq5}
\begin{aligned}
    \frac{|P|^4}{\mathcal{C}(P(x_2))}&= \calC(\widehat{P})^{-1}
    + \calC(\widehat{P})^{-\frac12}\left({|\Delta|}\mathrm{tr}\,\Cov(\Delta(x_2)) +\frac{|\Delta|}{|P|} \mathrm{tr}\, b(\Delta(x_2))b(\Delta(x_2))^T\right)\\
    &+ \frac{|\Delta|^4}{\mathcal{C}(\Delta(x_2))} + \frac{|\Delta|^2}{|P|} b^N(\Delta(x_2))^T \mathrm{Cov}(\Delta(x_2)) b^N(\Delta(x_2)).
\end{aligned}
\end{equation}
Thus, to prove Lemma \ref{Cconvexity} it is enough to compute the components of \eqref{iso_sliding_eq5}. In the notation of \S \ref{Ctriangle}, 
by Lemma \ref{iso_sliding_lem1},
\begin{equation}
\label{trEq1}
    \mathrm{tr}\,\mathrm{Cov}(\Delta_{l,h}(x_2)+(x_0,y_0))= 
    \mathrm{tr}\,\mathrm{Cov}(\Delta_{l,h}(x_2))= \frac{x_2^2}{18}+ \frac{h^2}{18}+ \frac{l^2}{24}. 
\end{equation}
Additionally, 
\begin{equation}
\label{bbTDelta}
\begin{aligned}
       b(\Delta_{l,h}(x_2)+(x_0,y_0)) b(\Delta_{l,h}(x_2)+ (x_0,y_0))^T &= 
       \begin{bmatrix}
        \frac{x_2}{3} + x_0 \\ 
        \frac{h}{3} + y_0
       \end{bmatrix}
       \begin{bmatrix}
           \frac{x_2}{3}+ x_0 & \frac{h}{3}+ y_0
       \end{bmatrix} 
       \\
       &= \begin{bmatrix}
           \left( \frac{x_2}{3}+ x_0\right)^2 & \left( \frac{x_2}{3}+ x_0\right) \left( \frac{h}{3}+ y_0\right) \\
           \left( \frac{x_2}{3}+ x_0\right) \left( \frac{h}{3}+ y_0\right) &
           \left( \frac{h}{3}+y_0\right)^2
       \end{bmatrix}, 
\end{aligned}
\end{equation}
hence
    \begin{equation}
    \label{trEq2}
        \mathrm{tr}\left( b(\Delta_{l,h}(x_2)+(x_0,y_0)) b(\Delta_{l,h}(x_2)+ (x_0,y_0))^T \right)=  \left(\frac{x_2}{3}+ x_0\right)^2+ \left(\frac{h}{3}+ y_0\right)^2. 
    \end{equation}
Finally, 
$b^N(\Delta_{l,h}(x_2)+ (x_0,y_0))= (y_0+\frac{h}{3}, -x_0-\frac{x_2}{3})$ thus, 
\begin{equation}
\label{bNEq}
\begin{aligned}
        &b^N(\Delta_{l,h}(x_2)+(x_0,y_0))^T \mathrm{Cov}(\Delta_{l,h}(x_2)+(x_0,y_0)) b^N(\Delta_{l,h}(x_2)+ (x_0,y_0)) 
        \\
        &= \frac{1}{18}\begin{bmatrix} y_0+ \frac{h}{3} & -x_0-\frac{x_2}{3}\end{bmatrix}
        \begin{bmatrix} x_2^2+ \frac{3l^2}{4} & x_2 h \\ x_2 h & h^2\end{bmatrix}
        \begin{bmatrix} y_0+ \frac{h}{3} \\ -x_0-\frac{x_2}{3}\end{bmatrix}
        \\ 
        &=\frac{1}{18}\begin{bmatrix} y_0+ \frac{h}{3} & -x_0-\frac{x_2}{3}\end{bmatrix}
        \begin{bmatrix}
        x_2^2 y_0+ x_2^2 \frac{h}{3}+ 3l^2\frac{y_0}{4}+ l^2 \frac{h}{4}- x_2 hx_0- x_2^2 \frac{h}{3}\\
        x_2hy_0+ x_2 \frac{h^2}{3}- x_0 h^2- x_2 \frac{h^2}{3}
        \end{bmatrix}
        \\
        &=\frac{1}{18}\begin{bmatrix} y_0+\frac{h}{3} & -x_0-\frac{x_2}{3}\end{bmatrix}
        \begin{bmatrix}
        x_2^2 y_0 + 3l^2\frac{y_0}{4}+ l^2 \frac{h}{4}- x_2 hx_0\\
        x_2 hy_0- x_0 h^2
        \end{bmatrix}
        \\
        &=\frac{1}{18} \left( x_2^2 y_0^2+ 3l^2\frac{y_0^2}{4} + l^2h\frac{y_0}{4}- x_2 hx_0 y_0+ x_2^2y_0\frac{h}{3}+ l^2y_0\frac{h}{4} + l^2\frac{h^2}{12} - x_2 h^2\frac{x_0}{3} \right.
        \\
        &\left. -x_2 hx_0 y_0+ x_0^2h^2- x_2^2h\frac{y_0}{3}+ x_0 h^2\frac{x_2}{3}\right) 
        \\
        &=\frac{1}{18}\left(y_0^2 x_2^2- 2hx_0 y_0 x_2+ x_0^2h^2 + \frac{3l^2 y_0^2}{4}+ \frac{l^2h y_0}{2}+ \frac{l^2h^2}{12}\right).
\end{aligned}
\end{equation}

\begin{proof}[Proof of Lemma \ref{Cconvexity}]
    By the affine invariance of $\calC$ (Corollary \ref{CaffineInvariance}) there is no loss in taking $P\in\widetilde{\mathcal{P}}$, i.e., assuming that $\widehat{P}$ is in isotropic position, as discussed at the beginning of this section.  

    There are $h,l>0$ and $(x_0, y_0)\in \R^2$ so that
    \begin{equation*}
        \Delta(x_2)= \Delta_{l,h}(x_2)+ (x_0,y_0), \quad \text{ for all } x_2\in\R.
    \end{equation*}
    By \eqref{iso_sliding_eq5}--\eqref{bNEq} and Corollary \ref{TriangleC},
    \begin{equation*}
        \begin{aligned}
            \frac{|P|^4}{\calC(P(x_2))}&= \calC(\widehat{P})^{-\frac2n}+ \calC(\widehat{P})^{-\frac1n}
            \frac{lh}{2}\left(\frac{x_2^2}{18}+ \frac{h^2}{18} +\frac{l^2}{24}\right) 
            \\
            &+ \calC(\widehat{P})^{-\frac1n} \frac{lh}{2|P|} \left(\left( \frac{x_2}{3}+ x_0\right)^2 + \left(\frac{h}{3}+ y_0 \right)^2\right)
            + \frac{1}{108} \frac{l^4h^4}{16} 
            \\
            &+ \frac{l^2h^2}{4|P|} \frac{1}{18}\left(y_0^2 x_2^2- 2hx_0 y_0 x_2+ x_0^2h^2 + \frac{3l^2 y_0^2}{4}+ \frac{l^2h y_0}{2}+ \frac{l^2h^2}{12}\right) \\
            &= \left(\calC(\widehat{P})^{-\frac1n}\frac{lh}{36} + \calC(\widehat{P})^{-\frac1n} \frac{lh}{18 |P|}+ \frac{l^2h^2 y_0^2}{72|P|}\right) x_2^2 
            + \left( \calC(\widehat{P})^{-\frac1n} \frac{lh x_0}{3|P|}- \frac{l^2h^3 x_0 y_0}{36|P|}\right) x_2\\
            &+ \calC(\widehat{P})^{-\frac2n} + \calC(\widehat{P})^{-\frac1n}\frac{lh}{2}\left( \frac{h^2}{18}+ \frac{l^2}{24}\right) +  \calC(\widehat{P})^{-\frac1n} \frac{lh}{2|P|} \left( x_0^2 + \left(\frac{h}{3}+ y_0 \right)^2\right)
            \\
            &+ \frac{1}{108} \frac{l^4h^4}{16} 
            + \frac{l^2h^2}{4|P|} \frac{1}{18}\left(x_0^2h^2 + \frac{3l^2 y_0^2}{4}+ \frac{l^2h y_0}{2}+ \frac{l^2h^2}{12}\right),
        \end{aligned}
    \end{equation*}
    is a convex quadratic polynomial in $x_2$. 
\end{proof}

Similarly, let
\begin{equation*}
    \widetilde{\mathcal{S}}\defeq \{S\in\mathcal{S}: \widehat{S} \text{ in isotropic position}.\}
\end{equation*}

\begin{proof}[Proof of Lemma \ref{CsymConvexity}]
    Let $S\in \mathcal{S}$, $\widehat{S}$ as in \eqref{hatS}, and $\Delta(x_2)$ as in \eqref{Deltax2Def} so that $S(x_2)= \widehat{S}\cup \Delta(x_2)\cup (-\Delta(x_2))$. If $T$ is an affine transformation so that $T\widehat{S}$ is in isotropic position, then after rotation (Figure \ref{fig:IsoRotation}), we may assume $TS\in \mathcal{S}$. By Corollary \ref{CaffineInvariance}, $\calC(S)= \calC(TS)$, hence we may assume $S\in \widetilde{\mathcal{S}}$. 

    Note that Corollary \ref{2dCFormula}, does not require either of the bodies to be connected. Consequently, we may take $K= S(x_2), L= \widehat{S}$ and $C(x_2)= \Delta(x_2)\cup (-\Delta(x_2))$. Note $b(C)=0$, hence $b^N(C)=0$ and 
    \begin{equation}
    \label{CsymEq1}
        \frac{|S|}{\calC(S(x_2))}= \calC(\widehat{S})^{-\frac2n}+ \calC(\widehat{S})^{-\frac1n} 2|\Delta| \mathrm{tr}\,\mathrm{Cov}(C(x_2)) + \frac{16|\Delta|^4}{\mathcal{C}(C(x_2))}. 
    \end{equation}
    
    Let $(x_0, y_0)\in \R^2$ and $l,h>0$ so that
    \begin{equation*}
        \Delta(x_2)= \Delta_{l,h}(x_2)+ (x_0, y_0). 
    \end{equation*}
    By Lemmas \ref{iso2_lem1} and \ref{iso_sliding_lem1}, and \eqref{bbTDelta},
    \begin{equation*}
        \begin{aligned}
            \mathrm{Cov}(C(x_2))&= \mathrm{Cov}(\Delta(x_2))+ b(\Delta(x_2)) b(\Delta(x_2))^T \\
            &= \mathrm{Cov}(\Delta_{l,h}(x_2))+ b(\Delta_{l,h}(x_2)+(x_0,y_0)) b(\Delta_{l,h}(x_2)+ (x_0,y_0))^T  \\
            &= \frac{1}{18}\begin{bmatrix}
        x_2^2+ \frac{3l^2}{4} & x_2 h\\
        x_2 h & h^2
        \end{bmatrix}
        + \begin{bmatrix}
           \left( \frac{x_2}{3}+ x_0\right)^2 & \left( \frac{x_2}{3}+ x_0\right) \left( \frac{h}{3}+ y_0\right) \\
           \left( \frac{x_2}{3}+ x_0\right) \left( \frac{h}{3}+ y_0\right) &
           \left( \frac{h}{3}+y_0\right)^2
       \end{bmatrix}
       \\
       &= \begin{bmatrix}
           \frac{x_2^2}{6}+ \frac{2x_0}{3}x_2+ \frac{l^2}{24}+ x_0^2 & \big(\frac{h}{6}+\frac{y_0}{3}\big)x_2 + x_0y_0 + \frac{x_0h}{3} \\
           \big(\frac{h}{6}+\frac{y_0}{3}\big)x_2 + x_0y_0 + \frac{x_0h}{3} &  \frac{h^2}{6}+ \frac{2h}{3}y_0+ y_0^2
       \end{bmatrix}
        \end{aligned}
    \end{equation*}
    because $b(-\Delta(x_2))= -b(\Delta(x_2))$ and $\mathrm{Cov}(-\Delta(x_2))= \mathrm{Cov}(\Delta(x_2))$. Thus, 
    \begin{equation}
    \label{CsymEq2}
        \mathrm{tr}\,\mathrm{Cov}(C(x_2))= \frac{x_2^2}{6}+ \frac{2x_0}{3}x_2+ \frac{l^2}{24}+ x_0^2 +  \frac{h^2}{6}+ \frac{2h}{3}y_0+ y_0^2, 
    \end{equation}
    and 
    \begin{equation}
    \label{CsymEq3}
        \frac{1}{\calC(C(x_2))}= \frac{\det\mathrm{Cov}(C(x_2))}{|C(x_2)|^2}= \frac{2y_0^2}{9|\Delta|^2} x_2^2+ \text{ lower order terms}.
    \end{equation}
    because $|C(x_2)|= 2|\Delta|$. The claim follows from the combination of \eqref{CsymEq1}, \eqref{CsymEq2}, and \eqref{CsymEq3}. 
\end{proof}

\subsection{Proofs of Theorems \ref{plane_slicing_nonsym}--\ref{plane_slicing_sym}
}
\label{isoFinishingSection}
\begin{proof}[Proof of Proposition \ref{nonsym_slicing_sliding_prop}]
                By Lemma \ref{CaffineInvariance}, $\calC$ is affine invariant, so we may assume that $P(x_2)\in\mathcal{P}$. Let $\xi_\ell, \xi_r\in \R$, be such that 
    $(x_3, y_3)\in [(\xi_\ell, y_2), (x_4, y_4)]$ and $(x_1, y_1)\in [(\xi_r, y_2), (y_m, y_m)]$. 
    Let also $\lambda\in (0,1)$ such that $x_2= (1-\lambda)\xi_\ell+ \lambda \xi_r$.
    By Lemma \ref{Cconvexity}, 
    \begin{equation*}
        \frac{1}{|\calC(P(x_2))|}< \frac{1-\lambda}{|\calC(P(\xi_\ell))|}+ \frac{\lambda}{|\calC(P(\xi_r))|}\leq \max\left\{\frac{1}{|\calC(P(\xi_\ell))|}, \frac{1}{|\calC(P(\xi_r))|}\right\}, 
    \end{equation*}
    or $|\calC(P(x_2))|> \min\{|\calC(P(\xi_\ell))|, |\calC(P(\xi_r))|\}$.
    To complete the proof, it is enough to note that $P(\xi_\ell)$ and $P(\xi_r)$ are polytopes with $m-1$ vertices.
\end{proof}

\begin{proof}[Proof of Proposition \ref{sym_slicing_sliding_prop}]
        By Lemma \ref{CaffineInvariance}, $\calC$ is affine invariant, so we can assume that $S=S(x_2)\in\mathcal{S}$. Let $\xi_\ell, \xi_r\in \R$, be such that 
    $(x_3, y_3)\in [(\xi_\ell, y_2), (x_4, y_4)]$ and $(x_1, y_1)\in [(\xi_r, y_2), (y_m, y_m)]$. 
    Let also $\lambda\in (0,1)$ such that $x_2= (1-\lambda)\xi_\ell+ \lambda \xi_r$.
    By Lemma \ref{CsymConvexity}, 
    \begin{equation*}
        \frac{1}{|\calC(S(x_2))|}< \frac{1-\lambda}{|\calC(S(\xi_\ell))|}+ \frac{\lambda}{|\calC(S(\xi_r))|}\leq \max\left\{\frac{1}{|\calC(S(\xi_\ell))|}, \frac{1}{|\calC(S(\xi_r))|}\right\}, 
    \end{equation*}
    or $|\calC(S(x_2))|> \min\{|\calC(S(\xi_\ell))|, |\calC(S(\xi_r))|\}$. To complete the proof, it is enough to note that $S(\xi_\ell)$ and $S(\xi_r)$ are symmetric polytopes with $2m-2$ vertices.
\end{proof}

\begin{proof}[Proof of Theorem \ref{plane_slicing_nonsym}]
    By continuity of $\calC$ under the Hausdorff topology (Corollary \ref{CHcontinuity}), it suffices to work with polytopes. 
Let $P$ be a polytope with $m\geq 4$ vertices. Write $P_m = P$. By repeated applications of Proposition \ref{nonsym_slicing_sliding_prop}, there exist polytopes $\{P_{i}\}_{i=3}^{m-1}$ satisfying
\begin{equation*}
    \calC(P_m)> \calC(P_{m-1})> \cdots> \calC(P_3).
\end{equation*}
Since $P_3$ is a triangle, the claim follows by Corollary \ref{TriangleC}.
\end{proof}

\begin{proof}[Proof of Theorem \ref{plane_slicing_sym}]
    By continuity of $\calC$ under the Hausdorff topology (Corollary \ref{CHcontinuity}), it suffices to work with symmetric polytopes. 
        Let $S$ be a symmetric polytope with $2m\geq 6$ vertices. Write $S_{2m} = S$. By repeated applications of Proposition \ref{sym_slicing_sliding_prop}, there exist symmetric polytopes $S_{2m-2}, \ldots, S_{6}, S_4$ with $2m-2, \ldots, 6, 4$ vertices respectively such that 
    \begin{equation*}
        \calC(S_{2m})> \calC(S_{2m-2})> \cdots> \calC(S_6) > \calC(S_4). 
    \end{equation*}
    Since $S_4$ is a symmetric polytope with four vertices, it lies in the $GL(2,\R)$ orbit of $[-1,1]^2$, hence $\calC(S_4)= \calC([-1,1]^2)$. Finally, 
    \begin{equation*}
        \mathrm{Cov}([-1,1]^2)= \begin{pmatrix}
            \int_{[-1,1]^2} x^2 \frac{dxdy}{4} & \int_{[-1,1]^2}xy \frac{dxdy}{4} \\
            \int_{[-1,1]^2} xy \frac{dx dy}{4} & \int_{[-1,1]^2} y^2 \frac{dy}{4}
        \end{pmatrix}
        = 
        \begin{pmatrix}
            \frac13 & 0 \\
            0 & \frac13, 
        \end{pmatrix}
    \end{equation*}
    hence
    \begin{equation*}
        \calC([-1,1]^2)= \frac{|[-1,1]^2|^2}{\det\mathrm{Cov}([-1,1]^2)}= \frac{16}{\frac19}= 144.
    \end{equation*}
\end{proof}

\appendix
\section{Application of Borell--Brascamp--Lieb}
\label{appendixA}

The next lemma loosely falls under the umbrella of results 
that give convexity of a `marginal type function' of  a convex function
(the protoype being the minimum principle and Pr\'ekopa--Leindler). For
convenience we include
the details in this appendix.

\begin{lemma}
\label{-1concaveLemma}
    Let $m\in\mathbb{N}$. For $f: \R\times \R^m\to (0,\infty)$ a positive and convex function, 
    \begin{equation*}
        x_2\mapsto \bigg({\int_{\R}\frac{\dif z}{f(z,x_2)^2}}\bigg)^{-1}
    \end{equation*}
    is convex function on $\R^m\to (0,\infty)$. 
\end{lemma}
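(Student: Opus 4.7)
The plan is to deduce this lemma directly from the Borell--Brascamp--Lieb inequality (Lemma \ref{BrascampLiebineq}), with the key being a careful choice of $p$-power mean exponents. The heuristic is that $(-1)$-concavity (i.e.\ convexity of the reciprocal) of the integrand $1/f^2$ on $\R \times \R^m$, after integrating out the one-dimensional $z$ variable, propagates to $(-1)$-concavity of $G(x_2) := \int_{\R} f(z,x_2)^{-2}\,dz$, which is precisely the statement that $1/G$ is convex.

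First I would extract, from pointwise convexity of $f$, the relevant $p$-power mean inequality satisfied by $1/f^2$. Fix $\lambda \in (0,1)$, points $a,b \in \R^m$, and $z_0,z_1 \in \R$. Convexity of $f$ on $\R \times \R^m$ gives
\begin{equation*}
f\bigl((1-\lambda)z_0 + \lambda z_1,\, (1-\lambda)a + \lambda b\bigr) \le (1-\lambda)f(z_0,a) + \lambda f(z_1,b),
\end{equation*}
and squaring and inverting both sides yields
\begin{equation*}
\frac{1}{f\bigl((1-\lambda)z_0 + \lambda z_1,\, (1-\lambda)a + \lambda b\bigr)^2} \ge \bigl[(1-\lambda)f(z_0,a) + \lambda f(z_1,b)\bigr]^{-2} = M_{-1/2}^{\lambda}\!\left(\tfrac{1}{f(z_0,a)^2},\, \tfrac{1}{f(z_1,b)^2}\right),
\end{equation*}
where $M_p^\lambda(\alpha,\beta) = ((1-\lambda)\alpha^p + \lambda\beta^p)^{1/p}$. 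Thus $1/f^2$ is $(-\tfrac12)$-concave along this parametrized direction.

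Second I would apply Borell--Brascamp--Lieb to the functions $u(z) := f(z,a)^{-2}$, $v(z) := f(z,b)^{-2}$, $w(z) := f(z,(1-\lambda)a+\lambda b)^{-2}$ on $\R$, with $n = 1$ and $p = -\tfrac{1}{2}$ (which satisfies the hypothesis $p \ge -1/n = -1$). The displayed inequality verifies the BBL hypothesis $w((1-\lambda)z_0+\lambda z_1) \ge M_p^\lambda(u(z_0), v(z_1))$, so its conclusion reads
\begin{equation*}
\int_{\R} w(z)\,dz \ge M_{q}^{\lambda}\!\left(\int_{\R} u,\; \int_{\R} v\right), \qquad q := \frac{p}{1+np} = \frac{-1/2}{1 - 1/2} = -1.
\end{equation*}
Unwinding this is exactly
\begin{equation*}
G\bigl((1-\lambda)a + \lambda b\bigr) \;\ge\; \left[\frac{1-\lambda}{G(a)} + \frac{\lambda}{G(b)}\right]^{-1},
\end{equation*}
i.e., $1/G$ satisfies the midpoint-type convexity inequality. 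Combined with the trivial continuity of $G$ in $x_2$ (locally uniform convergence of the convex integrands $f$ on compacts), this gives convexity of $1/G$.

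The only delicate point, and the ``key step,'' is the exponent bookkeeping: convexity of $f$ naturally produces the exponent $p = -\tfrac{1}{2}$ for the function $1/f^2$, and it is exactly this value that, under BBL on a one-dimensional fiber, lands at $q = -1$. Had we worked with $1/f$ in place of $1/f^2$, we would begin at $p = -1$ and BBL would degenerate to $q = -\infty$, yielding only quasi-concavity of the marginal. So the square in the definition is essential; once that is noticed, the proof is a short two-line application of BBL.
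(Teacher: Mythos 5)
Your proof is correct and takes essentially the same route as the paper: you observe that $1/f^2$ is $(-1/2)$-concave because $f$ is convex, then apply Borell--Brascamp--Lieb with $n=1$ and $p=-1/2$ to land at $q=-1$, i.e.\ $(-1)$-concavity of the marginal, which is convexity of its reciprocal. The paper packages the BBL application as Corollary \ref{BLcorollary}, but the computation and the crucial exponent bookkeeping ($-1/2 \mapsto -1$) are identical; note also that since you established the inequality for arbitrary $\lambda\in(0,1)$, the closing appeal to continuity is superfluous.
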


Lemma \ref{-1concaveLemma} is a corollary of the classical Borell--Brascamp--Lieb inequality,
Lemma \ref{BrascampLiebineq} below \cite[Theorem 3.2]{BrascampLieb76b} \cite[Theorem 4.3]{Borell75}, itself a generalization of the Pr\'ekopa--Leinder
inequality for log-concave functions. 
Lemma \ref{BrascampLiebineq} assumes a generalized $q$-concavity on three functions
and concludes $\frac{q}{nq+1}$-concavity on their integrals. 
We now define this notion of concavity (that generalizes log-concavity, the case $q=0$).

For $\lambda\in[0,1], \, q\in [-\infty,\infty],\, a,b>0$, set
\begin{equation*}
    \mathcal{A}_{q}(\lambda, a,b)\defeq \begin{cases}
    \left( (1-\lambda)a^q+ \lambda b^q \right)^{1/q}, & q\in \R\setminus\{0\}, \\
    a^{1-\lambda}b^\lambda, & q=0, \\
    \max\{a,b\}, & q=\infty, \\
    \min\{a,b\}, & q=-\infty. 
    \end{cases}
\end{equation*}
A non-negative function $f$ on $\R^n$ is \textit{$q$-concave} \cite[Definition 1.1]{Borell75} if
\begin{equation*}
    f((1-\lambda)x+ \lambda y)\geq \mathcal{A}_q(\lambda, f(x), f(y)), \quad \text{ for all } x,y\in \R^n, \lambda\in[0,1].
\end{equation*}
For $s<r$, it follows from the convexity of $x^a$ for $a\in (1,\infty)$ that if $f$ is $r$-concave then it is also $s$-concave. 

Note that Lemma \ref{-1concaveLemma} states that a certain function is $-1$-concave. Thus,
it fits naturally to the Borell--Brascamp--Lieb framework \cite[Theorem 3.3]{BrascampLieb76b}.

\begin{lemma}[Borell--Brascamp--Lieb]
\label{BrascampLiebineq}
   Let $n\in\mathbb{N}, -1/n\leq q\leq \infty$ and $\lambda\in(0,1)$. If $f,g$ and $h$ are non-negative measurable functions on $\R^n$ satisfying
   \begin{equation*}
       h((1-\lambda)x+ \lambda y)\geq \mathcal{A}_q(\lambda, f(x), g(y)), 
   \end{equation*}
   then
   \begin{equation*}
       \int_{\R^n} h\geq \mathcal{A}_{\frac{q}{nq+1}}\left( \lambda, \int_{\R^n}f, \int_{\R^n} g\right),
   \end{equation*}
   with the convention $\frac{q}{nq+1}=-\infty$ when $q=-1/n$. 
\end{lemma}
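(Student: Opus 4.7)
The plan is to reduce to the one-dimensional case by induction on $n$, with the base case handled via a one-dimensional monotone-transport argument; the induction step is a Fubini-type computation that tracks how the exponent degenerates.

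For the base case $n=1$, I would assume (without loss) $A := \int f > 0$ and $B := \int g > 0$ and define the inverse distribution functions $u, v : [0,1] \to \R$ by $\int_{-\infty}^{u(s)} f = sA$ and $\int_{-\infty}^{v(s)} g = sB$, so that $f(u(s))u'(s) = A$ and $g(v(s))v'(s) = B$ almost everywhere. Setting $z(s) = (1-\lambda)u(s) + \lambda v(s)$, which is strictly increasing, a change of variables together with the hypothesis yields
\begin{equation*}
\int h \ge \int_0^1 h(z(s))\, z'(s)\, ds \ge \int_0^1 \mathcal{A}_q\bigl(\lambda, f(u(s)), g(v(s))\bigr) \cdot \bigl[(1-\lambda)u'(s) + \lambda v'(s)\bigr]\, ds.
\end{equation*}
A Minkowski-type inequality for power means, valid for $q \ge -1$, then says $\mathcal{A}_q(\lambda, a, b) \cdot \bigl[(1-\lambda)c + \lambda d\bigr] \ge \mathcal{A}_{q/(q+1)}(\lambda, ac, bd)$ (with the convention that the right-hand exponent becomes $-\infty$ when $q = -1$, where the inequality reduces to $\mathcal{A}_{-1}(\lambda, a, b)\bigl[(1-\lambda)c + \lambda d\bigr] \ge \min(ac, bd)$ by a direct elementary check). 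Applied pointwise with $ac = A$, $bd = B$, the integrand is bounded below by $\mathcal{A}_{q/(q+1)}(\lambda, A, B)$, giving the claim since $q/(nq+1) = q/(q+1)$ when $n=1$.

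For the induction step, given $f, g, h$ on $\R^n$ satisfying the hypothesis, I would define the slice integrals $F(t) := \int_{\R^{n-1}} f(x', t)\, dx'$ and similarly $G, H$. Fixing the last coordinates $x_n, y_n$ and setting $z_n := (1-\lambda)x_n + \lambda y_n$, the hypothesis descends to the $(n-1)$-dimensional slices $f(\cdot, x_n), g(\cdot, y_n), h(\cdot, z_n)$ with the \emph{same} parameter $q$, so the induction hypothesis gives
\begin{equation*}
H(z_n) \ge \mathcal{A}_{q/((n-1)q+1)}\bigl(\lambda, F(x_n), G(y_n)\bigr).
\end{equation*}
Applying the one-dimensional case to $F, G, H$ with parameter $q' := q/((n-1)q+1)$ (still $\ge -1$ since $q \ge -1/n$), a short algebraic computation gives $q'/(q'+1) = q/(nq+1)$, and Fubini delivers $\int h \ge \mathcal{A}_{q/(nq+1)}(\lambda, \int f, \int g)$.

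The main obstacle is the Minkowski-type power mean inequality used in the base case: for $q > 0$ it is a one-line consequence of H\"older's inequality, but for $q \in [-1, 0)$ and at the boundary cases $q \in \{-1, 0, \infty\}$ (which must be interpreted as limits) verification requires a separate elementary computation or a continuity argument. A secondary technicality is justifying the monotone-transport construction when $f, g$ are merely measurable rather than smooth and positive; this is standardly remedied by approximating with smooth strictly positive compactly supported densities, proving the inequality there, and passing to the limit by monotone/dominated convergence.
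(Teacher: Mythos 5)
The paper does not prove Lemma \ref{BrascampLiebineq}: it is stated as a classical result and cited directly to Borell \cite[Theorem 4.3]{Borell75} and Brascamp--Lieb \cite[Theorem 3.2]{BrascampLieb76b}, so there is no in-paper argument to compare against. Your proposal is the standard transport-plus-induction proof of Borell--Brascamp--Lieb, and it is correct. In the one-dimensional base case, the power-mean lemma $\mathcal{A}_q(\lambda,a,b)\cdot\mathcal{A}_1(\lambda,c,d)\ge\mathcal{A}_{q/(q+1)}(\lambda,ac,bd)$ for $q\ge -1$ is exactly what makes the monotone-transport change of variables close, and your normalization $ac=\int f$, $bd=\int g$ turns the integrand into a constant as required. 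The induction step is also right: one checks that $q\ge -1/n$ forces $(n-1)q+1>0$, hence $q'=q/((n-1)q+1)\ge -1$ so the one-dimensional case applies to the slice integrals, and the exponent arithmetic $q'/(q'+1)=q/(nq+1)$ gives the stated degradation. The two technical points you flag --- checking the power-mean inequality at the boundary values $q\in\{-1,0,\infty\}$ by taking limits or direct computation, and approximating $f,g$ by smooth strictly positive compactly supported densities so that the inverse distribution functions are absolutely continuous --- are genuine loose ends, but both are routine and handled in the standard references exactly as you describe. So you have a complete and correct proof; it just is not one the paper supplies.
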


This immediately leads to a marginal-type statement \cite[Corollary 3.5]{BrascampLieb76b} \cite[Corollary]{CampGron06}.
\begin{corollary}
\label{BLcorollary}
    If $F$ is a non-negative measurable function on $\R^n\times \R^m$ that is $q$-concave for $q\geq -1/n$, then
    \begin{equation*}
        y\mapsto \int_{\R^n}F(x,y)\dif x, 
    \end{equation*}
    is $\frac{q}{nq+1}$-concave on $\R^m$.
\end{corollary}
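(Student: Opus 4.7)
The plan is to reduce the corollary to a direct slicewise application of Lemma \ref{BrascampLiebineq}. Fix arbitrary $y_0, y_1 \in \R^m$ and $\lambda \in (0,1)$, and define three non-negative measurable functions on $\R^n$ by
\begin{equation*}
    f(x) \defeq F(x, y_0), \qquad g(x) \defeq F(x, y_1), \qquad h(x) \defeq F\big(x,\, (1-\lambda)y_0 + \lambda y_1\big).
\end{equation*}
The goal is to verify that the triple $(f,g,h)$ satisfies the pointwise hypothesis of Lemma \ref{BrascampLiebineq}, and then read off the conclusion as the defining inequality of $\frac{q}{nq+1}$-concavity for $y \mapsto \int_{\R^n} F(x,y)\,dx$ at the arbitrary pair $(y_0, y_1)$ with weight $\lambda$.

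The key step is to unpack the $q$-concavity of $F$ on $\R^n \times \R^m$ along convex combinations that couple the $x$ and $y$ variables with the \emph{same} parameter $\lambda$. For any $x_0, x_1 \in \R^n$,
\begin{equation*}
    (1-\lambda)(x_0, y_0) + \lambda (x_1, y_1) = \big((1-\lambda)x_0 + \lambda x_1,\; (1-\lambda)y_0 + \lambda y_1\big),
\end{equation*}
so the $q$-concavity of $F$ gives
\begin{equation*}
    h\big((1-\lambda)x_0 + \lambda x_1\big) = F\big((1-\lambda)(x_0,y_0) + \lambda(x_1,y_1)\big) \geq \mathcal{A}_q\big(\lambda, F(x_0,y_0), F(x_1,y_1)\big) = \mathcal{A}_q(\lambda, f(x_0), g(x_1)).
\end{equation*}
This is exactly the hypothesis needed to invoke Lemma \ref{BrascampLiebineq}, and the constraint $q \geq -1/n$ is precisely what that lemma requires.

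Applying Lemma \ref{BrascampLiebineq} then yields
\begin{equation*}
    \int_{\R^n} F\big(x,\, (1-\lambda)y_0 + \lambda y_1\big)\,\dif x \;\geq\; \mathcal{A}_{\frac{q}{nq+1}}\!\left(\lambda,\, \int_{\R^n} F(x, y_0)\,\dif x,\; \int_{\R^n} F(x, y_1)\,\dif x\right),
\end{equation*}
which is the $\frac{q}{nq+1}$-concavity of $y \mapsto \int_{\R^n} F(x,y)\,\dif x$ at $(y_0, y_1, \lambda)$. Since the triple was arbitrary, the corollary follows. There is no genuine obstacle here: all the analytic content is packaged in Lemma \ref{BrascampLiebineq}, and the only thing to verify carefully is the coupling in the $q$-concavity inequality, namely that combining $x$ and $y$ components with the same weight $\lambda$ is legitimate because $F$ is jointly $q$-concave on the product space $\R^n \times \R^m$.
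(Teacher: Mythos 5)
Your proof is correct and follows essentially the same route as the paper: define the three slice functions $f,g,h$, use the joint $q$-concavity of $F$ to check the hypothesis of the Borell--Brascamp--Lieb inequality (Lemma \ref{BrascampLiebineq}), and read off the conclusion. The only difference is notational ($y_0,y_1$ versus $y,z$).
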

\begin{proof}
    Fix $\lambda\in(0,1)$ and $y,z\in\R^m$. Let, also, 
    \begin{equation*}
        \begin{aligned}
        &f(x)\defeq F(x,y), \\
        &g(x)\defeq F(x,z), \\
        &h(x)\defeq F(x, (1-\lambda)y+ \lambda z). 
        \end{aligned}
    \end{equation*}
By the $q$-concavity of $F$, 
\begin{equation*}
    h((1-\lambda)x+\lambda u)= F((1-\lambda)(x, y)+ \lambda(u,z))\geq \mathcal{A}_q(\lambda, F(x,y), F(u,z))= \mathcal{A}_q(\lambda, f(x), g(u)),  
\end{equation*}
for all $x,u\in\R^n$. Therefore, by Lemma \ref{BrascampLiebineq}, 
\begin{equation*}
\begin{aligned}
    \int_{\R^n}F(x,(1-\lambda)y + \lambda z)\dif x&= \int_{\R^n} h(x)\dif x \\
    &\geq \mathcal{A}_{\frac{q}{nq+1}} \left( \lambda, \int_{\R^n} f(x)\dif x, \int_{\R^n} g(x)\dif x\right) \\
    &= \mathcal{A}_{\frac{q}{nq+1}}\left( \lambda, \int_{\R^n}F(x,y)\dif x, \int_{\R^n}F(x,z)\dif x\right), 
\end{aligned}
\end{equation*}
as claimed. 
\end{proof}
\begin{remark}
\label{convexityRemark}
In our applications, the functions are always continuous.
For continuous functions, midpoint convexity implies convexity, hence it
will be enough to work with $\lambda=1/2$ throughout.
\end{remark}

\begin{proof}[Proof of Lemma \ref{-1concaveLemma}]
    Let
    \begin{equation*}
        F(x,y)\defeq f(x,y)^{-2}. 
    \end{equation*}
    Since $f$ is convex, $F$ is $-1/2$-concave:
    \begin{equation*}
        \begin{aligned}
            F((1-\lambda)(x,y)+ \lambda (x',y'))
            &= 
            {f\Big((1-\lambda)(x,y)+ \lambda (x', y')\Big)^{-2}} \\
            &\geq {\Big[ (1-\lambda) f(x,y)+ \lambda f(x', y')\Big]^{-2}} \\
            &= \Big[ (1-\lambda)F(x,y)^{-\frac12}+ \lambda F(x',y')^{-\frac12}\Big]^{-2} \\
            &= \mathcal{A}_{-1/2}(\lambda, F(x,y), F(x', y')).
        \end{aligned}
    \end{equation*}
    Thus, by Corollary \ref{BLcorollary} for $n=1$ and $q=-1/2$, 
    \begin{equation*}
        y\mapsto \int_{\R}F(x,y)dx= \int_{\R}\frac{dx}{f(x,y)^2}
    \end{equation*}
    is $\frac{-\frac12}{-\frac12+1}= -1$-concave, concluding the proof.
\end{proof}

\section{A theorem of Ball}
\label{BallSection}
Recall a theorem of Ball \cite[Theorem 4.10]{Ball86}; for a detailed proof see \cite[Theorem 5.20]{BMR23}.
\begin{theorem}
\label{BallIneq}
    Let $F,G,H: (0,\infty)\to [0,\infty)$ be continuous
    functions, not identically $0$, satisfying
    \begin{equation}
    \label{HFGrstEq}
        H(r)^{1/r}\geq \sqrt{F(t)^{1/t} G(s)^{1/s}}, \quad 
        \text{ for all $r,s,t>0$ with} \quad \frac{1}{r}=\frac12\bigg(\frac{1}{t}+\frac{1}{s}\bigg).
    \end{equation}
    Then, for $q\geq 1$, 
    \begin{equation*}
        \left(\int_{0}^\infty r^{q-1} H(r) \dif r \right)^{-\frac1q}\leq \frac12\left(\int_0^\infty t^{q-1} F(t) \dif t \right)^{-\frac1q}+\frac12 \left( \int_0^\infty s^{q-1} G(s)\dif s\right)^{-\frac1q}.
    \end{equation*}
\end{theorem}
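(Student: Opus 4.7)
The plan is to reduce this to a one-dimensional application of the Borell--Brascamp--Lieb inequality (Lemma~\ref{BrascampLiebineq}) via the reciprocal change of variables that converts the harmonic-mean constraint into an arithmetic midpoint.

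\textbf{Step 1 (Normalization).} The joint transformation $(F,G,H)\mapsto(cF(\alpha\,\cdot),cG(\alpha\,\cdot),cH(\alpha\,\cdot))$ preserves both hypothesis \eqref{HFGrstEq} and the conclusion for any $c,\alpha>0$: the check uses that $1/(\alpha r)=\tfrac12(1/(\alpha t)+1/(\alpha s))$ and that the $1/r$-exponent in $H(r)^{1/r}$ scales consistently. Using these two degrees of freedom, I would normalize
$$
\int_0^\infty F(t)\,t^{q-1}\,dt=\int_0^\infty G(s)\,s^{q-1}\,ds=1,
$$
so that the goal reduces to showing $\int_0^\infty H(r)\,r^{q-1}\,dr\geq 1$.

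\textbf{Step 2 (Pointwise reformulation).} Exponentiating \eqref{HFGrstEq} and using $r=2ts/(t+s)$ yields
$$
H\bigl(\tfrac{2ts}{t+s}\bigr)\geq F(t)^{s/(t+s)}\,G(s)^{t/(t+s)},\qquad s,t>0.
$$
Reparametrizing by $\lambda=s/(t+s)\in(0,1)$, so that $t=r/(2(1-\lambda))$ and $s=r/(2\lambda)$, I obtain, for every $r>0$ and $\lambda\in(0,1)$,
$$
H(r)\geq F\bigl(\tfrac{r}{2(1-\lambda)}\bigr)^{1-\lambda}G\bigl(\tfrac{r}{2\lambda}\bigr)^{\lambda}.
$$
In the reciprocal coordinates $u=1/t$, $v=1/s$, $w=1/r$, the constraint $1/r=(1/t+1/s)/2$ becomes the midpoint identity $w=(u+v)/2$, which is the form amenable to Borell--Brascamp--Lieb.

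\textbf{Step 3 (Borell--Brascamp--Lieb).} Next, I would construct auxiliary functions $\tilde f,\tilde g,\tilde h$ on $(0,\infty)$ through the reciprocal substitution, absorbing the Jacobian factor $dw=-dr/r^2$ together with the weight $r^{q-1}$ so that $\int_0^\infty\tilde f(u)\,du=\int_0^\infty F(t)\,t^{q-1}\,dt$, and similarly for $\tilde g$ and $\tilde h$. The content of Step~2, transported to these new variables, becomes an $A_{-1/(q+1)}$-concavity condition of $\tilde h$ relative to $\tilde f$ and $\tilde g$ at midpoints. Applying Lemma~\ref{BrascampLiebineq} with $n=1$ and $q_{\mathrm{BBL}}=-1/(q+1)$, which satisfies the admissibility condition $q_{\mathrm{BBL}}\geq -1=-1/n$, yields the conclusion $\int\tilde h\geq A_{q_{\mathrm{BBL}}/(q_{\mathrm{BBL}}+1)}(1/2,\int\tilde f,\int\tilde g)=A_{-1/q}(1/2,\int\tilde f,\int\tilde g)$, which unwinds to the desired harmonic-mean-type bound $(\int H\,r^{q-1}dr)^{-1/q}\leq\tfrac12[(\int F\,t^{q-1}dt)^{-1/q}+(\int G\,s^{q-1}ds)^{-1/q}]$.

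\textbf{The main obstacle.} The subtle point is in Step~3: hypothesis \eqref{HFGrstEq} has the exponent $1/r$ attached to $H(r)$, while the integrals carry the weight $r^{q-1}$, and these must combine in just the right way for the required concavity exponent $-1/(q+1)$ to emerge from the reformulation--- simultaneously for every real $q\geq 1$. Properly tracking the Jacobian and verifying the midpoint $A_{-1/(q+1)}$-condition from the pointwise bound of Step~2 is the delicate technical step; a fully detailed execution is given in \cite[Theorem~5.20]{BMR23}.
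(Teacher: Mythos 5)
The paper does not actually prove Theorem~\ref{BallIneq}; it states it and refers to \cite[Theorem 4.10]{Ball86} and, for a detailed proof, to \cite[Theorem 5.20]{BMR23}. Your strategy---reciprocal substitution plus Borell--Brascamp--Lieb---is the standard route and is presumably what those references do, and your sketch is essentially correct. Two remarks.

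First, Step~1 (the normalization) is harmless but unnecessary: once the BBL machine is set up in Step~3, the conclusion comes out in the exact form stated in the theorem with no normalization needed.

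Second, and more substantively, Step~3 glosses over a small but essential bridge. With $\tilde f(u)=u^{-q-1}F(1/u)$ (and similarly for $\tilde g,\tilde h$), one checks directly that $\int_0^\infty\tilde f(u)\,\dif u=\int_0^\infty t^{q-1}F(t)\,\dif t$, and that the midpoint BBL hypothesis
\begin{equation*}
\tilde h\Big(\tfrac{u+v}{2}\Big)\;\geq\;\mathcal{A}_{-1/(q+1)}\big(\tfrac12,\tilde f(u),\tilde g(v)\big)
\end{equation*}
unwinds (using $\tilde f(u)^{-1/(q+1)}=u\,F(1/u)^{-1/(q+1)}$ and $\tfrac{u}{u+v}=\tfrac{s}{t+s}$) to
\begin{equation*}
H(r)\;\geq\;\Big(\tfrac{s}{t+s}F(t)^{-1/(q+1)}+\tfrac{t}{t+s}G(s)^{-1/(q+1)}\Big)^{-(q+1)}
=\mathcal{A}_{-1/(q+1)}\big(\tfrac{t}{t+s},F(t),G(s)\big).
\end{equation*}
However, Step~2 only gives the geometric-mean bound $H(r)\geq F(t)^{s/(t+s)}G(s)^{t/(t+s)}=\mathcal{A}_0\big(\tfrac{t}{t+s},F(t),G(s)\big)$. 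You need to invoke explicitly the monotonicity of the generalized means $q\mapsto\mathcal{A}_q$ (equivalently, weighted AM--GM) to pass from $\mathcal{A}_0$ to $\mathcal{A}_{-1/(q+1)}$; without that line the claim that ``Step~2, transported, becomes an $\mathcal{A}_{-1/(q+1)}$-condition'' is not literally true. Once this is added, Lemma~\ref{BrascampLiebineq} with $n=1$, $q_{\mathrm{BBL}}=-1/(q+1)$ (admissible since $q\geq1\Rightarrow q_{\mathrm{BBL}}\geq-1/2>-1$) yields $\int\tilde h\geq\mathcal{A}_{-1/q}(1/2,\int\tilde f,\int\tilde g)$, and raising to the power $-1/q$ gives the stated inequality.
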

Oftentimes we will
verify 
    \begin{equation}
\label{HFGrstEquivEq}
        H(r)\geq F(t)^{\frac{s}{t+s}} G(s)^{\frac{t}{t+s}},\text{ for all $r,s,t>0$ with} \quad \frac{1}{r}=\frac12\bigg(\frac{1}{t}+\frac{1}{s}\bigg),
    \end{equation}
instead of \eqref{HFGrstEq}; thanks to $\frac{1}{r}=\frac12\bigg(\frac{1}{t}+\frac{1}{s}\bigg)$, equations
\eqref{HFGrstEquivEq}
and \eqref{HFGrstEq}
are equivalent.

\phantomsection
\addcontentsline{toc}{section}{References}
\printbibliography

{\sc University of Maryland}

{\tt vmastr@umd.edu, yanir@alum.mit.edu}
\end{document}